\documentclass[12pt, reqno, oneside]{amsart}
\usepackage{macros}
\usepackage[
backend=biber,
url = false,
doi = false,
isbn = false
]{biblatex}
\appto{\bibsetup}{\raggedright}
\renewbibmacro{in:}{}
\numberwithin{equation}{section}    

\title{Nodal degeneration of chiral algebras I: Global structure and gluing formula}

\author{Elchanan Nafcha} 
\date{}

\begin{document}

\maketitle
\pagestyle{plain}

\begin{abstract}
    Given a family of stable curves, we define a sheaf of factorization algebras associated to any universal factorization algebra, and prove a gluing formula for the corresponding sheaf of chiral homology, generalizing the sheaves of vertex algebras and the associated Verlinde formula for gluing of conformal blocks.
\end{abstract}

\tableofcontents

\section{Introduction}

\subsection{Motivation and main results}

The theory of factorization algebras arises when dealing with families of local operators over an algebraic variety or a smooth manifold. Its origin is the theory of vertex operator algebras --- a mathematical structure describing the chiral part of quantum field operators in a two-dimensional conformal field theory~\cite{Bor, FBZ}. In their original context, which is the subject of this paper, factorization algebras were introduced by Beilinson and Drinfeld~\cite{BD} for families of local operator over finite subsets of a smooth algebraic curve, arising in the study of the geometric Langlands correspondence. They were later generalized to higher dimensional smooth schemes~\cite{FG}, as well as smooth manifolds~\cite{HA, AF}, and have been used to describe the structure of observables in a general quantum field theory~\cite{CG}.

Heuristically, a factorization algebra over an algebraic curve $X$ is an assignment of a vector space (or a chain complex) $\cA_{(x_1,\dots,x_n)}$ for each finite subset $\{x_1,\dots,x_n\} \subset X$, and an isomorphism \[\cA_{(x_1,\dots,x_n)} \simeq \cA_{x_1} \otimes \dotsb \otimes \cA_{x_n}\] whenever $x_1,\dots,x_n$ are disjoint. The assignment \[(x_1,\dots,x_n) \mapsto \cA_{(x_1,\dots,x_n)}\] should have a flat connection along each variable $x_i$, and assemble to a collection of D-modules $\cA_I$ over $X^I$ for each finite set $I$, compatible with diagonal embeddings. A compatible collection of D-modules over powers of a curve $X$ can be equivalently defined as a D-module over $\Ran X$ --- the moduli space of finite subsets of $X$. The structure of a factorization algebra then translates to an equivariance condition with respect to the (disjoint) union operation. If $X$ is proper, one then have a de Rham global sections functor over $\Ran X$ --- the \textit{chiral (or factorization) homology of $X$ with coefficients in $\cA$}, which we denote by $\int_{X} \cA$.

Any vertex operator algebra $V$ defines a factorization algebra $\cA_{V,X}$ over any smooth curve~\cite{FBZ}. Factorization algebras arising in this way have a special property --- they are defined over any smooth curve in a compatible way, and assemble to a family of factorization algebras over the universal curve $\cX_g \to \cM_g$ of the moduli of genus-$g$ smooth curves. Forgetting the factorization structure, one gets the structure of a \textit{universal D-module}. The precise structure of the letter is defined in~\cite{Cliff}, using the stack of \'{e}tale germs of $n$-dimensional varieties. We give here a factorization version of this construction, which generalizes the definition in~\cite[Section~3.1.16]{BD} to the derived setting, using the moduli space $\MDisk_{\Ran}$ of  pointed multidisks:

\begin{definition}
    (Definition~\ref{def:multi_disks}, Definition~\ref{def:multi_disk_Ran}) Let $\MDisk_{\Ran}$ be the space classifying formal pointed multidisks, namely formal completions of finite flat divisors inside smooth curves. Let $\sqcup : \MDisk_{\Ran} \times \MDisk_{\Ran} \to \MDisk_{\Ran}$ be the disjoint union operation. Let $\Fact^{\univ}$ be the category of sheaves $\cA$ over $\MDisk_{\Ran}$, equivariant with respect to $\sqcup$, namely equipped with an isomorphism \[\sqcup^!\cA \iso \cA \boxtimes \cA\] We refer to objects of $\Fact^{\univ}$ as \textit{universal factorization algebras}.
\end{definition}

\begin{theorem}
    (Theorem~\ref{thm:loc-fact}) For any family of smooth curves $X/S$, there exists a classifying map \[\pi^{X/S}_{\dR} : \Ran (X/S) \to \MDisk_{\Ran}\] such that pullback map $\pi_{\dR}^{X/S,!}$ sends a universal factorization algebra to an $S$-family of factorization algebras $\cA_{X/S}$ over $X/S$. 
\end{theorem}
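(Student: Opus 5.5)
The plan is to build $\pi^{X/S}_{\dR}$ in two stages, first at the level of $\Ran(X/S)$ itself and then after passing to de Rham prestacks, and to deduce the factorization statement from a compatibility of $\pi$ with disjoint union.

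\textbf{Construction of the classifying map.} Work with $T$-points. A $T$-point of $\Ran(X/S)$ is a map $T \to S$ together with a finite nonempty set of sections $x_i : T \to X_T := X \times_S T$. Let $D \subset X_T$ be the union of the graphs $\Gamma_{x_i}$. This is a relative effective Cartier divisor, finite and flat over $T$, because $X_T/T$ is a smooth curve. Let $\widehat{X_T}_D$ be the formal completion of $X_T$ along $D$. Together with $D$, this is a formal pointed multidisk over $T$ in the sense of Definition~\ref{def:multi_disks}. The assignment is functorial in $T$, since completion commutes with base change along $T' \to T$ for finite flat divisors in smooth curves. This gives a map $\pi^{X/S} : \Ran(X/S) \to \MDisk_{\Ran}$.

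\textbf{Passage to de Rham.} Next I would check that $\pi^{X/S}$ factors through $\Ran(X/S)_{\dR}$, which is the de Rham prestack relative to $S$. Concretely, $\pi$ must send points that agree modulo nilpotents to canonically isomorphic multidisks. For this I would use that $\MDisk_{\Ran}$, as defined in Definition~\ref{def:multi_disk_Ran}, is a quotient by the group of automorphisms of the formal multidisk, so that it is formally étale in the appropriate sense. Alternatively, one can note that a nilpotent thickening of a section of a smooth formal curve can be recentred by a translation automorphism of the completion. This translation automorphism is exactly the data of the flat connection. Granting this, $\pi$ descends to $\pi^{X/S}_{\dR}$.

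\textbf{Factorization.} Define $\cA_{X/S} := \pi^{X/S,!}_{\dR}\cA$. The key compatibility is the commutative square over the disjoint locus $(\Ran(X/S)\times_S \Ran(X/S))_{\disj}$: the union map to $\Ran(X/S)$ followed by $\pi$ agrees with $\sqcup \circ (\pi \times \pi)$. This holds because the completion of $X_T$ along $D_1 \sqcup D_2$ is canonically the disjoint union of the two completions. Pulling back the isomorphism $\sqcup^!\cA \simeq \cA \boxtimes \cA$ along this square gives the factorization isomorphisms. The higher coherences come for free, because the equivariance data in $\Fact^{\univ}$ are pulled back along a map of commutative monoids in correspondences.

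\textbf{Main obstacle.} I expect the hardest step to be the descent to the de Rham prestack in the derived setting. There one must verify that pullback of sheaves along $\pi_{\dR}$ lands in the correct category, namely D-modules, or crystals on $\Ran(X/S)_{\dR}$ relative to $S$. One must also verify that $\sqcup$ is compatible with the disjointness condition at the level of de Rham spaces. My first attempt would be to reduce both checks to the case of $X^I$, using that $\Ran(X/S)$ is a colimit of the $X^I_S$, and to verify them there using formal smoothness of $X/S$.
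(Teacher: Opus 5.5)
Your strategy is the paper's: define $\pi$ by formally completing $X$ along the configuration, compare the disjoint locus of $\Ran(X/S)$ with $\sqcup$ on multidisks, and pull back the universal factorization isomorphisms. However, you misdiagnose and incorrectly argue the step you flag as the main obstacle. You also state too weakly the step that actually carries the content.

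\textbf{De Rham descent.} In the paper, $\Ran(X/S)$ is already $\colim_I X^{\times_SI}_{S\mh\dR}$, so its $R$-points carry sections only over $\Spec R^{\red}$. The map $\pi_{\dR,I}$ is defined directly on these points by $X_R\times_{(X_R)_{R\mh\dR}}(\Spec R)^{\sqcup I}$. This completion depends only on the $R^{\red}$-points, and an $I$-pointing in $\MDisk_I$ is by definition only a map from $(\Spec R^{\red})^{\sqcup I}$. So two honest sections with the same reduction give literally the same point of $\MDisk_I$. The map factors through $X^{\times_SI}_{S\mh\dR}$ tautologically, and $!$-pullback lands in relative crystals automatically.

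The justification you propose does not work. $\MDisk_{\Ran}$ is not formally \'etale: already for $\MDisk_{\{\pt\}}=B\Aut\hat{\cO}$, a $k[\epsilon]/\epsilon^2$-point has more automorphisms than its reduction, because $\Aut\hat{\cO}$ has nonzero Lie algebra. Recentring by a translation identifies strictly pointed disks only after choosing a coordinate, so it is not canonical. The translations you have in mind are exactly what distinguishes $\MDisk_I$ from $\MDisk_I^{\str}$, cf.~\eqref{eq:MDisk-cover}.

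Relatedly, the scheme-theoretic union of the graphs is not flat over $T$ when points collide over a non-reduced base; take $x_1=0$, $x_2=\epsilon$ in $\AA^1$ over $k[\epsilon]/\epsilon^2$. Use the sum $\sum_i\Gamma_{x_i}$ instead, which has the same support and hence the same completion.

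\textbf{Disjointness.} The paper needs more than commutativity of the square with top arrow $\sqcup : (\prod_j X^{\times_SI_j}_{S\mh\dR})_{\disj}\to X^{\times_SI}_{S\mh\dR}$ and bottom arrow $\sqcup:\prod_j\MDisk_{I_j}\to\MDisk_I$. It needs that square to be \emph{Cartesian}. This holds because an $I$-tuple whose completion splits compatibly with $\alpha : I\onto J$ has pairwise disjoint blocks, and such a splitting is unique.

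Combined with Lemma~\ref{lem:ren-?push}, this gives $\pi^!_{\dR,I}\sqcup_?\simeq\sqcup_*\jmath^!\bigl(\prod_j\pi_{\dR,I_j}\bigr)^!$, so $\pi^{X/S,!}_{\dR}$ is symmetric monoidal for $\otimes^{\ch}$. That is what transports the cocommutative coalgebra structure, which is part of the data of an object of $\Fact(X/S)$. Once that is in place, the maps adjoint to the comultiplication are the pullbacks of~\eqref{eq:univ-fact-iso} and hence isomorphisms.

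Your phrase ``map of commutative monoids in correspondences'' presupposes exactly this Cartesian property. You should state and verify it rather than asserting only that the square commutes.
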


In the case where a universal factorization algebra arises from a vertex operator algebra, this construction reduces to that of~\cite{FBZ}. In particular, we have an isomorphism $H^0\int_{X} \cA_X \simeq \VV(V)_{[X]}$, where $\VV(V)_{[X]}$ denotes the space of coinvariants (with vacuum coefficients), dual to the space of conformal blocks, which is central in the construction of correlation functions in conformal field theory.  

If we start with a family of curves $X/S$, the resulted chiral homology would be a sheaf over $S$. Furthermore, using the action of the Virasoro algebra, we get a structure of a twisted D-module~\cite{BS}. In particular, taking $S$ to be the universal base $\cM_{g}$, one gets a twisted D-module of (complexes of) vector spaces over $\cM_g$, whose zeroth homology is the sheaf of coinvariants. An important question in the study of vertex operator algebras is the extension of the sheaf of coinvariants to the boundary, namely to the Deligne-Mumford compactification $\bcM_g$, together with a gluing formula, relating the space of coinvariants for a nodal curve with that of its normalization. The motivation is twofold:

The first is computational: Under certain finiteness assumptions, the sheaf of coinvariants is a vector bundle, and in particular, its rank depends on the genus only. By extending to the boundary, one reduces the computation to the case of a nodal curve. The gluing formula then expresses the genus-$g$ rank in terms of lower genera. This is known as the \textit{Verline formula}, which was conjectured in~\cite{Ver}, and proven in~\cite{TUY} for the case of the integrable quotient vertex algebra, and more generally in~\cite{DGT}.

The second comes from the idea of an algebraic quantum field theory~\cite{BFM, BZSV}. In its simplest form, it describes a structure of a topological quantum field theory arising from a conformal field theory which is equipped with strong equivariance with respect to deformations. In that case, the resulted structure is that of a functor out of a $[1,2]$-cobordism category~\cite{Seg}. In the case of a vertex operator algebra $V$, it assigns the category $V\mh\Mod$ to the circle, and the functor \[\VV(V,-)_{[\overline{\Sigma},\{x_i\}]} : V\mh\Mod^{\otimes n} \to \Vect\] to a Riemann surface $\Sigma$ with boundary $(S^1)^{\sqcup n}$, where $\VV(V,{M_1,\dots,M_n})_{[\overline{\Sigma},\{x_i\}]}$ denotes the spaces of coinvariants corresponding to $M_i$ assigned to $x_i$. Here $\overline{\Sigma} = \Sigma \underset{(S^1)^{\sqcup n}}{\cup} D^{\sqcup n}$, and $x_i$ the center of the $i$-th disk. The gluing law then translate to a composition law --- the behavior of the spaces $\VV(V,\{M_i\})_{[\overline{\Sigma},\{x_i\}]}$ with respect to compositions of cobordisms. The precise structure is described in~\cite{DW} under strong finiteness assumptions on the category of modules. In general, however, one cannot expect a completely topological structure: even of we have a flat connection on $\cM_g$, this connection may fail to be integrable. Instead, one only expects a composition law to hold for nodal curves.

Our main result is an extension of chiral homology to the boundary, and a gluing formula for nodal curves. 
\begin{theorem}
    (Theorem~\ref{thm:disj-gluing}, Theorem~\ref{thm:self-gluing}) For a universal factorization algebra $\cA$ and integers $g,n \geq 0$, there exists a sheaf $\int_{g,n} \cA$ over $\bcM_{g,n}$ extending the sheaf of chiral homology over $\cM_g$, and a non-unital associative algebra $\fZ_{\cA}^0$, such that the fibers $\int_{X \backslash \{x_1,\dots,x_n\}} \cA$ of $\int_{g,n}\cA$ admit $n$ commuting $\fZ_{\cA}^0$ actions, and such that the following gluing formulas hold:
    \begin{enumerate}
        \item If a curve is given by gluing of two curves $(X,x)$, $(Y,y)$, we have \begin{equation} \label{eq:gluing-formula-1}
            \int_{X \underset{x \sim y}{\cup} Y} \cA \simeq \int_{X \backslash \{x\}} \cA \underset{\fZ_{\cA}^0}{\otimes} \int_{Y \backslash \{y\}} \cA~.
        \end{equation}
        \item If a curve is given by self-gluing of a two-pointed curve $(X,x_1,x_2)$, we have \begin{equation} \label{eq:gluing-formula-2}
            \int_{X \underset{\{x_1, x_2\}}{\cup} \{\pt\}} \cA \simeq \int_{X \backslash \{x_1,x_2\}} \cA \underset{\fZ_{\cA}^0 \otimes \fZ_{\cA}^{0,\op}}{\otimes} \fZ_{\cA}^0~.
        \end{equation} In other words, it is given by the Hochschild homology of $\fZ_{\cA}^0$ with coefficients in the bimodule $\int_{X \backslash \{x_1,x_2\}} \cA$. 
    \end{enumerate}
\end{theorem}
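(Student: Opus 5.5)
Here is the plan. I first extend the classifying map of Theorem~\ref{thm:loc-fact} across the boundary of $\bcM_{g,n}$, and then reduce the gluing formula to a local computation near the node. That local computation is governed by a single algebra $\fZ_{\cA}^0$ attached to the nodal germ $\Spec k[[s,t]]/(st)$.

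\textbf{Step 1: the nodal classifying map.} For a family of stable curves $X/S$, enlarge $\MDisk_{\Ran}$ to a space $\MDisk^{\mathrm{nod}}_{\Ran}$ of formal multidisks in which a component may be the completion of a nodal curve at its node. Then $\Ran(X/S)$ still maps to it by sending a finite subset to the formal completion of $X$ along it. On this space, define the extension of a universal factorization algebra $\cA$ by pushforward along the inclusion of the smooth locus. Equivalently, over a point whose divisor contains the node, the fiber is taken to be the factorization homology of the punctured nodal germ with the appropriate coefficients. Factorization compatibility holds because $\sqcup$ is compatible with this open inclusion. Pulling back and taking de Rham global sections over $\Ran(X/S)$ then gives a sheaf $\int_{g,n}\cA$ on $\bcM_{g,n}$. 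Over the smooth locus, base change with Theorem~\ref{thm:loc-fact} shows that it restricts to the usual chiral homology.

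\textbf{Step 2: excision.} Let $X_0$ be a nodal curve with node $p$ and normalization $\widetilde X$. Cover $\Ran X_0$ by the open $\Ran(X_0\setminus p)$ together with the subsets containing $p$. Using factorization, I would prove the standard excision statement in the $\Ran$-space formalism (as in Beilinson--Drinfeld and Francis--Gaitsgory): chiral homology of $X_0$ is the relative tensor product
\[
\int_{X_0\setminus p}\cA \underset{\int_{D^\times_p}\cA}{\otimes} \int_{D_p}\cA .
\]
Here $D_p$ is the formal nodal neighbourhood and $D^\times_p$ is the punctured one. Since $D^\times_p$ is a disjoint union of two punctured disks, $\int_{D^\times_p}\cA$ is the tensor product of two copies of the associative (topological, completed) algebra attached to a punctured disk. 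Define $\fZ_{\cA}^0$ as the non-unital associative algebra obtained from $\int_{D_x^\times}\cA$. Its non-unitality reflects that no vacuum is inserted at the puncture. Each puncture of $X\setminus\{x_i\}$ then carries a commuting $\fZ_{\cA}^0$-action, supplied by the factorization module structure over the punctured disk.

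\textbf{Step 3: the nodal germ as the diagonal bimodule.} The key local input is an identification of $\int_{D_p}\cA$, as a $\fZ_{\cA}^0\otimes\fZ_{\cA}^{0,\op}$-module, with the diagonal bimodule $\fZ_{\cA}^0$. To prove it, I would present $D_p$ as $D_x\cup D_y$ glued at a point. Then $\int_{D_p}\cA$ is computed by configurations on the two branches that become factorized away from $p$. At the node itself the two punctured disks carry opposite orientations, which is why the action on the second factor is through $\fZ_{\cA}^{0,\op}$. I would check this concretely for vertex algebras, where the identification matches the mode-transposition relation $a_{(n)}\otimes 1 \sim 1\otimes \theta(a)_{(n)}$ used in~\cite{DGT}, and then argue in general using $\Fact^{\univ}$ equivariance.

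Given Step 3, the self-gluing formula is immediate from Step 2. For disjoint gluing, $X_0\setminus p = (X\setminus x)\sqcup(Y\setminus y)$, so by factorization its chiral homology is $\int_{X\setminus x}\cA\otimes\int_{Y\setminus y}\cA$. Tensoring this over $\fZ_{\cA}^0\otimes\fZ_{\cA}^{0,\op}$ with the diagonal bimodule gives $\int_{X\setminus x}\cA\otimes_{\fZ_{\cA}^0}\int_{Y\setminus y}\cA$, which is \eqref{eq:gluing-formula-1}. The main obstacle is Step 3, together with the analytic issue of completions in the topological algebra $\fZ_{\cA}^0$. I expect the hardest part to be showing that the relative tensor product commutes with the required colimits, and that the non-unital algebra gives the correct answer rather than a version twisted by the unit. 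I would attack this first by reducing to the universal case through $\MDisk^{\mathrm{nod}}_{\Ran}$, where the computation involves only the nodal germ and is independent of the global curve.
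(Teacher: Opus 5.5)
There is a genuine gap, and it sits where the content of the theorem lies.

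\textbf{Step 2 (excision).} You import $\otimes$-excision from topological factorization homology, but no such statement is available for algebraic chiral homology. The punctured formal neighbourhood $D_p^\times$ is not an open piece of $\Ran X_0$, and chiral homology is only defined here (as $p_{\Ran,!}$) for proper families. So there is no object ``$\int_{D_x^\times}\cA$'' acting on the chiral homology of the complement in such a way that a bar construction recovers $\int_{X_0}\cA$. The natural algebra attached to a punctured disk is the topological associative algebra $\cA_x^{\operatorname{as}}$ of Beilinson--Drinfeld (the mode algebra, $U(V)$ for a vertex algebra). The paper's closing remark keeps this separate from $\fZ_{\cA}^0$. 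It is $\fZ_{\cA}^0$ that satisfies the gluing formula, and its $H^0$ for a vertex algebra is Zhu's algebra $A(V)$, not the mode algebra. So even granting excision, you would be proving a formula for a different algebra, from scratch.

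\textbf{Step 3 (the crux).} You rightly call this the crux, but it is supported only by a vertex-algebra check and an appeal to ``$\Fact^{\univ}$ equivariance''. Nothing in the proposal actually produces the bar complex.

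\textbf{Step 1 (the extension).} If $j$ is the open inclusion of smooth germs, then $\imath^!j_*=0$ on the nodal locus. After $!$-pullback, configurations containing the node therefore contribute nothing, and $p_{\Ran,!}$ just returns the $*$-de Rham cohomology of $\Ran(X_0^{\sm})$. The ``factorization homology of the punctured germ'' is not the $!$-fiber of the object you defined. You also never say how marked points enter $\int_{g,n}\cA$ for $n>0$, although the statement is about fibers $\int_{X\backslash\{x_i\}}\cA$ carrying $\fZ_{\cA}^0$-actions.

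\textbf{What the paper does instead.} The missing idea is to resolve nodes and punctures geometrically by semistable modifications rather than by formal collars.
\begin{itemize}
\item The paper works on $\bcM_{g,n,\Ran}$, the Ran space of weight-$\epsilon$ stable configurations on curves with rational bridges inserted at nodes and rational tails at marked points. It pulls $\cA$ back to a vacuum module $\Vac(\cA)^{\epsilon}_{g,n}$ there and defines $\int_{g,n}\cA$ by $!$-pushforward. This also circumvents the fact that a node need not extend to a section in a degenerating family.
\item It sets $\fZ_{\cA}^0=\Gamma_c(\bcM_{0,2,\Ran},\Vac(\cA)^{\epsilon}_{0,2})$, with non-unital product coming from concatenation $\lor_{\Ran}$ of rational chains. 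Since $\lor_{\Ran}$ lies over $\sqcup$ on $\MDisk_{\Ran}$, the factorization isomorphisms of $\cA$ make $\Vac(\cA)^{\epsilon}$ join-equivariant.
\item The gluing formula is then proper descent. The join map $\bcM_{(X,x),\Ran}\times\bcM_{(Y,y),\Ran}\to\bcM_{X\cup_{x\sim y}Y,\Ran}$ is proper and surjective. Its semisimplicial \v{C}ech nerve is $\bcM_{(X,x),\Ran}\times\bcM_{0,2,\Ran}^{\bullet}\times\bcM_{(Y,y),\Ran}$, because two ways of splitting the bridge at the node differ by a chain in the middle.
\item Apply $\Gamma_c$, which commutes with colimits and is monoidal for $\star$, to $\cF\simeq\colim_{\Delta^{\inj,\op}}\lor^{\bullet}_{\Ran,!}\lor^{\bullet,!}_{\Ran}\cF$. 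This gives literally the bar complex computing $\int_{X\backslash\{x\}}\cA\otimes_{\fZ_{\cA}^0}\int_{Y\backslash\{y\}}\cA$. Self-gluing is the same argument with the Hochschild bar complex.
\end{itemize}
In this approach your Step~3 is not a separate input; it comes directly from the geometry of chains.
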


Here the tensor product of two modules $M,N$ over a non-unital associative algebra $A$ is given by the non-unital bar complex \[M \underset{A}{\otimes} N = \colim(\dotsb \rrrarrow M \otimes A \otimes N \rrarrow M \otimes N)~.\]

The fibers $\int_{X \backslash \{x_1,\dots,x_n\}} \cA$ are isomorphic to the chiral homology of the proper curve $X$ with coefficients in a certain $\cA$-module $\fZ_{\cA}^+$ supported at each marked point $x_i$, which we regard as a puncture. Similarly, the fibers at a nodal curve $X$ can be interpreted as factorization homology for $X$ with coefficients in a chiral bimodule $\fZ_{\cA}^{\nd}$. However, the latter description does not work in families --- in a nodal degeneration with smooth general fiber, the node does not extend to a section over the smooth locus. Instead, we interpret it as chiral homology for a module $\Vac(\cA)^{\epsilon}_{(X,x_1,\dots,x_n)}$ whose support is the entire space $\Ran X$.

In~\cite{Naf}, we show that for a vertex operator algebra $V$ with associated universal factorization algebra $\cA_V$, the associative algebra $H^0\fZ_{\cA}^0$ is isomorphic to Zhu's associative algebra $A(V)$ defined in~\cite{Zhu}, and the chiral module $H^0\fZ_{\cA}^+$ is the vertex algebra module associated to $A(V)$. Assume now $A(V)$ is semisimple, namely decomposes as $A(V) \simeq \bigoplus_{i=1}^{\ell} M_i \otimes M_i^{\lor}$, where $M_1,\dots,M_n$ are the simple $A(V)$-modules. In particular, $H^0\fZ_{\cA}^+$ decomposes as $H^0\fZ_{\cA}^+ \simeq \bigoplus_{i=1}^{\ell} \cM_i \otimes \cM_i^{\lor}$. Then we show that at the level of $H^0$, the formula~\eqref{eq:gluing-formula-1} reduces to \[\VV(V)_{[X \underset{x \sim y}{\cup} Y]} \simeq \bigoplus_{i=1}^{\ell} \VV(V, \cM_i)_{[X,x]} \otimes \VV(V, \cM_i^{\lor})_{[Y,y]}\] This is the usual form of the Verlinde formula, which is proven in~\cite{TUY,DGT}.

An important ingredient, which is missing from the current paper, is that of \textit{sewing}, namely the extension of a gluing formula to the formal neighborhood of the boundary $\partial \bcM_{g}$. We hope to address that in future work.

\subsection{Overview}

In Section~\ref{sec:ch}, we review the definitions of chiral and factorization algebras, chiral homology and chiral modules, and write down the corresponding definitions in the relative case of families of smooth curves. We formalize the notion of a universal factorization algebra as a certain commutative coalgebra in some symmetric monoidal category, and define a localization functor from the category of universal factorization algebras to the category of families of factorization algebras over any family of smooth curves.

In Section~\ref{sec:moduli}, we define the moduli space of semistable modifications of a curve $\fM_{X}^{\sst}$, and construct various Ran spaces (namely, moduli spaces of finite subsets) over it. In particular, we define the pseudo-proper prestack $\bcM_{g,n,\Ran} \to \fM^{\sst}_{g,n}$ of stable configurations over semistable modifications, which serves as our maximal compactification for the Ran space of the punctured, smooth locus $\mathring{\Ran}_{g,n}$, and which maps to the minimal compactification $\Ran_{g,n}$, given by Ran space of the universal curve.

In Section~\ref{sec:chial-nodal}, we define chiral monoidal structures for the spaces of semistable modifications, as well as a chiral module space structure on the space $\bcM_{g,n,\Ran}$ of stable configurations. We then construct, for any universal factorization algebra, a factorization $\cA$-module $\Vac(\cA)_{g,n}^{\epsilon}$ over $\bcM_{g,n,\Ran}$. By pushing forward along $\bcM_{g,n,\Ran} \to \Ran_{g,n}$, we get a factorization $\cA$-module $\Vac(\cA)^0_{g,n}$ over $\Ran_{g,n}$, whose fibers over unmarked smooth points are the vacuum module. We prove that in the case of two-pointed genus zero curves $\bcM_{0,2,\Ran}$, we have an additional (non-symmetric) monoidal structure, which we call the join monoidal structure, and which is given by concatenation of semistable modifications, and show that $\Vac(\cA)_{g,n}^{\epsilon}$ are equivariant with respect to this monoidal structure.

Finally, in Section~\ref{sec:ch-hom}, we prove the main result of this paper. We define the associative algebra $\fZ_{\cA}^0$ for any universal factorization algebra $\cA$ as global sections over $\bcM_{0,2,\Ran}$, with multiplication induced by the join product. We also define $\fZ_{\cA}^0$-modules $\fZ_{\cA}^+,\fZ_{\cA}^-$, which admit an additional structure of chiral modules supported at a point. We define the chiral bimodule $\fZ_{\cA}$, which is the value of our extension at singular points. We then prove a gluing formula, relating factorization homology with coefficients in $\fZ_{\cA}$ at a node to factorization homology with coefficients in $\fZ_{\cA}^{\pm}$ over its normalization, and in particular proving that $\fZ_{\cA} \simeq \fZ_{\cA}^+ \underset{\fZ_{\cA}^0}{\otimes} \fZ_{\cA}^-$.

\subsection{Notations and conventions}

Throughout this paper, and unless specified otherwise, all categories are assumed to be $(\infty,1)$-categories. In particular, for a field $k$, $\Vect_k$ will refer to the $(\infty,1)$-category of $k$-chain complexes. We use the theory of ind-coherent sheaf as developed in~\cite{Gai-ICS}. We use the following notations:
\begin{itemize}
    \item $k$ denotes a fixed algebraically closed field of characteristic zero.
    \item For a base scheme or stack $S$, $\PreSt_S$ refers to the category of $S$-prestacks, namely presheaves of groupoids over the category of affine schemes over $S$. We will write $\PreSt = \PreSt_k$.
    \item $\PreSt^{\aft} \subset \PreSt$ denotes the full subcategory of prestacks \textit{locally almost of finite type}.
    \item For a prestack $\cX$, we write $\cX_{\dR}$ for its de Rham prestack, whose functor of points is given by $\cX_{\dR}(R) = \cX(R^{\red})$. For a base stack $S$ and $\cX \in \PreSt_S$, we write \[\cX_{S\mh\dR} = \cX_{\dR} \times_{S_{\dR}} S\] the relative de Rham prestack. If the base stack is understood implicitly, such as in the case $S = \fM_{g,n}^{\sst}$, we will write \[\cX_{S\mh\dR} = \cX_{\dR/}\]
    \item For a category $\cC$, we let $\cC^{\corr}$ be the $1$-category of correspondences in $\cC$, namely the category with same objects as in $\cC$, and with morphisms $X \to Y$ given by spans $X \from Z \to Y$ in $\cC$. Compositions are defined via pullback.
    \item We will denote by $\Cat_k = \Vect_k\mh\Mod(\Cat)$ the symmetric monoidal category of $k$-linear categories, namely stable categories whose hom spaces have a structure of $k$-vector spaces in a compatible way. The monoidal strcture is given by Lurie's tensor product~\cite[Section~4.8]{HA}.
    \item We denote by $\hat{\cO} = k[[t]]$ the algebra of Taylor series over $k$, and by $\hat{D} = \Spf k[[t]]$ its formal spectrum. We denote by \[\Aut(\hat{\cO}) \simeq k[[a_0]][a_1^{\pm 1},a_2,a_3,\dots]\] the group indscheme of continuous automorphisms of $\hat{\cO}$, and \[\Aut_*(\hat{\cO}) \simeq k[a_1^{\pm 1},a_2,a_3,\dots]\] its subgroup of automorphisms preserving the closed point $\Spec k \to \Spf k[[t]]$.
    \item By a flat family of curves $X \to S$ over a base stack $S$ we mean a flat morphism, of finite presentation, of relative dimension $\leq 1$, with $X$ an algebraic space. 
    \item The category $\fSet^{\surj}$ is the category of non-empty finite sets and surjections between them. The category $\fSet^{\surj,\subset}$ refers to the category of non-empty finite sets, together with a choice of a non-empty subset $I \subset J$. Morphisms are given by surjective morphisms of sets which induce surjective morphisms on the chosen subsets.
    \item The category $\Delta$ is the simplex category of finite non-empty ordered sets $[n] = \{0 < 1 < \dotsb < n\}$ and order preserving morphisms between them. The subcategories $\Delta^{\surj},\Delta^{\inj}$ are the subcategories with same objects, and with morphisms given by surjective and injective maps resp.  
\end{itemize}

For the most part, we will only deal with the subcategories $\PreSt^{\aft}_S$ over an algebraic stack $S$ locally of finite type. We will use the symmetric monoidal structure on $\PreSt_S^{\corr}$ induced by the Cartesian symmetric monoidal structure $(-) \times_S (-)$ on $\PreSt_S$. We then have a lax-monoidal functor \[\IndCoh : \PreSt^{\aft,\corr} \to \Cat_k\] sending a prestack $\cX$ to the category \[\IndCoh(\cX) = \lim_{\Spec R \to \cX} \IndCoh(\Spec R)\] where the limit is taken with respect to $!$-pullback, and a morphism given by a span $\cX \xleftarrow{f} \cZ \xrightarrow{g} \cY$ is sent to the functor \[g_*f^! : \IndCoh(\cX) \to \IndCoh(\cY)\] The lax-monoidal structure is given by the exterior tensor product \[\IndCoh(\cX) \otimes \IndCoh(\cY) \xrightarrow{\boxtimes} \IndCoh(\cX  \times \cY)~.\]

As a consequence, for any base stack $S \in \PreSt^{\aft}$, the category $\IndCoh(S)$ is a commutative algebra in $\Cat_k$, with multiplication given by \[\IndCoh(S) \otimes \IndCoh(S) \xrightarrow{\boxtimes} \IndCoh(S \times S) \xrightarrow{\Delta^!} \IndCoh(S)\] where $\Delta : S \to S \times S$ is the diagonal map. We let \[\Cat_S \coloneqq \IndCoh(S)\mh\Mod(\Cat_k)~.\]

The composition \[\IndCoh_S : \PreSt_S^{\aft,\corr} \to \PreSt^{\aft,\corr} \to \Cat_k\] then upgrades to a lax-monoidal functor \[\IndCoh_S : \PreSt_S^{\aft,\corr} \to \Cat_S\] where the lax structure is given by the composition \begin{align*}
    \IndCoh(\cX) \underset{\IndCoh(S)}{\otimes} \IndCoh(\cY) & \simeq \IndCoh(S) \underset{\IndCoh(S) \otimes \IndCoh(S)}{\otimes} \IndCoh(\cX) \otimes \IndCoh(\cY) \\
    & \xrightarrow{\id \otimes \boxtimes} \IndCoh(S) \underset{\IndCoh(S \times S)}{\otimes} \IndCoh(\cX \times \cY) \\
    & \to \IndCoh(S \times_{S \times S} \cX \times \cY) \simeq \IndCoh(\cX \times_S \cY)~.
\end{align*} 
Denote this composition by \[\otimes_S^! : \IndCoh(\cX) \otimes_{\IndCoh(S)} \IndCoh(\cY) \to \IndCoh(\cX \times_S \cY)~.\]

\subsection{Ind-coherent sheaves over infinite type stacks}

Since the group indscheme $\Aut(\hat{\cO})$ has infinite type, we will also need some extensions of the theory of ind-coherent sheaves to the infinite context. We will follow the formalism in~\cite{RasHM,CF}. Unlike the finite type case, in the infinite case we gnerally do not have both $!$-pullback and $*$-pushforward. 

One can always define a functor \[\IndCoh^! : \PreSt^{\op} \to \Cat_k\] by right Kan extension, so that we have $!$-pullback along any map $f : \cX \to \cY$. However, not much can be said about the category $\IndCoh^!(\cX)$ without further assumptions. One can also define a $*$-theory for a larger class of prestacks:

\begin{definition} \label{def:apai}
    (\parencite[Section~11.2]{CF}) A qcqs, eventually coconnective scheme is called \textit{apaisant} if it can be written as an inverse limit under faithfully flat maps of eventually coconnective schemes almost of finite type. A morphism $f : \cX \to \cY$ between prestacks is relatively apaisant if it is schematic, qcqs, has bounded Tor dimension, and its base change to any apaisant scheme is apaisant.
\end{definition}

\begin{definition} \label{def:weakly-ren}
    (\parencite[Definition~6.21.1]{RasHM}, \parencite[Definition~11.5.1]{CF}) A prestack $S$ is \textit{weakly renormalizable} if it admits a flat and relatively apaisant cover $T \to S$ with $T$ an apaisant indscheme, namely $T$ can be written as a filtered colimit under closed embeddings of apaisant schemes. We let $\PreSt_{\operatorname{w.ren}}$ be the full subcategory spanned by weakly renormalizble prestacks.
\end{definition}

For any renormalizable prestack $\cX$, one can attach a compactly generated category $\IndCoh_*^{\ren}(\cX)$, and this construction defines a functor \[\IndCoh_*^{\ren} : \PreSt_{\operatorname{w.ren}} \to \Cat_k~.\]

\begin{definition} \label{def:IndCoh_ren}
    Define \[\IndCoh^!_{\ren} : \PreSt_{\operatorname{w.ren}}^{\op} \to \Cat_k\] to be the dual of $\IndCoh_*^{\ren}$.
\end{definition}

We will need the following properties of this functor:
\begin{lemma} \label{lem:ren-?push}
    (Cf.~\cite[Section~6.24]{RasHM}, \cite[Section~11.9]{CF}) For an open immersion $f : \cX \to \cY$ in $\PreSt_{\operatorname{w.ren}}$, the functor $f_{\ren}^!$ admits a continuous right adjoint $f^{\ren}_?$, such that for any pullback diagram 
    \[\begin{tikzcd}
        {\cX'} & {\cY'} \\
        {\cX} & {\cY}
        \arrow["{f'}"', from=1-1, to=1-2]
        \arrow["{g'}", from=1-1, to=2-1]
        \arrow["g"', from=1-2, to=2-2]
        \arrow["f", from=2-1, to=2-2]
    \end{tikzcd}\] in $\PreSt_{\operatorname{w.ren}}$, the Beck-Chevalley map \[g'^{!}_{\ren}f'^{\ren}_{?} \to g_{\ren}^!f_?^{\ren}\] is an isomorphism.
\end{lemma}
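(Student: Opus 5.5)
We will reduce the statement to the case of an open immersion of (apaisant) schemes and then propagate it through the colimit presentations defining $\IndCoh^!_{\ren}$. Since $\IndCoh^!_{\ren}$ is by Definition~\ref{def:IndCoh_ren} the dual of $\IndCoh_*^{\ren}$, the functor $f^!_{\ren}$ is the dual of $f_*^{\ren}$. Our candidate for $f^{\ren}_?$ is the dual of a continuous left adjoint of $f_*^{\ren}$. For an open immersion this left adjoint is the $*$-restriction $f^{*,\ren}$, and its existence is the basic input. Dualizing the adjunction $f^{*,\ren} \dashv f^{\ren}_*$ between dualizable (compactly generated) categories gives an adjunction $f^!_{\ren} \dashv f_?^{\ren}$ with both functors continuous. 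Continuity of $f_?^{\ren}$ is automatic here, since it is a dual functor.

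\textbf{Step 1: existence of the left adjoint in the affine case.} For $\cY$ an apaisant scheme and $\cX \subset \cY$ an open subscheme, $\cX$ is again apaisant. To see this, write $\cY = \lim Y_\alpha$ along faithfully flat maps, descend the open to some $Y_{\alpha_0}$ (possible since $\cX$ is quasi-compact), and take preimages. In this presentation $\IndCoh^{\ren}_*(\cY) \simeq \colim_\alpha \IndCoh(Y_\alpha)$ along flat $*$-pullbacks, and similarly for $\cX$. We then obtain $f^{*,\ren}$ as the colimit of the finite-type restrictions $f_\alpha^*$. Flat base change for the open immersions $f_\alpha$ in finite type shows that these restrictions are compatible with the transition functors. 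It also shows that the levelwise adjunctions $f_\alpha^* \dashv f_{\alpha,*}$ assemble to an adjunction on the colimit. Finally, $f^{*,\ren}$ preserves compacts, which is the condition needed for $f^{\ren}_*$ to be continuous.

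\textbf{Step 2: globalization.} For weakly renormalizable $\cY$, choose a flat relatively apaisant cover $T \to \cY$ with $T$ an apaisant indscheme. We use the description of $\IndCoh_*^{\ren}(\cY)$ as a colimit along the \v{C}ech nerve of $T \to \cY$ (flat pullbacks) and of the ind-presentation of $T$ (closed pushforwards). The open $\cX$ pulls back to opens at every level. We then need $f^{*}$ to commute with the transition functors. For flat pullbacks this is the flat base change of Step 1. For closed pushforwards it is base change of open restriction against proper pushforward, which holds already in finite type. Given these compatibilities, the levelwise adjunctions glue to $f^{*,\ren} \dashv f_*^{\ren}$.

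\textbf{Step 3: Beck--Chevalley.} Dualizing, the base change map $g'^{!}_{\ren}f'^{\ren}_{?} \to g^!_{\ren}f^{\ren}_?$ is dual to the map $f^{*,\ren} g^{\ren}_* \to g'^{\ren}_* f'^{*,\ren}$. Both sides are continuous, so it suffices to check that this map is an isomorphism on generators. This in turn reduces, by the same colimit presentations, to base change for an open immersion against an arbitrary pushforward of apaisant schemes. There one can approximate by finite-type models, where the statement is standard.

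The main obstacle is Step 2 combined with the generality of $g$ in Step 3. An arbitrary map between weakly renormalizable prestacks need not be compatible with the chosen covers, and the renormalized categories are not a priori defined by descent in a way that commutes with arbitrary $g_*$. I would first try to cite the functoriality of $\IndCoh_*^{\ren}$ established in \cite[Section~11]{CF}. Its construction as a left Kan extension from apaisant schemes should reduce the Beck--Chevalley property to the scheme-level statement of Step 1, since open immersions are stable under base change.
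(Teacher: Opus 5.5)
\textbf{Verdict.} Your proposal is correct in its mechanism. The paper has no internal proof to compare against: it gives no argument for this lemma beyond citing \cite[Section~6.24]{RasHM} and \cite[Section~11.9]{CF}. Your key reduction is the argument behind those references. Since $f^!_{\ren}=(f^{\ren}_*)^\vee$, a continuous right adjoint of $f^!_{\ren}$ is the same thing as the dual of a continuous left adjoint $f^{*,\ren}$ of $f^{\ren}_*$. The base-change property is then dual to $f^{*,\ren}g^{\ren}_*\simeq g'^{\ren}_*f'^{*,\ren}$. This reduces everything to open restriction on the $*$-side, where it is standard. For a general $g$ you end up relying on the functoriality of $\IndCoh^{\ren}_*$ from \cite{CF}, exactly as the paper does.

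\textbf{Points to tighten.}

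First, the map $g'^!_{\ren}f'^{\ren}_?$ in the statement does not typecheck: $f'_?$ lands on $\cY'$, while $g'^!$ starts on $\cX$. The intended Beck--Chevalley map is $g^!_{\ren}f^{\ren}_?\to f'^{\ren}_?g'^!_{\ren}$. Its dual is precisely the map $f^{*,\ren}g^{\ren}_*\to g'^{\ren}_*f'^{*,\ren}$ you wrote down, so your Step~3 targets the right statement.

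Second, in Step~2 the \v{C}ech description of the renormalized category must be a geometric realization of $\IndCoh^{\ren}_*(T^{\times_{\cY}\bullet})$ along $*$-pushforwards for the flat face maps, together with closed pushforwards in the ind-direction. The totalization along flat $*$-pullbacks is the unrenormalized descent category, which in general differs from $\IndCoh^{\ren}_*(\cY)$; working there would not give an adjunction on the right category. With the correct description, the compatibility you need is $f^*p_*\simeq p'_*f'^*$ for flat face maps $p$, i.e.\ base change along the flat map $f$. Compatibility of $f^*$ with pullbacks is not what is needed. This base change does hold, so the levelwise adjunctions still glue.

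Third, Step~1 uses that the open is quasi-compact, which is not automatic in infinite type. The punctured $\AA^{\infty}$ is an open in an apaisant scheme that is not quasi-compact, hence not apaisant. You should add quasi-compactness of $f$ as a hypothesis. This is harmless for the disjoint-union maps on $\MDisk^{\bullet}_I$ where the paper applies the lemma.

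\textbf{The obstacle for arbitrary $g$.} The difficulty you flag is handled by refining covers. Let $T'\to\cY'$ and $T\to\cY$ be covers. Then $T'\times_{\cY}T\to\cY'$ is again a flat relatively apaisant cover by an apaisant indscheme, and it maps to $T$, since relative apaisantness is stable under base change. Grant from \cite{CF} that $g^{\ren}_*$ is computed levelwise on such compatible \v{C}ech presentations. Then Step~3 reduces levelwise to base change for a quasi-compact open immersion of apaisant indschemes against pushforward, which is the finite-type statement after approximation.
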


\begin{lemma} \label{lem:ren-!push}
    (Cf.~\cite[Section~11.9]{CF}) For a relatively apaisant and ind-proper ind-finitely presented $f : \cX \to \cY$ in $\PreSt_{\operatorname{w.ren}}$, the functor $f_{\ren}^!$ admits a left adjoint $f^{\ren}_!$, and for any pullback diagram 
    \[\begin{tikzcd}
        {\cX'} & {\cY'} \\
        {\cX} & {\cY}
        \arrow["{f'}"', from=1-1, to=1-2]
        \arrow["{g'}", from=1-1, to=2-1]
        \arrow["g"', from=1-2, to=2-2]
        \arrow["f", from=2-1, to=2-2]
    \end{tikzcd}\] in $\PreSt_{\operatorname{w.ren}}$, the Beck-Chevalley map \[f'^{\ren}_!g'^!_{\ren} \to g_{\ren}^!f_{\ren,!}\] is an isomorphism.
\end{lemma}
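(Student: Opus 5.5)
The statement to prove is Lemma~\ref{lem:ren-!push}: for $f$ relatively apaisant, ind-proper and ind-finitely presented, $f^!_{\ren}$ has a left adjoint $f^{\ren}_!$ satisfying base change. The plan is to reduce to the finite-type theory of~\cite{Gai-ICS} by duality and approximation.

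\textbf{Step 1: reduce to a pushforward.} Since $\IndCoh^!_{\ren}$ is defined as the dual of $\IndCoh^{\ren}_*$, $f^!_{\ren}$ is the dual of the pushforward $f^{\ren}_*:\IndCoh^{\ren}_*(\cX)\to\IndCoh^{\ren}_*(\cY)$. So a left adjoint of $f^!_{\ren}$ is the same as the dual of a right adjoint $f^{\ren,!}$ of $f^{\ren}_*$, once that right adjoint is known to be continuous. It therefore suffices to show that $f^{\ren}_*$ preserves compact objects. These categories are compactly generated, so continuity of the right adjoint follows.

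\textbf{Step 2: local computation.} The question is local on $\cY$. Choose a flat, relatively apaisant cover $T\to\cY$ by an apaisant indscheme. Then $\IndCoh^{\ren}_*$ satisfies flat descent along such covers, as in~\cite[Section~11.5]{CF}, and the base change is again relatively apaisant. So we may assume $\cY=\lim_\alpha Y_\alpha$ is apaisant, with $Y_\alpha$ almost of finite type and faithfully flat transition maps. Finite presentation of each closed piece of $\cX$ lets us descend $f$ to an ind-proper morphism $f_\alpha:X_\alpha\to Y_\alpha$, with $\cX\simeq X_\alpha\times_{Y_\alpha}\cY$.

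In this setting the renormalized category is the colimit of the $\IndCoh(Y_\beta)$ along flat pullbacks, and compact objects are pulled back from some finite stage. Proper pushforward $f_{\alpha,*}$ preserves coherence, so it preserves compact objects, and flat base change in the finite-type theory makes it compatible with the transition functors. Hence $f^{\ren}_*$ preserves compacts. For ind-proper maps one writes $\cX$ as a colimit of proper closed pieces. Pushforward commutes with that colimit and still sends compact objects, which come from a single piece, to compact objects.

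\textbf{Step 3: base change.} The Beck--Chevalley map $f'^{\ren}_!g'^!_{\ren}\to g^!_{\ren}f^{\ren}_!$ is dual to the map $g'^{\ren}_*f'^{\ren,!}\to f^{\ren,!}g^{\ren}_*$. It suffices to check that this map is an isomorphism after pulling back along a flat cover of $\cY$, and at the finite stages $\alpha$. There it becomes proper base change for $\IndCoh$ on aft prestacks, combined with compatibility of $f^!$ with flat pullback.

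\textbf{Main obstacle.} I expect the hardest point to be Step 2, specifically the descent of $f$ to finite stages uniformly across the ind-system. The delicate part is verifying that the resulting maps are compatible with the renormalization, that is, with the t-structure and eventual coconnectivity. Bounded Tor dimension of $f$ is needed exactly here, so that pullback preserves the relevant compact generators. I would first try to quote the corresponding statements in~\cite[Section~11.9]{CF} and~\cite[Section~6.24]{RasHM} directly, treating the lemma as an adaptation of their arguments.
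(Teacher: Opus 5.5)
The paper gives no argument for this lemma; it is imported from \cite[Section~11.9]{CF}. Your Step~1 is the right mechanism. Since $f^!_{\ren}=(f^{\ren}_*)^\vee$ and the categories are compactly generated, $f^!_{\ren}$ has a left adjoint iff $f^{\ren}_*$ preserves compact objects, and $f^{\ren}_!$ is then the dual of the continuous right adjoint of $f^{\ren}_*$. The Beck--Chevalley map dualizes to $g'_*f'^!\to f^!g_*$, as you say.

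In Step~2, however, the claim that $f$ descends to a single finite stage with $\cX\simeq X_\alpha\times_{Y_\alpha}\cY$ is false in general. Each finitely presented proper piece of $\cX$ descends at its own stage, and there need not be a uniform one (e.g.\ $\bigsqcup_{i\geq1}\{x_i=0\}\to\Spec k[x_1,x_2,\dots]$). It is also unnecessary. After your reduction to an apaisant $\cY$, a compact object of $\IndCoh^{\ren}_*(\cX)$ is pushed forward from a single piece $\cX_i$, so only $\cX_i\to\cY$ needs to descend to a proper $X_{i,\alpha}\to Y_\alpha$. Bounded Tor dimension makes the stages $X_{i,\alpha}\times_{Y_\alpha}Y_\beta$ eventually coconnective, so compacts on $\cX_i$ are pulled back from $\operatorname{Coh}$ of a finite stage. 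Proper pushforward together with flat base change then finishes. So the ``main obstacle'' you single out is a red herring: the last sentence of your Step~2 already contains the correct argument.

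The genuine gap is Step~3, for two reasons.

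First, $g\colon\cY'\to\cY$ is an arbitrary morphism in $\PreSt_{\operatorname{w.ren}}$. Approximating $\cY$ and $\cX$ by finite stages does nothing to $\cY'$, so $g'_*f'^!\to f^!g_*$ does not turn into finite-type proper base change.

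Second, ``it suffices to check after pulling back along a flat cover'' is not valid for renormalized categories, because pullback to a flat cover is not conservative on them. For instance, take $G=\prod_{n\geq1}\GG_a$ and the cover $\pt\to BG$. The category $\IndCoh^{\ren}_*(BG)$ is compactly generated by finite-dimensional representations, with $\operatorname{Hom}(k,k[m])=H^m(G,k)=\Lambda^m\bigl(\bigoplus_i k\,a_i\bigr)$. The object $\colim\bigl(k\xrightarrow{a_1}k[1]\xrightarrow{a_2}k[2]\to\cdots\bigr)$ is nonzero because $a_1\cdots a_m\neq0$ for every $m$, yet its pullback to $\pt$ is zero. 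Your appeal to flat descent in Step~2 has the same defect, but there it is harmless, since you only need compactness to be detected on the cover.

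To run a descent argument you would need an extra input. For example: reduce to compact objects of $\IndCoh^{\ren}_*(\cY')$; show that both sides then land in eventually coconnective objects, where the renormalization is invisible and pullback to a cover does detect isomorphisms; then approximate $\cY'$ as well as $\cY$. None of this is in the sketch. As written, the base-change half of the lemma is supported only by the citation to \cite{CF}, which is also all the paper itself offers.
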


\subsection{Relation to previous work}

For a vertex operator algebra $V$ and a curve $X$, Frenkel and Ben-Zvi~\cite{FBZ} constructed a family of D-modules $\cV_{g}$ over the universal curve $\cX_{g}$, using Gelfand-Kazhdan descent. This can be interpreted as a classifying map $X \to B\Aut_*\hat{\cO}$ for any family of smooth curves $X/S$, which agrees with our map $\pi^{X/S}_{1}$ from Example~\ref{ex:X-to-disk}, and our construction reduces to their construction in the case where a universal factorization algebra comes from a vertex operator algebra.

Our definition of the space of $I$-pointed multidisks is borrowed from~\cite{Lur}, and provides a (one-dimensional) multi-point version of the stack of \'{e}tale germs defined in~\cite{Cliff}.

In~\cite{DG}, the sheaf $\cV_g$ is extended to a sheaf $\overline{\cV}_g$ over the universal curve $\bcX_g \to \bcM_{g}$, using the isomorphism $\bcX_{g} \simeq \bcM_{g,1}$. This allows one to identify a node $p \in X \subset \bcX_g$ with a smooth point in the stable model for $(X,p)$, given by $1 \in (X \underset{\{p\} \sim \{0,\infty\}}{\cup} \PP^1) / \GG_m \subset \bcM_{g,1}$. Since the vertex algebra bundle can be defined over any smooth point, one gets the desired extension. Inspired by this construction, we define the extension of a universal factorization algebra using the moduli of semistable modifications at $p$, with a sequence of curves $\PP^1 \cup \dotsb \cup \PP^1$ of arbitrary length inserted at $p$, and with an arbitrary number of smooth points over it. This provides a multi-points extension for the above construction. However, we do not know whether the resulted spaces of conformal blocks agree with $H^0$ of our extension, beyond the rational case. In~\cite{DGK}, the above results are generalized using the mode transition associative algebra, which we identify with the zeroth homology of $\fZ_{\cA}$ in~\cite{Naf}.

\subsection{Acknowledgements}

I am deeply grateful to my advisor, John Francis, for his invaluable guidance and support throughout the writing of this paper. I would like to thank Alexander Beilinson for generously sharing his insights on the problem and for his meticulous feedback, which helped correct several errors in my arguments. I also thank Sam Raskin, David Nadler, Chiara Damiolini, Kevin Lin, and Justin Campbell for many helpful discussions.

\section{Chiral and factorization algebras} \label{sec:ch}

\subsection{Chiral monoidal structure}

A factorization algebra $\cA_X$ over a smooth curve $X$ is given roughly by the following data: a collection of D-modules $\cA_{I}$ over $X^I$ for each finite set $I$, compatible with restrictions along diagonal embeddings $X^I \to X^J$, together with a collection of isomorphism \[\cA_I \mid X^I \backslash \Delta \simeq \cA_1^{\boxtimes I} \mid X^I \backslash \Delta\] where $\Delta \subset X^I$ is the union of all the diagonals subspaces. In particular, both the values over the diagonal and away from the diagonal are determined by $\cA_1$, and the only information missing is the gluing rules between $\cA_1^{\boxtimes I}$ over $X^I \backslash \Delta$ and $\cA_1$ over $\Delta$. The latter is captured precisely by the structure of a chiral algebra: A D-module $\cB$ over $X$, and a collection of gluing rules \[\cB^{\boxtimes I} \mid X^I \backslash \Delta \to \Delta_!\cB~.\] These gluing rules are then expected to satisfy certain compatibility conditions. In~\cite{BD}, Beilinson and Drinfeld formulated the structures of chiral and factorization algebras as certain cocommutative and Lie algebras in the appropriate pseudo-tensor category (namely, a $k$-linear colored operad), and proved that they are in fact equivalent. This was later reinterpreted in~\cite{FG} as Koszul duality between Lie and cocommutative algebras in a certain symmetric monoidal category. One possible way of implementing this symmetric monoidal category is via the Ran space --- the moduli space of finite nonempty subsets of $X$. We will describe this idea, in the more general context of families of smooth curves, following the formalism in~\cite{FG}.

Throughout this section, we fix an algebraic stack of finite type $S$. We will only discuss here the case of curves.

\begin{definition}
    Given a family of smooth curves $X \to S$, define a prestack $\Ran (X/S) \to S \in \PreSt_S^{\aft}$ to be the functor assigning to a map $f : \Spec R \to S$ the set
    \[\Ran (X/S)(f) = \left\{\text{finite nonempty sets of lifts }\Spec R^{\red} \to X\right\}\]
\end{definition}

Let $\fSet^{\surj}$ be the category of finite sets and surjective morphisms between them. Define a functor
\[(X/S)^{\fSet}_{\dR} : \fSet^{\op} \to \PreSt_{S}^{\aft}\]
by
\[I \mapsto X^{\times_S I}_{S\mh\dR}, \quad (I \xrightarrow{\alpha} J) \mapsto (X^{\times_S J}_{S\mh\dR} \xrightarrow{\Delta_{(\alpha)}} X^{\times_S I}_{S\mh\dR})~,\]
where $\Delta_{(\alpha)}$ is the diagonal map corresponding to $\alpha$. Then we have a presentation of $\Ran (X/S)$ as a colimit:

\begin{lemma} \label{lem:Ran-colim}
    $\Ran(X/S) \simeq \colim_{\fSet^{\surj,\op}} X^{\times_S I}_{S\mh\dR}$.
\end{lemma}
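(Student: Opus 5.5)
We prove the equivalence objectwise on $R$-points, since colimits in $\PreSt_S$ are computed pointwise (before any sheafification, which we will not need). Fix $f : \Spec R \to S$ with $R$ almost of finite type. An $R$-point of $X^{\times_S I}_{S\mh\dR} = X^{\times_S I}_{\dR}\times_{S_{\dR}} S$ over $f$ is an $I$-tuple of lifts $x_i : \Spec R^{\red} \to X$ of $f|_{\Spec R^{\red}}$. The transition maps $\Delta_{(\alpha)}$ for a surjection $\alpha : I \to J$ send $(y_j)_{j \in J}$ to $(y_{\alpha(i)})_{i\in I}$. So the colimit on $R$-points is
\[\colim_{I \in \fSet^{\surj,\op}} \operatorname{Hom}_f(\Spec R^{\red}, X)^{I}\]
in spaces, where $\operatorname{Hom}_f$ denotes the set of lifts.

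First step: $\operatorname{Hom}_f(\Spec R^{\red},X)=:T$ is a set, i.e.\ discrete. This holds because $X$ is an algebraic space (in fact a separated family of curves), so morphisms from an affine scheme form a set rather than a groupoid. Second step: apply the standard lemma (as in \cite{BD}, and Gaitsgory--Lurie) that for any set $T$ the colimit $\colim_{\fSet^{\surj,\op}} T^I$ in spaces is the discrete set of finite nonempty subsets of $T$. The map sends a tuple to its image. To check that it is an equivalence, one shows that for each finite nonempty subset $A\subset T$ the fiber category, whose objects are pairs $(I, x : I \to T)$ with image $A$ and whose morphisms are surjections compatible with $x$, is contractible. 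This category has a terminal object $(A, A\hookrightarrow T)$: any $x$ with image $A$ factors uniquely as $I \twoheadrightarrow A \hookrightarrow T$. Hence its nerve is contractible. The colimit is then computed fiberwise via the Grothendieck construction (the category of elements of the functor $I\mapsto T^I$ decomposes by image), which gives the claim.

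Third step: naturality in $R$, which is clear since pullback along $R\to R'$ commutes with taking images of tuples. Note that $\Ran(X/S)$ as defined is exactly the functor of finite nonempty sets of lifts, so the two sides agree as prestacks.

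The main subtlety, and the step that needs care, is the second. The colimit is an $\infty$-categorical colimit in spaces, and one must make sure that no higher homotopy arises. The terminal-object argument handles this once we have passed to the category of elements. One should also confirm that the paper's colimit is taken in $\PreSt_S$ and not in sheaves; if it is taken in sheaves, the pointwise statement still implies the result, since $\Ran(X/S)$ is already a sheaf.
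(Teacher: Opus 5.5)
Your proposal is correct. The paper states this lemma without proof and treats it as the standard presentation of the Ran space. Your argument is the standard proof and handles the $\infty$-categorical colimit properly: you show the set $T$ of lifts is discrete, identify $\colim_{I} T^I$ with the geometric realization of the category of elements, and split that category by image into pieces with a terminal object $(A, A\hookrightarrow T)$. With the covariant Grothendieck construction on $\fSet^{\surj,\op}$ that object is initial rather than terminal, but either way each piece is contractible.

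\textbf{One side remark should be deleted.} $\Ran(X/S)$ is \emph{not} a sheaf. Take $\Spec R=\Spec k\sqcup\Spec k$. The lifts form $T_1\times T_2$, and a finite subset of $T_1\times T_2$ is not determined by its two projections, so the Zariski sheaf condition already fails. The lemma therefore genuinely lives in $\PreSt_S$, i.e.\ presheaves, which is how the paper defines it. If colimits were taken in sheaves, you would only recover the sheafification of $\Ran(X/S)$.

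\textbf{Two minor points.} Discreteness of the lifts comes from representability of $X\to S$: the fiber of $X(R^{\red})\to S(R^{\red})$ over $f|_{\Spec R^{\red}}$ is the set of sections of the algebraic space $X\times_S\Spec R^{\red}$, with the 2-isomorphism datum included. Separatedness plays no role. Also, restricting to $R$ almost of finite type is unnecessary, since the argument works verbatim for every $R$.
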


The $S$-prestack $\Ran (X/S)$ admits a symmetric monoidal structure  given by the union of finite subsets. If we restrict the union operation to disjoint subsets only, we get merely an operad structure, which can be implemented by a symmetric monoidal structure in the category $\PreSt^{\corr}_S$ of correspondences in $\PreSt_S$, as we describe below.

\begin{definition}
    Let 
    \[\jmath : (\Ran (X/S) \times_S \Ran (X/S))_{\disj} \subset \Ran (X/S) \times_S \Ran (X/S)\]
    be the sub-prestack whose $\Spec R \to S$-points are given by the subset of lifts $\Spec R \to X^{\times_S I}_{S\mh\dR} \times_S X^{\times_S J}_{S\mh\dR}$ whose graph factors through the complement to the diagonal $\Delta \subset X \times_S X$.

    Define the chiral symmetric monoidal structure to be the correspondence
    \[\Ran (X/S) \times_S \Ran (X/S) \xleftarrow{\jmath} (\Ran (X/S) \times_S \Ran (X/S))_{\disj} \xrightarrow{\sqcup} \Ran (X/S)~.\]
\end{definition}

\begin{definition}
    Let $\IndCoh^{\ch}(\Ran (X/S))$ be the symmetric monoidal structure on the category $\IndCoh(\Ran (X/S))$ induced by the disjoint union operation, namely
    \[\otimes^{\ch}_{S,i\in I} \cF_i = \sqcup_*\jmath^!\otimes^!_{S,i \in I} \cF_i~.\]
\end{definition}

From Lemma~\ref{lem:Ran-colim}, we get
\[\IndCoh(\Ran (X/S)) \simeq \lim_{I \in \fSet^{\surj}, (-)^!} \IndCoh(X^{\times_S I}_{S\mh\dR})~.\]

Furthermore, since all the transition maps $\Delta_{(\alpha)}$ are closed embeddings, the corresponding pullbacks $\Delta_{(\alpha)}^!$ admit left adjoints $\Delta_{(\alpha),!} \simeq \Delta_{(\alpha),*}$. In particular, we get an isomorphism 
\begin{equation} \label{eq:colim-present}
    \cF \simeq \colim_{\fSet^{\op}} \Delta_{I,!}\Delta_{I}^!\cF \eqqcolon \colim_{\fSet^{\op}}\Delta_{I,!}\cF_{I}
\end{equation}
for any $\cF \in \IndCoh(\Ran (X/S))$. Here $\Delta_I : X_{S\mh\dR}^{\times_S I} \to \Ran (X/S)$ is the diagonal embedding.

In terms of this presentation, an ind-coherent sheaf $\cF$ over $\Ran (X/S)$ can be described as a collection $\cF_I \in \IndCoh(X_{S\mh\dR}^{\times_S I})$ for $I \in \fSet$, 
together with isomorphisms
\begin{equation} \label{eq:Ran-iso}
    \cF_I \iso \Delta_{(\alpha)}^!\cF_J
\end{equation} for any surjective $\alpha : J \onto I$. The chiral monoidal structure can be then described by \[(\otimes^{\ch}_{S,i \in I} \cF_i)_J = \bigoplus_{\alpha : J \onto I} \jmath_{(\alpha),*}\jmath_{(\alpha)}^!\otimes^!_{S,i \in I}(\cF_i)_{J_i}~.\]

Here $\jmath_{(\alpha)} : X^{(\alpha)}_{S\mh-\dR} \into X^{\times_S I}_{S\mh\dR}$, where \[X^{(\alpha)}_{S\mh\dR}(R) = \left\{(x_i)_{i \in I} \in X^{\times_SI}_{S\mh\dR}(R) :\: x_i \cap x_{i'} = \emptyset \text{ for }\alpha(i) \neq \alpha(i')\right\}~.\]

Now, since $\sqcup$ is \'{e}tale (see, e.g., \cite[Lemma~6.18.1]{Ras}), $\sqcup^! = \sqcup^*$ is left adjoint to $\sqcup_*$.
\begin{definition}
    A family of factorization algebras over $X/S$ is a commutative coalgebra $\cA \in \coComm(\IndCoh^{\ch}(\Ran(X/S)))$ such that for each finite set $I$, the morphism \begin{equation} \label{eq:fact-iso}
        \sqcup^!\cA \to \jmath^!\cA^{\otimes_S I}
    \end{equation} adjoint to the comultiplication map, is an isomorphism. We denote by $\Fact(X/S)$ the category of families of factorization algebras over $X/S$. 
\end{definition}

Given a map $f: T \to S$, we have a symmetric monoidal base-change functor \[(-)_T : \PreSt_S \to \PreSt_T\] right adjoint to the forgetful functor \[\Oblv_S^T : \PreSt_T \to \PreSt_S\] It induces a symmetric monoidal functor \[(-)_T : \PreSt_S^{\corr} \to \PreSt_T^{\corr}\] and therefore an induced map on commutative (co)algebra objects. Given a smooth family of curves $X/S$, it sends the commutative algebra $\Ran(X/S)$ to a commutative algebra $\Ran(X/S)_T$. In general, this commutative algebra is different from the commutative algebra $\Ran(X_T/T)$: While the underlying spaces are the same, the notions of disjointness are different, namely $\Ran(X/S)^{\times_SI}_{\disj} \times_S T$ is different from $\Ran(X_T/T)^{\times_TI}$. We always have a map 
\begin{equation} \label{eq:bc-Ran-spaces}
    \Ran(X/S)^{\times_SI}_{\disj} \times_S T \to \Ran(X_T/T)^{\times_TI}_{\disj}~, 
\end{equation} 
but this map may not be an isomorphism even if $u_f$ is \'{e}tale. 

\begin{definition} \label{def:Ran-compat-bc}
    We say the chiral monoidal structure on $\Ran(X/S)$ is compactible with base-change along $f : T \to S$ if the map \eqref{eq:bc-Ran-spaces} is an isomorphism for each finite set $I$.
\end{definition}

\begin{example}
    If $u_f$ is an open embedding, $\Ran(X/S)$ is compatible with base-change along $f$. 
\end{example}

We have a unit map \[u_f : \Oblv^T_S\Ran(X/S)_T \to \Ran(X/S)\]
in $\PreSt_S$, which induces maps \[(\id : u_f) : \Ran(X/S)_T \xleftarrow{\id} \Ran(X/S)_T \xrightarrow{u_f} \Ran(X/S)\] and \[(u_f : \id) : \Ran(X/S) \xleftarrow{u_f} \Ran(X/S)_T \xrightarrow{\id} \Ran(X/S)\] in $\PreSt_S^{\corr}$.

\begin{lemma} \label{lem:base-change-fact}
    Given $f :T \to S$, and a family of smooth curves $X / S$, such that $\Ran(X/S)$ is compatible with base-change along $f$. Then both $(\id : u_f) : \Ran(X_T/T) \to \Ran(X/S)$ and $(u_f : \id) : \Ran(X/S) \to \Ran(X_T/T)$ are maps of commutative algebras. Furthermore, we have induced symmetric monoidal maps \[u_{f,*} : \IndCoh^{\ch}(\Ran(X/S)) \leftrightarrows \IndCoh^{\ch}(X/S) : u_f^!\] and corresponding functors \[u_{f,*}^{\fact} : \Fact(X_T/T) \to \Fact(X/S) : u_f^{!,\fact}~.\]
\end{lemma}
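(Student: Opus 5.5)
We reduce everything to the structure of the correspondences $(\id : u_f)$ and $(u_f : \id)$ in $\PreSt_S^{\corr}$, together with the compatibility hypothesis of Definition~\ref{def:Ran-compat-bc}. First note that $\Ran(X_T/T)$ and $\Ran(X/S)_T = \Ran(X/S)\times_S T$ have the same underlying prestack. Indeed, a point of either over $\Spec R \to T$ is a finite nonempty set of lifts $\Spec R^{\red} \to X_T$, equivalently of lifts to $X$ over the composite to $S$. By the compatibility assumption, the map \eqref{eq:bc-Ran-spaces} identifies the disjoint loci, so the commutative algebra $\Ran(X/S)_T$ (the image under the symmetric monoidal functor $(-)_T$) is identified with $\Ran(X_T/T)$ as a commutative algebra in $\PreSt_T^{\corr}$. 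Applying $\Oblv^T_S$, which is lax (in fact symmetric, since $\times_T$ maps to $\times_S$ via $X_T\times_T X_T \simeq (X\times_S X)\times_S T$) monoidal, we view it in $\PreSt_S^{\corr}$.

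The commutative algebra claim is handled next. For $(\id:u_f)$ we must check that the square formed by $\jmath$, $\sqcup$ on both sides and $u_f^{\times I}$ is compatible. Concretely, we need the correspondence composites \[\Ran(X_T/T)^{\times_T I} \to \Ran(X/S)^{\times_S I} \dashrightarrow \Ran(X/S)\] and \[\Ran(X_T/T)^{\times_T I}\dashrightarrow \Ran(X_T/T) \to \Ran(X/S)\] to agree. Computing the first composite requires the pullback of $(\Ran(X/S)^{\times_S I})_{\disj}$ along $u_f^{\times I}$. This pullback is $(\Ran(X/S)^{\times_S I})_{\disj}\times_S T$, which by compatibility is $\Ran(X_T/T)^{\times_T I}_{\disj}$. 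Moreover $\sqcup$ commutes with $u_f$ strictly. The higher coherences follow because all the data is obtained by base change from the $S$-level diagrams, i.e.\ $u_f$ is the counit of the adjunction $\Oblv^T_S \dashv (-)_T$ evaluated on a commutative algebra; the only non-formal input is the isomorphism \eqref{eq:bc-Ran-spaces}. The argument for $(u_f:\id)$ is the same, now requiring that the disjoint locus pulls back correctly, which is again the same pullback square read backwards.

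Next we apply the lax monoidal functor $\IndCoh_S$ to these commutative algebra maps. A correspondence $\Ran(X_T/T) \xleftarrow{\id}\cdot\xrightarrow{u_f}\Ran(X/S)$ goes to $u_{f,*}$, and the other goes to $u_f^!$. The lax structure gives maps \[\otimes^{\ch}_S(u_{f,*}\cF_i) \to u_{f,*}\otimes^{\ch}_T \cF_i,\] and similarly for $u_f^!$. To upgrade these to symmetric monoidal functors we must show these maps are isomorphisms. For $u_f^!$, this uses base change of $!$-pullback against $\sqcup_*$ and $\jmath^!$, together with $u_f^!(\cF\otimes^!_S\cG)\simeq u_f^!\cF\otimes^!_T u_f^!\cG$. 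For $u_{f,*}$, we use the projection formula for $\otimes^!$ along $T\to S$ and base change for $\jmath^!u_{f,*}$, which holds since $\jmath$ is an open embedding.

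I expect this last step to be the main obstacle, since $u_{f,*}$ is only lax monoidal for $\otimes^!$ in general. I would first try using the identification $\IndCoh(\Ran(X/S)_T)\simeq \IndCoh(\Ran(X/S))\otimes_{\IndCoh(S)}\IndCoh(T)$ (valid for the finite-type, ind-proper-over-$S$ pieces $X^{\times_S I}_{S\mh\dR}$ via Lemma~\ref{lem:Ran-colim}). Under this identification $u_{f,*}$ is $\id\otimes f_*$ and the chiral product is computed $\IndCoh(S)$-linearly. If this fails in general, we settle for lax monoidality of $u_{f,*}$, which still suffices to send commutative coalgebras to commutative coalgebras in the appropriate direction.

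Finally, symmetric monoidal functors preserve $\coComm$, so it remains to check that the factorization condition \eqref{eq:fact-iso} is preserved. For $u_f^!$, we pull back $\sqcup^!\cA\iso\jmath^!\cA^{\otimes I}$ along $u_f$ and use again the identification of disjoint loci and base change. For $u_{f,*}$, the condition is an isomorphism of sheaves on $(\Ran^{\times_S I})_{\disj}$. By the étale base change $\sqcup^! u_{f,*}\simeq u_{f,*}\sqcup^!$, and compatibility of $\jmath^!$ with $u_{f,*}$, this reduces to \eqref{eq:fact-iso} on $T$. This yields the functors $u^{\fact}_{f,*}$ and $u^{!,\fact}_f$.
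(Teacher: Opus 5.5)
\textbf{The gap is the $u_{f,*}$ half, and it cannot be closed as stated.} Your outline is the paper's: identify $\Ran(X/S)\times_S T$ with $\Ran(X_T/T)$ via the compatibility hypothesis, check that the squares built from $\jmath$, $\sqcup$ and $u_f$ are Cartesian, apply $\IndCoh$, and verify \eqref{eq:fact-iso} by base change. Your $u_f^!$ half is correct. The $u_{f,*}$ half fails exactly at the step you flag.

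Both the symmetric monoidality of $u_{f,*}$ and your reduction of \eqref{eq:fact-iso} for $u_{f,*}\cA$ to the one for $\cA$ need $(u_f^I)_*(\cA^{\otimes^!_T I})\simeq(u_{f,*}\cA)^{\otimes^!_S I}$. This is false unless $T\times_S T\simeq T$. Let $\delta\colon\Ran(X_T/T)^{\times_T 2}\to\Ran(X_T/T)^{\times_S 2}$ be the base change of the diagonal $T\to T\times_S T$. Then, with $\cF\otimes^!_S\cG$ living on $\Ran(X_T/T)^{\times_S 2}$,
\[u_{f,*}\cF\otimes^!_S u_{f,*}\cG\simeq(u_f\times_S u_f)_*(\cF\otimes^!_S\cG),\qquad (u_f^{2})_*(\cF\otimes^!_T\cG)\simeq(u_f\times_S u_f)_*\delta_*\delta^!(\cF\otimes^!_S\cG).\]

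The source of the error is your claim that $\Oblv^T_S$ is symmetric monoidal. Your own identity $X_T\times_T X_T\simeq(X\times_S X)\times_S T$ shows that $\times_T$-powers and $\times_S$-powers differ by $T$ versus $T\times_S T$. Likewise, your computation of the pullback of the disjoint locus is valid only for $\times_T$-powers.

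Neither repair works:
\begin{itemize}
\item The chiral product on the $T$-side is $\IndCoh(T)$-linear, and $\id\otimes f_*$ is not monoidal, because $f_*$ is not monoidal for $\otimes^!$. The projection formula only gives $f_*(a\otimes^! f^!b)\simeq f_*a\otimes^! b$.
\item Lax monoidal functors preserve algebras, not coalgebras. What the counit $\delta_*\delta^!\to\id$ actually provides (for $f$ separated) is an oplax structure. That makes $u_{f,*}\cA$ a cocommutative coalgebra, but it never gives the factorization isomorphism.
\end{itemize}

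\textbf{Counterexample.} Take $S=\Spec k$, $T=\Spec k\sqcup\Spec k$ and $X$ a smooth curve. The compatibility condition holds, $\Fact(X_T/T)\simeq\Fact(X)\times\Fact(X)$, and $u_{f,*}(\cA_1,\cA_2)=\cA_1\oplus\cA_2$. For $\cA_1=\cA_2=\omega_{\Ran X}$, the fibers of $\sqcup^!u_{f,*}\cA$ and of $\jmath^!(u_{f,*}\cA\boxtimes u_{f,*}\cA)$ at a pair of distinct points have dimensions $2$ and $4$.

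\textbf{The paper's proof has the same gap.} After base change in the left square, its final isomorphism $u_{f,I,*}\jmath_T^!\cA^{\otimes_TI}\simeq\jmath^!(u_{f,*}\cA)^{\otimes_SI}$ uses exactly the false identification above. Its appeal to lax monoidality of $\IndCoh$ also conflates $\times_T$ with $\times_S$, just as you do.

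\textbf{What your argument does prove.} It establishes the $u_f^!$ part for arbitrary $f$. It also establishes the $u_{f,*}$ part when $f$ is a monomorphism, for example an open embedding. In that case $\delta$ is an isomorphism and all your base-change steps go through. This restriction matters: the lemma is later applied to the stable reduction map $\Psi_{g,n}\colon\fM^{\sst}_{g,n}\to\bcM_{g,n}$, which is not a monomorphism.
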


\begin{proof}
    First, we need to show that $u_f$ commutes both with the multiplication maps \[\Ran(X_T/T)^{\times_TI} \leftarrow{\jmath_T} \Ran(X_T/T)^{\times_TI}_{\disj} \xrightarrow{\sqcup_T} \Ran(X_T/T)\] and with the comultiplication \[\Ran(X_T/T) \leftarrow{\sqcup_T} \Ran(X_T/T)^{\times_TI}_{\disj} \xrightarrow{\jmath_T} \Ran(X_T/T)^{\times_TI}\] Since the composition in correspondence categories is given by pullback, this amounts to proving that we have commutative diagrams  \[\begin{tikzcd}
	{\Ran(X_T/T)^{\times_T I}} & {\Ran(X_T/T)^{\times_T I}_{\disj}} & {\Ran(X_T/T)} \\
	{\Ran(X/S)^{\times_SI}} & {\Ran(X/S)^{\times_SI}_{\disj}} & {\Ran(X/S)}
	\arrow["{u_f^I}", from=1-1, to=2-1]
	\arrow["{\jmath_T}"', from=1-2, to=1-1]
	\arrow["{\sqcup_T}", from=1-2, to=1-3]
	\arrow["{u_{f,I}}", from=1-2, to=2-2]
	\arrow["{u_f}", from=1-3, to=2-3]
	\arrow["\jmath"', from=2-2, to=2-1]
	\arrow["\sqcup", from=2-2, to=2-3]
    \end{tikzcd}\]
    where both squares are Cartesian. This follows from compatibility with pullback. Since $\IndCoh$ is lax-monoidal, it sends maps of commutative algebras to maps of commutative algebras, and thus induces symmetric monoidal functors \[u_{f,*} : \IndCoh^{\ch}(\Ran(X_T/T)) \leftrightarrows \IndCoh^{\ch}(\Ran(X/S)) : u_f^!\] In particular, both functors send a commutative coalgebra to a commutative coalgebra, and it remains only to check the factorization isomorphism~\eqref{eq:fact-iso}. Given $\cA \in \Fact(X/S)$, the maps \[\sqcup^!_Tu_f^!\cA \simeq u_{f,I}^!\sqcup^!\cA \to u_{f,I}^!\jmath^!\cA^{\otimes_S^!I} \simeq \jmath^!_T (u_f^!\cA)^{\otimes_T^!I}\] are given by pullback along $u_{f,I}$ of the factorization isomorphisms for $\cA$, and therefore are isomorphisms as well. On the other hand, given $\cA \in \Fact(X_T/T)$, by using base-change for both squares we have that \[\sqcup^!u_{f,*}\cA \simeq u_{f,I,*}\sqcup_T^!\cA \to u_{f,I,*}\jmath_T^!\cA^{\otimes_TI} \simeq \jmath^!(u_{f,*}\cA)^{\otimes_SI}\] are isomorphisms.
\end{proof}

\begin{lemma} \label{lem:open-fact}
    Given a family of smooth curves $X/S$, and a fiberwise nonempty open subfamily $f : U/S \to X/S$. Then the corresponding map \[f : \Ran(U/S) \to \Ran(X/S)\] is both a map of commutative algebras and commutative coalgebras in $\PreSt^{\corr}$. In particular, both functors \[f_{\Ran,*} : \IndCoh(\Ran(U/S)) \leftrightarrows \IndCoh(\Ran(X/S)) : f_{\Ran}^!\] are symmetric monoidal with respect to the chiral monoidal structure. Furthermore, it induces maps \[f_{\Ran,*}^{\fact} : \Fact(U/S) \leftrightarrows \Fact(X/S) : f_{\Ran}^{!,\fact}~.\]
\end{lemma}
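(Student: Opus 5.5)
The plan mirrors the proof of Lemma~\ref{lem:base-change-fact}. The key point is that $f : U \to X$ is an open embedding of families over $S$. Hence the induced map $f : \Ran(U/S) \to \Ran(X/S)$ is an open embedding of prestacks over $S$: an $R$-point of $\Ran(X/S)$ lies in $\Ran(U/S)$ exactly when each lift $\Spec R^{\red} \to X$ factors through $U$, and this is an open condition. Fiberwise nonemptiness is needed only so that the unit (empty set / basepoint) behaviour is not an issue and so that $f_{\Ran}^!$ is compatible with the colimit presentation of Lemma~\ref{lem:Ran-colim}. Concretely, $\Ran(U/S)\simeq\colim_{\fSet^{\surj,\op}} U^{\times_S I}_{S\mh\dR}$, and $U^{\times_S I}_{S\mh\dR} \to X^{\times_S I}_{S\mh\dR}$ are open embeddings compatible with all diagonals.

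First I will check that $f$ is a map of commutative algebras and of commutative coalgebras in $\PreSt_S^{\corr}$. As in the proof of Lemma~\ref{lem:base-change-fact}, this reduces to showing that in the diagram
\[\begin{tikzcd}
	{\Ran(U/S)^{\times_S I}} & {\Ran(U/S)^{\times_S I}_{\disj}} & {\Ran(U/S)} \\
	{\Ran(X/S)^{\times_SI}} & {\Ran(X/S)^{\times_SI}_{\disj}} & {\Ran(X/S)}
	\arrow["{f^I}", from=1-1, to=2-1]
	\arrow["{\jmath_U}"', from=1-2, to=1-1]
	\arrow["{\sqcup_U}", from=1-2, to=1-3]
	\arrow["{f_{I}}", from=1-2, to=2-2]
	\arrow["{f}", from=1-3, to=2-3]
	\arrow["\jmath"', from=2-2, to=2-1]
	\arrow["\sqcup", from=2-2, to=2-3]
\end{tikzcd}\]
both squares are Cartesian. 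The left square is Cartesian because disjointness of subsets of $U$ is the same as disjointness of their images in $X$: the diagonal of $U\times_S U$ is the preimage of the diagonal of $X \times_S X$. The right square is Cartesian because a disjoint union of finite subsets of $X$ lies in $U$ if and only if each piece lies in $U$. This is the step I expect to need the most care, since $\Ran$ is defined via reduced points and the check must be done on $R$-points $\Spec R^{\red} \to X$. Since $U \subset X$ is open, however, factorization through $U$ can be tested on underlying topological points, so the check is set-theoretic.

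Next, by lax-monoidality of $\IndCoh_S$ on correspondences, the correspondences $(\id:f)$ and $(f:\id)$ give symmetric monoidal functors $f_{\Ran,*}$ and $f_{\Ran}^!$ for $\otimes^{\ch}_S$. These therefore preserve commutative coalgebras. It remains to check the factorization isomorphisms~\eqref{eq:fact-iso}.

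For $f_{\Ran}^!$, I pull back the isomorphism $\sqcup^!\cA\iso\jmath^!\cA^{\otimes_S I}$ along $f_I$ and use $\sqcup_U^! f^! \simeq f_I^!\sqcup^!$ and $f_I^!\jmath^! (-)^{\otimes I} \simeq \jmath_U^!(f^!-)^{\otimes I}$, which follow from the Cartesian squares and the compatibility of $!$-pullback with $\otimes^!_S$. For $f_{\Ran,*}$, I use base change in both Cartesian squares, which holds since $f$ is an open embedding and hence $f_*$ satisfies base change against $!$-pullback. This gives $\sqcup^!f_*\cA \simeq f_{I,*}\sqcup_U^!\cA\simeq f_{I,*}\jmath_U^!\cA^{\otimes I}\simeq \jmath^!(f_*\cA)^{\otimes I}$. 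The last step uses $(f^I)_*$ commuting with $\boxtimes$ over $S$ and base change for the left square.
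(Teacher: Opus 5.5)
Your proposal is correct and follows essentially the same route as the paper: show that both squares in the $\jmath$/$\sqcup$ diagram are Cartesian (disjointness in $U$ agrees with disjointness in $X$, and a disjoint union lies in $U$ iff each piece does), deduce symmetric monoidality of $f_{\Ran,*}$ and $f_{\Ran}^!$ from lax-monoidality of $\IndCoh$, then pull back the factorization isomorphism for $f_{\Ran}^!$ and use base change in both squares for $f_{\Ran,*}$. You state explicitly that $(f^I)_*$ commutes with $\otimes^!_S$, a step the paper leaves implicit; your side remark on why fiberwise nonemptiness is needed is not accurate (the colimit presentation holds regardless), but the argument never uses it.
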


\begin{proof}
    We claim that we have a commutative diagram
    \begin{equation} \label{eq:comm-diag-bc-Ran}
    \begin{tikzcd}
	{\Ran(U/S)^{\times_S I}} & {\Ran(U/S)^{\times_S I}_{\disj}} & {\Ran(U/S)} \\
	{\Ran(X/S)^{\times_S I}} & {\Ran(X/S)^{\times_S I}_{\disj}} & {\Ran(X/S)}
	\arrow["{f^I_{\Ran}}", from=1-1, to=2-1]
	\arrow["{\jmath_U}"', from=1-2, to=1-1]
	\arrow["{\sqcup_U}", from=1-2, to=1-3]
	\arrow["{f_{\Ran,I}}", from=1-2, to=2-2]
	\arrow["f_{\Ran}", from=1-3, to=2-3]
	\arrow["\jmath"', from=2-2, to=2-1]
	\arrow["\sqcup", from=2-2, to=2-3]
    \end{tikzcd}
    \end{equation} 
    for each finite set $I$, and both squares are Cartesian. In particular, since composition in correspondence categories are given by pullbacks, both multiplication maps commute with $f_{\Ran}$, and since $\IndCoh$ is lax-monoidal, it preserves maps of commutative algebras. Indeed, the claim follows since two collections of sections are disjoint in $U$ if and only if their image in $X$ is disjoint. 
    
    To show that $f_{\Ran}^!$ sends factorization algebras to factorization algebras, note that the factorization isomorphism for a pullback is the pullback of the factorization isomorphism, as in the proof of Lemma~\ref{lem:base-change-fact}. For $f_{\Ran,*}$, the claim requires the fact that both squares are Cartesian. Therefore, for $\cA \in \Fact(U/S)$, we have \[\sqcup^!f_{\Ran,*}\cA \simeq f_{\Ran,I,*}\sqcup_U^!\cA \iso f_{\Ran,I,*}\jmath_U^!\cA^{\otimes_S^!I} \simeq \jmath^!(f_{\Ran}\cA)^{\otimes_S^!I}\] where the first isomorphism follows from base-change for the right square, and the last isomorphism from base-change for the left square. 
\end{proof}

\begin{definition}
    Given a family of smooth curves $X/S$, a family of chiral algebras over $X/S$ is a Lie algebra \[\cB \in \Lie(\IndCoh^{\ch}(\Ran (X/S)))\]
    such that its underlying sheaf is given by $\Delta_!\cB_1$ for some $\cB_1 \in \IndCoh(X_{S\mh\dR})$. We let $\Ch(X/S)$ denote the category of families of chiral algebras. 
\end{definition}

Any factorization algebra defines a chiral algebra in the following way: Given a factorization algebra $\cA$ over a smooth curve $X_0 / \Spec k$, the D-module $\cA_{1}[-1] = \Delta_1^!\cA[-1]$ has a structure of a chiral algebra, with the Lie algebra map given by the boundary (or gluing) morphism associated to the sequence
\begin{equation} \label{eq:exact-seq-fact}
    \Delta_*\Delta^!(\cA_{1} \boxtimes \cA_{1}) \to \cA_{1} \boxtimes \cA_{1} \to \jmath_*\jmath^!(\cA_{1} \boxtimes \cA_{1})~.
\end{equation}

Explicitly, since $\cA$ is a factorization algebra, we have $\jmath^!(\cA \boxtimes \cA) \simeq \jmath^!\cA_{2}$. By \eqref{eq:Ran-iso}, we have $\cA_{1} \iso \Delta^!\cA_{2}$. Therefore, \eqref{eq:exact-seq-fact} becomes
\[\Delta_*\cA_{1} \to \cA_{1} \boxtimes \cA_{1} \to \jmath_*\jmath^!(\cA \boxtimes \cA)\]
and the corresponding chiral multiplication map is the boundary morphism, applied to the shift of this sequence:
\[\jmath_*\jmath^!(\cA_1[-1] \boxtimes \cA_1[-1]) = \jmath_*\jmath^!(\cA_1 \boxtimes \cA_1)[-2] \to \Delta_*\cA_{1}[-1]~.\]

The equivalence between chiral algebras and factorization algebras over a curve in the absolute case was proven in \cite{BD}. This result was generalized to arbitrary smooth schemes in \cite{FG}, where it was shown to be a special case of Koszul duality between Lie algebras and commutative coalgebras. By adapting the proof of~\cite{FG} to the relative case, we get:
\begin{theorem}
    For a family of smooth curves $X/S$, the Chevalley-Eilenberg complex defines an isomorphism \[\Chev_S^{\ch} : \Lie(\IndCoh^{\ch}(\Ran (X/S))) \iso \coComm(\IndCoh^{\ch}(\Ran (X/S)))\]
    and restricts to an isomorphism \[\Ch(X/S) \iso \Fact(X/S)~.\]
\end{theorem}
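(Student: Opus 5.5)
We adapt the argument of Francis–Gaitsgory to the relative setting, where everything is done over $S$ inside $\Cat_S$. The plan has four steps.

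\emph{Step 1: the formal Koszul duality.} The category $\IndCoh^{\ch}(\Ran(X/S))$ is a stable, presentable symmetric monoidal category in $\Cat_S$, and $\otimes^{\ch}_S$ commutes with colimits in each variable. Indeed, $\jmath^!$, $\otimes^!_S$ and $\sqcup_*$ are continuous; for $\sqcup_*$ this holds because $\sqcup$ is étale and ind-finite. Hence $\Chev^{\ch}_S$ and its right adjoint $\Prim[-1]$ (the cobar construction) are defined. The general theory of \cite{FG} then says that the adjunction
\[\Chev^{\ch}_S : \Lie(\IndCoh^{\ch}(\Ran(X/S))) \leftrightarrows \coComm(\IndCoh^{\ch}(\Ran(X/S))) : \Prim[-1]\]
is an equivalence once the category is \emph{pro-nilpotent} for the chiral tensor product. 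Concretely, we need a filtration $\IndCoh(\Ran(X/S))^{\leq n}$ that is complete, exhaustive and stable under colimits, such that a tensor product of $n+1$ objects from positive filtration degrees lands in degree $> n$.

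\emph{Step 2: the filtration.} Let $\Ran^{\leq n}(X/S)$ be the closed sub-prestack of subsets of cardinality at most $n$, the image of $X^{\times_S I}_{S\mh\dR}$ for $|I|\leq n$. Its open complement consists of subsets of cardinality exactly $n$, which is $(X^{\times_S n}_{S\mh\dR}\setminus\Delta)/\Sigma_n$. Define the filtration by sheaves supported on $\Ran^{\geq n}$, or dually use the cardinality stratification; the colimit presentation \eqref{eq:colim-present} together with Lemma~\ref{lem:Ran-colim} shows it is exhaustive. The essential input is that $\sqcup$ restricted to disjoint pieces strictly adds cardinalities. This is a fiberwise statement over $S$, because disjointness in $(\Ran(X/S)\times_S\Ran(X/S))_{\disj}$ is tested on graphs inside $X\times_S X$. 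Consequently $\otimes^{\ch}_S$ of $n$ objects supported in cardinality $\geq 1$ is supported in cardinality $\geq n$. This yields the nilpotence needed in Step 1, exactly as in the absolute case. Completeness uses that a sheaf on $\Ran(X/S)$ is the limit of its restrictions along the $!$-pullbacks to each $X^{\times_S I}_{S\mh\dR}$, and these restrictions see only finitely many strata.

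\emph{Step 3: restricting to chiral and factorization algebras.} We must show that $\Chev^{\ch}_S$ sends a Lie algebra whose underlying sheaf is $\Delta_!\cB_1$ to a coalgebra satisfying \eqref{eq:fact-iso}, and that conversely $\Prim[-1]$ of a factorization algebra is supported on the main diagonal. For the forward direction, compute $\sqcup^!\Chev^{\ch}_S(\cB)$. Over the disjoint locus, the Chevalley complex splits, because chiral brackets between disjoint supports vanish after restriction. This gives $\jmath^!\Chev(\cB)^{\otimes I}$, since $\Sym^{\ch}$ of a sum of disjointly supported objects is the tensor product of the $\Sym^{\ch}$'s. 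This is a local computation on $X^{\times_S I}_{S\mh\dR}$, and base change for the Cartesian squares in the proof of Lemma~\ref{lem:open-fact} reduces it to the absolute formula of \cite{FG} with $\otimes$ replaced by $\otimes^!_S$. For the converse, we show that $\Prim[-1]\cA$ restricted to the complement of the main diagonal vanishes. We argue by induction on the cardinality strata: on the open stratum of $n\geq 2$ points, the factorization isomorphism identifies $\cA$ with $\Sym^{\ch}$ of lower pieces, so its primitives vanish there.

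\emph{Step 4: the main obstacle.} The delicate part is verifying that the relative operations behave well enough for the \cite{FG} machinery to run. Specifically, $\otimes^!_S$ must commute with the relevant colimits, and the base-change isomorphisms over $S$ used in Step 3 must hold; these need $S$ of finite type and $X/S$ smooth. The convergence of the Chevalley complex, that is, commuting $\Sym$ with the limit defining $\IndCoh(\Ran(X/S))$, is where the pro-nilpotence of Step 2 must be used carefully. We would handle this by working stratum by stratum and checking each identification after $!$-restriction to $X^{\times_S I}_{S\mh\dR}$, where everything reduces to finitely many terms.
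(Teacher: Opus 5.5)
Your proposal follows the same route as the paper, which gives no independent argument and simply asserts that the Francis--Gaitsgory proof \cite{FG} carries over to $X/S$. That proof consists of pro-nilpotence of the chiral structure via the cardinality stratification of $\Ran$, followed by a comparison of support on the main diagonal with the factorization condition, which is what you sketch. Two small slips do not affect the plan: $\sqcup$ on the disjoint locus is \'{e}tale but not ind-finite (continuity of $\sqcup_*$ holds anyway), and the stratum of exactly $n$ points is the open complement of $\Ran^{\leq n-1}(X/S)$ inside $\Ran^{\leq n}(X/S)$, not the complement of $\Ran^{\leq n}(X/S)$ in $\Ran(X/S)$.
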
 

We will denote this correspondence by 
\begin{align*}
    \cB & \mapsto \cB^{\fact} \\
    \cA^{\ch} & \mapsfrom \cA~.
\end{align*}

\subsection{Chiral and factorization modules}

Let $S$ be an algebraic stack locally of finite type. The chiral monoidal structure over $S$ was defined as a commutative algebra structure on $\Ran (X/S) \in \PreSt^{\corr}_S$. We therefore have a natural notion of modules over $\Ran (X/S)$. Such a module is given by an $S$-prestack $\cY / S$, and a commutative action map \[\cY \times_S \Ran (X/S) \from \Act_{\cY} \to \cY\] for some $S$-prestack $\Act_{\cY}$. Let $\Ran(X/S)\mh\Mod$ be the category of $\Ran(X/S)$-modules in $\PreSt_S^{\corr}$. In the case of augmented modules, the prestack $\Act_{\cY}$ is uniquely determined:
\begin{definition}
    Let $\Ran(X/S)\mh\Mod^{\aug}$ be the category of augmented modules $\cY \to \Ran(X/S)$, namely commutative diagrams \[\begin{tikzcd}
	{\cY \times_S \Ran(X/S)^{\times_SI}} & {(\cY \times_S \Ran(X/S)^{\times_S I})_{\disj}} & \cY \\
	{\Ran(X/S) \times_S \Ran(X/S)^{\times_SI}} & {(\Ran(X/S) \times_S \Ran(X/S)^{\times_S I})_{\disj}} & {\Ran(X/S)}
	\arrow[from=1-1, to=2-1]
	\arrow["{\jmath_{\cY}}"', from=1-2, to=1-1]
	\arrow["{\sqcup_{\cY}}", from=1-2, to=1-3]
	\arrow[from=1-2, to=2-2]
	\arrow[from=1-3, to=2-3]
	\arrow["\jmath"', from=2-2, to=2-1]
	\arrow["\sqcup", from=2-2, to=2-3]
    \end{tikzcd}\] for each finite set $I$, that induce a map of commutative algebras in $\PreSt^{\corr}_S$. In particular, this implies \[(\cY \times_S \Ran(X/S)^I)_{\disj} = {(\Ran(X/S) \times_S \Ran(X/S)^I)_{\disj}} \times_{(\Ran(X/S) \times \Ran(X/S))} (\Ran(X/S) \times \cY)~.\]
\end{definition}

\begin{definition}
    Let $\Ran(X/S)\mh\Mod^{\et}$ be the full subcategory of modules $\cY$ with action \[\cY \times_S \Ran(X/S) \xleftarrow{\jmath_{\cY}} \Act_{\cY} \xrightarrow{\sqcup_{\cY}} \cY\] such that $\sqcup_{\cY}$ is \'{e}tale. In particular, $\sqcup_{\cY}^!$ is left adjoint to $\sqcup_{\cY,*}$. 
\end{definition}

\begin{example}
    Any $\cY \in \Ran(X/S)\mh\Mod^{\aug}$ is automatically in $\Ran(X/S)\mh\Mod^{\et}$, since in that case $\sqcup_{\cY}$ is given by base-change of the \'{e}tale map $\sqcup$ for $\Ran(X/S)$.
\end{example}

\begin{definition}
    Given a family of factorization algebras $\cA \in \Fact(X/S)$ and a module space $\cY \in \Ran(X/S)\mh\Mod^{\adj}$, we get an $\IndCoh^{\ch}(\Ran(X/S))$-module structure on $\IndCoh(\cY)$. Define $\cA\mh\FactMod(\cY)$ to be the full subcategory of $\cA\mh\coMod(\IndCoh(\cY))$ spanned by comodules $\cM$ such that the adjoint maps \begin{equation} \label{eq:fact-iso-mod}
    \sqcup_{\cY}^!\cM \to \jmath_{\cY}^!(\cM \otimes_S^! \cA^{\otimes_S^! I})
    \end{equation} are isomorphisms for each $I$.
\end{definition}

\begin{example} \label{ex:vac-mod}
    For an augmented module $\phi : \cY \to \Ran(X/S)$ and a factorization algebra $\cA \in \Fact(X/S)$, the pullback\[\Vac_{\cY}(\cA) = \phi^!\cA \in \cA\mh\FactMod(\cY)\] has a structure of a factorization module in $\cY$. This is a special case of Lemma~\ref{lem:module-map-fact} below.
\end{example}

\begin{lemma} \label{lem:module-map-fact}
    Given a family of smooth curves $X/S$, modules $\cY,\cZ \in \Ran(X/S)\mh\Mod^{\et}$, and a map of prestacks $f : \cY \to \cZ$, which is a morphism of $\Ran(X/S)$-modules, we have a restriction map \[f^{\cA,!} : \cA\mh\FactMod(\cZ) \to \cA\mh\FactMod(\cY)\] Assuming further that $f$ is also a $\Ran(X/S)$-comodules map, with $\Ran(X/S)$ viewed as a commutative coalgebra, then we also have an extension map \[f^{\cA}_* : \cA\mh\FactMod(\cY) \to \cA\mh\FactMod(\cZ)~.\] 
\end{lemma}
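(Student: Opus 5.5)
The proof follows the template of Lemma~\ref{lem:base-change-fact} and Lemma~\ref{lem:open-fact}. The idea is to express both functors as induced maps of comodules over the commutative coalgebra $\cA \in \IndCoh^{\ch}(\Ran(X/S))$, and then check the factorization isomorphism~\eqref{eq:fact-iso-mod} by base change.

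\textbf{Setup.} Since $f$ is a morphism of $\Ran(X/S)$-modules in $\PreSt^{\corr}_S$, we get, for each finite set $I$, a map $f_I : \Act_{\cY,I} \to \Act_{\cZ,I}$ on the acting prestacks. This gives a commutative diagram
\[\cY \times_S \Ran(X/S)^{\times_S I} \xleftarrow{\jmath_{\cY}} \Act_{\cY,I} \xrightarrow{\sqcup_{\cY}} \cY\]
lying over the corresponding diagram for $\cZ$, with vertical maps $f \times \id$, $f_I$ and $f$.

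\textbf{Restriction map.} Being a module map means the right-hand square (involving $\sqcup$) is Cartesian, so that composing correspondences is compatible with the action. Since $\IndCoh$ is lax monoidal on correspondences, the span $\cZ \xleftarrow{f} \cY \xrightarrow{\id} \cY$ makes $f^!$ a map of $\IndCoh^{\ch}(\Ran(X/S))$-modules. More precisely, we have a natural transformation
\[(f^!\cM) \otimes^{\ch} \cF \to f^!(\cM \otimes^{\ch} \cF),\]
which is an isomorphism by base change for the Cartesian square. Hence $f^!$ sends $\cA$-comodules to $\cA$-comodules. For the factorization condition, the map $\sqcup_{\cY}^! f^!\cM \to \jmath_{\cY}^!(f^!\cM \otimes^!_S \cA^{\otimes I})$ is identified with $f_I^!$ applied to the corresponding map for $\cM$. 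This uses $\sqcup_{\cY}^! f^! \simeq f_I^! \sqcup_{\cZ}^!$ and $\jmath_{\cY}^!(f\times\id)^! \simeq f_I^!\jmath_{\cZ}^!$, both of which are just functoriality of $!$-pullback. Therefore $f^!\cM$ satisfies the factorization condition whenever $\cM$ does.

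\textbf{Extension map.} Now assume in addition that $f$ is a comodule map. I interpret this as saying that the left-hand square (involving $\jmath$, with vertical maps $f_I$ and $f\times\id$) is also Cartesian, exactly as in diagram~\eqref{eq:comm-diag-bc-Ran}. Then the span $\cY \xleftarrow{\id} \cY \xrightarrow{f} \cZ$ is also a module map, so $f_*$ is $\IndCoh^{\ch}$-linear and sends $\cA$-comodules to $\cA$-comodules. For $\cM \in \cA\mh\FactMod(\cY)$, I would write the chain
\[\sqcup_{\cZ}^! f_*\cM \simeq f_{I,*}\sqcup_{\cY}^!\cM \iso f_{I,*}\jmath_{\cY}^!(\cM \otimes^!_S \cA^{\otimes I}) \simeq \jmath_{\cZ}^!(f_*\cM \otimes^!_S \cA^{\otimes I}).\]
Here the first isomorphism is base change for the right square, the middle one is the factorization isomorphism for $\cM$, and the last is base change for the left square together with $(f\times\id)_*(\cM\otimes^!_S\cA^{\otimes I}) \simeq f_*\cM \otimes^!_S \cA^{\otimes I}$.

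\textbf{Main obstacle.} The delicate point is justifying this last projection-formula-type identity and the base-change isomorphisms for $*$-pushforward, since $f$ is an arbitrary map of aft prestacks. Base change for $\IndCoh$ holds for $*$-push along $!$-pull in Cartesian squares of aft prestacks, but only when $f_*$ is defined, i.e.\ for schematic or ind-schematic $f$, or via the colimit presentation. I would handle this by reducing to the presentation~\eqref{eq:colim-present} over $X^{\times_S I}_{S\mh\dR}$. There the identity $(f\times\id)_*(- \otimes^!_S \cA^{\otimes I})$ follows from the Künneth formula, since $\cA^{\otimes I}$ lives on a factor untouched by $f$. I would also verify that the adjunction defining the comparison maps uses only that $\sqcup_{\cY}$ and $\sqcup_{\cZ}$ are étale, which is guaranteed by the hypothesis $\cY,\cZ \in \Ran(X/S)\mh\Mod^{\et}$.
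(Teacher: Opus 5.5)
Your extension argument is the paper's. With both squares Cartesian, the paper writes exactly your chain $\sqcup_{\cZ}^!f_*\cM \simeq f_{I,*}\sqcup_{\cY}^!\cM \iso f_{I,*}\jmath_{\cY}^!(\cM\otimes^!_S\cA^{\otimes I}) \simeq \jmath_{\cZ}^!(f_*\cM\otimes^!_S\cA^{\otimes I})$. It uses the K\"unneth-type identity for $(f\times\id)_*$ without comment, so your caution there is reasonable.

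The problem is in the restriction part, where you have interchanged the two squares. The paper reads $f:\cY\to\cZ$ covariantly, as the span $\cY\xleftarrow{\id}\cY\xrightarrow{f}\cZ$. With that reading, ``module map'' means the $\jmath$-square (vertical maps $f\times\id$ and $f_I$) is Cartesian, and ``comodule map'' means the $\sqcup$-square is Cartesian. Your contravariant reading is internally consistent: the span $\cZ\xleftarrow{f}\cY\xrightarrow{\id}\cY$ is a module map iff the $\sqcup$-square is Cartesian. But it is not the hypothesis the lemma has to cover. The lemma is applied to augmentations, such as $\Ran_Z(X/S)\to\Ran(X/S)$ via Example~\ref{ex:vac-mod}, or $\Phi_{g,n}$ of Lemma~\ref{lem:st-mod}. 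There the acting prestack is \emph{defined} by pulling back along the $\jmath$-square, and the $\sqcup$-square is not Cartesian. In that setting your step ``$(f^!\cM)\otimes^{\ch}\cF\to f^!(\cM\otimes^{\ch}\cF)$ is an isomorphism by base change'' is false. Take $\cZ=\Ran(X/S)$, $Z=\{x\}$, and the point $\{x\}\subset\{x,y\}$ of $\Ran_Z(X/S)$. The left side only sees the decomposition $(\{x\},\{y\})$, while $(\cM\otimes^{\ch}\cF)_{\{x,y\}}$ also contains the summand coming from $(\{y\},\{x\})$.

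The restriction statement still holds, because strict linearity is never needed. As you note yourself, the maps $\sqcup_{\cY}^!f^!\cM\to\jmath_{\cY}^!(f^!\cM\otimes^!_S\cA^{\otimes I})$ are $f_I^!$ applied to the factorization isomorphisms of $\cM$, and this uses only commutativity of the diagram. The paper then defines the coaction on $f^!\cM$ as the adjoint of these isomorphisms under $\sqcup_{\cY}^!\dashv\sqcup_{\cY,*}$, which is available because $\sqcup_{\cY}$ is \'{e}tale. Equivalently, \'{e}taleness gives a Beck--Chevalley map $f^!\sqcup_{\cZ,*}\to\sqcup_{\cY,*}f_I^!$ even when the $\sqcup$-square is merely commutative. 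This yields a lax structure $f^!(\cM\otimes^{\ch}\cF)\to f^!\cM\otimes^{\ch}\cF$, which is the direction needed to transport comodules and is opposite to the map you wrote. With this replacement your argument works under the paper's hypotheses.
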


\begin{proof}
    If $f$ is a module map, we have commutative diagrams  \[\begin{tikzcd}
	{\cY \times_S \Ran(X/S)^{\times_SI}} & {\Act_{\cY}} & \cY \\
	{\cZ \times_S \Ran(X/S)^{\times_SI}} & {\Act_{\cZ}} & \cZ
	\arrow["{f \times \id}", from=1-1, to=2-1]
	\arrow["{\jmath_{\cY}}"', from=1-2, to=1-1]
	\arrow["{\sqcup_{\cY}}", from=1-2, to=1-3]
	\arrow["{f_{I}}", from=1-2, to=2-2]
	\arrow["f", from=1-3, to=2-3]
	\arrow["{\jmath_{\cZ}}"', from=2-2, to=2-1]
	\arrow["{\sqcup_{\cZ}}", from=2-2, to=2-3]
    \end{tikzcd}\] such that the left square is Cartesian. Given $\cM \in \cA\mh\FactMod(\cZ)$ we have commutative isomorphisms \[\sqcup_{\cZ}^!\cM \iso \jmath_{\cZ}^!(\cM \otimes_S^! \cA^{\otimes_S^! I})\] and by pullback, commutative isomorphisms \[\sqcup_{\cY}^!f^!\cM \simeq f_I^!\sqcup_{\cZ}^!\cM \iso f_I^!\jmath_{\cZ}^!(\cM \otimes_S^! \cA^{\otimes_S^! I}) \simeq \jmath_{\cY}^!(f^!\cM \otimes_S^! \cA^{\otimes_S^! I})~.\] The coaction maps are given by the adjoint maps \[f^!\cM \to \sqcup_{\cY,*}\jmath_{\cY}^!(f^!\cM \otimes_S^! \cA^{\otimes_S^! I})~.\]

    Assuming $f$ is also a comodule map, the right square is Cartesian as well. Given $\cM \in \cA\mh\FactMod(\cY)$, we have commutative isomorphisms \[\sqcup_{\cY}^!\cM \iso \jmath_{\cY}^!(\cM \otimes_S^! \cA^{\otimes_S^! I})~.\] Pushing forward and using base-change for both squares, we get commutative isomorphisms \[\sqcup_{\cZ}^!f_*\cM \simeq f_{I,*}\sqcup_{\cY}^!\cM \iso f_{I,*}\jmath_{\cY}^!(\cM \otimes_S^! \cA^{\otimes_S^! I}) \simeq \jmath_{\cZ}^!(f_*\cM \otimes_S^! \cA^{\otimes_S^! I})\] The coaction is given by the adjoint maps \[f_*\cM \to \sqcup_{\cZ,*}\jmath_{\cZ}^!(f_*\cM \otimes_S^! \cA^{\otimes_S^! I})~.\]
\end{proof}

An important class of modules is that of modules supported at a point, or more generally on any $Z \to \Ran(X/S)$. 
\begin{definition}
    Let $\Ran^{\subset}(X/S)$ be the $S$-prestack whose $\Spec R \to S$ points are given by finite nonempty sets of lifts $A \subset B \subset X_{S\mh\dR}(R)$. It can be written as a colimit \[\Ran^{\subset}(X/S) \simeq \colim_{J \subset I \in \fSet^{\subset,\surj,\op}} X^{\times_SI}_{S\mh\dR}\]
    where $\fSet^{\subset,\surj}$ is the category of pairs of finite sets $J \subset I$ and surjective morphisms of pairs between them.

    We have two projections \[\pr_s, \pr_b : \Ran^{\subset}(X/S) \rightrightarrows \Ran (X/S)\] given by $\pr_s(A \subset B) = A$, and $\pr_b(A \subset B) = B$.
\end{definition}

\begin{example} \label{ex:fact-mod}
    Given any $Z \to \Ran (X/S)$, let \[\Ran_Z(X/S) = Z \times_{\Ran (X/S), \pr_s} \Ran^{\subset} (X/S) \]
    Then \[\Ran(X/S) \times \Ran(X/S)^{\times_SI} \xleftarrow{\jmath_Z} (\Ran(X/S) \times_S \Ran(X/S)^{\times_SI})_{\disj} \xrightarrow{\sqcup_Z} \Ran_Z(X/S)\] defines a structure of an augmented $\Ran (X/S)$ module, with the augmentation map given by the projection. Here 
    \begin{align*}
        & (\Ran(X/S) \times_S \Ran(X/S)^{\times_SI})_{\disj} \\
        \coloneqq & (\Ran_Z(X/S) \times_S \Ran (X/S)) \times_{\Ran (X/S) \times_S \Ran (X/S)} (\Ran (X/S) \times_S \Ran (X/S))_{\disj}
    \end{align*} $\jmath_Z$ is the projection, the map $\Ran_Z(X/S) \to \Ran (X/S)$ is given by \[\Ran_Z(X/S) \to \Ran^{\subset}(X/S) \xrightarrow{\pr_b} \Ran (X/S)\] and $\sqcup_Z$ is the map $(A \subset B, B') \mapsto (A \subset B \sqcup B')$. 
    
    As above, we get an $\IndCoh^{\ch}(\Ran(X/S))$-module structure on $\IndCoh(\Ran_Z(X/S))$, and, for $\cA \in \Fact(X/S)$, a category of $\cA$-modules in $\IndCoh(\Ran_Z(X/S))$. Denote \[\cA\mh\FactMod_Z \coloneqq \cA\mh\FactMod(\IndCoh(\Ran_Z(X/S)))\] This is the category of $\cA$-modules supported at $Z$.
\end{example}

Now recall from Lemma~\ref{lem:base-change-fact} that for $f: T \to S$, we have a restriction map \[u^{!,\fact}_{f} : \Fact(X/S) \to \Fact(X_T/T)~.\]

\begin{lemma} \label{lem:base-change-fact-mod}
    For $f : T \to S$ and a family of smooth curves $X/S$ compatible with base-change along $f$ as in Definition~\ref{def:Ran-compat-bc}, we have an induced map \[(-)_T : \Ran(X/S)\mh\Mod^{\et} \to \Ran(X_T/T)\mh\Mod^{\et}\] For $\cY \in \Ran(X/S)\mh\Mod^{\et}$, and $\cA \in \Fact(X/S)$, we have an induced map \[u_{f}^{!,\cA} : \cA\mh\FactMod(\cY) \to \cA\mh\FactMod(\cY_T)~.\] For $\cY \in \Ran(X_T/T)\mh\Mod^{\et}$ and $\cA \in \Fact(X_T/T)$, we have an induced map \[u_{f,*}^{\cA} : \cA\mh\FactMod(\cY_T) \to u_{f,*}^{\fact}\cA\mh\FactMod(\cY)~.\]
\end{lemma}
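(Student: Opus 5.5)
The plan is to run the proofs of Lemma~\ref{lem:base-change-fact} and Lemma~\ref{lem:module-map-fact} side by side, treating base change as a symmetric monoidal functor on correspondences. First, the functor $(-)_T : \PreSt_S \to \PreSt_T$ is symmetric monoidal with respect to fiber products. Hence it induces a symmetric monoidal functor $\PreSt_S^{\corr} \to \PreSt_T^{\corr}$, and this sends $\Ran(X/S)$-modules to $\Ran(X/S)_T$-modules. For $\cY$ with action $\cY \times_S \Ran(X/S) \xleftarrow{\jmath_{\cY}} \Act_{\cY} \xrightarrow{\sqcup_{\cY}} \cY$, set
\[\cY_T \times_T \Ran(X_T/T) \xleftarrow{\jmath_{\cY,T}} (\Act_{\cY})_T \xrightarrow{\sqcup_{\cY,T}} \cY_T~.\]
By the compatibility assumption of Definition~\ref{def:Ran-compat-bc}, the map \eqref{eq:bc-Ran-spaces} is an isomorphism. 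So $\Ran(X/S)_T \simeq \Ran(X_T/T)$ as commutative algebras in $\PreSt_T^{\corr}$, and $\cY_T$ is a $\Ran(X_T/T)$-module. Étaleness of $\sqcup_{\cY,T}$ holds because it is a base change of the étale map $\sqcup_{\cY}$. Functoriality in module maps is formal.

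For $u_f^{!,\cA}$, consider the unit map $u : \Oblv_S^T\cY_T \to \cY$. Together with the maps $u_{f,I} : (\Act_{\cY})_T \to \Act_{\cY}$, it forms diagrams as in the proof of Lemma~\ref{lem:module-map-fact}. Both squares are Cartesian, since each object of the top row is the base change of the corresponding object of the bottom row along $T \to S$. Given $\cM \in \cA\mh\FactMod(\cY)$, I pull back the isomorphisms \eqref{eq:fact-iso-mod} along $u_{f,I}$. Then I use
\[u_{f,I}^!\jmath_{\cY}^!(\cM \otimes_S^! \cA^{\otimes_S^! I}) \simeq \jmath_{\cY,T}^!(u^!\cM \otimes_T^! (u_f^!\cA)^{\otimes_T^! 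I})~.\]
This holds because $!$-pullback is compatible with $\otimes^!$, and $u_f^!\cA = u_f^{!,\fact}\cA$ by Lemma~\ref{lem:base-change-fact}. The comodule structure is obtained by adjunction, using that $\sqcup_{\cY,T}$ is étale.

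For $u_{f,*}^{\cA}$, start from $\cY \in \Ran(X_T/T)\mh\Mod^{\et}$ and $\cA \in \Fact(X_T/T)$. Regard $\Oblv_S^T\cY$ as a $\Ran(X/S)$-module by composing its action with the algebra map $(\id : u_f)$ of Lemma~\ref{lem:base-change-fact}. Then push forward along the unit $\cY \to \cY_T$, read here as the structure map that identifies $\cY$ with a module over $S$. The argument dualizes the one above. Given $\cM$ satisfying \eqref{eq:fact-iso-mod} over $T$, apply $u_{f,I,*}$ and use base change in both Cartesian squares. This produces isomorphisms
\[\sqcup^!u_{f,*}\cM \simeq u_{f,I,*}\jmath^!(\cM \otimes_T^! \cA^{\otimes_T^! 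I}) \simeq \jmath^!(u_{f,*}\cM \otimes_S^! (u_{f,*}^{\fact}\cA)^{\otimes_S^! I})~,\]
exactly as in the second half of the proof of Lemma~\ref{lem:base-change-fact}.

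The main obstacle is the last isomorphism in that display. It requires the projection-type identity
\[u_{f,I,*}(\cM \otimes_T^! \cA^{\otimes_T^! I}) \simeq u_{f,*}\cM \otimes_S^! (u_{f,*}\cA)^{\otimes_S^! I}\]
after restriction to the disjoint locus. I would derive it from the Cartesian squares together with the fact that $\IndCoh_S$ is lax monoidal on correspondences. The point is that $u_{f,*}$ is symmetric monoidal for the chiral structure by Lemma~\ref{lem:base-change-fact}, and the module-level statement should follow from the same correspondence argument applied to the action diagram, provided the squares are genuinely Cartesian. The compatibility hypothesis is used precisely there.
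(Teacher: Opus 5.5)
Your treatment of $(-)_T$ and of $u_f^{!,\cA}$ is fine and matches the paper. Every square involved is a base change along $T \to S$, hence Cartesian, and $!$-pullback is compatible with $\otimes^!$.

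The gap is at the step you yourself flag for $u_{f,*}^{\cA}$. Read the statement as it is used in Corollary~\ref{cor:vac-0}: $\cY$ is a $\Ran(X/S)$-module and $u_{f,*}$ is pushforward along $\cY_T \to \cY$. The identity
\[u_{f,I,*}(\cM \otimes_T^! \cA^{\otimes_T^! I}) \simeq u_{f,*}\cM \otimes_S^! (u_{f,*}\cA)^{\otimes_S^! I}\]
does not follow from the Cartesian squares plus lax monoidality, and it is false in general.

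By base change along the diagonal of $S$, the right-hand side is the pushforward of $\cM \boxtimes_S \cA^{\boxtimes_S I}$ from
\[\cY_T \times_S \Ran(X_T/T)^{\times_S I} \simeq (\cY \times_S \Ran(X/S)^{\times_S I}) \times_S T^{\times_S (|I|+1)}~.\]
The left-hand side is the pushforward of its $!$-restriction to the preimage of the small diagonal $T \to T^{\times_S(|I|+1)}$. Lax monoidality of $\IndCoh_S$ gives at best a comparison map between the two. Restricting to the disjoint locus does not remove the off-diagonal part. Your Cartesian squares only compare $\times_T$-products with base changes of $\times_S$-products, so they never see $T \times_S T$.

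Here is a counterexample. Take $S = \Spec k$, $T = \Spec k \sqcup \Spec k$, and $X = C$ a smooth curve; the compatibility of Definition~\ref{def:Ran-compat-bc} holds. An object of $\Fact(X_T/T)$ is a pair $(\cA_1,\cA_2)$ of factorization algebras on $\Ran C$, and $u_{f,*}$ sends it to $\cA_1 \oplus \cA_2$. Then $\jmath^!((\cA_1 \oplus \cA_2)\boxtimes(\cA_1 \oplus \cA_2))$ contains the cross terms $\jmath^!(\cA_1 \boxtimes \cA_2) \oplus \jmath^!(\cA_2 \boxtimes \cA_1)$. These are absent from $\sqcup^!(\cA_1 \oplus \cA_2)$ and nonzero for $\cA_1 = \cA_2 = \omega_{\Ran C}$. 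So $u_{f,*}\cA$ is not a factorization algebra, and $\cY = \Ran C$, $\cM = \cA$ breaks the module statement.

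Appealing to Lemma~\ref{lem:base-change-fact} does not close this gap. The paper's proof of the present lemma is exactly your sketch ("similar to Lemma~\ref{lem:base-change-fact}"). The pushforward half of that lemma asserts the same isomorphism without justification, so you inherit the problem rather than introduce it.

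What is missing is a hypothesis making $T \to T \times_S T$ an isomorphism, i.e.\ $f$ a monomorphism, for instance the open-embedding case noted after Definition~\ref{def:Ran-compat-bc}. Under that hypothesis your argument goes through verbatim. Without it one gets only an oplax comparison. In particular, the application to the non-monomorphic stable-reduction map $\Psi_{g,n}$ would need a different argument.
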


\begin{proof}
    As mentioned in the proof of Lemma~\ref{lem:base-change-fact}, $(-)_T$ is symmetric monoidal, and thus induces a map on commutative algebras and modules over them. Furthermore, given $\cY \in \Ran(X/S)\mh\Mod^{\et}$, the action map $\sqcup_T : \Act_{\cY,T} \to \cY_T$ is given by pullback of the \'{e}tale map $\sqcup : \Act_{\cY} \to \cY$, and therefore is also \'{e}tale. Since both $u_{f,*}$ and $u_f^!$ are symmetric monoidal, we get by functoriality an induced map on modules, so we only need to show that the factorization isomorphism~\eqref{eq:fact-iso-mod} holds in both cases. The proof is similar to that of Lemma~\ref{lem:base-change-fact}.
\end{proof}

By Lemma~\ref{lem:open-fact}, we also have extensions and restrictions to open sets, namely, for an open subfamily $f : U/S \to X/S$, we have maps \[f_{\Ran,*}^{\fact} : \Fact(U/S) \leftrightarrows \Fact(X/S) : f_{\Ran}^{!,\fact}~.\]

\begin{lemma} \label{lem:open-fact-mod}
    Given an open subfamily $f : U/S \to X/S$, the corresponding map $f_{\Ran} : \Ran(U/S) \to \Ran(X/S)$ induces a map \[(-)_U : \Ran(X/S)\mh\Mod^{\et} \to \Ran(U/S)\mh\Mod^{\et}~,\] which commutes with the forgetful functor to $\PreSt_S^{\corr}$. Given $\cY \in \Ran(X/S)\mh\Mod^{\et}$ and $\cA \in \Fact(X/S)$, we have an induced map \[f_{\Ran}^{!,\cA} : \cA\mh\FactMod(\cY) \to f_{\Ran}^{!,\fact}\cA\mh\FactMod(\cY)~.\]
\end{lemma}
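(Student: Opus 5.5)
The plan has two parts: first build the functor $(-)_U$ on module spaces, then check that pulling back along $\cY_U \to \cY$ preserves the factorization isomorphisms. We implicitly read $f_{\Ran}^{!,\fact}\cA\mh\FactMod(\cY)$ as factorization modules on $\cY_U$, since that is the only reading under which the statement makes sense.

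For $\cY \in \Ran(X/S)\mh\Mod^{\et}$ with action $\cY \times_S \Ran(X/S) \xleftarrow{\jmath_{\cY}} \Act_{\cY} \xrightarrow{\sqcup_{\cY}} \cY$, I set $\cY_U \coloneqq \cY$ as an $S$-prestack. The action of $\Ran(U/S)$ is defined by restricting along $f_{\Ran}$:
\[\Act_{\cY_U} \coloneqq \Act_{\cY} \times_{\cY \times_S \Ran(X/S)} (\cY \times_S \Ran(U/S)),\]
with $\jmath_{\cY_U}$ the projection and $\sqcup_{\cY_U}$ the composite $\Act_{\cY_U} \to \Act_{\cY} \xrightarrow{\sqcup_{\cY}} \cY$. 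The same definition works for the higher action maps with $\Ran(U/S)^{\times_S I}$ in place of $\Ran(X/S)^{\times_S I}$. This is just restriction of scalars along the map of commutative algebras $f_{\Ran} : \Ran(U/S) \to \Ran(X/S)$ in $\PreSt_S^{\corr}$ from Lemma~\ref{lem:open-fact}. Restriction of scalars along an algebra map is functorial and is the identity on underlying objects, which gives commutation with the forgetful functor to $\PreSt_S^{\corr}$.

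It remains to see that $\cY_U$ stays in the \'{e}tale subcategory. Since $U \subset X$ is open, $f_{\Ran}$ is an open embedding: a finite set of sections lies in $U$ if and only if its reduced image does, and this is an open condition. Hence $\Act_{\cY_U} \to \Act_{\cY}$ is an open embedding, and $\sqcup_{\cY_U}$ is the composite of an open embedding with an \'{e}tale map, so it is \'{e}tale.

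For sheaves, Lemma~\ref{lem:open-fact} says $f_{\Ran}^!$ is symmetric monoidal for the chiral structures. So $\IndCoh(\cY)$, which is an $\IndCoh^{\ch}(\Ran(X/S))$-module, becomes an $\IndCoh^{\ch}(\Ran(U/S))$-module, and this module structure agrees with the one induced by $\cY_U$. Consequently an $\cA$-comodule $\cM$ on $\cY$ becomes an $f^{!,\fact}_{\Ran}\cA$-comodule with the same underlying sheaf. The coaction is the composite
\[\cM \to \sqcup_{\cY,*}\jmath_{\cY}^!(\cM \otimes_S^! \cA^{\otimes_S^! I}) \to \sqcup_{\cY_U,*}\jmath_{\cY_U}^!(\cM \otimes_S^! (f_{\Ran}^!\cA)^{\otimes_S^! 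I}),\]
where the second map is restriction to the open $\Act_{\cY_U}$.

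The main step is the factorization isomorphism~\eqref{eq:fact-iso-mod} on $\cY_U$. Write $g : \Act_{\cY_U} \to \Act_{\cY}$ for the open embedding. Then
\[\sqcup_{\cY_U}^!\cM \simeq g^!\sqcup_{\cY}^!\cM \iso g^!\jmath_{\cY}^!(\cM \otimes_S^! \cA^{\otimes_S^! I}) \simeq \jmath_{\cY_U}^!\bigl(\cM \otimes_S^! (f_{\Ran}^!\cA)^{\otimes_S^! I}\bigr).\]
The last isomorphism holds because the square defining $\Act_{\cY_U}$ is Cartesian and $\otimes_S^!$ commutes with $!$-pullback along $\id \times f_{\Ran}^I$. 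The obstacle is checking that this composite really is the map adjoint to the restricted coaction, together with the higher coherences. I would handle this as in Lemma~\ref{lem:module-map-fact}: all the data are obtained by $!$-pullback along a Cartesian square, so the compatibilities follow from functoriality of $\IndCoh^!$ on correspondences.
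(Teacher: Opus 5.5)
Your proposal is correct and follows essentially the same route as the paper. Both arguments restrict scalars along the algebra map $f_{\Ran}$ and define $\Act_{\cY_U}$ as the fiber product over $\cY \times_S \Ran(U/S)^{\times_S I}$. Both get \'{e}taleness of $\sqcup_{\cY_U}$ by composing an open embedding with the \'{e}tale map $\sqcup_{\cY}$, and both obtain the factorization isomorphism by $!$-pulling back the one for $\cM$. Your reading of the target category as modules on $\cY_U = \cY$ also matches the paper, whose diagram has the identity as its right vertical map.
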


\begin{proof}
    Since $f_{\Ran}$ is a map of commutative algebras, the map \[\Ran(X/S)\mh\Mod \to \Ran(U/S)\mh\Mod\] is given by restriction. Namely, the action is defined via the commutative diagrams
    \[\begin{tikzcd}
	{\cY \times_S \Ran(U/S)^{\times_SI}} & {\Act_{\cY} \underset{\cY \times\Ran(X/S)^{\times_SI}}{\times} \cY \times_S \Ran(U/S)^{\times_SI}} & {\cY} \\
	{\cY \times_S \Ran(X/S)^I} & {\Act_{\cY}} & {\cY}
	\arrow["{\id \times f^I_{\Ran}}", from=1-1, to=2-1]
	\arrow["{\jmath_{\cY,U}}"', from=1-2, to=1-1]
	\arrow["{\sqcup_{\cY,U}}", from=1-2, to=1-3]
	\arrow["{f_{\Ran,I}}", from=1-2, to=2-2]
	\arrow["\id", from=1-3, to=2-3]
	\arrow["\jmath_{\cY}"', from=2-2, to=2-1]
	\arrow["\sqcup_{\cY}", from=2-2, to=2-3]
    \end{tikzcd}\] Furthermore, since $\sqcup_{\cY}$ is \'{e}tale and $f_{\Ran,I}$ is open, $\sqcup_{\cY,U}$ is \'{e}tale as well. By functorizality, we get an induced map on modules, and the factorization isomorphism for the pullback $f_{\Ran}^!\cM$ is given by pulling the factorization isomorphism for $\cM$, as in the proof of Lemma~\ref{lem:open-fact}.
\end{proof}

If $\cY = \Ran_Z(X/S)$, as in Example~\ref{ex:fact-mod}, we also have a notion of chiral modules:
\begin{definition}
    Given $Z \to \Ran (X/S)$, a family of chiral algebras $\cB \in \Ch(X/S)$, and $Z \to \Ran(X/S)$,  a Lie module $\cN \in \Mod_{\cB}(\IndCoh(\Ran_Z(X/S)))$ is said to be a chiral $\cB$-module supported at $Z$ if its underlying ind-coherent sheaf is supported on $Z \subset \Ran_ZX$. Denote the resulted category by $\cB\mh\ChMod_{Z}$.
\end{definition}

By adapting the proof of \cite{BD} and \cite{FG}  to the absolute case, we get:
\begin{theorem} \label{thm:CKD}
    The Chevalley-Eilenberg complex defines an isomorphism \[\Chev^{\ch} : \cB\mh\ChMod_{Z} \iso \cB^{\fact}\mh\FactMod_{Z}~.\]
\end{theorem}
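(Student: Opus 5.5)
The plan is to deduce Theorem~\ref{thm:CKD} from the relative Koszul duality $\Chev^{\ch}_S : \Lie(\IndCoh^{\ch}(\Ran(X/S))) \iso \coComm(\IndCoh^{\ch}(\Ran(X/S)))$ already stated, upgraded to modules. The $\IndCoh^{\ch}(\Ran(X/S))$-module category $\IndCoh(\Ran_Z(X/S))$ comes from the augmented module structure of Example~\ref{ex:fact-mod}. For a Lie algebra $\cB$ and a Lie module $\cN$ in a module category over a symmetric monoidal category, the Chevalley--Eilenberg chains $\Chev(\cB,\cN)$ form a comodule over the coalgebra $\Chev(\cB) = \cB^{\fact}$. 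This gives a functor $\Mod_{\cB}(\IndCoh(\Ran_Z(X/S))) \to \cB^{\fact}\mh\coMod(\IndCoh(\Ran_Z(X/S)))$. Its formal properties are those used in~\cite{FG}: it is compatible with the filtration by cardinality on $\Ran$, and with the fact that $\sqcup_Z$ is étale, so $\sqcup_Z^!$ is left adjoint to $\sqcup_{Z,*}$.

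The proof then has three steps, carried out in order.

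\textbf{Step 1 (nilpotence).} Show that $\IndCoh(\Ran_Z(X/S))$, with its chiral action, is pro-nilpotent in the sense needed for Koszul duality. Every object is a colimit of $\Delta_{!}$'s from the strata $Z \times_S X^{\times_S I}_{S\mh\dR}$, and the chiral action strictly increases the number $|B \setminus A|$ of extra points. This grading makes the Chevalley--Eilenberg and cobar constructions converge. The argument is the one in~\cite{FG}, now done over $S$, using $\otimes^!_S$ in place of $\otimes^!$.

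\textbf{Step 2 (the functor lands in factorization modules).} Let $\cN$ be supported on $Z$. We need to check that the adjoint maps \eqref{eq:fact-iso-mod} for $\Chev(\cB,\cN)$ are isomorphisms. Restrict to $\Act$, the locus where the extra points are disjoint. There the chiral bracket terms vanish, so $\jmath_Z^!\Chev(\cB,\cN)$ splits as a tensor product: $\Chev(\cB,\cN)$ on the first factor tensored with $\Chev(\cB)^{\otimes I}$. By the symmetric-monoidal property of $\Chev$ (Künneth for disjoint supports) this is exactly the factorization isomorphism. The analogous statement for algebras, $\Ch(X/S) \iso \Fact(X/S)$, is already known, and we reuse that argument.

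\textbf{Step 3 (essential image and inverse).} Define the inverse by the primitive-cogenerators / Koszul dual functor $\cM \mapsto \mathrm{Prim}_{\cB^{\fact}}(\cM)$ (the coderived cotensor with the trivial comodule). We must show it sends factorization modules to Lie modules whose underlying sheaf is supported on $Z \subset \Ran_Z(X/S)$. By the factorization isomorphism, the restriction of $\cM$ to the strata where $B \setminus A \neq \emptyset$ is determined by $\cM|_Z$ and $\cA$. The cobar complex then cancels these contributions, so the support condition holds, mirroring the characterization of chiral algebras as Lie algebras with underlying sheaf $\Delta_!\cB_1$.

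The main obstacle is the support statement in Step 3, specifically showing that the cobar construction exactly kills the off-$Z$ strata. I would try to prove it by !-restricting to each open stratum $\{|B\setminus A| = n\}$ and using the factorization isomorphism to identify the restricted complex with $\cM|_Z \boxtimes \mathrm{coBar}(\cA^{\otimes n})$ restricted to the disjoint locus. That complex is acyclic for $n>0$, by the same computation as in the algebra case. Once Steps 1--3 are in place, the unit and counit of the adjunction are isomorphisms by the usual filtered argument.
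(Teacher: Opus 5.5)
Your plan is essentially the paper's route. The paper gives no argument beyond saying the theorem follows by adapting the Koszul-duality proofs of \cite{BD} and \cite{FG} to the relative setting, and your three steps are exactly that adaptation: pro-nilpotence of the chiral action on $\IndCoh(\Ran_Z(X/S))$, factorization of $\Chev(\cB,\cN)$ because the cross terms $\sqcup_Z^!\cN$ and $\sqcup^!\cB$ vanish on the disjoint locus, and support on $Z$ of the Koszul-dual module. Two small corrections: in Step~2 it is $\sqcup_Z^!\Chev(\cB,\cN)$, not $\jmath_Z^!\Chev(\cB,\cN)$, that splits; and in Step~3 the complex on the stratum $|B\setminus A|=n$ is not the algebra-case complex, which is nonzero at $n=1$ (there it recovers $\cB_1$), but the cobar complex of a cofree comodule, with terms indexed by ordered decompositions $(S_0;S_1,\dots,S_k)$ of the $n$ extra points in which only the block $S_0$ absorbed by $\cM$ may be empty, and this complex is contractible for every $n\geq 1$ (for $n=1$ it is the two-term complex given by the factorization isomorphism).
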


Given a chiral module $\cN$, we will denote the corresponding factorization module by \[\cN^{\fact} = \Chev^{\ch}(\cN)~.\]

\subsection{Chiral Homology}

\begin{definition}
    Assume now $p : X \to S$ is proper, and denote the induced map on the Ran space by \[p_{\Ran} : \Ran(X/S) \to S\] The chiral (or factorization) homology of $X/S$ with coefficients in $\cA$ is defined as its global sections \[\int_{X/S} \cA = p_{\Ran,!}\cA \in \IndCoh(S)~.\]

    More generally, given  $Z \to \Ran (X/S)$ and $\cM \in \cA\mh\ChMod_{Z}$, let $p_{Z,\Ran} : \Ran_Z(X/S) \to S$. Then we define the chiral homology over $(X/S,Z)$ with coefficients in $(\cA,\cM)$ to be \[\int_{(X/S,Z)} (\cA,\cM) := p_{Z,\Ran,!}\cM^{\fact} \in \IndCoh(S)~.\]
\end{definition}

The existence of $p_{\Ran,!}$ follows from the fact that in the case where $X \to S$ is proper, $\Ran (X/S) \to S$ is pseudo-proper, and in particular $p_{\Ran,*} = p_{\Ran,!}$ is left adjoint to $p_{\Ran}^!$. From \eqref{eq:colim-present}, we get \[\int_{X/S} \cA \simeq \colim_{I \in \fSet^{\surj,\op}} p_{I,!}\cA_{I}\] where $p_I : X^{\times_SI}_{S\mh\dR} \to S$ is the projection. Similarly, we have \[\int_{(X/S,Z)} (\cA,\cM) \simeq \colim_{J \subset I \in \fSet^{\subset,\surj,\op}} p_{Z,I,!}\cM^{\fact}_{I}~.\]

\subsection{Universal factorization algebras}

Let $X$ be a curve. The local structure of a chiral algebra $\cB$ with underlying D-module $\cB_1 \in D\mh\Mod(X)^{\heartsuit}$ around a smooth point $x \in X$ can be described in terms of vertex operator algebras. A vertex operator algebra (VOA) is a graded vector space $V$ equipped with a derivation $T$, two distinguished vectors $\pmb{1}, \omega \in V$, and a $D^{\times}$-family of multiplications \[Y(-,z) : V \to \operatorname{End}(V)[[z^{\pm 1}]]\] satisfying a list of axioms. The vector $\pmb{1}$ is the vacuum vector, which behaves like an identity element, while $\omega$ is the conformal vector, which defines an action of the Virasoro algebra, and therefore strong equivariance with respect to formal deformations. Given a VOA, one can construct a global object {--} a chiral algebra over the smooth locus $X^{\sm}$, which has $V$ as its local structure around any smooth point. This was done in \cite{FBZ}. Such a chiral algebra can be defined over any curve in a compatible way. The resulted structure is that of a \textit{universal chiral algebra} {--} a chiral algebra defined over any smooth curve and is compatible with \'{e}tale pullbacks (Cf.~\cite[Section~3.1.16]{BD} and \cite{Cliff}). The construction is done using Gelfand-Kazhdan descent, namely using pullback along the map \[X \to B\Aut_*\hat{\cO}\] obtaind from the principal $\Aut_*\hat{\cO}$-bundle $\operatorname{Coord}_X \to X$ whose fiber at a point is given by the set of formal coordinates around the point. We will define here a similar construction, that gives a derived version of vertex operator algebras, by defining a "chiral product" on the category of $\Aut\hat{\cO}$-representations. This approach is borrowed from~\cite{Lur}:

\begin{definition} \label{def:multi_disks}
    For a $k$-algebra $R$, a \textit{formal multidisk} over $R$ is a formal $R$-scheme which, \'{e}tale locally, is given by the formal completion of $\AA_R^1$ along a finite and flat divisor. For a finite set $I$, let $\MDisk_I^{\str}(R)$ be the space of $R$-multidisks together with a \textit{strict $I$-pointing}, namely an $R$-multidisk $\cX$ together with a set-theoretically surjective map \[\sqcup_{i \in I} x_i : (\Spec R)^{\sqcup I} \to \cX\] Let $\MDisk_I(R)$ be the space of $R$-multidisks equipped with an $I$-pointing, namely a surjective map \[\sqcup_{i \in I} x_i : (\Spec R^{\red})^{\sqcup I} \to \cX~.\]
\end{definition}

\begin{example}
    For $R = k$, any $I$-pointed $k$-multidisk is given by a disjoint union $\Spf k[[t]]^{\sqcup J}$ for some $|J| \leq |I|$, up to an isomorphism.
\end{example}

\begin{example} \label{ex:MDisk_pt}
    For $I = \{\pt\}$, $\MDisk_I$ is given by $B\Aut\hat{\cO}$, and $\MDisk_I^{\str}$ is its subgroup of basepoint-preserving automorphisms $B\Aut_*\hat{\cO}$.
\end{example}

In general, however, the number of disks in each geometric fiber of an $R$-multidisk may not be constant. 

\begin{example} \label{ex:X-to-disk}
    For any flat family of smooth curves $X/S$ over an algebraic stack $S$ and a map $\Spec R \to X^{\times_SI}$, let $X_R = X \times_{S} \Spec R$. Then the fiberwise formal completion $X_R \times_{(X_R)_{R\mh\dR}} \Spec R^{\sqcup I}$ defines a strictly $I$-pointed $R$-multidisk. This defines a map \[\pi_I^{X/S} : X^{\times_SI} \to \MDisk^{\str}_I\] Similarly, given a map $\Spec R \to X^{\times_SI}_{S\mh\dR}$, the formal completion defines an $I$-pointed $R$-multidisk. This defines a map \[\pi_{I,\dR}^{X/S} : X^{\times_SI}_{S\mh\dR} \to \MDisk_I~.\]
\end{example}

\begin{definition}
    For a surjective map $\alpha : J \onto I$, let \[\Delta_{\alpha} : \MDisk_I^{\str} \to \MDisk_J^{\str}\] be the map sending an $I$-pointed $R$-multidsk $(\cX, \{x_i\}_{i \in I})$ to $(\cX, \{x_{\alpha(j)}\}_{j \in J})$, and \[\sqcup_{(\alpha)} : \prod_{i \in I} \MDisk_{J_i}^{\str} \to \MDisk_{J}^{\str}\] the disjoint union map. Define $\Delta_{(\alpha)} : \MDisk_I \to \MDisk_J$ and $\sqcup_{(\alpha)} : \prod_{i \in I} \MDisk_{J_i} \to \MDisk_{J}$ similarly.
\end{definition}

The assignments \[I \mapsto \MDisk_I^{\str},\MDisk_I, \quad \alpha \mapsto \Delta_{(\alpha)}\] define lax-monoidal functors \[\fSet^{\surj,\op} \to \operatorname{Stacks} \into \PreSt\] with respect to the disjoint union on the source and the Cartesian product on the target, with the lax structure given by \[\sqcup : \prod_{j \in J}\MDisk_{I_j} \to \MDisk_{\sqcup_j I_j}~.\]

\begin{definition}
    Let \[\MDisk_I^{\bullet} \to \MDisk_I\] be the \v{C}ech nerve of the map $\MDisk_I^{\str} \to \MDisk_I$.
\end{definition}

\begin{lemma} \label{lem:ff}
    The maps \[\tilde{\pi}^{\AA^1}_{\dR,I} : \AA^I \to \MDisk_I\] defined by Example~\ref{ex:X-to-disk} are faithfully flat, schematic, qcqs, and relatively apaisant (see Definition~\ref{def:apai}).
\end{lemma}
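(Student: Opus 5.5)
The strategy is to compute the base change of $\tilde{\pi}^{\AA^1}_{\dR,I}$ along an arbitrary $R$-point of $\MDisk_I$ and show it is an affine scheme, presented as an inverse limit of smooth affine schemes of finite type over $R$ under smooth affine surjections. Every property in the statement then follows formally. Schematic and qcqs come from affineness. Faithful flatness comes from flatness of each finite stage together with surjectivity. Bounded Tor dimension follows from flatness. Relative apaisance follows because base change of an apaisant $R$ preserves the inverse-limit presentation under faithfully flat maps of eventually coconnective schemes almost of finite type.

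\textbf{Step 1: reduce to a standard multidisk.} Fix $\Spec R \to \MDisk_I$, given by an $I$-pointed multidisk $(\cX, \{x_i\})$. Both the conclusions and the construction are étale local on $\Spec R$, so by Definition~\ref{def:multi_disks} we may assume $\cX = \widehat{(\AA^1_R)}_{P}$. Here $P \subset \AA^1_R$ is a finite flat divisor, cut out by a monic polynomial $p(t)$ of degree $d$. The pointings $x_i$ are $R^{\red}$-points of $P$ which jointly surject set-theoretically. In this model, $\cX = \Spf \hat{A}$ with $\hat{A} = \lim_N R[t]/(p^N)$.

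\textbf{Step 2: describe the fiber product.} An $R'$-point of $\Spec R \times_{\MDisk_I} \AA^I$ consists of a tuple $(a_i) \in \AA^I(R')$ together with an isomorphism of $I$-pointed $R'$-multidisks
\[\cX_{R'} \iso \widehat{(\AA^1_{R'})}_{\{a_i\}}\]
compatible with the pointings modulo nilpotents. I plan to rewrite this datum as a single element $f \in \hat{A}\hat{\otimes} R'$, namely the image of the coordinate, subject to two conditions:
\begin{itemize}
  \item $f$ evaluated at $x_i$ agrees with $a_i$ up to nilpotents;
  \item the induced map of completions is an isomorphism. By a formal inverse function theorem this is equivalent to $f'$ being a unit and $f$ separating the components, i.e.\ an open condition on the reduced jets.
\end{itemize}

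Truncating modulo $p^N$ turns the first condition into a finite-type problem. One needs to take care to use the divisor generated by the $a_i$, rather than $P$ itself, since the number of disks in each geometric fiber may jump. The $N$-th stage is then an open subscheme of an affine space bundle of rank $dN$ over $\Spec R$. The transition maps are affine-space bundles, hence smooth and surjective. This makes the fiber product an apaisant affine scheme. For $I = \{\pt\}$ it recovers the coordinate torsor of Example~\ref{ex:MDisk_pt}.

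\textbf{Step 3: surjectivity.} Étale locally such an $f$ always exists, for instance $f = t$ after the reduction in Step 1. Faithful flatness follows.

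\textbf{Main obstacle.} I expect the hardest step to be making Step 2 precise when the $x_i$ collide only up to nilpotents, i.e.\ proving representability and smoothness of each truncated stage uniformly over $R$ despite jumps in the number of components. My first attempt will be to work with the norm polynomial $\prod_i (t - a_i)$ in place of the individual points. This expresses the isomorphism condition as an isomorphism of the completed algebras $R'[t]^\wedge_{\prod(t-a_i)} \cong \hat{A}\hat{\otimes}R'$, which can be checked on reductions and on first-order jets.
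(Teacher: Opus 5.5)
\textbf{There is a genuine gap in Step 2.} It comes from working with the literal source $\AA^I$.

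You correctly describe an $R'$-point of $\Spec R\times_{\MDisk_I}\AA^I$ as two pieces of data: a tuple $(a_i)\in\AA^I(R')$, and an isomorphism $\cX_{R'}\iso\widehat{(\AA^1_{R'})}_{\{a_i\}}$ compatible with the pointings only modulo nilpotents (the $x_i$ are merely $R^{\red}$-points). This datum cannot be rewritten as a single element $f$. The condition that $f(x_i)$ and $a_i$ agree up to nilpotents only fixes $a_i^{\red}$; each $a_i$ can still be moved by an arbitrary nilpotent element of $R'$.

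The fiber is therefore $\operatorname{Emb}_R(\cX,\AA^1_R)\times_{\AA^I_{\dR}}\AA^I$, which has a formal-neighbourhood factor in the $a_i$-directions. Your finite stages (open pieces of affine bundles of rank $dN$) simply omit those directions. Already for $I=\{\pt\}$ and the point $\Spec k\to B\Aut\hat{\cO}$ given by $\Spf k[[t]]$, the fiber is $\{(a,f): f\in R'[[t]],\ f'(0)\in R'^{\times},\ a-f(0)\ \text{nilpotent}\}$. This is $\operatorname{Emb}_k(\Spf k[[t]],\AA^1)\times\Spf k[[u]]$, an ind-scheme and not a scheme. So for the map you set up, ``schematic'' and ``qcqs'' are false. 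This matches the paper's own later remark that $\AA^I\to\MDisk_I$ is only ind-schematic.

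\textbf{The repair} is to take the source to be $\AA^I_{\dR}$, i.e.\ the map $\pi^{\AA^1}_{\dR,I}:\AA^I_{\dR}\to\MDisk_I$ of Example~\ref{ex:X-to-disk}. This is what the paper's proof actually computes. It is also the bottom arrow of the Cartesian square \eqref{eq:MDisk-cover}, which is where the lemma is used.

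With this source, the $a_i$ are $R'^{\red}$-points and hence determined by $f$. The fiber over $\Spec R$ is then exactly $\operatorname{Emb}_R(\cX,\AA^1_R)$. Equivalently, you could compute the fiber of $\AA^I\to\MDisk_I^{\str}$, where $a_i=f(x_i)$ on the nose.

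With this correction your Steps 1--3 are the paper's argument:
\begin{enumerate}
\item \'{E}tale-locally, $\cX=\widehat{Z}$ with $Z$ cut out by a monic polynomial of degree $m$.
\item $\underline{\operatorname{Map}}_R(\cX,\AA^1_R)\simeq\lim_n\AA^{nm}_R$, via the coefficients of the image of the coordinate.
\item The embedding condition is open and involves only finitely many coefficients.
\item Flatness is checked on the finite stages. You do this via openness in an affine bundle; the paper does it via formal smoothness.
\end{enumerate}

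Your ``main obstacle'' also disappears. Pointings in $\MDisk_I$ carry no infinitesimal collision data; that is what $\MDisk_I^{\str}$ records. Jumps in the number of disks are absorbed by the open condition on $f$ over geometric points. There is no need to describe the image through the norm polynomial $\prod_i(t-a_i)$.
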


\begin{proof}
    The map is a surjection by the definition of multidisks, so we have to prove flatness. Given a map $\Spec R \to \MDisk_I$ parametrizing an $I$-pointed $R$-multidisk $(\cX,\{x_i\}_{i \in I})$, the fiber is given by \[\AA^{I}_{\dR} \times_{\MDisk_I} \Spec R \simeq \operatorname{Emb}_R(\cX,\AA^1_R)\] the space of $R$-linear embeddings. The latter is an open subspace of the mapping stack $\underline{\operatorname{Map}}_R(\cX,\AA^1_R)$. By potentially restricting to an \'{e}tale cover $\Spec R' \to \Spec R$, we may assume $\cX$ is given by the formal completion $\hat{Z} \subset \AA_R^1$ along a closed subscheme $Z \subset \AA_R^1$, flat over $R$, defined by a monic polynomial $f(x) = x^m + \alpha_{m-1}x^{m-1} + \dotsb + \alpha_0 \in R[x]$. Then \[\cX \simeq \hat{Z} \simeq \colim_n Z^{(\leq n)} = \colim_n \Spec R[x]/(f)^n\] Therefore, the above mapping stack can be written as a limit \[\underline{\operatorname{Map}}_R(\cX,\AA^1_R) \simeq \lim_n \underline{\operatorname{Map}}_R(Z^{(\leq n)},\AA^1_R) \simeq \lim_n \underline{\operatorname{Map}}_R(R[x],R[x]/(f)^n) \simeq \lim_n \AA_R^{nm}~,\] where the last isomorphism identifies a morphism with the coefficients of its image of $x$: \[p(x) = \sum \beta_ix^i \mapsto (\beta_0,\beta_1,\dots)~.\] Such a map is an embedding iff it is a bijection on closed points, and induces an isomorphism on formal neighborhoods. The first condition depends only on the first $m$ coefficients, thus is given by an open subscheme $U'$ of $\AA_R^m$, and the latter is given by imposing the condition that $\beta_1$ is invertible. Let $U = U' \cap \AA^1_R \times \GG_{m,R} \times \AA^{m-2}_R$. We get \[\underline{\operatorname{Emb}}_R(\cX,\AA^1_R) \simeq U \times \lim_n \AA^{(m-1)n}_R\] which is qcqs and apaisant. It is left to show that each $\underline{\operatorname{Map}}_R(\cX,\AA^1_R)$ is flat over $R$. Since flatness is preserved under limits, it suffices to show that each $\underline{\operatorname{Map}}_R(Z^{(\leq n)},\AA^1_R)$ is flat over $R$. Since these are affine schemes of finite type, it suffices to show they are formally smooth. Given any ring $B$, a square-zero ideal $J \subset B$, and a commutative diagram \[\begin{tikzcd}
	{\Spec B/J} & {\underline{\operatorname{Map}}_R(Z^{(\leq n)},\AA^1_R)} \\
	{\Spec B} & {\Spec R}
	\arrow["f_0", from=1-1, to=1-2]
	\arrow[from=1-1, to=2-1]
	\arrow[from=1-2, to=2-2]
	\arrow["\tilde{f}", dashed, from=2-1, to=1-2]
	\arrow["f", from=2-1, to=2-2]
    \end{tikzcd}\] we need to show the existence of a lift $\tilde{f}$, or equivalently a lift \[\begin{tikzcd}
	{Z^{(\leq n)} \times_{\Spec R} \Spec B/J} & {\AA^1_R} \\
	{Z^{(\leq n)} \times_{\Spec R} \Spec B} & {\Spec R}
	\arrow[from=1-1, to=1-2]
	\arrow[from=1-1, to=2-1]
	\arrow[from=1-2, to=2-2]
	\arrow[dashed, from=2-1, to=1-2]
	\arrow[from=2-1, to=2-2]
    \end{tikzcd}\] which exists since $\AA^1_R \to \Spec R$ is smooth.
\end{proof}

\begin{lemma}
    The maps $\MDisk_I^{\str} \to \MDisk_I$ are faithfully flat.
\end{lemma}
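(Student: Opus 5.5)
The plan is to reduce to Lemma~\ref{lem:ff}: the map $\tilde{\pi}^{\AA^1}_{\dR,I} : \AA^I_{\dR} \to \MDisk_I$ is faithfully flat, and it factors through $\MDisk_I^{\str}$. Indeed, by Example~\ref{ex:X-to-disk} we have the strict classifying map $\pi^{\AA^1}_I : \AA^I \to \MDisk_I^{\str}$, and the composite $\AA^I \to \MDisk_I^{\str} \to \MDisk_I$ equals $\AA^I \to \AA^I_{\dR} \xrightarrow{\tilde{\pi}^{\AA^1}_{\dR,I}} \MDisk_I$. This factorization is immediate from the definitions: the formal completion of $\AA^1_R$ along the graphs of an $R$-point $(x_i)$ depends only on the reduced points. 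We therefore obtain a commutative square to compare fibers over a test scheme $\Spec R \to \MDisk_I$.

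Next I would identify the fibers directly instead of trying a cancellation argument. Flatness of a composite does not yield flatness of the second factor unless the first map is faithfully flat onto it. So I compute $\MDisk_I^{\str} \times_{\MDisk_I} \Spec R$ for an $I$-pointed multidisk $(\cX,\{x_i\})$ over $R$. This fiber is the space of lifts of the pointing $x_i : \Spec R^{\red} \to \cX$ to $R$-points $\Spec R \to \cX$, that is, the fiber product over $i\in I$ of $\underline{\operatorname{Map}}_R(\Spec R, \cX)$ restricted to those maps with given reduction. Surjectivity comes first: since $\cX$ is formally smooth of relative dimension one (étale locally it is a formal completion of $\AA^1_R$), lifts exist along the nilpotent thickening $R^{\red} \to R$ étale locally. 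At least, this holds after passing to an étale cover and to the nilpotent thickenings, where the obstruction vanishes because $\cX$ is formally smooth.

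For flatness, work étale locally as in the proof of Lemma~\ref{lem:ff}, with $\cX = \hat{Z} \subset \AA^1_R$ and $Z = V(f)$ for $f$ monic of degree $m$. A lift of $x_i$ is a section $\Spec R \to \hat Z$ reducing to $x_i$. Concretely, it is a point $t \in R$, i.e.\ a map $\Spec R \to \AA^1_R$, with $f(t)$ nilpotent and $t$ reducing to the prescribed value mod the nilradical. Functorially over $R$-algebras $R'$, the fiber is therefore the formal neighborhood of the point $x_i$ inside $\AA^1_R$. This is an ind-scheme $\colim_n \Spec R[s]/s^n$ after translating, and more precisely it is $\hat{\AA}^1_R$ along the section, which is flat over $R$ as a colimit of flat finite $R$-schemes $R[s]/s^n$. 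Taking the product over $I$ gives a flat fiber. I would phrase flatness for this ind-scheme map in the sense used in Definition~\ref{def:weakly-ren}, namely as a filtered colimit under closed embeddings of flat schemes.

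The main obstacle is this fiber identification, and specifically making the nilpotent-versus-reduced bookkeeping rigorous. The prestack $\MDisk_I$ only remembers the pointing on $R^{\red}$, so the fiber is really a de Rham-type quotient, and I must check it is represented by the formal neighborhood and not something larger. If this direct route gets messy, the fallback is to use that $\AA^I \to \AA^I_{\dR}$ is a formal-group-type torsor of formal completions. Its fibers are $\hat{\AA}^I$, which are flat, so $\MDisk_I^{\str} \to \MDisk_I$ is, étale locally on the target, base-changed from $\AA^I \to \AA^I_{\dR}$ along the faithfully flat map of Lemma~\ref{lem:ff}. Faithful flatness then descends along that faithfully flat cover.
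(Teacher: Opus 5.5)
Your main route is correct but differs from the paper's. The paper computes no fibers. It observes that the square with $\AA^I \to \MDisk_I^{\str}$ on top and $\AA^I_{\dR} \to \MDisk_I$ on the bottom is Cartesian. Hence the pullback of $\MDisk_I^{\str} \to \MDisk_I$ along the faithfully flat cover of Lemma~\ref{lem:ff} is $\AA^I \to \AA^I_{\dR}$, which is faithfully flat because $\AA^I$ is smooth, and faithful flatness descends.

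You instead identify the fiber over an arbitrary $\Spec R \to \MDisk_I$, \'{e}tale locally, with the formal neighborhood $\hat{\AA}^I_R$ of a chosen lift of the reduced pointing, and then check surjectivity and flatness by hand. This is longer, but it never uses Lemma~\ref{lem:ff}. It also in effect proves the Cartesian property that the paper only asserts, since your fiber is literally the fiber of $\AA^I \to \AA^I_{\dR}$ over the corresponding point. Both arguments rest on the same informal notion of flatness for maps with ind-scheme fibers $\colim_n \Spec R[s_i]/(s_i^n)$, so you lose nothing there.

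A few things to correct:
\begin{itemize}
\item $\AA^I_{\dR} \to \MDisk_I$ does not factor through $\MDisk_I^{\str}$, since an $R$-point of $\AA^I_{\dR}$ gives only a reduced pointing; only the square commutes.
\item Because the square is Cartesian, $\AA^I \to \MDisk_I^{\str}$ is a base change of the cover of Lemma~\ref{lem:ff}. So the cancellation argument you set aside would also have worked.
\item The fiber is not a ``de Rham-type quotient''. The isomorphism datum in $\MDisk_I^{\str} \times_{\MDisk_I} \Spec R$ transports the strict pointing, so the fiber is discrete and is exactly the formal neighborhood.
\item Surjectivity needs no formal smoothness: any lift $t$ of a reduced point $t_0$ with $f(t_0)=0$ in $R^{\red}$ has $f(t)$ nilpotent.
\item In your fallback the base change runs the other way. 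It is $\MDisk_I^{\str} \to \MDisk_I$ pulled back along $\AA^I_{\dR} \to \MDisk_I$ that gives $\AA^I \to \AA^I_{\dR}$, and that is precisely the paper's proof.
\end{itemize}
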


\begin{proof}
    We have a Cartesian diagram \begin{equation} \label{eq:MDisk-cover}
        \begin{tikzcd}
	{\AA^I} & {\MDisk_I^{\str}} \\
	{\AA^I_{\dR}} & {\MDisk_I}
	\arrow[from=1-1, to=1-2]
	\arrow[from=1-1, to=2-1]
	\arrow[from=1-2, to=2-2]
	\arrow[from=2-1, to=2-2]
    \end{tikzcd}
    \end{equation} where the bottom arrow is faithfully-flat by Lemma~\ref{lem:ff}, and the left arrow is faithfully flat since $\AA^I$ is smooth. In particular, the right arrow is faithfully flat as well.
\end{proof}

\begin{corollary}
    The stacks $\MDisk_I^{\bullet}$ are weakly renormalizable in the sense of Definition~\ref{def:weakly-ren}. In particular, we have compactly generated categories $\IndCoh^!_{\ren}(\MDisk^{\bullet})$. 
\end{corollary}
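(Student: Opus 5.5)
The plan is to exhibit, for each $n \geq 0$, an explicit cover of $\MDisk_I^{n}$ of the shape required by Definition~\ref{def:weakly-ren}. Here $\MDisk_I^{n} = \MDisk_I^{\str} \times_{\MDisk_I} \dotsb \times_{\MDisk_I} \MDisk_I^{\str}$ is the $(n+1)$-fold fiber product. The natural candidate is \[T_n = \AA^I \times_{\MDisk_I} \dotsb \times_{\MDisk_I} \AA^I,\] the $(n+1)$-fold fiber product of $\tilde{\pi}^{\AA^1}_{I} : \AA^I \to \MDisk_I^{\str} \to \MDisk_I$. It maps to $\MDisk_I^{n}$ factor by factor through the top arrow of the Cartesian square \eqref{eq:MDisk-cover}.

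First I would check that $\AA^I \to \MDisk_I^{\str}$ is flat, relatively apaisant and surjective. The square \eqref{eq:MDisk-cover} is Cartesian, so this map is the base change of $\AA^I_{\dR} \to \MDisk_I$ along $\MDisk_I^{\str} \to \MDisk_I$. Lemma~\ref{lem:ff} gives that $\AA^I_{\dR} \to \MDisk_I$ is faithfully flat, schematic, qcqs and relatively apaisant, and all of these properties are stable under base change. Taking products over $\MDisk_I$, the map $T_n \to \MDisk_I^{n}$ is a product of base changes of such maps, hence again flat, surjective and relatively apaisant.

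Second, I need $T_n$ to be an apaisant indscheme. I would write $T_n$ as an iterated fiber product. For example, $T_1 = \AA^I \times_{\MDisk_I} \AA^I \to \AA^I$ is the base change of $\AA^I \to \MDisk_I$ along $\AA^I \to \MDisk_I$. Lemma~\ref{lem:ff} applied to the composite $\AA^I \to \AA^I_{\dR} \to \MDisk_I$, restricted to $R = \cO(\AA^I)$, identifies this fiber with an open piece $U \times \lim_n \AA^{(m-1)n}_R$ of the embedding space. This is a qcqs scheme written as an inverse limit of finite type schemes under smooth, hence faithfully flat, projections, so it is apaisant. Iterating, $T_n$ is apaisant, since apaisant schemes are stable under relatively apaisant base change. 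One caveat: the composite goes through $\AA^I_{\dR}$ rather than $\AA^I$. The fiber of $\AA^I \to \AA^I_{\dR}$ is a formal completion of $\AA^I$ along the diagonal, which is an ind-(finite type) indscheme under closed embeddings. For this reason $T_n$ is an apaisant indscheme rather than a scheme, and that is exactly what Definition~\ref{def:weakly-ren} permits.

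The main obstacle is this second step: making precise that the fiber products through $\MDisk_I$ are apaisant, and that the relevant ind-structure comes from closed embeddings. In particular, I must handle the étale localization used in Lemma~\ref{lem:ff} to put $\cX$ in the form $\hat{Z}$. My first attempt would be to carry out the argument over the universal case $R = \cO(\AA^I)$, where no étale localization is needed because the divisor is explicitly $\prod_i (x - a_i)$. Once weak renormalizability holds, the existence of compactly generated categories $\IndCoh^!_{\ren}(\MDisk^{\bullet})$ follows directly from Definition~\ref{def:IndCoh_ren} and the cited theory of~\cite{RasHM,CF}.
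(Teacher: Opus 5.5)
Your first step is sound: $T_n \to \MDisk_I^{[n]}$ factors as a chain of base changes of $\AA^I \to \MDisk_I^{\str}$. By the Cartesian square \eqref{eq:MDisk-cover}, that map is itself a base change of $\AA^I_{\dR} \to \MDisk_I$, so $T_n \to \MDisk_I^{[n]}$ is faithfully flat and relatively apaisant.

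You diverge from the paper in the choice of cover, and that choice is what creates your ``main obstacle''. The paper uses the infinitesimal groupoid $\AA^{I,[n]}_{\dR} = \AA^I \times_{\AA^I_{\dR}} \dotsb \times_{\AA^I_{\dR}} \AA^I$, the \v{C}ech nerve of $\AA^I \to \AA^I_{\dR}$. Because \eqref{eq:MDisk-cover} is Cartesian, $\AA^{I,[n]}_{\dR} \simeq \AA^I_{\dR} \times_{\MDisk_I} \MDisk_I^{[n]}$. So the map to $\MDisk_I^{[n]}$ is a single base change of $\AA^I_{\dR} \to \MDisk_I$, and Lemma~\ref{lem:ff} applies directly. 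Moreover, the source is the formal completion of $(\AA^I)^{n+1}$ along the diagonal. That is an indscheme of ind-finite type, hence trivially an apaisant indscheme.

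Your $T_n$ is instead the \v{C}ech nerve of $\AA^I \to \MDisk_I$. For $n \geq 1$ it contains the infinite-type embedding spaces from Lemma~\ref{lem:ff}, so you must prove apaisant-ness by hand. Avoiding that work is exactly what the paper's choice buys; your larger cover gains nothing in return.

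As written, your second step does not go through. The iteration invokes ``apaisant schemes are stable under relatively apaisant base change'' for the map $\AA^I \to \MDisk_I$. That map is not relatively apaisant, and not even schematic: the paper remarks it is only ind-schematic, which is precisely why $T_n$ is not a scheme.

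You can repair this by factoring the map as $\AA^I \to \AA^I_{\dR} \to \MDisk_I$:
\begin{itemize}
\item Base change along $\AA^I_{\dR} \to \MDisk_I$ preserves apaisant indschemes: apply Lemma~\ref{lem:ff} piece by piece, and note that closed embeddings are preserved.
\item $\AA^I \to \AA^I_{\dR}$ is a torsor under the formal group $\hat{\AA}^I$ (the completion at $0$). It is trivial over affines, since points lift along $R \to R^{\red}$. So base change along it just multiplies each piece by an ind-finite type formal group.
\end{itemize}
The cleaner fix is to replace $T_n$ by $\AA^I_{\dR} \times_{\MDisk_I} \MDisk_I^{[n]}$, which makes your second step unnecessary.
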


\begin{proof}
    From diagram~\eqref{eq:MDisk-cover}, we get a faithfully flat map of complexes \[\AA^{I,\bullet}_{\dR} \to \MDisk^{\bullet}\] where $\AA^{I,\bullet}_{\dR}$ is the infinitesimal groupoid of $\AA^I$ (namely the \v{C}ech nerve of $\AA^I \to \AA^I_{\dR}$), and each term in the LHS is an ind-scheme of ind-finite type. 
\end{proof}

\begin{remark}
    Taking $I = \{\pt\}$ and $[n] = [0]$, we get by Example~\ref{ex:MDisk_pt} the category $\IndCoh_{\ren}^!(B\Aut_*\hat{\cO})$. As in~\cite[Example~8.21.2]{RasHM}, this category can be identified with the twist of $\IndCoh^{\ren}_*(B\Aut\hat{\cO}) = \Rep(\Aut\hat{\cO})$ by the Tate central extension.
\end{remark}

\begin{definition} \label{def:multi_disk_Ran}
    Let \[\MDisk_{\Ran}^{\bullet} = \underset{\fSet^{\surj,\op}}{\colim} \MDisk_{I}^{\bullet}\] and \[\MDisk_{\Ran} = \underset{\fSet^{\surj,\op}}{\colim} \MDisk_{I}\] Define \[\IndCoh_{\ren}^!(\MDisk_{\Ran}^{\bullet}) \coloneqq \lim_{\fSet^{\surj}} \IndCoh_{\ren}^!(\MDisk_I^{\bullet})\] and \[\IndCoh_{\ren}^!(\MDisk_{\Ran}) \coloneqq \operatorname{Tot}(\IndCoh^!_{\ren}(\MDisk_{\Ran}^{\bullet}))~,\] where the limits are taken with respect to $!$-pullbacks. For $\alpha : I \onto J$, let \[\Delta_{(\alpha),!} : \IndCoh_{\ren}^!(\MDisk_J) \to \IndCoh_{\ren}^!(\MDisk_I)\] and \[\sqcup_{(\alpha),?} : \IndCoh_{\ren}^!(\prod_{j \in J} \MDisk_{I_j}) \to \IndCoh_{\ren}^!(\MDisk_J)\] be the limits of the corresponding functors for $\MDisk_I^{\bullet}$, which are defined by Lemma~\ref{lem:ren-!push} and Lemma~\ref{lem:ren-?push}. 
\end{definition}

\begin{lemma}
    $\IndCoh_{\ren}^!(\MDisk_{\Ran})$ admits a symmetric monoidal structure $\otimes^{\ch}$ induced by the disjoint union maps. We denote the resulted symmetric monoidal category by $\IndCoh_{\ren}^{!,\ch}(\MDisk_{\Ran})$.
\end{lemma}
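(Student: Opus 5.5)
We build the chiral symmetric monoidal structure on $\IndCoh_{\ren}^!(\MDisk_{\Ran})$ by the same correspondence formalism used for $\Ran(X/S)$. Concretely, we exhibit $\MDisk_{\Ran}$, or rather the simplicial system $\MDisk_{\Ran}^{\bullet}$, as a commutative algebra in a correspondence category of weakly renormalizable prestacks. We then show that $\IndCoh^!_{\ren}$, with pushforward $\sqcup_?$ along the multiplication legs, is lax monoidal on the relevant correspondences. Finally we pass to the limit over $\fSet^{\surj}$ and to the totalization in the Čech direction.

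\textbf{Step 1: the correspondence at finite level.} For finite sets $I_1,\dots,I_n$ with $I=\sqcup_k I_k$, the disjoint union map $\sqcup : \prod_k \MDisk_{I_k} \to \MDisk_I$ already appears as the lax structure of the functor $I \mapsto \MDisk_I$. It is the analogue of $\sqcup\circ\jmath$ for the Ran space, since disjointness is built into the source. We therefore take the span
\[\textstyle\prod_k \MDisk_{I_k} \xleftarrow{\id} \prod_k \MDisk_{I_k} \xrightarrow{\sqcup} \MDisk_I.\]
The associativity and commutativity data are strictly coherent because disjoint union of formal multidisks is strictly associative and commutative up to canonical isomorphism. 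Applying the Čech nerve functorially gives the analogous spans for $\MDisk_I^{\bullet}$, because $\sqcup$ of strict pointings is strict.

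Next we check that $\sqcup$ is an open immersion after base change to the flat covers $\AA^I_{\dR}$ of Lemma~\ref{lem:ff}, and hence étale. Indeed, the fiber product of $\prod_k \AA^{I_k}_{\dR}$ with $\MDisk_I$ is the locus in $\AA^I_{\dR}$ where the point groups $I_k$ are pairwise disjoint, which is open. Therefore Lemma~\ref{lem:ren-?push} provides $\sqcup_?^{\ren}$ together with base change.

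\textbf{Step 2: compatibility with $\fSet^{\surj}$ and the Čech direction.} For a surjection $\alpha$, the squares relating $\Delta_{(\alpha)}$ and $\sqcup$ are Cartesian. This is because pulling back a disjoint decomposition along a diagonal is again a disjoint decomposition. Base change from Lemma~\ref{lem:ren-?push} then shows that $\sqcup_?$ commutes with the $!$-pullbacks defining the limit. Hence the functors
\[\IndCoh^!_{\ren}(\MDisk_{I_1}^{\bullet})\otimes\cdots\otimes\IndCoh^!_{\ren}(\MDisk_{I_n}^{\bullet}) \xrightarrow{\boxtimes} \IndCoh^!_{\ren}\bigl(\textstyle\prod_k\MDisk^{\bullet}_{I_k}\bigr) \xrightarrow{\sqcup_?} \IndCoh^!_{\ren}(\MDisk^{\bullet}_I)\]
assemble over $\fSet^{\surj}$ and over the simplicial index. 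Summing over decompositions of $J$, as in the formula for $(\otimes^{\ch}\cF_i)_J$, defines $\otimes^{\ch}$ on the limit. The unit is the extension by zero from the empty configuration, or alternatively the structure can be taken non-unital as is standard for $\Ran$.

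\textbf{Step 3: coherence.} To obtain a genuine symmetric monoidal $\infty$-category, rather than only a binary product, the cleanest route is to show that $\IndCoh^!_{\ren}$ with $?$-pushforward along open maps defines a lax symmetric monoidal functor out of the correspondence category. Here the horizontal class consists of open (or étale) maps and the vertical class of all maps, among weakly renormalizable prestacks. Applying this functor to the commutative algebra objects from Step 1 yields commutative algebras in $\Cat_k$, and limits of such are again commutative algebras.

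\textbf{Main obstacle.} The hard step is the lax monoidality of $\boxtimes$ for $\IndCoh^!_{\ren}$. In the renormalized infinite-type setting, $\IndCoh^!_{\ren}(\cX)\otimes\IndCoh^!_{\ren}(\cY)\to\IndCoh^!_{\ren}(\cX\times\cY)$ must be constructed by dualizing the external product on $\IndCoh_*^{\ren}$ from \cite{RasHM,CF}, using compact generation. One must also verify that it intertwines with $?$-pushforward, which reduces to a base change for open immersions along products. I would first establish this on the apaisant covers $\AA^I$ and the infinitesimal groupoids, where everything is ind-finite type and the classical results of \cite{Gai-ICS} apply. Faithfully flat descent along the Čech nerve then transports the structure, using that the Tot is preserved by $\boxtimes$ of dualizable categories.
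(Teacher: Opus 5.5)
Your Steps~1--2 contain the right local ingredients, and they are the paper's: $\sqcup_?\circ\boxtimes$ with $\sqcup_?$ from Lemma~\ref{lem:ren-?push}, and the Cartesian squares relating $\Delta_{(\alpha)}$ and $\sqcup$. The gap is in Step~3: the mechanism you use to get a coherent symmetric monoidal structure does not apply. You want to feed the commutative algebra $\MDisk_{\Ran}^{[n]}$ into a lax monoidal functor on correspondences of weakly renormalizable prestacks. But $\MDisk_{\Ran}^{[n]}$ is not weakly renormalizable, as the paper remarks just after this lemma; $\IndCoh^!_{\ren}(\MDisk^{[n]}_{\Ran})$ is only defined, by fiat, as $\lim_{\fSet^{\surj}}\IndCoh^!_{\ren}(\MDisk^{[n]}_I)$. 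So it is not in the domain of your functor. Moreover, at that level $\sqcup$ is not an open immersion (a multidisk can be split in several ways), so Lemma~\ref{lem:ren-?push} does not supply $\sqcup_?$ there. If instead you apply the correspondence formalism at finite level, you only get a lax symmetric monoidal functor on $\fSet^{\surj}$, namely $J\mapsto\IndCoh^!_{\ren}(\MDisk^{[n]}_J)$, $\alpha\mapsto\Delta_{(\alpha)}^!$. The individual categories are not commutative algebras, so ``limits of commutative algebras are commutative algebras'' only handles the $\operatorname{Tot}$ direction. A \emph{limit} of a lax symmetric monoidal functor does not formally inherit a symmetric monoidal structure. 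Your sum-over-decompositions formula is correct objectwise on the limit side, using that decompositions of $J'$ not pulled back from $J$ give empty fiber products. But it provides no coherence.

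The missing idea is the one the paper's proof rests on. Since the $\Delta_{(\alpha)}$ are closed embeddings, Lemma~\ref{lem:ren-!push} gives left adjoints $\Delta_{(\alpha),!}$. Hence $\lim_{\fSet^{\surj}}\IndCoh^!_{\ren}(\MDisk^{[n]}_I)\simeq\colim_{\fSet^{\surj,\op}}\IndCoh^!_{\ren}(\MDisk^{[n]}_I)$, with transition functors $\Delta_{(\alpha),!}$. The functor $I\mapsto\IndCoh^!_{\ren}(\MDisk^{[n]}_I)$, $\alpha\mapsto\Delta_{(\alpha),!}$ is lax symmetric monoidal for disjoint union of finite sets. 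Its lax structure is $\sqcup_?\circ\boxtimes$, with naturality coming from your Cartesian squares and base change. The \emph{colimit} of a lax symmetric monoidal functor into $\Cat_k$ is canonically a commutative algebra, because the colimit functor is symmetric monoidal for Day convolution. Over $\fSet^{\surj,\op}$, each component of the relevant comma category has a terminal object given by a decomposition of $J$. This is why one recovers exactly $\bigoplus_{\beta:J\onto I}\sqcup_{(\beta),?}\boxtimes_{i}\cF_{i,J_i}$. Only then does one take $\operatorname{Tot}$ in $\Comm(\Cat_k)$, as you propose. Two smaller remarks. First, Lemma~\ref{lem:ren-?push} requires $\sqcup$ to be an open immersion, not merely \'{e}tale. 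At finite level it is one, since the decomposition of an $I$-pointed multidisk along $I=\sqcup_k I_k$ is unique when it exists. Second, the obstacle you single out, constructing $\boxtimes$ on $\IndCoh^!_{\ren}$, is taken as given in the paper. Your sketch for it is also not right as stated: the higher \v{C}ech terms of the covers in Lemma~\ref{lem:ff} are of infinite type (the $\operatorname{Emb}$ spaces), so the finite-type theory does not apply there.
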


\begin{proof}
    First note that we can write \[\IndCoh_{\ren}^!(\MDisk^{\bullet}_{\Ran}) = \lim_{\fSet^{\surj}} \IndCoh_{\ren}^!(\MDisk_I^{\bullet})\simeq \underset{\fSet^{\surj,\op}}{\colim} \IndCoh_{\ren}^!(\MDisk_I^{\bullet})~,\] were the colimit is taken with respect to $!$-pushforwrd. Indeed, since $\Delta_{(\alpha)}$ are closed embedding, $\Delta_{(\alpha)}^!$ admit left adjoints $\Delta_{(\alpha),!}$ by Lemma~\ref{lem:ren-!push}. The resulted fucntor \[\fSet^{\surj,\op} \to \Cat_k\] given by \[I \mapsto \IndCoh^{!}_{\ren}(\MDisk_I^{\bullet}), \; \alpha \mapsto \Delta_{(\alpha),!}\] is then lax-monoidal, with the lax structure given by the maps \[\otimes_{j \in J}\IndCoh^!_{\ren}(\MDisk_{I_j}^{\bullet}) \xrightarrow{\boxtimes} \IndCoh^!_{\ren}(\prod_{j \in J}\MDisk^{\bullet}_{I_j}) \xrightarrow{\sqcup_{?}} \IndCoh^!_{\ren}(\MDisk^{\bullet}_I)~.\] Here $\sqcup_?$ is the continuous right adjoint to $\sqcup^!$, which is defined by Lemma~\ref{lem:ren-?push}. Thus, each term in the colimit $\IndCoh^!_{\ren}(\MDisk^{\bullet}_{\Ran})$ admits a canonical symmetric monoidal structure, and the morphism in this diagram commute with the symmetric monoidal structure. Finally, the symmetric monoidal structure on $\IndCoh_{\ren}^!(\MDisk_{\Ran})$ is defined as the limit of $\IndCoh^!_{\ren}(\MDisk^{\bullet}_{\Ran})$ in $\Comm(\Cat_k)$, which commutes with the forgetful functor to $\Cat_k$.
\end{proof}

Explicitly, we can represent each $\cF \in \IndCoh^{!,\ren}(\MDisk_{\Ran})$ by a collection $\cF_{J} \in \IndCoh^{!,\ren}(\MDisk_J)$ for each finite set $J$, and identifications $\Delta_{(\alpha)}^!\cF_{i,J} \simeq \cF_{i,J'}$ for each $\alpha : J' \onto J$. Given $\cF_i \in \IndCoh^{!,\ren}(\MDisk_{\Ran})$, we have \[\otimes^{\ch}_{i}\cF_{i,J} = \bigoplus_{\beta : J \onto I} \sqcup_{(\beta),?} \boxtimes_{i \in I} \cF_{i,J_i}~.\]

\begin{remark}
    Note that, unlike $\MDisk_I^{\str}$, the stacks $\MDisk_{I}$ are not renormalizable, since the composition $\AA^I \to \MDisk_I$ is only \textit{ind}-schematic. Similarly the prestack $\MDisk_{\Ran}^{\bullet}$ (and certainly $\MDisk_{\Ran}$) are not renormalizable. Thus, the notations we used for their categories of sheaves $\IndCoh_{\ren}^!(\MDisk_{I})$ etc. are for convenience only, and are not meant to be understood as a case of renormalization of categories.
\end{remark}

\begin{definition}
    Let $\Ch^{\univ}$ be the full subcategory of $\Lie(\IndCoh^{!,\ch}_{\ren})$ spanned by Lie algebras $\cB$ whose underlying sheaf is given by $\cB_I = \Delta_{I,!}\cB_1$ for some \[\cB_1 \in \IndCoh^!_{\ren}(\MDisk_1) \simeq \IndCoh_{\ren}^!(B\Aut\hat{\cO})~.\] 
    
    Let $\Fact^{\univ}$ be the full subcategory of $\coComm(\IndCoh^{!,\ch}_{\ren}(\MDisk_{\Ran}))$ spanned by coalgebras $\cA$ such that the maps \begin{equation} \label{eq:univ-fact-iso}
        \sqcup^!\cA \to \cA^{\boxtimes I}
    \end{equation} adjoint to the comultiplication maps are isomorphisms.

    We refer to an object of $\Ch^{\univ}$ as a universal chiral algebra, and to an object of $\Fact^{\univ}$ as a universal factorization algebra.
\end{definition}

\begin{lemma} \label{lem:pro-nil}
    The chiral monoidal structure $\IndCoh^{!,\ch}_{\ren}(\MDisk_{\Ran})$ is pro-nilpotent in the sense of~\cite{FG}.
\end{lemma}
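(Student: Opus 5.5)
We need to recall what pro-nilpotent means in Francis–Gaitsgory. A symmetric monoidal category $\cC$ is pro-nilpotent if it is the limit of a tower $\cC \simeq \lim_n \cC_n$. Each $\cC_n$ is a symmetric monoidal category, and the transition functors $\cC_{n+1}\to\cC_n$ are symmetric monoidal and admit left adjoints. The key nilpotence condition is that for each $n$, the tensor product of objects in the kernel of $\cC_{n+1}\to\cC_n$ with any object of $\cC_{n+1}$ vanishes, i.e. the kernels are square-zero. The model case is the chiral category $\IndCoh^{\ch}(\Ran X)$. There one takes $\cC_n=\IndCoh(\Ran^{\le n}X)$, with $\Ran^{\le n}=\colim_{|I|\le n}X^I$. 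The kernel of restriction from $\Ran^{\le n+1}$ to $\Ran^{\le n}$ consists of sheaves supported on the locus of exactly $n+1$ points. The chiral product of such a sheaf with anything is supported on configurations with at least $n+2$ points, so it dies in $\cC_{n+1}$.

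I will imitate this argument for $\MDisk_{\Ran}$, working levelwise in the Čech direction and then passing to $\operatorname{Tot}$. For each $n$, define
\[
\MDisk_{\Ran}^{\le n,\bullet} = \colim_{I\in\fSet^{\surj,\op},\,|I|\le n}\MDisk_I^\bullet,
\qquad
\cC_n = \operatorname{Tot}\Bigl(\lim_{|I|\le n}\IndCoh^!_{\ren}(\MDisk_I^\bullet)\Bigr).
\]
Because $\fSet^{\surj}=\bigcup_n\fSet^{\surj}_{\le n}$ and limits commute with limits, this gives $\IndCoh^!_{\ren}(\MDisk_{\Ran})\simeq\lim_n\cC_n$.

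Next I check that the chiral structure restricts to each level. The formula
\[
(\otimes^{\ch}_i\cF_i)_J=\bigoplus_{\beta:J\onto I}\sqcup_{(\beta),?}\boxtimes_{i}\cF_{i,J_i}
\]
only involves indices $J_i$ with $|J_i|\le|J|$. Truncating at $|J|\le n$ is therefore compatible with the product, so each $\cC_n$ inherits a symmetric monoidal structure and the restriction functors are symmetric monoidal. The left adjoints to restriction come from left Kan extension along $\fSet^{\surj}_{\le n}\subset\fSet^{\surj}_{\le n+1}$. These Kan extensions are computed by colimits of $\Delta_{(\alpha),!}$, which exist by Lemma~\ref{lem:ren-!push}. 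The compatibility with the $!$-pullbacks in the Čech direction follows from the base change statement in Lemma~\ref{lem:ren-!push}, so the adjoints commute with $\operatorname{Tot}$.

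For nilpotence, let $\cF$ lie in the kernel of $\cC_{n+1}\to\cC_n$, so that $\cF_J=0$ whenever $|J|\le n$. In the product formula, a nonzero summand for $(\cF\otimes^{\ch}\cG)_J$ with $|J|\le n+1$ requires a surjection $J\onto\{1,2\}$ with $|J_1|\ge n+1$. But $|J_2|\ge1$ forces $|J|\ge n+2$, so every summand vanishes and $\cF\otimes^{\ch}\cG=0$ in $\cC_{n+1}$. The same bookkeeping shows that $n$-fold products land in degrees $\ge n$, which is the form of the condition actually used by \cite{FG} for Koszul duality.

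The main obstacle I expect is making the identification of the truncated categories, their adjoints and their monoidal structures genuinely compatible with $\operatorname{Tot}$. The issue is that $\sqcup_{?}$ is only defined levelwise, and the categories $\IndCoh^!_{\ren}(\MDisk_I)$ are not honest renormalized categories. I would handle this by establishing everything at each cosimplicial level $[m]$ first. I would then use the Beck–Chevalley isomorphisms of Lemmas~\ref{lem:ren-?push} and~\ref{lem:ren-!push} to see that the structure maps in the Čech direction preserve the filtration and commute with the adjoints, and only afterwards take the limit.
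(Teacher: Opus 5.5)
Your proposal is correct and follows the same route as the paper. Both arguments truncate $\fSet^{\surj}$ by cardinality, commute the limit over $m$ with $\operatorname{Tot}$ and with the limit over $I$, give each truncated category the induced chiral structure (the paper simply cites \cite[Section~5.1.5]{FG} for this step), and observe that a product of more than $m$ factors must vanish when $|J|\le m$. Your explicit checks that the kernel of $\cC_{n+1}\to\cC_n$ is square-zero, and that the left adjoints exist and are compatible with $\operatorname{Tot}$ via Beck--Chevalley, fill in details the paper leaves implicit rather than changing the argument.
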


\begin{proof}
    By definition and commutation of limits, \begin{align*}
        \IndCoh^{!}(\MDisk_{\Ran}) & = \lim_{[n] \in \Delta} \lim_{I \in \fSet^{\surj}} \IndCoh(\MDisk^{[n]}_{I}) \\
        & \simeq \lim_{m \geq 0} \lim_{[n] \in \Delta} \lim_{|I| \leq m} \IndCoh(\MDisk^{[n]}_{I}) \\
        & \eqqcolon \lim_{m \geq 0} \IndCoh(\MDisk_{\leq m})~.
    \end{align*} Each of the categories $\IndCoh(\MDisk_{\leq m})$ inherits a symmetric monoidal structure, similar to the argument in~\cite[Section~5.1.5]{FG}. This monoidal structure is nilpotent --- the disjoint union of $> m$ multi-disks is necessarily zero in $\IndCoh(\MDisk_{\leq m})$. 
\end{proof}

From~\cite[Proposition~4.3.3]{FG} we get:
\begin{corollary}
    The Chevalley-Eilenberg functor defines an isomorphism \[\Lie(\IndCoh^{!,\ch}_{\ren}(\MDisk_{\Ran})) \iso \coComm(\IndCoh^{!,\ch}_{\ren}(\MDisk_{\Ran}))~.\]
\end{corollary}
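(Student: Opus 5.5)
The corollary should follow directly from \cite[Proposition~4.3.3]{FG}, which states that for a pro-nilpotent symmetric monoidal category the Chevalley--Eilenberg functor $\Chev$ and its inverse, the primitives (Quillen) functor, are mutually inverse equivalences between Lie algebras and cocommutative coalgebras. By Lemma~\ref{lem:pro-nil}, $\IndCoh^{!,\ch}_{\ren}(\MDisk_{\Ran})$ is pro-nilpotent. So the plan is to check that the presentation produced there matches the precise hypotheses of the cited proposition, and then read off the equivalence.

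The first step is to verify the hypotheses in the form \cite{FG} requires. We need a tower of symmetric monoidal categories
\[\IndCoh^!_{\ren}(\MDisk_{\Ran}) \simeq \lim_{m} \IndCoh(\MDisk_{\leq m}),\]
with symmetric monoidal transition functors, such that each $\IndCoh(\MDisk_{\leq m})$ is nilpotent of order $m$. Concretely:
\begin{itemize}
\item each $\IndCoh(\MDisk_{\leq m})$ is stable and presentable;
\item its tensor product commutes with colimits separately in each variable;
\item any $(m+1)$-fold product of objects in the augmentation ideal vanishes.
\end{itemize}

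Nilpotence holds because the chiral product $\otimes^{\ch}$ is computed as $\bigoplus_{\beta}\sqcup_{(\beta),?}\boxtimes$. A disjoint union of $m+1$ multidisks is pointed by a set of size at least $m+1$, and every such term is killed by restriction to $|I|\le m$. The transition functors are restrictions along $\{|I|\le m\}\subset\{|I|\le m+1\}$. They are symmetric monoidal because the formula for $\otimes^{\ch}$ is termwise in $J$, and the limit over $\Delta$ commutes with it, exactly as in the lemma defining $\otimes^{\ch}$.

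The second step is to invoke \cite[Proposition~4.3.3]{FG} levelwise. At each finite stage $m$, $\Chev$ is an equivalence, and these equivalences are compatible with the symmetric monoidal restriction functors. Passing to the limit gives the equivalence on $\Lie$ and $\coComm$ of the limit category, because both $\Lie(-)$ and $\coComm(-)$ commute with limits of symmetric monoidal categories along symmetric monoidal functors.

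The main obstacle is that \cite{FG} works with the categories $\IndCoh(\Ran X)$ and with coalgebras in the non-unital, conilpotent sense. One has to check two things:
\begin{itemize}
\item the $\coComm$ appearing here means ind-nilpotent (conilpotent) coalgebras, which is automatic in a pro-nilpotent category;
\item the Chevalley--Eilenberg complex, an infinite coproduct of $\Sym^n$ terms, converges, that is, it is computed termwise in the tower.
\end{itemize}
The second point follows from nilpotence: at stage $m$ only $\Sym^{\le m}$ contributes, so $\Chev$ reduces to a finite construction at each level and commutes with the transition functors.
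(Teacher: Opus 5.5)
Your proposal is correct and follows the paper's route: the paper deduces the corollary directly from the pro-nilpotence in Lemma~\ref{lem:pro-nil} together with \cite[Proposition~4.3.3]{FG}. The rest of your argument only unpacks what that proposition already packages, namely nilpotence of each $\IndCoh(\MDisk_{\leq m})$, compatibility of $\Chev$ with the symmetric monoidal restriction functors, and passage to the limit.
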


We do not know wether this functor restricts to an isomorphism \[\Ch^{\univ} \iso \Fact^{\univ}\] However, for a universal chiral algebra $\cB$, the pullback of $\Chev^{\ch}(\cB)$ to any smooth curve will be a factorization algebra, as we describe below.

\begin{definition} \label{def:pi_Ran^XS}
    For a family of curves $X/S$, define \[\pi^{X/S}_{\dR} : \Ran(X/S) \to \MDisk_{\Ran}\] to be the colimit of the maps $\pi_{\dR,I}^{X/S}$.
\end{definition}

\begin{theorem} \label{thm:loc-fact}
    For a family of smooth curves $X/S$ over an algebraic stack $S$ locally of finite type, the $!$-pullback along $\pi_{\dR}^{X/S}$ is a symmetric monoidal functor \[\IndCoh^{!,\ch}_{\ren}(\MDisk_{\Ran}) \to \IndCoh(\MDisk_{\Ran} \times S) \to \IndCoh^{\ch}(\Ran(X/S))\] and induces maps \[\Ch^{\univ} \to \Ch(X/S),\quad \Fact^{\univ} \to \Fact(X/S)~.\] Furthermore, it commutes with the Chevalley-Eilenberg functor, so that we have a commutative diagram
    \[\begin{tikzcd}
	{\Ch^{\univ}} & {\coComm(\IndCoh^{!,\ch}_{\ren}(\MDisk_{\Ran}))} \\
	{\Ch(X/S)} & {\Fact(X/S)}
	\arrow["{\Chev^{\ch}}", from=1-1, to=1-2]
	\arrow["{\pi^{X/S,!}_{\dR}}", from=1-1, to=2-1]
	\arrow["{\pi^{X/S,!}_{\dR}}", from=1-2, to=2-2]
	\arrow["{\Chev^{\ch}_S}", from=2-1, to=2-2]
    \end{tikzcd}\]
\end{theorem}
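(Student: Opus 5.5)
The plan is to prove Theorem~\ref{thm:loc-fact} at the level of correspondences. The key fact is that $\pi^{X/S}_{\dR}$, together with the projection to $S$, is a map of commutative algebras in $\PreSt^{\corr}$ whose squares with the chiral structure maps are Cartesian. Symmetric monoidality of $!$-pullback then follows formally, as in Lemma~\ref{lem:open-fact}. Concretely, for each surjection $\alpha : J \onto I$ I will construct the diagram
\[\begin{tikzcd}
	{\prod_{i}^{S} X^{\times_S J_i}_{S\mh\dR}} & {X^{(\alpha)}_{S\mh\dR}} & {X^{\times_S J}_{S\mh\dR}} \\
	{\prod_i \MDisk_{J_i} \times S} & {\prod_i \MDisk_{J_i} \times S} & {\MDisk_J \times S}
	\arrow["{\jmath_{(\alpha)}}"', from=1-2, to=1-1]
	\arrow["{\Delta\circ\jmath}", from=1-2, to=1-3]
	\arrow[from=1-1, to=2-1]
	\arrow[from=1-2, to=2-2]
	\arrow[from=1-3, to=2-3]
	\arrow["\id"', from=2-2, to=2-1]
	\arrow["{\sqcup_{(\alpha)}}", from=2-2, to=2-3]
\end{tikzcd}\]
The middle vertical map sends disjoint sections to the disjoint union of their formal neighbourhoods. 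The main geometric claim is that the right square is Cartesian. A point of the fibre product is a $J$-tuple in $X$ together with a splitting of its formal completion $\hat X_{x_J}$ into a disjoint union of multidisks indexed by $I$, compatible with the pointings. Such a splitting exists, and is then unique, exactly when $x_{J_i}\cap x_{J_{i'}}=\emptyset$ for $\alpha(i)\neq\alpha(i')$. The idempotents of $\cO(\hat X_{x_J})$ are locally constant on the reduced support, so a decomposition is the same as a decomposition of the union of the reduced graphs, which is precisely disjointness. This identifies the fibre product with $X^{(\alpha)}_{S\mh\dR}$. Passing to the colimit over $\fSet^{\surj,\op}$ via Lemma~\ref{lem:Ran-colim} then gives the corresponding Cartesian squares on Ran spaces.

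Given the Cartesian square, I will identify $(\pi^!\cF)_J$ with $\pi_J^!\cF_J$. The required base-change isomorphisms are
\[\pi_J^!\sqcup_{(\alpha),?}\simeq \jmath_{(\alpha),*}\pi_{(\alpha)}^!\quad\text{and}\quad \pi^!\boxtimes\simeq\otimes^!_S\circ(\pi^!\otimes\pi^!).\]
The first comes from Lemma~\ref{lem:ren-?push}. That lemma requires $\sqcup_{(\alpha)}$ to be an open immersion, which I will check: disjointness is an open condition on $\MDisk_J$, again by the idempotent argument. Comparing the explicit formula for $\otimes^{\ch}$ on $\MDisk_{\Ran}$ with the formula on $\Ran(X/S)$ summand by summand gives the symmetric monoidal structure. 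Compatibility with the \v{C}ech direction $\MDisk_I^\bullet$ and with $\Delta_{(\alpha),!}$ comes from Lemma~\ref{lem:ren-!push}, since the maps $X^{\times_S J}_{S\mh\dR}\to\MDisk_J$ factor, after the strict cover of Lemma~\ref{lem:ff}, through the renormalizable stacks.

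The rest is formal. A symmetric monoidal functor preserves cocommutative coalgebras and Lie algebras, and it commutes with $\Chev$, since $\Chev$ is built from tensor powers, symmetric invariants and colimits, all of which $!$-pullback preserves. For the factorization condition, the map $\sqcup^!\pi^!\cA\to\jmath^!(\pi^!\cA)^{\otimes_S^! I}$ is the pullback of the isomorphism \eqref{eq:univ-fact-iso} along the left square. Hence it is an isomorphism, so $\Fact^{\univ}$ maps to $\Fact(X/S)$. For chiral algebras, the condition that the underlying sheaf is $\Delta_!\cB_1$ is preserved because the diagonal squares $X_{S\mh\dR}\to X^{\times_S J}_{S\mh\dR}$ over $\MDisk_1\to\MDisk_J$ are also Cartesian, by the same idempotent argument, and Lemma~\ref{lem:ren-!push} then gives base change.

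I expect the main obstacle to be foundational rather than geometric. $\IndCoh^!_{\ren}(\MDisk_{\Ran})$ is defined only as a formal limit, and $\MDisk_I$ is not itself renormalizable. So the pullback must be defined as a map of limits: pull back termwise to $X^{\times_S J}_{S\mh\dR}\times_{\MDisk_J}\MDisk_J^{[n]}$, then use $X$-descent for the \v{C}ech nerve to land in $\IndCoh(X^{\times_S J}_{S\mh\dR})$. The base-change isomorphisms then have to be checked to be coherently compatible with the cosimplicial structure. I would handle this by proving everything at each level $[n]$ of $\MDisk^\bullet$, where Lemmas~\ref{lem:ren-?push} and~\ref{lem:ren-!push} apply, and then passing to $\operatorname{Tot}$.
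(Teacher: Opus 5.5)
Your proposal is correct and follows essentially the same route as the paper: a Cartesian right-hand square over $\sqcup_{(\alpha)}$ with Lemma~\ref{lem:ren-?push} for the chiral product, Lemma~\ref{lem:ren-!push} for the diagonals, and pulling back \eqref{eq:univ-fact-iso} for the factorization condition. You also supply justifications the paper leaves implicit (why the square is Cartesian, why $\sqcup_{(\alpha)}$ is an open immersion, and the levelwise treatment of $\MDisk_I^{\bullet}$), and the only real divergence is in the Chevalley step: the paper uses exactness of $\pi^{X/S,!}_{\dR}$ on finite truncations together with pro-nilpotence (Lemma~\ref{lem:pro-nil}), whereas you use continuity of $!$-pullback, and either suffices.
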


\begin{proof}
    Given a surjective map $\alpha : I \onto J$, we have a commutative diagram \[\begin{tikzcd}
	{\prod_{j \in J}X^{\times_SI_j}_{S\mh\dR}} & {(\prod_{j \in J}X_{S\mh\dR}^{\times_SI_j})_{\disj}} & {X^{\times_SI}_{S\mh\dR}} \\
	{\prod_{j \in J}\MDisk_{I_j}} & {\prod_{j \in J}\MDisk_{I_j}} & {\MDisk_I}
	\arrow["{\prod_{j \in J} \pi_{\dR,I_j}}"', from=1-1, to=2-1]
	\arrow["\jmath"', from=1-2, to=1-1]
	\arrow["\sqcup", from=1-2, to=1-3]
	\arrow["\pi_{\dR,I}^{\disj}", from=1-2, to=2-2]
	\arrow["{\pi_{\dR,I}}"', from=1-3, to=2-3]
	\arrow["\id"', from=2-2, to=2-1]
	\arrow["\sqcup", from=2-2, to=2-3]
    \end{tikzcd}\] and the right square is Cartesian. Using base-change for the right square and Lemma~\ref{lem:ren-?push}, we get \begin{align*}
        (\pi_{\dR}^{X/S,!}\otimes_{j \in J}^{\ch} \cF_{j})_I & = \pi_{\dR,I}^{X/S,!}\bigoplus_{\alpha : I \onto J} \sqcup_{?}\boxtimes_{j \in J} (\cF_j)_{I_j} \\
        & \simeq \bigoplus_{\alpha : I \onto J} \sqcup_*\jmath^!\boxtimes_{j \in J} \pi^{X/S,!}_{\dR,I_j}(\cF_j)_{I_j} = (\otimes^{\ch}_{j \in J} \pi_{\dR}^{X/S,!}\cF_j)_I~.
    \end{align*} 
Here we are using the notation $\sqcup_*$ for $X/S$ rather than $\sqcup_?$ since the stacks involved are of finite type, and therefore $\sqcup_*$ is defined and equals the right adjoint of $\sqcup^! = \sqcup^*$. By Lemma~\ref{lem:ren-!push}, $\pi_{\dR,I}^{X/S,!}\Delta_{I,!} \simeq \Delta_{I,!}\pi_{\dR,1}^{X/S,!}$, and so a chiral algebra is sent to a chiral algebra. For a universal factorization algebra $\cA$, the factorization isomorphism~\eqref{eq:fact-iso} for $\pi^{X/S,!}_{\dR}\cA$ follows from the factorization isomorphism~\eqref{eq:univ-fact-iso}. Finally, since $\pi^{X/S,!}_{\dR}$ is exact and symmetric monoidal, it commutes with finite truncations of the Chevalley-Eilenberg complex. Since the monoidal structures are pro-nilpotent by Lemma~\ref{lem:pro-nil} and~\cite{FG}, it commutes with the full Chevalley-Eilenberg functor.  
\end{proof}

\section{Moduli spaces of nodal curves} \label{sec:moduli}

\subsection{Stable weighted pointed curves}

Given $g,n \in \ZZ_{\geq 0}$, the moduli stack $\cM_{g,n}$ of $n$-pointed, smooth, projective, genus $g$ curves, admits a well known compactification, given by the Deligne-Mumford stack $\bcM_{g,n}$: For a $k$-algebra $R$, its $R$-points are given by flat and proper families $X \to \Spec R$, together with $n$ sections $p_1,\dots,p_n : \Spec R \to X$, such that $X$ has at worst nodal singularities, namely points whose formal neighborhood is isomorphic to $k[[x,y]]/(xy)$, and such that the configuration $(X,p_1,\dots,p_n)$ is \textit{stable}. A pointed curve is stable if for each irreducible component $X_0$ of genus $g_0$, the number of nodes and marked points is greater than $2-2g_0$. In particular, $\bcM_{0,n}$ is empty for $n<3$, and $\bcM_{1,n}$ is empty for $n=0$.

However, there exist other compactifications of $\cM_{g,n}$. A family of such compactifications, which we will use in this paper, is given by Hassett's moduli space of weighted pointed curves~\cite{Has}:

\begin{definition}
  For a vector of rational weights $\pmb{w} \in (\QQ \cap [0,1])^n$, an $R$-family of stable pointed curve $(X,p_1,\dots,p_n)$ is $\pmb{w}$-stable if the $\QQ$-divisor $K + \sum_{i=1}^n w_i[p_i]$ is ample. Denote by $\bcM_{g,\pmb{w}}$ the moduli stack of $n$-pointed genus $g$ $\pmb{w}$-stable curves.
\end{definition}

Over geometric points, this means the following: The total weight at any given point is $\leq 1$, where a node has weight $1$, and the total weight of an irreducible component of genus $h$ is greater than $2 - 2h$. The stacks $\bcM_{g,\pmb{w}}$ have the usual stable reduction maps, as well as stable reduction maps $\bcM_{g,\pmb{w}} \to \bcM_{g,\pmb{w}'}$ whenever $w_i' \leq w_i$ for all $i$. It is proven in~\cite{Has} that these spaces are smooth and proper Deligne-Mumford stacks.

We will mostly be interested in a special case of this construction, where we only have two classes of marked points: Those with weight one, which behaves like marked points on the usual compactification $\bcM_{g,n}$, and those with weight $\epsilon > 0$, which may collide with each other (but not with nodes or weight one points), thus behave like Ran points.

\begin{definition}
    For $n\geq 0$ and a finite set $I$, let $\pmb{w}_{n,I} := (\pmb{1},\frac{\pmb{1}}{|I|+1}) \in \QQ^n \times \QQ^I$. Define
  \[\bcM_{g,n,I} := \bcM_{g,\pmb{w}_{n,I}}~.\]
\end{definition}

\begin{definition}
  For an $R$-point $(X,x_1,\dots,x_n) \in \bcM_{g,n}(R)$, let \[\bcM_{(X,x_1,\dots,x_n),I} = \bcM_{g,n,I} \times_{\bcM_{g,n}} \Spec R~.\]
\end{definition}

\begin{example}
  In the case $ (g,n) = (0,2)$, the stacks $\bcM_{0,2,I}$ are precisely the stacks considered by Losev-Manin in \cite{LM}. A $k$-point is given by a chain $\PP^1 \cup_{\infty \sim 0} \PP^1 \cup \dotsb \cup_{\infty \sim 0} \PP^1$ with $0$ of the first component and $\infty$ of the last one being the weight $1$ points, together with $I$ smooth points disjoint from $0,\infty$. As was described in~\cite[Chapter 6]{Has}, they can be obtained from $\PP^{|I|-1}$ by an explicit sequence of blow-ups and are therefore smooth, proper, and reduced varieties, of dimension $|I|-1$.
\end{example}

\begin{example} \label{ex:ss-modif-marked}
  Let $(X,x) \in \cM_{g,1}$ be a pointed smooth curve. Then $k$-points of the stack $\bcM_{(X,x),I}$ are given by curves of the form \[X' = X \cup_{x \sim 0} \PP^1 \cup \dotsb \cup_{\infty \sim 0} \PP^1\] with $\infty$ of the last component being the unique marked point of weight $1$, together with $I$ weight-$\epsilon$ marked points on the smooth locus, such that each genus $0$ component has at least one marked point of weight $\epsilon$. We call such $X'$ a \textit{semistable modification of $X$ at the marked point $x$} (this is called an expanded pair in~\cite{Li}). The variety $(X \backslash \{x\})^I$ embeds as the open substack of $\bcM_{(X,x),I}$ where the chain is trivial. 
\end{example}

\begin{example} \label{ex:ss-modif-node}
  Let $X \in \bcM_g$ be a nodal curve with a unique node $q \in X$. Then $(X \backslash \{q\})^I$ embeds as an open substack in $\bcM_{X,I}$, the latter being the stack of rational bridges inserted at the node. Its $k$-points are given by curves of the form \[X' = X \cup_{q \sim \{0,\infty\}} (\PP^1 \cup \dotsb \cup_{\infty \sim 0} \PP^1)\] together with $I$ weight-$\epsilon$ marked points on the smooth locus, such that each component of genus $0$ has at least one marked point of weight $\epsilon$. We call such $X'$ a \textit{semistable modification of $X$ at the node $q$} (this is called an expanded degeneration in~\cite{Li}).
\end{example}

A special feature of the stacks $\bcM_{g,n,I}$ is that, when forgetting the weight $\epsilon$ points, the result is a semistable curve:
\begin{lemma}
  Let $\fM_{g,n}^{\sst}$ be the moduli stack of $n$-pointed genus $g$ semistable curves. Then there is a map
  \[\bcM_{g,n,I} \to \fM_{g,n}^{\sst}\]
\end{lemma}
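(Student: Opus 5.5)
The map will be the forgetful map that discards the weight-$\epsilon$ points. It does not need stabilization: the underlying curve $X$ of a $\pmb{w}_{n,I}$-stable curve $(X,p_1,\dots,p_n,\{q_i\}_{i\in I})$, together with its $n$ weight-one points, is already a semistable $n$-pointed curve. So I would define the map on $R$-points by
\[(X\to\Spec R,\ p_1,\dots,p_n,\ \{q_i\}_{i\in I}) \mapsto (X\to\Spec R,\ p_1,\dots,p_n).\]
Two things must be checked: the output satisfies the defining conditions of $\fM^{\sst}_{g,n}$, and the assignment is compatible with base change.

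\textbf{Fiberwise semistability.} Flatness, properness, arithmetic genus $g$, nodal fibers, and the weight-one sections lying in the smooth locus and being pairwise disjoint are all part of the data of $\bcM_{g,n,I}$ or follow from the weight conditions. Here two weight-one points cannot collide, since their total weight would be $2>1$, and a weight-one point cannot meet a node. All of these conditions are fiberwise or open, so it suffices to check semistability on geometric fibers. Let $X_0\subset X_{\bar s}$ be an irreducible component of genus $h$, and let $a$, $b$, $c$ be the numbers of special points (nodes counted on the normalization), weight-one points, and weight-$\epsilon$ points on $X_0$. Weighted stability gives
\[
a + b + \tfrac{c}{|I|+1} > 2-2h .
\]
If $h\ge 1$, semistability of $X_0$ is automatic. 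If $h=0$, we need $a+b\ge 2$. Since $c\le |I|$, we have $\tfrac{c}{|I|+1}<1$, so the inequality gives $a+b>1$, i.e.\ $a+b\geq 2$, because $a+b$ is an integer. This is the key point and the reason for the choice of weight $\tfrac{1}{|I|+1}$. So every rational component has at least two special points, and unstable components can only be rational bridges or tails carrying exactly two weight-one or nodal points. Those are exactly the components allowed in $\fM^{\sst}_{g,n}$, as in Examples~\ref{ex:ss-modif-marked} and~\ref{ex:ss-modif-node}.

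\textbf{Functoriality.} Forgetting sections visibly commutes with pullback along $\Spec R'\to\Spec R$. It also commutes with isomorphisms of families, since an isomorphism of $\pmb{w}$-stable curves is in particular an isomorphism of the underlying $n$-pointed curves. This gives a morphism of stacks, and hence the map $\bcM_{g,n,I}\to\fM^{\sst}_{g,n}$.

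The only step that is not formal is the numerical inequality. The main possible obstacle is a convention mismatch: the definition of $\fM^{\sst}_{g,n}$ might require the marked points to be distinct and smooth, and might exclude genus-$0$ components with fewer than two special points, for instance in degenerate cases such as $(g,n)=(0,0)$ or $(0,1)$. The computation above excludes such components uniformly, provided $|I|\ge 0$. Whatever convention is used, I would also verify that the weight-one disjointness is imposed by the total-weight $\le 1$ condition.
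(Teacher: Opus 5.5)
Your proposal is correct and follows the paper's argument. The map forgets the weight-$\epsilon$ points, and the key observation is that their total weight $|I|/(|I|+1)$ is strictly less than $1$, so a rational component with fewer than two nodes or weight-one points (a rational tail) would already violate $\pmb{w}_{n,I}$-stability. Your explicit inequality $a+b+\tfrac{c}{|I|+1}>2-2h$ and your base-change check spell out details that the paper's proof leaves implicit.
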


\begin{proof}
  Given a pointed curve $(X,x_1,\dots,x_n,\{p_i\}_{i \in I}) \in \bcM_{g,n,I}$, the corresponding $n$-pointed curve $(X,x_1,\dots,x_n)$ is prestable, and does not have a rational tail {--} any such tail has a total weight $\leq 1$, and since the total weight of $y_i,i \in I$ is $<1$, such a tail would have been unstable in $\bcM_{g,n,I}$ as well.
\end{proof}

We will mostly be interested in sheaves and spaces over $\bcM_{g,n,I}$ which have a flat connection with respect to changing the (weight $\epsilon$) marked points, but not with respect to changing the geometry of the curve (we regard a weight one marked point as a puncture, so changing it would mean changing the geometry). Thus, we define: 
\begin{definition}
  Let \[(\bcM_{g,n,I})_{\dR/} = (\bcM_{g,n,I})_{\dR} \times_{(\fM_{g,n}^{\sst})_{\dR}} \fM_{g,n}^{\sst}\] be the relative de Rham stack. Its $R$-points are given by tuples $(X,x_1,\dots,x_n,\{y_i^{\red}\})$ such that $(X,x_1,\dots,x_n) \in \fM_{g,n}^{\sst}(R)$, and \[(X^{\red},x_1^{\red},\dots,x_n^{\red},\{y_i^{\red}\}) \in \bcM_{g,n,I}(R^{\red})~.\]
\end{definition}

For a surjective map of finite sets $\alpha : I \to J$, let
\[\Delta_{(\alpha)} : (\bcM_{g,n,J})_{\dR/} \to (\bcM_{g,n,I})_{\dR/}\]
be the map
\[(X,x_1,\dots,x_n,\{y_j^{\red}\}_{j \in J}) \mapsto (X,x_1,\dots,x_n,\{y_{\alpha(i)}^{\red}\}_{i \in I})~.\]

\begin{definition}
  Let \[\bcM_{g,n,\Ran} \coloneqq \colim_{\fSet^{\surj,\op}} (\bcM_{g,n,I})_{\dR/}~.\]
\end{definition}

By definition,
\[\IndCoh(\bcM_{g,n,\Ran}) = \lim_{I \in \fSet^{\surj}} \IndCoh((\bcM_{g,n,I})_{\dR/})~,\]
where the limit is taken with respect to $!$-pullbacks.

Since the maps $\Delta_{(\alpha)}$ are closed embeddings, $\Delta_{(\alpha)}^!$ admits a left adjoint $\Delta_{(\alpha),!} \simeq \Delta_{(\alpha),*}$, and we get:
\begin{corollary}
  \[\IndCoh(\bcM_{g,n,\Ran}) \simeq \lim_{I \in \fSet^{\surj}} \IndCoh((\bcM_{g,n,I})_{\dR/}) \iso \colim_{I \in \fSet^{\surj,\op}} \IndCoh((\bcM_{g,n,I})_{\dR/})~,\]
  where the colimit is taken with respect to $!$-pushforward. 
  
  In terms of this isomorphism, each $\cF \in \IndCoh(\bcM_{g,n,\Ran})$ can be written as
  \[\cF \simeq \colim \Delta_{(\alpha),!}\Delta_{(\alpha)}^!\cF~.\]
\end{corollary}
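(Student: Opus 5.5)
The plan is to derive the Corollary formally from the definition of $\bcM_{g,n,\Ran}$ as a colimit, using the standard argument that turns a limit of categories along right adjoints into a colimit along left adjoints. The route is the same one the paper uses for $\Ran(X/S)$ in \eqref{eq:colim-present}.

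First, since $\bcM_{g,n,\Ran} = \colim_{\fSet^{\surj,\op}} (\bcM_{g,n,I})_{\dR/}$ and $\IndCoh$ (with $!$-pullback) is defined by right Kan extension from affines, it sends colimits of prestacks to limits of categories. This gives the first identification $\IndCoh(\bcM_{g,n,\Ran}) \simeq \lim_{I} \IndCoh((\bcM_{g,n,I})_{\dR/})$ along the functors $\Delta_{(\alpha)}^!$.

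Second, I need to check that each $\Delta_{(\alpha)}$ is a closed embedding, so that $\Delta_{(\alpha),*}$ is left adjoint to $\Delta_{(\alpha)}^!$. The map $\bcM_{g,n,J} \to \bcM_{g,n,I}$ sends a $J$-configuration to the configuration that repeats points according to $\alpha$. Its image is the locus where $y_i = y_{i'}$ whenever $\alpha(i) = \alpha(i')$. This locus is a closed condition: it is an equalizer of sections of the universal curve, which is separated over the base. The map is a monomorphism, since the weights are chosen so that a configuration with coincident points is still stable: coincident $\epsilon$-points have total weight $<1$. After passing to relative de Rham stacks, the map is $(-)_{\dR}\times_{(\fM^{\sst})_{\dR}}\fM^{\sst}$ of a closed embedding, and is therefore again a closed embedding, in fact ind-proper and a monomorphism. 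Since these prestacks are laft, $\Delta_{(\alpha),!}\simeq \Delta_{(\alpha),*}$ exists and is left adjoint to $\Delta_{(\alpha)}^!$.

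Third, I apply the general principle (Gaitsgory–Rozenblyum, \emph{limits along right adjoints equal colimits along left adjoints} in $\mathrm{Pr}^L$) to obtain $\lim \IndCoh((\bcM_{g,n,I})_{\dR/}) \simeq \colim \IndCoh((\bcM_{g,n,I})_{\dR/})$, with the colimit formed in presentable categories along the $\Delta_{(\alpha),!}$. Under this equivalence, the insertion functors $\mathrm{ins}_I$ of the colimit correspond to the left adjoints $\Delta_{I,!}$ of the evaluation functors $\Delta_I^!$. The standard formula for an object of such a colimit then gives $\cF \simeq \colim_I \Delta_{I,!}\Delta_I^!\cF$.

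The main obstacle is the second step: verifying that $\Delta_{(\alpha)}$ is a closed embedding on the Hassett spaces. Concretely, I must make sure that merging $\epsilon$-points never destabilizes a component. This holds because the total $\epsilon$-weight on a component is unchanged by merging, and collisions of $\epsilon$-points are allowed. I also need the index category to be handled correctly. Its morphisms are surjections, so the colimit is not sifted in the naive sense, but the adjoint-functor principle requires no siftedness.
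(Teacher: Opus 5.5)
Your proposal is correct and follows the paper's argument: the maps $\Delta_{(\alpha)}$ are closed embeddings, so $\Delta_{(\alpha)}^!$ has the left adjoint $\Delta_{(\alpha),!}\simeq\Delta_{(\alpha),*}$, and the limit along right adjoints is the colimit along left adjoints, which gives $\cF\simeq\colim\Delta_{(\alpha),!}\Delta_{(\alpha)}^!\cF$. One small fix in your second step: stability of the configuration with repeated points is what makes $\Delta_{(\alpha)}$ land in $\bcM_{g,n,I}$, while monomorphy comes from surjectivity of $\alpha$; also, because the weights $\frac{1}{|I|+1}$ depend on $|I|$, the total $\epsilon$-weight of a component is not literally preserved, and what matters is only that it stays positive on each rational component and below $1$ overall.
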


Assume now $2g+n-2>0$. Then we have a projection map
\[\bcM_{g,n,\Ran} \to \bcM_{g,n}\]
given by forgetting the weight-$\epsilon$ points, followed by stable reduction. Furthermore, this projection can be lifted to the Ran space of the universal curve:
\begin{definition} \label{def:Ran_gn}
  Let $\Ran_{g,n} \to \bcM_{g,n}$ be the relative Ran space, i.e., given by the colimit over finite sets and surjections of the stacks
  \[(\bcX_{g,n})^I_{\dR/} \times_{\bcM_{g,n}^I} \bcM_{g,n}\]
  where \[(\bcX_{g,n})_{\dR/} = (\bcX_{g,n})_{\dR} \times_{(\bcM_{g,n})_{\dR}} \bcM_{g,n}\] is the relative de Rham stack of the universal curve. Note that for $(X,x_1,\dots,x_n) \in \bcM_{g,n}(R)$, \[\Ran_{g,n} \times_{\bcM_{g,n}} \{(X,x_1,\dots,x_n)\} = \Ran(X/R)\] is the usual Ran space. Let $\mathring{\Ran}_{g,n} \subset \Ran_{g,n}$ be the open subspace corresponding to $\Ran(X^{\sm} \backslash \{x_1,\dots,x_n\}/R) \into \Ran (X/R)$.
\end{definition}

We can identify an object in the fiber of $\bcX_{g,n} \to \bcM_{g,n}$ as a marked point of weight $0$. In particular, we have a stable reduction map from points of weight $\epsilon$:
\begin{equation} \label{eq:Pi_gn}
  \Pi_{g,n} : \bcM_{g,n,\Ran} \to \Ran_{g,n}~.
\end{equation}

\begin{definition}
  For $(X,x_1,\dots,x_n) \in \bcM_{g,n}(R)$, let
  \[\bcM_{(X,x_1,\dots,x_n),\Ran} := \bcM_{g,n,\Ran} \times_{\bcM_{g,n}} \Spec R \simeq \colim_{\fSet^{\surj,\op}} (\bcM_{(X,x_1,\dots,x_n),I})_{\dR/}~.\]
\end{definition}

\begin{example} \label{ex:Pi_Xx}
  Following Example~\ref{ex:ss-modif-marked}, for a smooth pointed curve $(X,x)$, the space $\bcM_{(X,x),\Ran}$ classifies finite and reduced subsets of semistable modifications $X'$ of $X$ at $x$. This space will serve as a maximal compactification of $\Ran(X \backslash x)$. Restricting the map \eqref{eq:Pi_gn} to $(X,x)$, we get a map
  \[\Pi_{(X,x)} : \bcM_{(X,x),\Ran} \to \Ran X\]
  to the minimal compactification.
\end{example}

\begin{example}
  Following Example~\ref{ex:ss-modif-node}, for an $R$-family of nodal curves $X \in \bcM_g(R)$, the space $\bcM_{X,\Ran}$ classifies finite and reduced subsets of semistable modifications $X'$ of $X$ at the nodes. This space serves as a maximal compactification for the relative Ran space $\Ran(X^{\sm}/R)$, and by restricting~\eqref{eq:Pi_gn}, we get a map
  \[\Pi_{X/R} : \bcM_{X/R,\Ran} \to \Ran (X/R)\]
  to the minimal compactification.
\end{example}

Finally, we also have a semistable version of $\bcM_{g,n,\Ran}$:
\begin{definition}
    For $g,n \geq 0$, let \[\fM_{g,n,\Ran}^{\sst} \to \fM_{g,n}^{\sst}\] be the prestack whose fiber over $\Spec R \to \fM_{g,n}^{\sst}$ classifying a semistable $n$-pointed $R$-curve $(X,p_1,\dots,p_n)$ is given by \[\Ran_{\emptyset}(X^{\sm} \backslash \{p_1,\dots,p_n\}/R) \coloneqq \Ran(X^{\sm} \backslash \{p_1,\dots,p_n\}/R) \sqcup \{\emptyset_R\}~.\]

    For a finite (possibly empty) set $I$, let $\fM_{0,2,I}^{\sst} \to \fM_{0,2,\Ran}^{\sst}$ be the space of $I$-indexed finite subsets.
\end{definition}

In other words, we have \[\fM_{g,n,\Ran}^{\sst} = \Ran_{\emptyset}(\mathring{\fX}_{g,n}^{\sst}/\fM_{g,n}^{\sst})~,\]
where $\fX_{g,n}^{\sst} \to \fM_{g,n}^{\sst}$ is the universal curve, and $\mathring{\fX}_{g,n}^{\sst} = \fX_{g,n}^{\sst,\sm} \backslash \bigcup_{i=1}^n \sigma_i^*\fM_{g,n}^{\sst}$ is the smooth locus of the corresponding punctured curve. Here $\sigma_i : \fM_{g,n}^{\sst} \to \fX_{g,n}^{\sst}$ are the canonical sections. We then have a natural inclusion 
\begin{equation} \label{eq:Phi_gn}
    \Phi_{g,n} : \bcM_{g,n,\Ran} \to \fM_{g,n,\Ran}^{\sst}
\end{equation} 

and a left inverse over the non-empty locus, given by stable reduction
\begin{equation} \label{eq:Psi_gn}
    \Psi_{g,n} : \fM_{g,n,\Ran}^{\sst} \backslash \fM_{0,2,\emptyset}^{\sst} \to \bcM_{g,n,\Ran}~.
\end{equation}

\subsection{Moduli of semistable modifications for \texorpdfstring{$(g,n) = (0,2)$}{(g,n) = (0,2)}}

In this section we review some results about the moduli of semistable modifications in the case of two-pointed genus zero curves. The basic object of study will be the moduli stack $\fM_{0,2}^{\sst}$, parametrizing chains of genus zero curves (``rational chains''), with the two endpoints marked. A general semistable modification of an $n$-pointed genus $g$ curve, either at a marked point or at a node, is given by inserting a rational chain at that point, hence the importance of this special case.

A basic object in the description of this stack is the quotient stack $\AA^1/\GG_m$, parametrizing (virtual) Cartier divisors: A map $S \to \AA^1/\GG_m$ from a scheme $S$ is equivalent to a choice of a Cartier divisor $D \subset S$, given by pulling back the universal Cartier divisor $B\GG_m = \{0\}/\GG_m \into \AA^1/\GG_m$: \[D = S \times_{\AA^1/\GG_m} B\GG_m~.\]

Over $\AA^1$, we have the family \[\overline{\WW}[1] = \left\{([x_0:x_1],[y_0:y_1],t) \vcentcolon x_0y_0 = tx_1y_1\right\} \subset \PP^1 \times \PP^1 \times \AA^1 \xrightarrow{t} \AA^1\]
(or: the closure of $\{xy=t\} \subset \AA^3$ inside $\PP^1 \times \PP^1 \times \AA^1$). We have two sections \[0,\infty : \AA^1 \to \overline{\WW}[1]\] given by \[0,\infty : t \mapsto ([1:0],[0:1],t),([0:1],[1:0],t)\]respectively. This is a family of semistable, two pointed, genus zero curves, hence defines a map \[\tilde{\tau}_1 : \AA^1 \to \fM_{0,2}^{\sst}~.\]

The curve $\overline{\WW}[1] \to \AA^1$ admits an action by $\GG^2_m$, given by \[(\lambda_0,\lambda_1) \cdot ([x_0:x_1],[y_0:y_1],t) = ([\lambda_0^{-1}x_0:x_1],[\lambda_1y_0:y_1],\lambda_0^{-1}t\lambda_1)\]
and so we get a map \[\tau_1 : \fA[1] \coloneqq \GG_m \backslash \AA^1 / \GG_m \to \fM_{0,2}^{\sst}~.\] As we will see, this map is an open embedding of the substack of curves with at most two irreducible components. More generally, Li~\cite{Li} defined families of semistable two-pointed genus zero curves \[\overline{\WW}[\ell] \to \AA^{\ell}\] for each ${\ell}>0$, having at most $\ell$ nodes (or: at most $\ell+1$ irreducible components). It has the property that the fiber over a point $(t_1,\dots,t_{\ell})$ with $i$ zeros has exactly $i$ nodes, and it can be equipped with a $\GG_m^{\ell+1}$ action.

\begin{definition}
    Write \[\fA[\ell] = \GG_m \backslash \AA^1 \times^{\GG_m} \dotsb \times^{\GG_m} \AA^1 = \AA^{\ell}/\GG_m^{\ell+1}\] For $\ell>0$, let \[\tau_{\ell} : \fA[\ell] \to \fM_{0,2}^{\sst}\] be the map classifying the family $\overline{\WW}[\ell] / \GG_m^{\ell+1} \to \fA[\ell]$. For $\ell=0$, let $\overline{\WW}[0] = \PP^1$, and \[\tau_0 : B\GG_m \to \fM_{0,2}^{\sst}\] the map classifying the family $\PP^1/\GG_m \to B\GG_m$.
\end{definition}

For a surjective order preserving map $\alpha : [\ell] \onto [\ell']$, we have maps \[\fA[\ell'] \to \fA[\ell]\] given by \[(t_1,\dots,t_{\ell'}) \mapsto (t_{[\alpha(0)+1,\alpha(1)]},\dots,t_{[\alpha(\ell-1)+1,\alpha(\ell)]})~.\] 

The following is a combination of~\cite[Proposition~3.3.4,6.3.3,8.3.1]{ACFW}:
\begin{theorem} \label{thm:ss-02}
    The maps \[\tau_{\ell} : \fA[\ell] \to \fM_{0,2}^{\sst}\] are \'{e}tale covers of the open substack $\fM_{0,2}^{\sst,(\leq \ell+1)} \subset \fM_{0,2}^{\sst}$ of at most $\ell+1$ irreducible components, and we have an isomorphism \[\fM_{0,2}^{\sst} \simeq \colim_{[\ell] \in \Delta^{\surj,\op}} \fA[\ell]~.\]
\end{theorem}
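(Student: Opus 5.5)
The statement is the one the paper attributes to \cite{ACFW}, so the plan is to reassemble their argument in the notation above. There are two parts. First, each $\tau_\ell$ is an étale surjection onto the open substack $\fM_{0,2}^{\sst,(\leq \ell+1)}$. Second, these charts glue along the maps induced by surjections in $\Delta$.

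\textbf{Étaleness and surjectivity.} I would first identify the image. A fiber of $\overline{\WW}[\ell]$ over a point with $i$ vanishing coordinates is a chain of $i+1$ components, so $\tau_\ell$ lands in $\fM_{0,2}^{\sst,(\leq \ell+1)}$. Openness of this substack follows from upper semicontinuity of the number of nodes.

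Surjectivity onto it is a $k$-point statement. Given a chain with $i+1\le \ell+1$ components, choose any subset of $i$ of the $\ell$ coordinates to be zero and set the rest to $1$.

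For étaleness I would use deformation theory of nodal curves. A semistable two-pointed rational chain $C$ with nodes $q_1,\dots,q_i$ has no moduli for its components, since each is a $\PP^1$ with two special points. Its deformations are therefore governed by the smoothing parameters of the nodes, and its automorphisms are $\GG_m^{i+1}$. Hence the versal deformation space is $\AA^i$ with $\GG_m^{i+1}$ acting, and locally $\fM_{0,2}^{\sst}\simeq \AA^i/\GG_m^{i+1}$ near $[C]$.

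Near a point of $\fA[\ell]$ with exactly $i$ zero coordinates, the remaining $\ell-i$ coordinates are invertible. Using part of the $\GG_m^{\ell+1}$-action to normalize them, I expect to identify the chart locally with $\AA^i/\GG_m^{i+1}$. This identification has to be compatible with the family $\overline{\WW}[\ell]$, which locally at the $j$-th node is $\{xy=t_j\}$. Then $\tau_\ell$ is locally an isomorphism onto the versal chart, hence étale.

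\textbf{Colimit presentation.} Next I would check that for $\alpha:[\ell]\onto[\ell']$ the map $\fA[\ell']\to\fA[\ell]$, which multiplies consecutive coordinates, is compatible with the $\tau$'s. Concretely, the pullback of $\overline{\WW}[\ell]$ is $\overline{\WW}[\ell']$, because the family $\{xy=t_at_{a+1}\cdots\}$ is the contraction of the intermediate components. This makes $\tau$ into a cone under the $\Delta^{\surj,\op}$-diagram.

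To show the induced map from the colimit is an isomorphism, I would use the fact that the $\fM_{0,2}^{\sst,(\leq \ell+1)}$ exhaust $\fM_{0,2}^{\sst}$ and compare the colimits filtration-wise. It then suffices to show that restricting to $[\ell]$ and below presents $\fM_{0,2}^{\sst,(\leq\ell+1)}$.

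\textbf{Main obstacle.} The key point is that the fiber products $\fA[\ell]\times_{\fM}\fA[\ell]$ are computed by the diagram. Two charts meet along loci where a chain with $i$ nodes is marked in two ways by subsets of size $i$. I expect these to be unions of open substacks indexed by pairs of surjections $[\ell]\onto[m]$, matching the simplicial structure. This is where I would lean directly on \cite[Prop.~6.3.3, 8.3.1]{ACFW}, including their treatment of the nontrivial automorphisms through the $\GG_m$-quotients, rather than reprove it.
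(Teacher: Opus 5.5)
Your outline has the same shape as the paper's proof, which is nothing more than the citation to \cite{ACFW}. Your surjectivity and \'{e}taleness arguments are fine in substance: both stacks are smooth, and at the most degenerate point $\tau_i$ is an isomorphism on stabilizers $\GG_m^{i+1}$ and on first-order deformations, since $xy=t_j$ is the universal smoothing of the $j$-th node.

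The step that is wrong is the cone compatibility. For a surjection $\alpha:[\ell]\onto[\ell']$ one has $\alpha(k)-\alpha(k-1)\in\{0,1\}$. So each interval $[\alpha(k-1)+1,\alpha(k)]$ in the paper's formula is empty or a singleton. The map $\fA[\ell']\to\fA[\ell]$ therefore multiplies nothing: it inserts $1$ in the coordinates $k$ with $\alpha(k)=\alpha(k-1)$ and is the identity on the others. The group $\GG_m^{\ell+1}$ surjects onto the torus acting on those $\ell-\ell'$ coordinates, with kernel a rank $\ell'+1$ torus. Hence this map is an open embedding of $\fA[\ell']$ onto the locus in $\fA[\ell]$ where those coordinates are invertible.

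$\tau_\ell$ restricts to $\tau_{\ell'}$ because over $\{t_k\neq 0\}$ the $k$-th node of $\overline{\WW}[\ell]$ is smoothed, so the restricted family is $\overline{\WW}[\ell']$. That is an open restriction, not a contraction. Your justification describes the opposite kind of map, and for that kind the claim is false. Take a map $\fA[\ell]\to\fA[\ell']$ that genuinely multiplies consecutive coordinates, as in the projection $\fM_{0,2}^{\sst}\to\fA[1]$ used later in the paper. Pulling $\overline{\WW}[\ell']$ back along it gives the family with local equations $xy=t_at_{a+1}\cdots$. That family has fewer nodes over the origin and a singular total space, so it is not $\overline{\WW}[\ell]$, and such maps do not form a cone under the $\tau$'s.

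This is more than a local slip. The open-embedding nature of the transition maps is exactly what makes your deferred key step come out as you predict. It is why $\fA[\ell]\times_{\fM_{0,2}^{\sst}}\fA[\ell]$ is a non-separated union of copies of $\fA[m]$, each embedded via a pair of these open embeddings indexed by a pair of surjections $[\ell]\onto[m]$. For $\ell=2$, for instance, it is the diagonal $\fA[2]$ together with two copies of $\fA[1]$, embedded by $t\mapsto((t,1),(1,t))$ and $t\mapsto((1,t),(t,1))$, all glued along the smooth locus $\fA[0]$.

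With the transition maps corrected, the rest of your outline agrees with what the paper does. That includes the reduction via the exhaustion by the open substacks $\fM_{0,2}^{\sst,(\leq \ell+1)}$ and relying on \cite{ACFW} for the descent statement.
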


\begin{remark}
    In~\cite{ACFW}, the colimit is taken along $\Delta^{\inj}$, and the $\ell$-th component corresponds to the locus of at most $\ell+2$ components. Those two presentations are equivalent, using the isomorphism $\Delta^{\inj}_+ \simeq \Delta^{\surj,\op}$, where $\Delta_+$ is the augmented simplex category (namely, with an additional object $[-1]$ and a unique map $[-1] \to [\ell]$), and the fact that the inclusion $\Delta^{\inj} \to \Delta^{\inj}_+$ is final.
\end{remark}

We will also need the following variants:
\begin{definition} \label{def:M2plus2}
    Let $\fM_{0,1+2}^{\sst}$ (resp. $\fM_{0,2+1}^{\sst}$) be the open substack of $\fM_{0,3}^{\sst}$ given by $3$-pointed curves $(X,x_0,x_1,x_\infty)$ such that $x_1,x_{\infty}$ (resp. $x_0,x_1$) are supported on the same irreducible component. Similarly, let $\fM_{0,2+2}^{\sst} \subset \fM_{0,4}^{\sst}$ be the open substack of $(X,x_0,x_1,x_{\infty-1},x_{\infty})$ such that both $x_0,x_1$ and $x_{\infty-1},x_{\infty}$ are supported on the same irreducible components. We will denote by $\fM_{0,2+1,\Ran}^{\sst}, \bcM_{0,2+1,\Ran}$ etc. the restriction of the Ran spaces to these open substacks.
\end{definition}

The following is essentially~\cite[Proposition~3.3.4]{ACFW}:
\begin{lemma} \label{lem:Gm-torsor}
    The projections \[\fM_{0,2+1}^{\sst},\fM_{0,1+2}^{\sst} \to \fM_{0,2}^{\sst}\] forgetting the point $x_1$ are $\GG_m$-torsors. Similarly, the projection \[\fM_{0,2+2}^{\sst} \to \fM_{0,2}^{\sst}\] forgetting the points $x_1,x_{\infty-1}$ is a $\GG_m^2$-torsor. Furthermore, we have $\GG_m$-equivariant identifications \[\fM_{0,2+2}^{\sst} / \GG_m \simeq \fM_{0,2+1}^{\sst}\] and \[\GG_m \backslash \fM_{0,2+2}^{\sst} \simeq \fM_{0,1+2}^{\sst}~.\]
\end{lemma}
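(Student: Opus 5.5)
The plan is to work directly with the moduli interpretation and identify, for each projection, the fiber over a point of $\fM_{0,2}^{\sst}$ with a torus of choices of the forgotten point(s). Let $(X,x_0,x_\infty)\in\fM_{0,2}^{\sst}(R)$ be a rational chain. For $\fM_{0,2+1}^{\sst}\to\fM_{0,2}^{\sst}$, the forgotten point $x_1$ must lie on the component containing $x_0$. I first check that forgetting $x_1$ creates no destabilization. Semistability allows two special points on a rational component. The component $C_0$ carrying $x_0$ and $x_1$ also contains either $x_\infty$ or a node, so after forgetting $x_1$ it still has two special points. Hence no contraction occurs, and the forgetful map is well defined with target the same underlying curve.

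Next I identify the fiber. Locally on $R$, let $C_0\simeq\PP^1_R$ be the component containing $x_0$, with $x_0=0$ and the second special point at $\infty$. Then $x_1$ ranges over sections of $C_0\setminus\{0,\infty\}\simeq\GG_{m,R}$. This uses that $x_1$ must avoid the special points and stay on the same component.

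In families this is not yet enough, because near the locus where $C_0$ degenerates the component is not well defined. I would handle this using the \'etale charts $\tau_\ell:\fA[\ell]\to\fM_{0,2}^{\sst}$ of Theorem~\ref{thm:ss-02}, pulling back to $\overline{\WW}[\ell]\to\AA^\ell$. Over $\AA^\ell$, the locus in $\overline{\WW}[\ell]$ of smooth points lying on the component of the section $0$ should be exactly $\{x_0\neq 0,\ x_1\neq 0\}$ in the first $\PP^1$ factor. This is the complement of the section $0$ and of the closure of the rest of the chain. It is an open subscheme isomorphic to $\GG_m\times\AA^\ell$, on which the first $\GG_m$ factor of $\GG_m^{\ell+1}$ acts simply transitively along the fibers.

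This chart computation is the main obstacle. It requires verifying that, for every fiber (with any subset of the $t_i$ vanishing), the first $\PP^1$-coordinate restricted to the $0$-component is an isomorphism onto $\GG_m$ minus the relevant points. I expect the explicit equations $x_0y_0=tx_1y_1$ (iterated in Li's construction) to make this a direct check. Alternatively I would cite~\cite[Proposition~3.3.4]{ACFW}, which computes exactly this chart.

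Given the chart computation, the torsor structure descends along the \'etale cover, since the constructions are compatible with the degeneracy maps $\fA[\ell']\to\fA[\ell]$. The $\GG_m$-action itself is intrinsic: scaling the coordinate on the component of $x_0$ fixing $0$ and the other special point.

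The case $\fM_{0,1+2}^{\sst}$ is symmetric, using the last $\GG_m$ factor. For $\fM_{0,2+2}^{\sst}$, the conditions on $x_1$ and on $x_{\infty-1}$ involve the two end components. These can coincide only when the chain is irreducible, in which case the fiber is $\{(x_1,x_{\infty-1})\}$ with both in $\GG_m$, possibly equal. Rather than excluding this, I would observe that the conditions are independent, so the fiber is the product of the two torsors: $\fM_{0,2+2}^{\sst}\simeq\fM_{0,2+1}^{\sst}\times_{\fM_{0,2}^{\sst}}\fM_{0,1+2}^{\sst}$. This product is a $\GG_m^2$-torsor.

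Finally, quotienting by the $\GG_m$ acting on $x_{\infty-1}$ recovers $\fM_{0,2+1}^{\sst}$, and quotienting by the one acting on $x_1$ recovers $\fM_{0,1+2}^{\sst}$. These identifications are equivariant for the remaining factor, since a torsor quotiented by its structure group is the base.
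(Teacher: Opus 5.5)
Your outline is sound (modulo the point about $\fM_{0,2+2}^{\sst}$ below), but it takes a different route from the paper.

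The paper's proof is one line. A lift of $(X,x_0,x_\infty)$ to $\fM_{0,2+1}^{\sst}$ is the same as a trivialization of the conormal line at the marked point on the component carrying $x_1$. So the fibre is the frame torsor of a line bundle on the base. The $\GG_m^2$ case is the product of two such frame torsors, and the quotient identifications are then immediate. (The paper attaches $\fM_{0,2+1}^{\sst}$ to $x_\infty$; relative to Definition~\ref{def:M2plus2} this looks like a label swap, and your assignment to $x_0$ is the consistent one.)

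What the paper's route buys is a global, intrinsic $\GG_m$-action. Your ``rescale the coordinate on the component of $x_0$'' is only a fibrewise description. The conormal line at $x_0$ is exactly what makes it meaningful over a base where that component degenerates.

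What your chart computation buys is the family-level verification that the paper's ``is equivalent to'' leaves implicit. You can also get this without charts. The linear system $|\cO_X(x_0)|$ maps $X$ to a $\PP^1$-bundle over the base, isomorphically on the $x_0$-component. It contracts the remaining components onto the section through the image of $x_\infty$, which is disjoint from the image of $x_0$. The admissible $x_1$ are then sections of a $\PP^1$-bundle minus two disjoint sections, i.e.\ of the frame torsor of $x_0^*T_{X/R}$. If you keep the charts, you must do the check on $\overline{\WW}[\ell]$ for every $\ell$ and glue along the colimit of Theorem~\ref{thm:ss-02}. This works but is more laborious.

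The substantive caveat concerns $\fM_{0,2+2}^{\sst}$. You note that over the irreducible stratum $x_1$ and $x_{\infty-1}$ may coincide. You then ``observe'' that $\fM_{0,2+2}^{\sst}\simeq\fM_{0,2+1}^{\sst}\times_{\fM_{0,2}^{\sst}}\fM_{0,1+2}^{\sst}$. Read Definition~\ref{def:M2plus2} literally, as an open substack of $\fM_{0,4}^{\sst}$ whose markings are pairwise disjoint. Then this is false:
\begin{itemize}
\item the fibre over $(\PP^1,0,\infty)$ is $\GG_m^2$ minus the diagonal;
\item the image of the comparison map is the complement of the closed locus $\{x_1=x_{\infty-1}\}$;
\item that image is not preserved by the $\GG_m^2$-action, since rescaling $x_{\infty-1}$ can move it onto $x_1$.
\end{itemize}
So neither the torsor statement nor the quotient identifications hold for the literal substack. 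The lemma is true only if $\fM_{0,2+2}^{\sst}$ is \emph{defined} as that fibre product, i.e.\ $x_1$ and $x_{\infty-1}$ are allowed to collide.

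The paper makes the same tacit choice (independent trivializations at $x_0$ and $x_\infty$), and it needs it later. The identification $\bcM_{0,2+2}\simeq\AA^1$ holds only if the point with $x_1=x_{\infty-1}$ is included. State this as a convention rather than deducing it; with that convention your last two paragraphs are correct.
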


\begin{proof}
    Given $(X,x_0,x_\infty) \in \fM_{0,2}^{\sst}(R)$, a lift to $\fM_{0,2+1}^{\sst}$ (resp. $\fM_{0,1+2}^{\sst}$, resp. $\fM_{0,2+2}^{\sst}$) is equivalent to a trivialization of the conormal bundle at $x_{\infty}$ (resp. $x_0$, resp. $x_0$ and $x_\infty$), and thus is a $\GG_m$ (resp. $\GG_m^2$) torsor.
\end{proof}

\begin{lemma}
    There exists a projection map
    \begin{equation} \label{eq:proj-ss02}
    \fM_{0,2}^{\sst} \to \fM_{0,2}^{\sst,(\leq 2)} \simeq \fA[1]
    \end{equation}
    which lifts to a projection of the universal curve \[\fX_{0,2}^{\sst} \to \fX_{0,2}^{\sst,(\leq 2)}~.\] 
\end{lemma}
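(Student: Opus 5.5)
The plan is to construct the map chart by chart using the presentation $\fM_{0,2}^{\sst} \simeq \colim_{[\ell] \in \Delta^{\surj,\op}} \fA[\ell]$ of Theorem~\ref{thm:ss-02}. I would then give an intrinsic description of the curve-level lift, and use it both to check the construction and to handle the universal curve.

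\textbf{Step 1: maps on charts.} For each $\ell \geq 1$, define $\rho_\ell : \fA[\ell] \to \fA[1]$ by
\[(t_1,\dots,t_\ell) \mapsto t_1 t_2 \dotsb t_\ell .\]
On groups it is given by $\GG_m^{\ell+1} \to \GG_m^2$, $(\lambda_0,\dots,\lambda_\ell) \mapsto (\lambda_0,\lambda_\ell)$. This is equivariant: $t_i \mapsto \lambda_{i-1}^{-1} t_i \lambda_i$ makes the product transform by $\lambda_0^{-1}\lambda_\ell$, and the intermediate factors telescope. For $\ell = 0$, set $\rho_0 : B\GG_m \to \fA[1]$ to be the inclusion of the open point $t=1$, using the diagonal $\GG_m \to \GG_m^2$. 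Next, compatibility with the transition maps $\fA[\ell'] \to \fA[\ell]$ attached to a surjection $\alpha : [\ell] \onto [\ell']$. These maps send the coordinates to the products $t_{[\alpha(i-1)+1,\alpha(i)]}$, so composing with $\rho_\ell$ returns the full product $t_1 \dotsb t_{\ell'}$, which is $\rho_{\ell'}$. The same telescoping holds on the group side. Higher coherences are automatic, since everything is strictly given by monomial maps of quotient stacks. So the $\rho_\ell$ form a cocone, and we get the map~\eqref{eq:proj-ss02} from the colimit. Restricted to $\fA[1] = \fM_{0,2}^{\sst,(\leq 2)}$, it is the identity.

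\textbf{Step 2: intrinsic description.} I would interpret the map as follows: it contracts every interior component of the chain, keeping the components containing $x_0$ and $x_\infty$. Given $(X,x_0,x_\infty)$ over $R$, let $L = \cO_X(x_0 + x_\infty)$. On each geometric fibre, $L$ has degree $1$ on the end components and degree $0$ on interior components, and $H^1$ vanishes on a chain of rational curves. By cohomology and base change, $p_*L$ is therefore locally free of rank $3$, and its formation commutes with base change. The relative $\operatorname{Proj}$ of $\bigoplus_n p_*L^{n}$ gives a contraction $X \to \bar X$ onto a flat family of two-pointed semistable curves with at most two components. I would check, on Li's model $\overline{\WW}[\ell]$, that this contraction recovers $\overline{\WW}[1]$ pulled back along the product map. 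This holds because $\overline{\WW}[\ell]$ is built as an iterated small blow-up of $\overline{\WW}[1] \times_{\AA^1} \AA^\ell$ along the product, and the exceptional (interior) components are exactly those contracted by $L$.

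\textbf{Step 3: the universal curve.} The lift $\fX_{0,2}^{\sst} \to \fX_{0,2}^{\sst,(\leq 2)}$ is then the contraction $X \to \bar X$ itself, which is functorial in $R$. Equivalently, on charts it is the $\GG_m^{\ell+1}$-equivariant blow-down
\[\overline{\WW}[\ell] \to \overline{\WW}[1] \times_{\AA^1,\, t_1\dotsb t_\ell} \AA^\ell ,\]
and these blow-downs are compatible with the transition maps by the same telescoping argument.

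The main obstacle is Step 2, specifically the base-change and flatness assertion for the contraction in families. Near a node being smoothed, one must check that $L$ is relatively base-point free and that the image is exactly the expected node $xy = t_1\dotsb t_\ell$ in the local coordinates of $\overline{\WW}[\ell]$. I would verify this first in the local model $xy = t_1 \dotsb t_\ell$ with the chain of exceptional curves, and then globalize using the étale cover of Theorem~\ref{thm:ss-02}.
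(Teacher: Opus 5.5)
Your proposal is correct and follows the paper's proof. The projection comes from the product maps $\fA[\ell]\to\fA[1]$ on the charts of Theorem~\ref{thm:ss-02}, compatible with the transition maps, and your lift via the linear system $\cO(x_0+x_\infty)$ is exactly the alternative the paper cites (\cite[Lemma~3.3.3]{ACFW}); the paper's primary lift is instead the $\GG_m^2$-quotient of the stable reduction $\fX_{0,2+2}^{\sst}\to\bcX_{0,2+2}$ over $\fM_{0,2+2}^{\sst}\to\bcM_{0,2+2}\simeq\AA^1$. One small slip: on the locus of irreducible curves $L$ has degree $2$, not $1$, but this does not affect the vanishing of $H^1$ or the rank-$3$ count.
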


\begin{proof}
    The projection follows from the description in Theorem~\ref{thm:ss-02}: For each $\ell$, we have multiplication maps $\fA[\ell] \to \fA[1]$ induced by the multiplicative monoid $\AA^1$. Since this map commutes with the transition maps $\fA[\ell] \to \fA[\ell']$, it defines a map from the colimit. The lift to the universal curve is given by taking the $\GG_m^2$ quotient of the stable reduction map \[\begin{tikzcd}
	{\fX_{0,2+2}^{\sst}} & {\bcX_{0,2+2}} \\
	{\fM_{0,2+2}^{\sst}} & {\bcM_{0,2+2} \simeq \AA^1}
	\arrow[from=1-1, to=1-2]
	\arrow[from=1-1, to=2-1]
	\arrow[from=1-2, to=2-2]
	\arrow[from=2-1, to=2-2]
    \end{tikzcd}\]  Alternatively, it can be obtained using the linear system $\cO(x_0+x_\infty)$, Cf.~\cite[Lemma~3.3.3]{ACFW}.
\end{proof}

In particular, we get projection maps \[\fM_{0,2+1}^{\sst}, \fM_{0,1+2}^{\sst} \to \AA^1/\GG_m\] and \[\fM_{0,2+2}^{\sst} \to \AA^1\] which maps the smooth locus to $\GG_m$ and the singular locus to $\{0\}$. 

\begin{definition} \label{def:M2plus2completed}
    Let $\bcM_{0,2,2+1,\Ran}^{\land}$ (resp. $\bcM_{0,2,1+2,\Ran}^{\land}$, resp. $\bcM_{0,2,2+2,\Ran}^{\land}$) be the formal completion along the closed embedding of $\bcM_{0,2,\Ran}$ given by $(\PP^1,0,1,\infty) \lor (-)$ (resp. $(-) \lor (\PP^1,0,1,\infty)$, resp. $(\PP^1,0,1,\infty) \lor (-) \lor (\PP^1,0,1,\infty)$).
\end{definition}

\subsection{Moduli of semistable modifications for general \texorpdfstring{$(g,n)$}{(g, n)}}

In this section, we describe the structure of semistable modifications at a point (either smooth or nodal) in terms of the stack $\fM_{0,2}^{\sst}$ and its variants above. Informally, a semistable modification of a curve $X$ at a smooth point $x$ is given by attaching a (possibly empty) rational tail at $x$. Since this construction is local, it can be identified with attaching a rational tail to $\PP^1$ at $\infty$. This data is equivalent to a choice of an object $X \in \fM_{0,2}^{\sst}(k)$, together with identification of the component of $0$ with $\PP^1$. Similarly, a semistable modification at a node is given by inserting a rational bridge, which is equivalent to the insertion of a rational bridge at $\infty \sim 0' \in \PP^1 \cup_{\infty \sim 0'} \PP^1$. The latter is equivalent, up to identifying the components of $0,\infty$ with $\PP^1$, with an object of $\fM_{0,2}^{\sst}$. The precise statement was described in~\cite{ACFW} (under the names ``expanded pairs'' and ``expanded degenerations''). We will follow their description, and explain the corresponding structure on Ran spaces.

\begin{proposition} \label{prop:structure_MXxRan_smooth}
    For a smooth pointed curve $(X,x) \in \cM_{g,1}(k)$, there is an isomorphism \[\fM_{(X,x)}^{\sst} \simeq \fM_{0,2+1}^{\sst}\] unique up to a choice of a local coordinate at $x$. Under this identification, we have a Cartesian square 
    \[\begin{tikzcd}
	{\bcM_{0,2+1,\Ran}^{\land}} & {\bcM_{(X,x),\Ran}} \\
	{\{x\}} & {\Ran X}
	\arrow[from=1-1, to=1-2]
	\arrow[from=1-1, to=2-1]
	\arrow[from=1-2, to=2-2]
	\arrow[from=2-1, to=2-2]
    \end{tikzcd}\]
\end{proposition}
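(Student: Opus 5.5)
The plan is to build the identification $\fM_{(X,x)}^{\sst} \simeq \fM_{0,2+1}^{\sst}$ by an explicit modification of the curve at $x$, and then obtain the Cartesian square by comparing fibers of $\Pi_{(X,x)}$ over the point $\{x\}$ on both sides.

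For the first isomorphism, fix a local coordinate $t$ at $x$. This gives an identification of the formal disk $\hat{D}_x \simeq \Spf k[[t]]$, and so of $\hat{D}_x$ with the formal neighborhood of $0$ in $\PP^1$. Here $t \mapsto 1/t$ sends it to the neighborhood of $\infty$, matching the marked point $x_\infty$ of weight one.

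Given an $R$-point $(C,x_0,x_1,x_\infty)$ of $\fM_{0,2+1}^{\sst}$, the point $x_1$ together with $x_\infty$ lying on the same component trivializes that component. More precisely, by Lemma~\ref{lem:Gm-torsor}, the $\GG_m$-torsor is a trivialization of the conormal bundle, i.e.\ a first-order coordinate at the last component. I then glue $C$ to $X$ via Beauville--Laszlo/formal gluing: remove $x_0$'s component neighborhood and glue $X \setminus \{x\}$ along the punctured disk identified through $t$. Concretely, the family $\overline{\WW}[\ell]$ restricted near the section $0$ is, étale locally, the formal completion of $\AA^1$. Gluing it with $X\times \Spec R$ along $\hat{D}_x^\times$ produces a semistable modification of $X$ at $x$. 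Conversely, any expanded pair over $X$ at $x$ restricts over the formal neighborhood of $x$ to a rational chain with a coordinate induced by $t$. These constructions are inverse, and the dependence on $t$ is only through the gluing datum, which gives uniqueness up to the choice of coordinate. I would argue étale locally using the atlas $\fA[\ell]$ of Theorem~\ref{thm:ss-02}, checking compatibility with the transition maps so that the identification descends to the colimit.

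For the Cartesian square, the fiber of $\Pi_{(X,x)} : \bcM_{(X,x),\Ran} \to \Ran X$ over $\{x\}$ consists of stable configurations whose stable reduction to $X$ sends every weight-$\epsilon$ point to $x$. On $k$-points these are exactly the configurations in which all marked points lie on the inserted chain, with the chain nontrivial. Under the identification above, these are the objects $(\PP^1,0,1,\infty)\lor C'$ together with points on $C'$, i.e.\ the closed substack of $\bcM_{0,2+1,\Ran}$ defining the completion in Definition~\ref{def:M2plus2completed}.

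Because $\{x\} \to \Ran X$ is a de Rham-type point, its fiber product is not the reduced closed substack but its formal completion. I would check this by computing both sides on $R$-points: an $R$-point of the fiber is an $R$-point of $\bcM_{(X,x),\Ran}$ whose reduction lies in the closed locus, because $\Ran X$ is built from $X_{\dR}$. This is exactly the functor of points of the formal completion $\bcM_{0,2+1,\Ran}^{\land}$, since on the relative de Rham prestack the Ran points are only recorded up to nilpotents.

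I expect the main obstacle to be the first step: making the gluing well defined in families over arbitrary (possibly non-reduced) $R$, including when nodes smooth out, and verifying that the result is independent of choices up to the coordinate. I would handle this by reducing to the explicit local models $\overline{\WW}[\ell]$ and the $\GG_m$-torsor description of Lemma~\ref{lem:Gm-torsor}, following the expanded-pairs argument of \cite{ACFW}.
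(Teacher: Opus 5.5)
Your argument for the Cartesian square is the paper's argument. An $R$-point of the fiber over $\{x\}$ is an arbitrary deformation in $\fM^{\sst}_{(X,x)}(R)$ whose reduced data has all weight-$\epsilon$ points on a nontrivial exceptional chain, so it lies over the image of $(\PP^1,0,1,\infty)\lor(-)$. Since $\Ran X$ only sees $R^{\red}$, this is exactly an $R$-point of $\bcM^{\land}_{0,2+1,\Ran}$. For the isomorphism $\fM^{\sst}_{(X,x)}\simeq\fM^{\sst}_{0,2+1}$, the paper cites \cite{ACFW} and describes the map on reduced points. Making it explicit by formal gluing is a reasonable substitute, but your gluing uses the wrong piece of the curve and would not produce any modification.

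\textbf{Which component is replaced by $X$.} In $\fM^{\sst}_{0,2+1}$ the point $x_1$ lies on the component of $x_0$ (Definition~\ref{def:M2plus2}). The wording in the proof of Lemma~\ref{lem:Gm-torsor} points the other way, but Definition~\ref{def:M2plus2completed} and your own second half both use $(\PP^1,0,1,\infty)\lor C'$. This rigidified first component is the one that gets replaced by $X$. The point $x$ corresponds to the point $\infty$ of that component, where the rest of the chain is attached. The point $x_\infty$ corresponds to the marked point $x'$ of the modification, and it equals $x$ only for the trivial chain.

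\textbf{Why your gluing fails.} Gluing $X_R\setminus\{x\}$ to the formal neighborhood of the section $0$ along $\hat{D}_x^{\times}$ just returns $X_R$. That neighborhood is a smooth disk $\Spf R[[z]]$ for every $\ell$ and contains none of the chain.

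\textbf{The correct construction.} You must glue in the formal neighborhood of the whole chain. The line bundle $\cO(x_0)$ contracts $C$ onto its first component, giving a map $C\to\PP^1_R$ normalized by $x_0\mapsto 0$, $x_1\mapsto 1$, $x_\infty\mapsto\infty$ (these are three disjoint sections). This map is an isomorphism over $\PP^1_R\setminus\{\infty\}$. Now set $X'=(X_R\setminus\{x\})\cup_{\hat{D}^{\times}}(C\times_{\PP^1_R}\hat{D}_\infty)$, using $t$ to identify $\hat{D}_\infty\simeq\hat{D}_x$, and take $x'=x_\infty$. The inverse glues $X'\times_{X_R}\hat{D}_x$ to $\PP^1_R\setminus\{\infty\}$, which carries the points $0$ and $1$.

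This construction is defined directly on $R$-points, uniformly in families, including where nodes smooth out. The dependence on the coordinate is exactly the choice of $\hat{D}_\infty\simeq\hat{D}_x$. On reduced points it recovers the paper's formula $(X',x')\mapsto(\PP^1\cup_{\infty\sim x_0}(X'\times_X\{x\}),0,1,x')$.
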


\begin{proof}
    The isomorphism $\fM_{(X,x)}^{\sst} \simeq \fM_{0,2+1}^{\sst}$ is~\cite[1.3.2(2)]{ACFW}. Restricted to reduced algebras $R^{\red}$, it can be described explicitly as follows: Define a map \[\fM_{(X,x)}^{\sst}(R^{\red}) \to \fM_{0,2+1}^{\sst}(R^{\red})\] by joining $(\PP^1_{R^{\red}},0,1,\infty)$ to the exceptional divisor of an $R^{\red}$-point $(X'^{\red},x'^{\red}) \in \fM_{(X,x)}^{\sst}(R^{\red})$: \[(X'^{\red},x'^{\red}) \mapsto (\PP^1_{R^{\red}} \cup_{\infty \sim x_0^{\red}} (X'^{\red} \times_{X} \{x\}), 0,1,x'^{\red}) \eqqcolon (Y^{\red},y_0^{\red},y_1^{\red},y_{\infty}^{\red})\] where $x_0^{\red} = \overline{(X \backslash \{x\} \times_{X} X'^{\red})} \cap (\{x\} \times_{X} X'^{\red})$. Its inverse is given in a similar way: \[(Y^{\red},y_0^{\red},y_1^{\red},y_\infty^{\red}) \mapsto (X^{\red} \cup_{x^{\red} \sim y'^{\red}} (Y^{\red} \times_{\PP^1_{\red}} \{0\}),y_0^{\red})\] where $y'^{\red} = \overline{(\PP^1_{R^{\red}} \backslash \{0\}) \times_{\PP^1_{R^{\red}}} Y^{\red}} \cap (\{0\} \times_{\PP^1_{R^{\red}}} Y^{\red})$. We see that over the reduced locus, the isomorphism is unambiguously determined.
    
    A general $R$-point of $\fM_{(X,x)}^{\sst}$ is given by an $R$-deformation $(X',x')$ of $(X'^{\red},x'^{\red})$. By the above construction, we can identify it with an $R$-deformation $(Y,y_0,y_1,y_{\infty})$ of $(Y^{\red},y_0^{\red},y_1^{\red},y_\infty^{\red})$, up to a choice of a local coordinate at $x_0^{\red}$, which can then by used to identify deformations at $x_0^{\red}$ with deformations at $y'^{\red}$. 

    For the fiber square, note that an $R$-point of the fiber product \[\{x\} \times_{\Ran X} \bcM_{(X,x),\Ran}\] is given by a semistable modification $(X',x') \in \fM_{(X,x)}^{\sst}(R)$, together with a finite subset $\{x_i^{\red} : \Spec R^{\red} \to X'\}$ which is supported on the excpetional divisor of $X' \to X$ and away from $x_0$, and intersects nontrivially with each irreducible component of $X'^{\red}$. In particular, $X'^{\red}$ is strictly nodal, and so $(X',x')$ is contained in the formal neighborhood of the image of \[(\PP^1,0,\infty) \lor (-) : \fM_{0,2}^{\sst} \to \fM_{0,2+1}^{\sst}\] Such a subset is equivalent to a finite subset of $Y^{\red} \coloneqq \PP^1_{\red} \cup_{\infty \sim x_0^{\red}} (X'^{\red} \times_{X} \{x\})$ as above, which is supported on the second component \[\{y_i^{\red} : \Spec R^{\red} \xrightarrow{x_i^{\red}} X'^{\red}\times_X \{x\} \to \PP^1_{R^{\red}} \cup_{\infty \sim x_0} (X'^{\red} \times_X \{x\})\}\] Let now $(Y,y_0,y_1,y_\infty)$ be the $R$-deformation corresponding to $(Y^{\red},0,1,x'^{\red})$. Then we get precisely an $R$-point $(Y,y_0,y_1,y_\infty,\{y_i^{\red}\}) \in \bcM_{0,2+1,\Ran}^{\land}(R)$.
\end{proof}

\begin{definition} \label{def:nodal-deg}
    A \textit{nodal degeneration family} over a pair $(B,b)$ of a smooth curve $B$ and a closed point $b \in B$ is a flat family of proper curves $X/B$, such that the restriction $X_{\mathring{B}}$ to $\mathring{B} \coloneqq B \backslash {b}$ is smooth, and the fiber $X_b$ is nodal with a single node $p \in X_b$.
\end{definition}

\begin{definition}
    Let $\bcM_{0,2+2,\Ran}^{\land,\nd} \subset \bcM_{0,2+2,\Ran}^{\land}$ be the closed substack of curves which are either of the form $(\PP^1,0,1,\infty) \lor (X,x_0,x_\infty,\{x_i^{\red}\})$ or $(X,x_0,x_\infty,\{x_i^{\red}\}) \lor (\PP^1,0,1,\infty)$ for some $(X,x_0,x_\infty,\{x_i^{\red}\}) \in \bcM_{0,2,\Ran}$. Namely \[\bcM_{0,2+2,\Ran}^{\land,\nd} \simeq \colim_{I \in \fSet^{\surj,\op}}\bcM_{0,2+1,I}^{\land} \underset{\bcM_{0,2,I}}{\cup} \bcM_{0,1+2,I}^{\land}~,\] where the pushout is taken in the category of stacks and the colimit over finite set is taken in the category of prestacks (for the existence of this pushout see~\cite[Appendix~A]{Hall}).
\end{definition}

\begin{proposition} \label{prop:structure_MXRan_nodal}
    Let $X/B$ be a nodal degeneration as in Definition~\ref{def:nodal-deg}. 
    \begin{enumerate}
        \item There is an equivalence \[\fM_{X/B}^{\sst} \simeq \fM_{0,2+2}^{\sst}/\GG_m \times_{\AA^1/\GG_m} B\] where the $\GG_m$-action corresponds to the diagonal action of $\GG_m$ coming from Lemma~\ref{lem:Gm-torsor}, the map $\fM_{0,2+2}/\GG_m \to \AA^1/\GG_m$ is the one coming from the projection~\eqref{eq:proj-ss02}, and the map $B \to \AA^1/\GG_m$ is the map classifying the divisor $b$. 
        \item We have a Cartesian square 
        \[\begin{tikzcd}
	    {\bcM_{0,2+2,\Ran}^{\land,\nd}} & {\bcM_{(X,x),\Ran}} \\
	    {\{p\}/\{b\}} & {\Ran (X/B)}
	    \arrow[from=1-1, to=1-2]
	    \arrow[from=1-1, to=2-1]
	    \arrow[from=1-2, to=2-2]
	    \arrow[from=2-1, to=2-2]
        \end{tikzcd}\]
        \item We have a Cartesian square \[\begin{tikzcd}
        {\bcM_{0,2+2,\Ran}^{\land}} & {\bcM_{(X,x),\Ran}} \\
        {(X_{B\mh\dR})^{\land}_{p}} & {\Ran (X/B)}
        \arrow[from=1-1, to=1-2]
        \arrow[from=1-1, to=2-1]
        \arrow[from=1-2, to=2-2]
        \arrow[from=2-1, to=2-2]
        \end{tikzcd}\]
        \end{enumerate}
\end{proposition}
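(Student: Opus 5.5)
We prove the three parts in order, each modelled on the proof of Proposition~\ref{prop:structure_MXxRan_smooth}: first over reduced test rings by explicit gluing, then deformations handled through the local structure at the node.

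For part (1), fix a formal identification of the node, $\hat{\cO}_{X,p} \simeq \hat{\cO}_{B,b}[[x,y]]/(xy - t)$, where $t$ is a uniformizer at $b$. Such an identification exists because $X/B$ is a nodal degeneration with smooth generic fiber, so the node is locally smoothed to first order; the local equation has the form $xy = u t^m$, and we treat $m=1$, the case where the total space is regular at $p$. A semistable modification of $X/B$ over $\Spec R \to B$ is then determined by the following data:
\begin{itemize}
    \item a rational bridge inserted at $p$, i.e.\ an object $(Y,y_0,y_\infty)$ of $\fM_{0,2}^{\sst}$;
    \item identifications of the components of $y_0$ and $y_\infty$ with the two branches $\{x=0\}$ and $\{y=0\}$;
    \item compatibility of the smoothing parameter with $t$.
\end{itemize}
The branch identifications are exactly trivializations of the conormal lines at $y_0,y_\infty$, which by Lemma~\ref{lem:Gm-torsor} is a lift to $\fM_{0,2+2}^{\sst}$. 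The diagonal $\GG_m$ rescales $(x,y) \mapsto (\lambda x, \lambda^{-1} y)$ and preserves $xy$, so we quotient by it. The smoothing parameter is the image under the projection~\eqref{eq:proj-ss02} of $\fM_{0,2+2}^{\sst}/\GG_m \to \AA^1/\GG_m$. Matching it with the divisor $b$ via $B \to \AA^1/\GG_m$ gives the claimed fiber product.

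Over reduced rings we write both directions explicitly, gluing $(\PP^1,0,1,\infty)$ on both sides of the exceptional chain exactly as in the smooth case. For general $R$ we then argue by deformation theory. By~\cite{ACFW}, expanded degenerations are \'etale locally pulled back from Li's families $\overline{\WW}[\ell] \to \AA^\ell$. Near $p$, the family $X/B$ is itself the pullback of $\overline{\WW}[1]$ along $t$. Therefore deformations of the modified curve are governed by the local deformations at the nodes together with the smoothing-parameter constraint $t_1 \cdots t_\ell = t$, which is precisely the fiber product over $\AA^1/\GG_m$.

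For part (2), an $R$-point of the fiber product over $\{p\}/\{b\}$ is a modification over $b$ whose Ran points all lie on the exceptional chain and meet every inserted component. Under (1), such a point is a chain glued between $(\PP^1,0,1)$ and $(\PP^1,1,\infty)$. If there is at least one exceptional component, the Ran points can occupy only the interior of the chain. The outer $\PP^1$'s then carry no points, so one of them is contracted: when the exceptional chain is nonempty both outer components are unmarked, and these configurations are exactly $(\PP^1,0,1,\infty)\lor(-)$ or $(-)\lor(\PP^1,0,1,\infty)$. The two loci are glued along $\bcM_{0,2,I}$, where both outer components are simultaneously present, which gives the pushout description of $\bcM_{0,2+2,\Ran}^{\land,\nd}$. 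Formal completeness is inherited from the formal neighborhoods, as in Proposition~\ref{prop:structure_MXxRan_smooth}.

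For part (3), we replace $\{p\}$ by its formal neighborhood $(X_{B\mh\dR})^\land_p$. Now the base point may move off $b$ infinitesimally, and Ran points may sit near the node on the smoothed curve. Via (1), these are exactly the formal neighborhoods of the locus in $\bcM_{0,2,\Ran}$ where both outer unmarked components appear, that is, $\bcM_{0,2+2,\Ran}^{\land}$.

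\textbf{Main obstacle.} The hardest step is the non-reduced part of (1): proving that the identification is independent of choices up to the $\GG_m$-quotient and is compatible with the smoothing parameter. The plan is to reduce to Li's local model by pulling back $\overline{\WW}[1]$ along $t$, and to cite~\cite[Section~1.3]{ACFW} for expanded degenerations.
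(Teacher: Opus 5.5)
Your part (2) rests on a false step. You argue that because the outer $\PP^1$'s carry no Ran points, ``one of them is contracted'', and you read off the two loci $(\PP^1,0,1,\infty)\lor(-)$ and $(-)\lor(\PP^1,0,1,\infty)$ from that. But in $\bcM_{0,2+2,\Ran}$ each outer component carries two weight-one points ($0_1,1_1$, resp.\ $1_2,\infty_2$) and a node. It is therefore stable without any Ran points and is never contracted; rigidifying the two branches in this way is exactly what the lift to $\fM_{0,2+2}^{\sst}$ does. (So these components are not ``unmarked'', as you also say in (3).)

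In fact the fiber products in (2) and (3) have the same $R^{\red}$-points. $R^{\red}$ lies over $b$, and the Ran points must map to $p$ and lie in the smooth locus. Hence they lie on the exceptional chain $X'^{\red}_0$, which is therefore nonempty. The explicit gluing $Y^{\red}=\PP^1\cup_{\infty_1\sim x_0}X'^{\red}_0\cup_{x_\infty\sim 0_2}\PP^1$ always contains \emph{both} outer components. For the same reason, in (3) the Ran points never ``sit near the node on the smoothed curve''; only the curve deforms. So (2) and (3) differ only infinitesimally, and your argument never says how.

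The missing idea is that Ran points are only $R^{\red}$-data. Hence factoring through $\{p\}/\{b\}$ rather than through $(X_{B\mh\dR})^{\land}_p\simeq\hat{D}_b$ is a condition on the base map alone: $\Spec R\to B$ must factor scheme-theoretically through $b$, i.e.\ $t=0$. The paper accordingly proves (3) first. An $R$-point of that fiber product is the reduced datum above plus an $R$-deformation of $X'$ over $\hat{D}_b$. Restricted to $\hat{D}_b$, (1) reads $\fM^{\sst}_{X_b^{\land}/\hat{D}_b}\simeq\fM_{0,2+2}^{\sst}\times_{\AA^1}\hat{D}_0$, so these are exactly deformations of $Y^{\red}$, i.e.\ points of $\bcM_{0,2+2,\Ran}^{\land}$. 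Then (2) is the closed substack where $t=0$: there the total smoothing parameter $\fM_{0,2+2}^{\sst}\to\AA^1$ vanishes and $X'$ (equivalently $Y$) stays nodal. That condition, not a contraction, is what cuts out $\bcM_{0,2+2,\Ran}^{\land,\nd}$.

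A smaller point in your sketch of (1): a $\GG_m$ that preserves $xy$ cannot be the group being quotiented. It fixes the smoothing parameter, so it acts trivially on the smooth locus of $\fM_{0,2+2}^{\sst}$, which the smoothing parameter identifies with $\GG_m\subset\AA^1$. The quotient would then acquire $B\GG_m$ stabilizers over $\mathring{B}$. The quotienting $\GG_m$ must act on the smoothing parameter with weight one; this is what makes the $\AA^1/\GG_m$ formulation agree with $\fM_{0,2+2}^{\sst}\times_{\AA^1}B$ once $t$ is fixed. Your restriction to $m=1$, on the other hand, is appropriate: (1), with $B\to\AA^1/\GG_m$ classifying $b$, needs the local form $xy=t$.
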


\begin{proof}
    For (1), the isomorphism $\fM_{X/B}^{\sst} \simeq \fM_{0,2+2}^{\sst}/\GG_m \times_{\AA^1/\GG_m} B$ is a special case of~\cite[Theorem~1.3.2(3)]{ACFW}. When restricting to $X_{b}^{\land} \coloneqq X \times_{B_{\dR}} \{b\}$, this isomorphism becomes \begin{equation} \label{eq:deform-X_b}
        \fM_{X_b^{\land}/\hat{D}_b}^{\sst} \simeq \fM_{0,2+2}^{\sst}/\GG_m \times_{\AA^1/\GG_m} \hat{D}_b \simeq \fM_{0,2+2}^{\sst} \times_{\AA^1} \hat{D}_0~.
    \end{equation} Given a $k$-algebra $R$, the $R^{\red}$-points of the isomorphism can be described explicitly as follows: For $X'^{\red} \in \fM_{X_b}^{\sst}(R^{\red})$, let $X'^{\red}_0 = X'^{\red} \times_{X_b} \{p\}$ be the exceptional divisor, and $\{x_0^{\red},x^{\red}_\infty\} = X'^{\red}_0 \cap \overline{X'^{\red} \times_{X_b} (X_b \backslash \{p\})}$ (we might have $x_0^{\red} = x_\infty^{\red}$). This defines \[(Y^{\red},0_1,1_1,1_2,\infty_2) \coloneqq (\PP^1_{R^{\red}} \underset{\infty_1 \sim x_0}{\cup} X'^{\red}_0 \underset{x_\infty^{\red} \sim 0_2}{\cup} \PP^1_{R^{\red}}, 0_1,1_1,1_2,\infty_2) \in (\fM_{0,2+2}^{\sst} \times_{\AA^1} \{0\})(R^{\red})~.\] The construction in the other direction is similar. Now an $R^{\red}$-point of the fiber product \[\{p\}/\{b\} \times_{\Ran(X/B)} \bcM_{(X,x),\Ran}\] is given by $X' \in \fM_{X_b}^{\sst}(R^{\red})$, together with a finite subset $\{x_i^{\red} : \Spec R^{\red} \to X'_{0}\}$ supported on the smooth locus, and in particular away from $x_0,x_\infty$. Thus, the compositions $y_i^{\red}$ of $x_i^{\red}$ with the map $X'^{\red}_0 \to Y^{\red}$ define \[(Y^{\red},0_1,1_1,1_2,\infty_2,\{y_i^{\red}\}) \in \bcM_{0,2+2,\Ran}^{\land}(R^{\red})\] Given an $R^{\red}$-point as above, a lift to an $R$-point of $\bcM_{X_b^{\land}/\hat{D}_b,\Ran}$ is given by an $R$-deformation $X' \in \fM_{X^{\land}_b/\hat{D}_b}^{\sst}(R)$. By~\eqref{eq:deform-X_b}, such a deformation is exactly a deformation of $Y^{\red}$, which proves (3). Finally, such a deformation maps to $\{p\} \subset (X_{B\mh\dR})_p^{\land}$ precisely if $X'$, and therefore $Y$, is strictly nodal. This is given by the subspace \[\bcM_{0,2+2,\Ran}^{\land,\nd} \subset \bcM_{0,2+2,\Ran}^{\land}~,\] which proves (2).
\end{proof}

\section{Chiral algebras on semistable modifications} \label{sec:chial-nodal}

\subsection{Chiral action on stable curves}

In this section we discuss the chiral monoidal structure, defined in Section~\ref{sec:ch}, in the context of families of stable and semistable curves. First note that, since $\fM_{g,n,\Ran} \to \fM_{g,n}$ is a family of smooth curves over the stack $\fM_{g,n}$, we have a chiral product \[\fM_{g,n,\Ran} \times_{\fM_{g,n}^{\sst}} \fM_{g,n,\Ran} \xleftarrow{\jmath} (\fM_{g,n,\Ran} \times_{\fM_{g,n}^{\sst}} \fM_{g,n,\Ran})_{\disj} \xrightarrow{\sqcup} \fM_{g,n,\Ran}^{\sst}~.\]

Similarly, $\mathring{\Ran}_{g,n} \to \bcM_{g,n}$ (see Definition~\ref{def:Ran_gn}) is a family of smooth curves over the stack $\bcM_{g,n}$, and so we have a chiral multiplication \[\mathring{\Ran}_{g,n} \times_{\bcM_{g,n}} \mathring{\Ran}_{g,n} \xleftarrow{\jmath} (\mathring{\Ran}_{g,n} \times_{\bcM_{g,n}} \mathring{\Ran}_{g,n})_{\disj} \xrightarrow{\sqcup} \mathring{\Ran}_{g,n}~.\]  

Given a universal factorization algebra $\cA$, our goal in this section is to construct a factorization module $\Vac(\cA)_{g,n}^0$ defined over nodal curves $\Ran_{g,n}$, with respect to the action of $\cA_{g,n}^0$, defined over the smooth, punctured locus $\mathring{\Ran}_{g,n}$.

\begin{definition}
    Let \[\Ran_{g,n}^{\sst} = \Ran_{g,n} \times_{\bcM_{g,n},\Psi_{g,n}} \fM_{g,n}^{\sst}\] be the base-change of $\Ran_{g,n}$ to a family of curves over $\fM_{g,n}^{\sst}$. Here $\Psi_{g,n}$ is the stable reduction map of \eqref{eq:Psi_gn}. Define $\mathring{\Ran_{g,n}}^{\sst} \subset \Ran_{g,n}^{\sst}$ by \[\mathring{\Ran_{g,n}}^{\sst} = \Ran_{g,n}^{\sst} \times_{\Ran_{g,n}} \mathring{\Ran}_{g,n}~.\]
\end{definition}

Explicitly, an $R$-point of $\Ran_{g,n}^{\sst}$ is given by an $n$-pointed semistable curve $(X,\overline{p})$, and a finite and reduced subset $\{x_i^{\red}\}_{i \in I} \subset X^{\operatorname{st}}(R^{\red})$ of its stable reduction. Denote by $\operatorname{St} : X \to X^{\operatorname{st}}$ the projection. Then a point as above lies within $\mathring{\Ran}_{g,n}^{\sst}$ if the image of $\{x_i^{\red}\}_{i \in I}$ is contained in $X^{\operatorname{st,sm}} \backslash \{\operatorname{St}(p_1),\dots,\operatorname{St}(p_n)\}$.

\begin{lemma}
    The chiral multiplication on $\mathring{\Ran}_{g,n}$ is compatible with base-change along the stable reduction map $\Psi_{g,n}$ in the sense of Definition~\ref{def:Ran-compat-bc}.
\end{lemma}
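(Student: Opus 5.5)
Since the map \eqref{eq:bc-Ran-spaces} is always defined, compatibility with base-change along $\Psi_{g,n}$ amounts to showing that it is an isomorphism for every finite set $I$. Concretely, for $T = \fM_{g,n}^{\sst}$ (or the relevant open locus on which $\Psi_{g,n}$ is defined), $S = \bcM_{g,n}$ and the family $\mathring{\Ran}_{g,n} \to \bcM_{g,n}$, we must show
\[(\mathring{\Ran}_{g,n}^{\times_{\bcM_{g,n}} I})_{\disj} \times_{\bcM_{g,n}} \fM_{g,n}^{\sst} \to \big((\mathring{\Ran}_{g,n}^{\sst})^{\times_{\fM_{g,n}^{\sst}} I}\big)_{\disj}\]
is an isomorphism. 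Both sides are subfunctors of the same prestack $(\mathring{\Ran}_{g,n}^{\sst})^{\times I}$, because the underlying spaces agree, as remarked before Definition~\ref{def:Ran-compat-bc}. The map is therefore a monomorphism, and it suffices to compare the two disjointness conditions on $R$-points.

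Fix an $R$-point: a semistable curve $(X,\overline{p})$ over $R$, its stable reduction $\operatorname{St} : X \to X^{\operatorname{st}}$ (the pullback along $\Psi_{g,n}$ of a point of $\bcM_{g,n}(R)$), and $I$ finite subsets $A_i$ of $R^{\red}$-points of $X^{\operatorname{st,sm}} \backslash \{\operatorname{St}(p_j)\}$. On the left, disjointness means that the graphs of the $A_i$ are pairwise disjoint inside the family $X^{\operatorname{st}}$ over $\bcM_{g,n}$. On the right, we view $\mathring{\Ran}_{g,n}^{\sst}$ as the Ran space of the base-changed curve $X^{\operatorname{st}}\times_{\bcM_{g,n}}\fM^{\sst}_{g,n} = X^{\operatorname{st}}_R$, and disjointness means that the graphs are pairwise disjoint inside $X^{\operatorname{st}}_R$. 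Since disjointness of reduced sections only depends on $R^{\red}$-points, and the fiber of the base-changed curve over an $R$-point is literally $X^{\operatorname{st}}_R$, the two conditions coincide. This mirrors the example after Definition~\ref{def:Ran-compat-bc}.

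The main obstacle is to make sure that the right-hand side really is the Ran space of the pulled-back family $X^{\operatorname{st}}_R$, rather than of the semistable curve $X$ itself. If the chiral structure on $\mathring{\Ran}_{g,n}^{\sst}$ were instead measured on $X$, then disjointness in $X^{\operatorname{st}}$ would have to be compared with disjointness of lifts to $X$. This is still fine: the points lie in $X^{\operatorname{st,sm}}$ away from the images of the markings, and over this locus $\operatorname{St}$ is an isomorphism, since contraction only affects rational bridges and tails that collapse to nodes or marked points. So the preimage of $X^{\operatorname{st,sm}}\backslash\{\operatorname{St}(p_j)\}$ is an open subset of $X$ mapping isomorphically onto it. Via the example after Definition~\ref{def:Ran-compat-bc} (open embeddings are compatible), disjointness is then preserved and reflected, and the isomorphism follows.
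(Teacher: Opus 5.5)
Your proposal is correct and takes the same approach as the paper. The paper's proof is your third paragraph: since the stable reduction map $\operatorname{St}$ is an isomorphism over the smooth unmarked locus, disjoint subsets of $X^{\operatorname{st}}$ correspond exactly to disjoint subsets of the semistable curve $X$ away from the exceptional divisor. Your second paragraph covers only the literal base-change reading, where the statement is automatic. The disjointness fact you actually need at the end is the one used in the proof of Lemma~\ref{lem:open-fact}, not the example after Definition~\ref{def:Ran-compat-bc}.
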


\begin{proof}
    The fiber product \[(\mathring{\Ran}_{g,n}^{\sst} \times_{\fM_{g,n}^{\sst}} \mathring{\Ran}_{g,n}^{\sst}) \times_{(\mathring{\Ran}_{g,n}^{\sst} \times_{\bcM_{g,n}} \mathring{\Ran}_{g,n})} (\mathring{\Ran}_{g,n} \times_{\bcM_{g,n}} \mathring{\Ran}_{g,n})_{\disj}\] classifies pairs of semistable curves, together with finite, reduced, and disjoint subsets of their stable reduction, which are supported on the smooth locus and away from the weight-$1$ marked points. Since the stable reduction map is an isomorhpism over the smooth unmarked locus, this corresponds exactly to a pair of disjoint subsets of the original curves, supported away from the exceptional divisor. 

\end{proof}

Therefore, the chiral monoidal structure on the family $\mathring{\Ran}_{g,n}^{\sst}$ is obtained by base-change from the chiral multiplication on $\mathring{\Ran}_{g,n}$. By Lemma~\ref{lem:base-change-fact-mod}, $\Ran_{g,n}^{\sst} \in \mathring{\Ran}_{g,n}^{\sst}\mh\Mod^{\et}$ is a factorization module space over it. 

\begin{lemma} \label{lem:st-mod}
    The open embedding $\Phi_{g,n} : \bcM_{g,n,\Ran} \to \fM_{g,n,\Ran}$ of \eqref{eq:Phi_gn} upgrades to an augmentation of $\fM_{g,n,\Ran}^{\sst}$-modules.
\end{lemma}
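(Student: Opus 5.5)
The plan is to exhibit $\bcM_{g,n,\Ran}$ as the base change of the chiral module structure on $\fM_{g,n,\Ran}^{\sst}$ along the open embedding $\Phi_{g,n}$, exactly as in the definition of $\Ran(X/S)\mh\Mod^{\aug}$. Here $\fM_{g,n,\Ran}^{\sst} = \Ran_{\emptyset}(\mathring{\fX}_{g,n}^{\sst}/\fM_{g,n}^{\sst})$ is itself a commutative algebra in $\PreSt^{\corr}_{\fM_{g,n}^{\sst}}$ via the chiral product of the family of smooth curves $\mathring{\fX}^{\sst}_{g,n}/\fM^{\sst}_{g,n}$, and hence a module over itself. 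I would define
\[\Act_{\bcM} \coloneqq (\fM_{g,n,\Ran}^{\sst} \times_{\fM^{\sst}_{g,n}} \fM_{g,n,\Ran}^{\sst,\times I})_{\disj} \times_{\fM_{g,n,\Ran}^{\sst}\times \fM^{\sst,\times I}_{g,n,\Ran}} (\bcM_{g,n,\Ran} \times_{\fM_{g,n}^{\sst}} \fM^{\sst,\times I}_{g,n,\Ran}),\]
so that the left square of the augmentation diagram is Cartesian by construction. It then remains to show two things: that $\sqcup$ restricted to $\Act_{\bcM}$ lands in $\bcM_{g,n,\Ran}$, and that the associativity and commutativity data of the algebra $\fM^{\sst}_{g,n,\Ran}$ restrict to these sub-prestacks.

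For the first point, I would check that stability is preserved under enlarging the configuration. Take an $R$-point consisting of a semistable curve $(X,\bar p)$, a configuration $A \subset X^{\sm}\setminus\{p_i\}$ such that $(X,\bar p, A)$ is $\pmb{w}$-stable, and configurations $B_1,\dots,B_I$ pairwise disjoint and disjoint from $A$. The stability condition in $\bcM_{g,n,I}$ is checked on geometric fibres. It requires weight at most $1$ at each point and every rational component to carry enough weight. Since stability only asks each genus-$0$ component to contain at least one $\epsilon$-point, the union $A \sqcup B_1 \sqcup \dots \sqcup B_I$ still satisfies it. Disjointness guarantees that the new points stay in $X^{\sm}\setminus\{p_i\}$, which is already built into $\fM^{\sst}_{g,n,\Ran}$. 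I must also check that the weight normalization $1/(|I|+1)$ causes no issue. Points may collide in the Ran sense only through de Rham directions, and all $\epsilon$-points live in the relative de Rham stack $(-)_{\dR/}$, so ampleness is a condition on $X^{\red}$ with the reduced support only. Because $\bcM_{g,n,\Ran}\subset \fM_{g,n,\Ran}^{\sst}$ is open, this fibrewise check suffices.

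For the second point, since $\Phi_{g,n}$ is a monomorphism, all higher coherence data for the module structure are inherited by restriction. Concretely, I would argue that a sub-prestack $\cY\subset\cM$ of a module in $\PreSt^{\corr}$ which is closed under the action and whose action prestack is the pullback inherits a module structure making the inclusion an augmented module map. Étaleness of $\sqcup_{\bcM}$ then follows as in the example after the definition of $\Mod^{\et}$, since it is a base change of the étale $\sqcup$ composed with an open embedding.

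I expect the main obstacle to be the stability check in families rather than on fibres: making sure that ampleness of $K+\sum w_i[p_i]$ is an open condition that is detected on reduced geometric fibres, and that the union of Ran points behaves well when components degenerate. I would handle this via Hassett's criterion on geometric points, together with the openness of $\bcM_{g,n,I}\subset\fM^{\sst}_{g,n,I}$.
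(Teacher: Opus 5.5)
Your proposal is correct and is essentially the paper's proof: the action prestack is defined as the pullback along the left square, and the only real check is that $\sqcup$ restricts to $\bcM_{g,n,\Ran}$. This holds because adding disjoint weight-$\epsilon$ points to a stable configuration keeps it stable. (Strictly, only genus-$0$ components with exactly two special points need an $\epsilon$-point, not every genus-$0$ component, but the condition is upward-closed either way.) Your remarks on inheriting the coherence data through the monomorphism $\Phi_{g,n}$, and on \'{e}taleness of the restricted action, make explicit what the paper leaves implicit.
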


\begin{proof}
    The action and augmentation are defined via the diagram
    \[\begin{tikzcd}
	{\bcM_{g,n,\Ran} \times_{\fM_{g,n}^{\sst}} \fM_{g,n,\Ran}^{\sst,I}} & {(\bcM_{g,n,\Ran} \times_{\fM_{g,n}^{\sst}} \fM_{g,n,\Ran}^{\sst,I/})_{\disj}} & {\bcM_{g,n,\Ran}} \\
	{\fM_{g,n,\Ran}^{\sst,I_+}} & {(\fM_{g,n,\Ran}^{\sst})^{I_+/}_{\disj}} & {\fM_{g,n,\Ran}^{\sst}}
	\arrow["{\Phi_{g,n} \times \id}", from=1-1, to=2-1]
	\arrow["{\jmath_{\st}}"', from=1-2, to=1-1]
	\arrow["{\sqcup_{\st}}", from=1-2, to=1-3]
	\arrow["{\Phi_{g,n,I}}", from=1-2, to=2-2]
	\arrow["{\Phi_{g,n}}", from=1-3, to=2-3]
	\arrow["{\jmath}"', from=2-2, to=2-1]
	\arrow["{\sqcup}", from=2-2, to=2-3]
    \end{tikzcd}\] where $I_+ = I \sqcup \{+\}$ and the term $(\bcM_{g,n,\Ran} \times_{\fM_{g,n}^{\sst}} \fM_{g,n,\Ran}^{\sst,I/})_{\disj}$ is defined as the pullback along the left square. $\sqcup_{\st}$ is well-defined since the union of a stable and a semistable configurations of weight-$\epsilon$ points is again stable.
\end{proof}

\begin{definition}
    Let \begin{align*}
        \Fact_{g,n}^{0,\sst} & = \Fact(\mathring{\Ran}_{g,n}^{\sst}/\fM_{g,n}^{\sst}) \\
        \Fact_{g,n}^{\epsilon,\sst} & = \Fact(\fM_{g,n,\Ran}^{\sst}/\fM_{g,n}^{\sst}) \\
        \Fact_{g,n}^0 & = \Fact(\mathring{\Ran}_{g,n}/\bcM_{g,n})~.
    \end{align*}
\end{definition}

Define a map \[\Upsilon_{g,n} : \mathring{\Ran}_{g,n}^{\sst} \to \fM_{g,n,\Ran}^{\sst}\] by \[(X,p_1,\dots,p_n,\{x_i^{\red}\}_{i \in I}) \mapsto \left(X,p_1,\dots,p_n,\left\{\operatorname{St}^{-1}(x_i^{\red})\right\}_{i\in I}\right)~.\]
Note that this is well-defined, since $\operatorname{St}$ is an isomorphism away from the exceptional divisor. From Lemma~\ref{lem:open-fact} and Lemma~\ref{lem:open-fact-mod}, with $S = \fM_{g,n}^{\sst}$, $\Ran(X/S) = \fM^{\sst}_{g,n,\Ran}$ and $\Ran(U/S) = \mathring{\Ran}_{g,n}^{\sst}$, we get:
\begin{corollary}
    $\Upsilon_{g,n}$ induces a map \[\Upsilon_{g,n}^{\circ} : \fM_{g,n,\Ran}^{\sst}\mh\Mod^{\et} \to \mathring{\Ran}_{g,n}^{\sst}\mh\Mod^{\et}\] as well as a map \[\Upsilon_{g,n}^{!,\fact} : \Fact_{g,n}^{\epsilon,\sst} \to \Fact_{g,n}^{0,\sst}\] Combining with Lemma~\ref{lem:st-mod}, we get a $\mathring{\Ran}^{\sst}_{g,n}$-module structure on $\bcM_{g,n,\Ran}$.
\end{corollary}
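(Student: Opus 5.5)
The plan is to verify the two hypotheses of Lemma~\ref{lem:open-fact} and Lemma~\ref{lem:open-fact-mod} for the map $\Upsilon_{g,n}$, and then compose the resulting restriction of module structures with the augmentation of Lemma~\ref{lem:st-mod}. We work over the base $S = \fM_{g,n}^{\sst}$. The family $\mathring{\fX}_{g,n}^{\sst}$ is a family of smooth curves, and $\fM_{g,n,\Ran}^{\sst}$ is its Ran space with the empty configuration adjoined. The first step is to identify $\mathring{\Ran}_{g,n}^{\sst}$ with $\Ran(U/S)$, where $U \subset \mathring{\fX}_{g,n}^{\sst}$ is the open complement of the exceptional locus of $\operatorname{St} : X \to X^{\operatorname{st}}$, together with the preimages of the punctures and nodes. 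Since $\operatorname{St}$ is an isomorphism over $X^{\operatorname{st,sm}} \backslash \{\operatorname{St}(p_i)\}$, a finite subset of this locus in $X^{\operatorname{st}}(R^{\red})$ is the same as a finite subset of $U(R^{\red})$. Under this identification $\Upsilon_{g,n}$ becomes the map $f_{\Ran}$ induced by the open inclusion $f : U \to \mathring{\fX}_{g,n}^{\sst}$, followed by the inclusion $\Ran \subset \Ran_{\emptyset}$.

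\textbf{Main obstacle.} $U$ need not be fiberwise nonempty: the exceptional locus can exhaust some components, but $U$ always contains the non-exceptional part, which is nonempty in every fiber because $X^{\operatorname{st}}$ has nonempty smooth unmarked locus when $2g+n-2>0$. The delicate point is openness of $U$ in families. I would handle it by noting that the exceptional locus is the union of the fibers of $\operatorname{St}$ over the closed set of nodes and punctures of $X^{\operatorname{st}}$, hence closed. Its complement is therefore open, and it maps isomorphically onto an open subset of the stable curve.

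With this identification, Lemma~\ref{lem:open-fact} gives $\Upsilon_{g,n}^{!,\fact} : \Fact_{g,n}^{\epsilon,\sst} \to \Fact_{g,n}^{0,\sst}$. The key input is the Cartesian squares~\eqref{eq:comm-diag-bc-Ran}, which hold because disjointness of sections in $U$ is detected in $\mathring{\fX}_{g,n}^{\sst}$. The adjoined point $\emptyset$ causes no trouble, since it is disjoint from everything and is not in the image of $\Upsilon_{g,n}$. Lemma~\ref{lem:open-fact-mod} then gives $\Upsilon_{g,n}^{\circ}$. Its construction restricts the action correspondence along $\id \times f_{\Ran}^I$, and étaleness of the restricted action map holds because it is the composite of an open embedding with an étale map.

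Finally, Lemma~\ref{lem:st-mod} makes $\bcM_{g,n,\Ran}$ an augmented, hence étale, $\fM_{g,n,\Ran}^{\sst}$-module. Applying $\Upsilon_{g,n}^{\circ}$ to it yields the desired $\mathring{\Ran}_{g,n}^{\sst}$-module structure. The action correspondence is the pullback of $(\bcM_{g,n,\Ran} \times \fM_{g,n,\Ran}^{\sst,I})_{\disj}$ to $\bcM_{g,n,\Ran} \times (\mathring{\Ran}_{g,n}^{\sst})^I$.
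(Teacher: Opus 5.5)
Your proposal is correct and follows the paper's route. The paper invokes Lemma~\ref{lem:open-fact} and Lemma~\ref{lem:open-fact-mod} with $S=\fM_{g,n}^{\sst}$, $\Ran(X/S)=\fM_{g,n,\Ran}^{\sst}$ and $\Ran(U/S)=\mathring{\Ran}_{g,n}^{\sst}$, then restricts the augmented module of Lemma~\ref{lem:st-mod}; your description of $U$ as the preimage under $\operatorname{St}$ of the smooth unmarked locus of $X^{\operatorname{st}}$, with the openness and fiberwise-nonemptiness checks, just makes explicit what the paper leaves implicit.

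One small correction: once $\emptyset$ is adjoined to the target, the right-hand ($\sqcup$) square of~\eqref{eq:comm-diag-bc-Ran} is no longer Cartesian, because the fiber of $\sqcup$ over a configuration in $U$ contains decompositions with empty parts. The corollary never needs that square, however: restricting module structures uses only the left square, and $\Upsilon_{g,n}^!$, as left adjoint to the symmetric monoidal functor $\Upsilon_{g,n,*}$, is oplax monoidal and so still carries factorization algebras to factorization algebras.
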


From Theorem~\ref{thm:loc-fact}, we get for each universal factorization algebra $\cA$ a family of factorization algebras over these spaces.
\begin{definition} \label{def:univ-to-Mgn}
    For a universal factorization algebra $\cA$ and $g,n \geq 0$, denote by $\cA_{g,n}^{\epsilon,\sst} \in \Fact_{g,n}^{\epsilon,\sst}$ be the corresponding family of factorizatioon algebras. Let \[\cA_{g,n}^{0,\sst} = \Upsilon^{!,\fact}_{g,n}\cA_{g,n}^{\epsilon,\sst}~.\]
\end{definition}

\begin{definition}
    Let \begin{align*}
        \cA_{g,n}^{\epsilon,\sst}\mh\FactMod^{\epsilon} & = \cA_{g,n}^{\epsilon,\sst}\mh\FactMod(\bcM_{g,n,\Ran}) \\
        \cA_{g,n}^{0,\sst}\mh\FactMod^{\epsilon} & = \cA_{g,n}^{0,\sst}\mh\FactMod(\bcM_{g,n,\Ran}) \\
        \cA_{g,n}^{0,\sst}\mh\FactMod^{0} & = \cA_{g,n}^{0,\sst}\mh\FactMod(\Ran_{g,n}^{\sst})~.
    \end{align*} 
\end{definition}

\begin{remark}
    $\cA_{g,n}^{0,\sst}$ can be equivalently defined as the factorization algebra attached to the $\fM_{g,n}^{\sst}$-family of curves $\mathring{\Ran}_{g,n}^{\sst}$.
\end{remark}

Now from the map $\Pi_{g,n}$ of \eqref{eq:Pi_gn} and the map $\bcM_{g,n,\Ran} \to \fM_{g,n}^{\sst}$, we get a map \[\tilde{\Pi}_{g,n} : \bcM_{g,n,\Ran} \to \Ran_{g,n}^{\sst}\]
Explicitly, for an $R$-point $(X,x_1,\dots,x_n,\{y_i^{\red}\}_{i \in I}) \in \bcM_{g,n,\Ran}(R)$, let \[\operatorname{St} : (X,x_1,\dots,x_n) \to (X^{\operatorname{st}},x'_1,\dots,x'_n)\] be the stable reduction map. Then \[\tilde{\Pi}_{g,n} : \left(X,x_1,\dots,x_n,\left\{y_i^{\red}\right\}_{i \in I}\right) \mapsto \left(X,x_1,\dots,x_n,\left\{\operatorname{St}(y_i^{\red})\right\}_{i \in I}\right)~.\]

\begin{lemma} \label{lem:Pi-bimod-map}
    $\tilde{\Pi}_{g,n}$ is both a map of $\mathring{\Ran}_{g,n}^{\sst}$-modules and of $\mathring{\Ran}_{g,n}^{\sst}$-comodules.
\end{lemma}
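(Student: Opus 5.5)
The plan is to exhibit, for each finite set $I$, the commutative diagram
\[\begin{tikzcd}
	{\bcM_{g,n,\Ran} \times_{\fM_{g,n}^{\sst}} (\mathring{\Ran}_{g,n}^{\sst})^{\times I}} & {\Act_{\bcM}} & {\bcM_{g,n,\Ran}} \\
	{\Ran_{g,n}^{\sst} \times_{\fM_{g,n}^{\sst}} (\mathring{\Ran}_{g,n}^{\sst})^{\times I}} & {\Act_{\Ran^{\sst}}} & {\Ran_{g,n}^{\sst}}
	\arrow["{\tilde{\Pi}_{g,n} \times \id}", from=1-1, to=2-1]
	\arrow["{\jmath}"', from=1-2, to=1-1]
	\arrow["{\sqcup}", from=1-2, to=1-3]
	\arrow["{\tilde{\Pi}_{g,n,I}}", from=1-2, to=2-2]
	\arrow["{\tilde{\Pi}_{g,n}}", from=1-3, to=2-3]
	\arrow["{\jmath}"', from=2-2, to=2-1]
	\arrow["{\sqcup}", from=2-2, to=2-3]
\end{tikzcd}\]
and to show that the left square is Cartesian, which gives the module-map statement as in the proof of Lemma~\ref{lem:module-map-fact}. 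I would then show that the right square is also Cartesian, which gives the comodule-map statement. Here the top row is the action from the Corollary following Lemma~\ref{lem:st-mod}: the action of $\fM_{g,n,\Ran}^{\sst}$ on $\bcM_{g,n,\Ran}$ restricted along $\Upsilon_{g,n}$. The bottom row is the action of Lemma~\ref{lem:base-change-fact-mod} on the base-changed Ran space. Both rows live over $\fM^{\sst}_{g,n}$, and $\tilde{\Pi}_{g,n}$ is a map over $\fM^{\sst}_{g,n}$, so all fiber products can be computed fiberwise over an $R$-point $(X,\overline{p})$ of $\fM_{g,n}^{\sst}$. Also, all the conditions involved (disjointness, support) are conditions on $R^{\red}$-points, since these are relative de Rham prestacks. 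So each check reduces to a set-theoretic statement about finite subsets of $X(R^{\red})$ and of $X^{\operatorname{st}}(R^{\red})$.

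Commutativity is immediate. A point of $\Act_{\bcM}$ is a stable configuration $\{y_i\}$ on $X$ together with $I$ subsets $\{z^{(a)}\}$ of $X^{\operatorname{st,sm}}\backslash\{\operatorname{St}(p_j)\}$ such that $\{y_i\}$ and $\operatorname{St}^{-1}\{z^{(a)}\}$ are pairwise disjoint. The map $\sqcup$ takes the union. Since $\operatorname{St}\circ\operatorname{St}^{-1}=\id$ on the unmarked smooth locus, applying $\operatorname{St}$ to the union gives the union of $\operatorname{St}\{y_i\}$ and $\{z^{(a)}\}$.

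For the left square, I must show that disjointness of $\{y_i\}$ from $\operatorname{St}^{-1}(z^{(a)})$ in $X$ is equivalent to disjointness of $\operatorname{St}(y_i)$ from $z^{(a)}$ in $X^{\operatorname{st}}$. The argument is the same as in the proof of the Lemma preceding Lemma~\ref{lem:st-mod}. The points $z^{(a)}$ lie in the open locus $U\subset X^{\operatorname{st}}$ over which $\operatorname{St}$ is an isomorphism, namely away from the images of exceptional components and marked points. So a collision $\operatorname{St}(y_i)=z^{(a)}$ on a geometric point forces $y_i\in \operatorname{St}^{-1}(U)$ with $y_i=\operatorname{St}^{-1}(z^{(a)})$, and the converse implication is clear.

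The main obstacle is the right square, because it requires reconstructing the $\bcM$-data from its image. An $R$-point of the fiber product $\Act_{\Ran^{\sst}}\times_{\Ran^{\sst}_{g,n}}\bcM_{g,n,\Ran}$ is a stable configuration $W$ on $X$ together with a decomposition $\operatorname{St}(W)=B_0\sqcup\bigsqcup_a B_a$, where each $B_a\subset U$ is nonempty and disjoint from everything else. I would set $W_a=\operatorname{St}^{-1}(B_a)$ and $W_0=W\backslash\bigcup_a W_a$. Then $W_a\subset W$, because $\operatorname{St}$ is injective over $U$. One must check two things. First, $W_0$ must still be a stable configuration: every exceptional component of $X$ has to keep a weight-$\epsilon$ point. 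This holds because the points of $W$ on exceptional components map outside $U$, so they lie in $W_0$. Second, the decomposition must be Zariski-locally constant over $\Spec R^{\red}$, which follows from the disjointness condition being open and closed. With these, the map from $\Act_{\bcM}$ to this fiber product is an isomorphism. The one caveat is the empty-configuration case $\fM_{0,2,\emptyset}^{\sst}$, where $\Psi_{g,n}$ is undefined and which has to be excluded consistently. Base change then applies to both squares, exactly as in Lemma~\ref{lem:module-map-fact}.
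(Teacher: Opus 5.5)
Your proposal is correct and follows the paper's proof. The paper uses the same diagram of action correspondences and shows that both squares are Cartesian, because stable reduction is an isomorphism over the unmarked smooth locus and so preserves and reflects disjointness. Your explicit inverse for the right square, $W_0=W\backslash\bigcup_a\operatorname{St}^{-1}(B_a)$, together with your check that it is still stable, spells out what the paper compresses into the remark that the squares are Cartesian before imposing disjointness. The Zariski-local-constancy step is unnecessary, since the decomposition is already given as a partition of the finite set of sections.
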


\begin{proof}
    For each finite set $I$, we have a commutative diagram \[\begin{tikzcd}
	{\bcM_{g,n,\Ran} \times_{\fM_{g,n}^{\sst}} \mathring{\Ran}_{g,n}^{\sst,I}} & {(\bcM_{g,n,\Ran} \times_{\fM_{g,n}^{\sst}} \mathring{\Ran}_{g,n}^{\sst,I/})_{\disj}} & {\bcM_{g,n,\Ran}} \\
	{\Ran_{g,n}^{\sst} \times_{\fM_{g,n}^{\sst}} \mathring{\Ran}_{g,n}^{\sst,I}} & {(\Ran_{g,n}^{\sst} \times_{\fM_{g,n}^{\sst}} \mathring{\Ran}_{g,n}^{\sst,I/})_{\disj}} & {\Ran_{g,n}^{\sst}}
	\arrow["{\tilde{\Pi}_{g,n} \times \id}", from=1-1, to=2-1]
	\arrow["{\jmath_{\epsilon}}"', from=1-2, to=1-1]
	\arrow["{\sqcup_{\epsilon}}", from=1-2, to=1-3]
	\arrow["{\tilde{\Pi}_{g,n,I}}", from=1-2, to=2-2]
	\arrow["{\tilde{\Pi}_{g,n}}", from=1-3, to=2-3]
	\arrow["{\jmath_0}"', from=2-2, to=2-1]
	\arrow["{\sqcup_0}", from=2-2, to=2-3]
    \end{tikzcd}\] where $\tilde{\Pi}_{g,n,I}$ is well-defined since any $\mathring{\Ran}_{g,n}^{\sst}$ marked point is supported on the stable locus, over which the stable reduction map is an isomorphism, and so $\tilde{\Pi}_{g,n}$ preserves disjointness. Therefore, since both squares are Cartesian if we remove the disjoint condition, they remain so after we add it.
\end{proof}

Combining Lemma~\ref{lem:Pi-bimod-map} with Lemma~\ref{lem:module-map-fact}, we get:
\begin{corollary} \label{cor:eps-to-zero-mod}
    The map $\tilde{\Pi}_{g,n}$ induces a map \[(\tilde{\Pi}_{g,n})_* : \cA_{g,n}^{0,\sst}\mh\FactMod^{\epsilon} \to \cA_{g,n}^{0,\sst}\mh\FactMod^{0}~.\]
\end{corollary}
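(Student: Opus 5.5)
The statement follows by applying the second half of Lemma~\ref{lem:module-map-fact} to $f = \tilde{\Pi}_{g,n} : \bcM_{g,n,\Ran} \to \Ran_{g,n}^{\sst}$, with the family of factorization algebras $\cA_{g,n}^{0,\sst} \in \Fact_{g,n}^{0,\sst}$ over $\mathring{\Ran}^{\sst}_{g,n}/\fM_{g,n}^{\sst}$. That lemma needs three inputs. First, both source and target must lie in $\mathring{\Ran}^{\sst}_{g,n}\mh\Mod^{\et}$. Second, $f$ must be a map of modules. Third, $f$ must also be a map of comodules.

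For the first input, I treat the two spaces separately.
\begin{itemize}
\item For $\Ran^{\sst}_{g,n}$: the module structure comes from the base-change compatibility lemma for $\Psi_{g,n}$ together with Lemma~\ref{lem:base-change-fact-mod}. Its action map is a base change of the \'{e}tale disjoint-union map $\sqcup$ for $\mathring{\Ran}_{g,n}$, hence is \'{e}tale.
\item For $\bcM_{g,n,\Ran}$: the module structure is the one obtained by restricting the augmented $\fM^{\sst}_{g,n,\Ran}$-module structure of Lemma~\ref{lem:st-mod} along $\Upsilon_{g,n}$ (the corollary following it). Augmented modules are \'{e}tale, as noted in the example after the definition of $\Ran(X/S)\mh\Mod^{\et}$. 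Restriction along the open map $\Upsilon_{g,n}$ preserves \'{e}taleness by the argument in Lemma~\ref{lem:open-fact-mod}.
\end{itemize}

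The second and third inputs are exactly Lemma~\ref{lem:Pi-bimod-map}. The only point to check is that the diagram there uses the same action correspondences as the ones just fixed on both sides. On the source, the top row $(\jmath_\epsilon,\sqcup_\epsilon)$ should be the restriction along $\Upsilon_{g,n}$ of the augmented action. On the target, the bottom row should be the base-changed chiral action. This holds because $\Upsilon_{g,n}$ and $\operatorname{St}$ are inverse isomorphisms on the stable, smooth, unmarked locus where the $\mathring{\Ran}$-points live. Consequently the disjointness conditions defining $\Act$ agree on both sides.

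With these inputs, the proof of Lemma~\ref{lem:module-map-fact} produces the functor $(\tilde{\Pi}_{g,n})_*$ on $\cA^{0,\sst}_{g,n}$-comodules. Base change along both Cartesian squares identifies $\sqcup_0^!\tilde{\Pi}_{g,n,*}\cM$ with $\jmath_0^!(\tilde{\Pi}_{g,n,*}\cM \otimes^!_S \cA^{\otimes^! I})$. This identification preserves the factorization isomorphisms~\eqref{eq:fact-iso-mod}, so the functor lands in factorization modules.

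The main technical obstacle is justifying the base-change isomorphism $\sqcup_0^!\tilde{\Pi}_{g,n,*} \simeq \tilde{\Pi}_{g,n,I,*}\sqcup_\epsilon^!$. The map $\tilde{\Pi}_{g,n}$ is not of finite type in the naive sense, since it is a colimit over $\fSet^{\surj}$ of maps of relative de Rham stacks. I would handle this levelwise in $I$. For each $I$ the map is proper (a stable reduction of Hassett spaces), and $\sqcup$ is \'{e}tale, so $\sqcup^! = \sqcup^*$ and base change holds for each $I$. The pushforward is then compatible with the colimit presentation~\eqref{eq:colim-present}, since the $\Delta_{(\alpha),!}$ commute with proper pushforward.
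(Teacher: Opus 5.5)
Your proposal is the paper's argument. The corollary comes from feeding Lemma~\ref{lem:Pi-bimod-map} ($\tilde{\Pi}_{g,n}$ is both a module and a comodule map) into the extension half of Lemma~\ref{lem:module-map-fact}, and your check that $\bcM_{g,n,\Ran}$ and $\Ran^{\sst}_{g,n}$ are \'{e}tale $\mathring{\Ran}^{\sst}_{g,n}$-modules matches the remarks preceding the corollary.

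One caveat: your claim that $\tilde{\Pi}_{g,n}$ is levelwise proper is false. Since it keeps the semistable curve fixed, a weight-$\epsilon$ point on an exceptional $\PP^1$ can specialize into a node or a weight-one point with no stable limit on that same curve. Properness is not needed, though: the base change isomorphisms you use, against the \'{e}tale $\sqcup$, the open $\jmath$ and the $!$-pullbacks defining $\otimes^!_S$, hold for $*$-pushforward without it, which is how the paper applies them in Lemmas~\ref{lem:open-fact} and~\ref{lem:module-map-fact}.
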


\begin{example} \label{ex:vac-mod-gn}
    As in Example~\ref{ex:vac-mod}, we have the vacuum module \[\Vac(\cA)_{g,n}^{\epsilon} \coloneqq \Phi_{g,n}^!\cA_{g,n}^{\epsilon,\sst} \in \cA_{g,n}^{\epsilon,\sst}\mh\FactMod^{\epsilon}~.\]
    Using Corollary~\ref{cor:eps-to-zero-mod}, we get \[\Vac(\cA)_{g,n}^{0,\sst} \coloneqq (\tilde{\Pi}_{g,n})_*\Vac(\cA)_{g,n}^{\epsilon} \in \cA_{g,n}^{0,\sst}\mh\FactMod^{0}\] This defines an $\cA$-vacuum module for $\Ran_{g,n}^{\sst}$ over $\mathring{\Ran}_{g,n}^{\sst}$, despite the fact that the latter is not an augmented module.
\end{example}

Finally, using Lemma~\ref{lem:base-change-fact} and Lemma~\ref{lem:base-change-fact-mod}, with $\Psi_{g,n} : T = \fM^{\sst}_{g,n} \to \bcM_{g,n} = S$ (see \eqref{eq:Psi_gn}) and $\Ran(X/S) = \mathring{\Ran}_{g,n}$, we get:
\begin{corollary}\label{cor:vac-0}
    The pushforward of $\cA_{g,n}^{0,\sst}$ along the unit map $u_{\Psi} : \mathring{\Ran}^{\sst}_{g,n} \to \mathring{\Ran}_{g,n}$ defines a factorization algebra $u_{\Psi,*}\cA_{g,n}^{0,\sst} \eqqcolon \cA_{g,n}^0 \in \Fact_{g,n}^0$, and we have a map
    \[u_{\Psi,*}^{\cA} : \cA_{g,n}^{0,\sst}\mh\FactMod^{0} \to \cA_{g,n}^0\mh\FactMod^0 \coloneqq \cA_{g,n}^{0}\mh\FactMod(\Ran_{g,n})\] We denote \[\Vac(\cA)^0_{g,n} \coloneqq u_{\Psi,*}^{\cA}\Vac(\cA)_{g,n}^{0,\sst}~.\]
\end{corollary}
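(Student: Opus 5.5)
The corollary has two parts. I will get both from Lemma~\ref{lem:base-change-fact} and Lemma~\ref{lem:base-change-fact-mod}, applied with $S = \bcM_{g,n}$, $T = \fM_{g,n}^{\sst}$, $f = \Psi_{g,n}$ and the family of smooth curves $\mathring{\Ran}_{g,n} \to \bcM_{g,n}$.

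\textbf{Base-change hypothesis.} Both lemmas require that $\mathring{\Ran}_{g,n}$ be compatible with base-change along $\Psi_{g,n}$, in the sense of Definition~\ref{def:Ran-compat-bc]}. This is exactly the content of the lemma preceding Lemma~\ref{lem:st-mod}. The key input there is that the stable reduction map is an isomorphism over the smooth unmarked locus, so disjointness in $\mathring{\Ran}_{g,n}^{\sst}$ and in $\mathring{\Ran}_{g,n}$ agree. I will also use that $\mathring{\Ran}_{g,n}^{\sst} = \mathring{\Ran}_{g,n} \times_{\bcM_{g,n}} \fM_{g,n}^{\sst}$. This identifies the unit map $u_\Psi$ with the projection $\Oblv\,\mathring{\Ran}_{g,n,T} \to \mathring{\Ran}_{g,n}$ appearing in Lemma~\ref{lem:base-change-fact}.

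\textbf{The factorization algebra.} Lemma~\ref{lem:base-change-fact} then provides the symmetric monoidal pushforward $u_{\Psi,*}$ and the functor $u_{\Psi,*}^{\fact} : \Fact_{g,n}^{0,\sst} \to \Fact_{g,n}^0$. Applying it to $\cA_{g,n}^{0,\sst}$ from Definition~\ref{def:univ-to-Mgn} gives $\cA^0_{g,n} \in \Fact^0_{g,n}$.

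\textbf{The module functor.} Here I apply Lemma~\ref{lem:base-change-fact-mod} to the module space $\cY = \Ran_{g,n} \in \mathring{\Ran}_{g,n}\mh\Mod^{\et}$. Since $\Ran_{g,n}$ is a family of curves containing $\mathring{\Ran}_{g,n}$, it is a module by disjoint union, and the union map is \'{e}tale. Its base change $\cY_T = \Ran_{g,n} \times_{\bcM_{g,n}} \fM^{\sst}_{g,n}$ is $\Ran_{g,n}^{\sst}$ by definition, with its module structure obtained by base change as noted after that lemma. The lemma then gives
\[u_{\Psi,*}^{\cA} : \cA^{0,\sst}_{g,n}\mh\FactMod(\Ran_{g,n}^{\sst}) \to u_{\Psi,*}^{\fact}\cA^{0,\sst}_{g,n}\mh\FactMod(\Ran_{g,n}),\]
which is the asserted map. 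The definition of $\Vac(\cA)^0_{g,n}$ is then simply an application of this functor to the object $\Vac(\cA)_{g,n}^{0,\sst}$ from Example~\ref{ex:vac-mod-gn}.

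\textbf{Main obstacle.} The one point that needs care is that $u_{\Psi,*}$ must satisfy base change with respect to the action and disjoint-union squares. These squares are Cartesian by the compatibility hypothesis, but $\Psi_{g,n}$ is not representable: it goes from an Artin stack to a DM stack. So I would check that the pushforward is defined and satisfies base change for $\IndCoh$ along such maps, using that $\fM^{\sst}_{g,n} \to \bcM_{g,n}$ is locally of finite type, so the $*$-pushforward exists on the finite type pieces of the Ran colimit.
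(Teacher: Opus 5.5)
Your proposal is correct and follows the paper's argument. The paper obtains the corollary by applying Lemma~\ref{lem:base-change-fact} and Lemma~\ref{lem:base-change-fact-mod} to $\Psi_{g,n} : T = \fM^{\sst}_{g,n} \to \bcM_{g,n} = S$ with $\Ran(X/S) = \mathring{\Ran}_{g,n}$. As in your proposal, it uses the lemma preceding Lemma~\ref{lem:st-mod} for base-change compatibility and takes $\Ran_{g,n}$ as the module space, with base change $\Ran^{\sst}_{g,n}$. The paper does not treat the foundational point you flag (existence of $*$-pushforward and base change along the non-representable, non-quasi-compact stable reduction map); it simply takes $\IndCoh$ to be defined on all correspondences of laft prestacks.
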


The module $\Vac(\cA)^0_{g,n}$ will be our extension of a universal factorization algebra to nodal curves. 

\begin{definition} \label{def:VacXx}
    For $f : \Spec R \to \bcM_{g,n}$, let $f_{\Ran}^{\epsilon} : \bcM_{(X,x_1,\dots,x_n),\Ran} \to \bcM_{g,n,\Ran}$ and $f_{\Ran}^0 : \Ran(X/R) \to \Ran_{g,n}$ be its base-change. Define \begin{align*}
        \Vac(\cA)_{(X,x_1,\dots,x_n)}^{\epsilon} & = f_{\Ran}^{\epsilon,!}\Vac(\cA)_{g,n}^{\epsilon} \\
        \Vac(\cA)_{(X,x_1,\dots,x_n)}^0 & = f_{\Ran}^{0,!}\Vac(\cA)_{g,n}^{0}~.
    \end{align*}
\end{definition}

Note that by base-change, we have \[\Vac(\cA)_{(X,x_1,\dots,x_n)}^0 = \Pi_{(X,x_1,\dots,x_n),!}\Vac(\cA)_{(X,x_1,\dots,x_n)}^{\epsilon}\] where $\Pi_{(X,x_1,\dots,x_n)}$ is as in Example~\ref{ex:Pi_Xx}.

\subsection{The join monoidal structure}

In this section, we will see that for $(g,n) = (0,2)$, the (pre)stacks of semistable modifications (namely $\fM_{0,2}^{\sst},\fM_{0,2,\Ran},\bcM_{0,2,\Ran}$) admit an additional structure of an associative algebra, given by concatenation of rational chains, and a universal factorization algebra produces an equivariant sheaf with respect to this algebra structure. From that we will get an associative algebra structure on global sections, which, as we prove later, is the value at a node of the extension $\Vac(\cA)^0_{g,n}$ of Corollary~\ref{cor:vac-0}.

\begin{construction} \label{con-join}
    For $\ell >0$ and $(X^i,x^i_0,x^i_\infty) \in \fM_{0,2}^{\sst};\: 1 \leq i \leq \ell$, define \[\lor_{1\leq i \leq n} (X^i,x^i_0,x^i_\infty) = (X^1 \cup_{x_\infty^1 \sim x_0^2} X^2 \cup \dotsb \cup_{x_\infty^{\ell-1} \sim x_0^{\ell}}X^{\ell}, x_0^1,x_\infty^{\ell}) ~.\]

    For $(X,x_0,x_\infty,\{y_{i,j}^{\red}\}_{j \in J_i}) \in \fM^{\sst}_{0,2,\Ran};\:1 \leq i\leq \ell$, define \[\lor_{\Ran,1\leq i \leq n} (X^i,x^i_0,x^i_\infty, \{y_{i,j}^{\red}\}_{j \in J_i}) = (\lor_{1\leq i \leq n} (X^i,x^i_0,x^i_\infty), \cup_{1 \leq i \leq \ell}\{y_{i,j}^{\red}\}_{j \in J_i}) ~.\]
\end{construction}

This defines a non-unital associative algebra structure on $\fM_{0,2}^{\sst},\fM_{0,2,\Ran}^{\sst}$. Furthermore, the projection commutes with the multiplication maps, and we have:
\begin{lemma} \label{lem:join-pullback}
    The projection map $\fM_{0,2,\Ran}^{\sst} \to \fM_{0,2}^{\sst}$ is an associative algebra map, and for each $\ell > 0$ we have a Cartesian square \[\begin{tikzcd}
	{\fM_{0,2,\Ran}^{\sst,\ell}} & {\fM_{0,2,\Ran}^{\sst}} \\
	{\fM_{0,2}^{\sst,\ell}} & {\fM_{0,2}^{\sst}}
	\arrow["{\lor_{\Ran}}", from=1-1, to=1-2]
	\arrow[from=1-1, to=2-1]
	\arrow[from=1-2, to=2-2]
	\arrow["\lor", from=2-1, to=2-2]
    \end{tikzcd}\]
\end{lemma}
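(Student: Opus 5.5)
The plan is to read off both claims directly from the definition of $\fM_{0,2,\Ran}^{\sst}$ as the relative Ran space $\Ran_{\emptyset}(\mathring{\fX}_{0,2}^{\sst}/\fM_{0,2}^{\sst})$. An $R$-point of $\fM_{0,2,\Ran}^{\sst}$ is a pair consisting of a two-pointed semistable chain $(X,x_0,x_\infty)$ over $R$ and a finite, possibly empty, set of lifts $\Spec R^{\red} \to X^{\sm}\backslash\{x_0,x_\infty\}$. By Construction~\ref{con-join}, $\lor_{\Ran}$ joins the underlying chains by $\lor$ and takes the union of the point sets. Its composite with the projection therefore equals $\lor$ composed with the product of projections, which gives the commutative square. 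Compatibility with associativity holds because both structures come from the same iterated gluing; the associativity data of $\lor_{\Ran}$ is by construction the associativity data of $\lor$ together with the associativity of unions of subsets. So the projection is a map of non-unital associative algebras.

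For Cartesianness, I will fix an $R$-point of $\fM_{0,2}^{\sst,\ell}$, that is, chains $(X^i,x^i_0,x^i_\infty)$ for $1\le i\le \ell$, and let $(X,x_0,x_\infty)=\lor_i X^i$. I then compare fibers. The fiber of the left vertical map is $\prod_i \Ran_{\emptyset}(\mathring{X}^i/R)$, where $\mathring{X}^i = X^{i,\sm}\backslash\{x^i_0,x^i_\infty\}$. The fiber of the right map over $X$ is $\Ran_{\emptyset}(\mathring{X}/R)$.

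The key geometric input is that the gluing maps $X^i \to X$ identify $\mathring{X}$ with the disjoint union $\bigsqcup_i \mathring{X}^i$ as open subschemes. First, the new nodes of $X$ are exactly the images of $x^i_\infty \sim x^{i+1}_0$. Second, gluing along sections in the smooth locus, following the pushout construction in~\cite[Appendix~A]{Hall}, is an isomorphism away from the glued sections. Third, the images of the $\mathring{X}^i$ are pairwise disjoint, and they are open because their complement in $X^{\sm}\backslash\{x_0,x_\infty\}$ is empty.

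Given this decomposition, a finite set of $R^{\red}$-lifts into a disjoint union splits uniquely into finite sets of lifts into each piece. Over a non-connected $\Spec R^{\red}$ a lift may land in different pieces on different components. I handle this by working Zariski locally on $\Spec R^{\red}$, where each lift lands in a single piece because the pieces are open and closed in $\mathring{X}_{R^{\red}}$. This is exactly why the empty set is allowed in $\Ran_\emptyset$: a piece may receive no points. The result is the equivalence $\Ran_{\emptyset}(\bigsqcup_i \mathring{X}^i/R)\simeq \prod_i \Ran_{\emptyset}(\mathring{X}^i/R)$, with the inverse given by union. I then check that this equivalence is the map induced by $\lor_{\Ran}$ on fibers. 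That identifies the square as Cartesian, functorially in $R$, and hence as prestacks. It is compatible with the colimit presentations over $\fSet^{\surj,\op}$ because the decomposition respects the diagonal maps $\Delta_{(\alpha)}$.

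I expect the main obstacle to be the second step: making precise that $\mathring{X}\simeq\bigsqcup_i \mathring{X}^i$ for families over a general, possibly non-reduced, base $R$. Over a field this is evident. In families I would argue étale locally using the charts $\fA[\ell]$ of Theorem~\ref{thm:ss-02}, on which $\lor$ is induced by the concatenation $\fA[\ell_1]\times\fA[\ell_2]\to\fA[\ell_1+\ell_2+1]$ setting the new coordinate to $0$. On Li's family $\overline{\WW}[\ell]$, the locus where that coordinate vanishes is visibly a union of the two subfamilies meeting along a single section.
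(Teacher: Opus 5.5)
Your proposal is correct and follows essentially the paper's route. The paper likewise identifies a point of the fiber product with a finite subset of the joined chain, and inverts $\lor_{\Ran}$ by intersecting that subset with each component.

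Your extra details go beyond the paper's short argument: the pinching decomposition $\mathring{X}\simeq\bigsqcup_i\mathring{X}^i$, and the Zariski-local splitting of lifts over a disconnected $\Spec R^{\red}$. Strictly, the latter yields the Cartesian square only after Zariski sheafification. A single lift that lands in different pieces over different components of $\Spec R^{\red}$ has no preimage in $\prod_i \Ran_{\emptyset}(\mathring{X}^i/R)$. This is harmless for the $\IndCoh$ applications.
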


\begin{proof}
    The pullback $\fM_{0,2,\Ran}^{\sst} \times_{\fM_{0,2}^{\sst}} \fM_{0,2}^{\sst,\ell}$ classifies $\ell$ two-pointed curves and a finite,  stable, and reduced subset of their join. Thus, we have a map \[\fM_{0,2,\Ran}^{\sst} \times_{\fM_{0,2}^{\sst}} \fM_{0,2}^{\sst,\ell} \to \fM_{0,2,\Ran}^{\ell}~,\] whose $i$-th component is given by the $i$-th curve, together with the intersection of the finite subset with its image under the join map. This defines an inverse to the natural map \[\fM_{0,2,\Ran}^{\ell} \to \fM_{0,2,\Ran}^{\sst} \times_{\fM_{0,2}^{\sst}} \fM_{0,2}^{\sst,\ell}~.\]
\end{proof}

\begin{lemma} \label{lem:join-proper}
    The maps $\lor$ and $\lor_{\Ran}$ are proper.
\end{lemma}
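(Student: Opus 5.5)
Since Lemma~\ref{lem:join-pullback} exhibits $\lor_{\Ran}$ as the base change of $\lor$ along the projection $\fM_{0,2,\Ran}^{\sst} \to \fM_{0,2}^{\sst}$, and properness is stable under base change, it suffices to prove that $\lor : (\fM_{0,2}^{\sst})^{\ell} \to \fM_{0,2}^{\sst}$ is proper. By associativity, $\lor$ for $\ell$ factors is an iterate of the binary join. It therefore reduces to the case $\ell = 2$, because a composition of proper maps, and a product of a proper map with an identity, are proper.

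For $\ell = 2$ I would first check that $\lor$ is representable, quasi-compact and separated. The key step is to identify the fiber of $\lor$ over an $R$-point $(X,x_0,x_\infty)$ of $\fM_{0,2}^{\sst}$. A factorization $X \simeq X^1 \lor X^2$ amounts to a choice of a node of $X$ separating $x_0$ from $x_\infty$, i.e.\ a section of the locus of separating nodes together with the partial normalization along it. Since $X$ is a chain, every node separates $x_0$ from $x_\infty$, so this locus is the singular locus $\mathrm{Sing}(X/R)$ itself. Here I use that the singular locus of a nodal family is finite and unramified over the base. Deformation theory ensures that a node which persists over $R$ gives a canonical partial normalization splitting $X$ into two semistable two-pointed chains.

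Thus the fiber of $\lor$ over $\Spec R$ is the relative singular locus $\mathrm{Sing}(X/R) \to \Spec R$. This is a closed subscheme of the proper curve $X$, hence finite over $R$, and so $\lor$ is finite and in particular proper and representable. Alternatively, I would verify this chart by chart via Theorem~\ref{thm:ss-02}. Pulled back to $\fA[\ell]$, the join corresponds to the maps $\fA[\ell_1] \times \fA[\ell_2] \to \fA[\ell_1+\ell_2+1]$ inserting a coordinate $t = 0$. This is a closed embedding of the form $\AA^{\ell_1+\ell_2}/\GG_m^{\ell_1+\ell_2+2} \hookrightarrow \AA^{\ell_1+\ell_2+1}/\GG_m^{\ell_1+\ell_2+2}$, up to the unions over positions of the inserted node. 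This exhibits $\lor$ étale-locally on the target as a finite union of closed embeddings, hence finite.

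The main obstacle is making the identification of the fiber with $\mathrm{Sing}(X/R)$ precise over non-reduced $R$. One must show that a section of the singular locus genuinely lifts to a decomposition of the family, which needs the local form $xy = t$ with $t = 0$ along the section, and that this decomposition is unique. I would settle this using the explicit charts $\overline{\WW}[\ell]$, where the nodes are exactly the loci $t_i = 0$ and the decomposition is visible. I would then descend along the étale covers $\tau_\ell$, using that properness is étale local on the target.
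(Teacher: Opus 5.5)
Your proof is correct, but it takes a different route from the paper.

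Both arguments open the same way: properness of $\lor_{\Ran}$ follows from properness of $\lor$ via the Cartesian square of Lemma~\ref{lem:join-pullback}.

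For $\lor$ itself, the paper applies the valuative criterion directly, for arbitrary $\ell$. Given an $A$-family $X$ whose generic fiber is an $\ell$-fold join, the gluing nodes of $X_K$ extend to $A$-sections because the nodal locus is closed in the proper curve $X$. Cutting $X$ along these sections gives the unique decomposition over $A$.

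You instead reduce to $\ell=2$ and identify the entire fiber of $\lor$ over $\Spec R$ with the relative singular locus $\mathrm{Sing}(X/R)$, with its Fitting-ideal scheme structure. From this you conclude that $\lor$ is representable and finite.

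The geometric input is the same in both arguments: the nodal locus is closed and quasi-finite over the base, and a persistent node splits a chain canonically. Your version proves more, namely that $\lor$ is finite rather than just proper. It also supplies representability and finite type, which the valuative criterion takes as hypotheses and which the paper leaves implicit.

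The price of your route is exactly the obstacle you flag. Over non-reduced $R$ you must check, in the local model $xy=t$, that a section of the singular locus forces $t=0$. The paper's argument never meets this issue: a valuation ring is a domain, so a section whose generic point lies in the closed singular locus automatically factors through it scheme-theoretically.
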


\begin{proof}
    Using the valuative criterion, we need to show the following: Given a commutative diagram
    \[\begin{tikzcd}
	    {\Spec K} & {\fM^{\sst,\ell}_{0,2}} \\
	    {\Spec A} & {\fM^{\sst}_{0,2}}
	    \arrow["{\mathring{f}}", from=1-1, to=1-2]
	    \arrow[from=1-1, to=2-1]
	    \arrow["\lor", from=1-2, to=2-2]
	    \arrow[dashed, from=2-1, to=1-2]
	    \arrow["f"', from=2-1, to=2-2]
    \end{tikzcd}\]
    for $K = \operatorname{Frac}A, A$ a valuation ring, there is a unique dotted arrow lifting $f$ and extending $\mathring{f}$. Write $(X,x_0,x_\infty)$ for the $A$-family classified by $f$, and $(\mathring{X}^i,\mathring{x}_0^i,\mathring{x}_{\infty}^i); 1 \leq i \leq \ell$ for the $K$-families classified by $\mathring{f}$, so that 
    \[X_K \simeq \mathring{X}^1 \cup \dotsb \cup \mathring{X}^n~,\]

    Since the nodal locus is closed, the nodes $\mathring{x}_0^i,\mathring{x}_\infty^i$ extends to $A$ points $x_0^i,x_{\infty}^i$, and we get a decomposition
    \[X \simeq X^1 \cup_{x_\infty^1 \sim x_0^2} \dotsb \cup_{x_\infty^{\ell-1} \sim x_0^{\ell}} X^n~.\]
    The unique extension to $A$ families is then given by $(X^i,x_0^i,x_\infty^i)$.

    Now by Lemma~\ref{lem:join-pullback}, $\lor_{\Ran}$ is given by the pullback of a proper map, and therefore is proper as well.
\end{proof}

\begin{corollary}
    $\lor_{\Ran}^!$ admits a left adjoint $\lor_{\Ran,!}$.
\end{corollary}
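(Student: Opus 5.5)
The corollary should follow from Lemma~\ref{lem:join-proper} by the general existence of left adjoints to $!$-pullback along proper maps in the finite-type theory of ind-coherent sheaves. The plan is to reduce to the finite-dimensional pieces of the Ran prestacks and apply the adjunction $f_* \dashv f^!$ for proper maps there. Then I will check that the resulting functors are compatible with the colimit presentations, so they assemble into a left adjoint on the limit categories.

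First I will write the source and target as colimits. By Lemma~\ref{lem:join-pullback}, $\lor_{\Ran}$ is the base change of $\lor : \fM_{0,2}^{\sst,\ell} \to \fM_{0,2}^{\sst}$ along $\fM_{0,2,\Ran}^{\sst} \to \fM_{0,2}^{\sst}$. The latter is the colimit over $\fSet^{\surj,\op}$ (together with the empty configuration) of relative de Rham prestacks of fiber powers of the universal punctured smooth locus. On each finite piece $I$, the base change of $\lor$ is a map of prestacks locally almost of finite type. It is proper, being a base change of the proper map $\lor$ from Lemma~\ref{lem:join-proper}, and the relative de Rham construction is compatible with this base change. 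For a proper map $f$ in $\PreSt^{\aft}$, we have $f_* \dashv f^!$; this is the standard property of $\IndCoh$ from~\cite{Gai-ICS}. We therefore set $\lor_{\Ran,!} := \lor_{\Ran,*}$ on each piece.

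Next I will assemble the pieces. $\IndCoh(\fM_{0,2,\Ran}^{\sst})$ and $\IndCoh(\fM_{0,2,\Ran}^{\sst,\ell})$ are limits along $!$-pullbacks over the diagrams of pieces. The transition maps $\Delta_{(\alpha)}$ are closed embeddings, so each limit is also the colimit along $\Delta_{(\alpha),!}$. The pointwise left adjoints $\lor_{I,*}$ commute with $\Delta_{(\alpha),*}$ by functoriality of $*$-pushforward. By the base-change isomorphisms for proper maps, they also satisfy the Beck--Chevalley condition with respect to $\Delta_{(\alpha)}^!$, because the relevant squares are Cartesian by Lemma~\ref{lem:join-pullback}. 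A compatible family of left adjoints between limit diagrams satisfying Beck--Chevalley induces a left adjoint on the limits. Equivalently, in the colimit presentation, the colimit of the functors $\lor_{I,*}$ is left adjoint to $\lor_{\Ran}^!$. The same argument applies to the $\bcM_{0,2,\Ran}$ version, because it is an open substack and proper base change restricts.

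The main obstacle is the bookkeeping in the second step: on the infinite colimit one has to verify the Beck--Chevalley condition, rather than just pointwise adjunctions. Properness of $\lor_{\Ran}$ alone does not give a left adjoint on a non-finite-type prestack, so this is where the argument has real content. If that becomes awkward, I would fall back on the pseudo-properness formalism: show that $\lor_{\Ran}$ is pseudo-proper, as a colimit of proper maps over the same index category. Then $\lor_{\Ran,!}$ exists as the left adjoint of $\lor_{\Ran}^!$, exactly as for $p_{\Ran,!}$ in the definition of chiral homology.
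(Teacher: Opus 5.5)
Your proposal is correct and is the paper's argument. The paper reads the corollary off directly from Lemma~\ref{lem:join-proper}: $\lor_{\Ran}$ is the base change (Lemma~\ref{lem:join-pullback}) of the representable proper map $\lor$, so $\lor_{\Ran,!}=\lor_{\Ran,*}$ exists by the standard $\IndCoh$ formalism for proper maps of laft prestacks. Your assembly over the Ran pieces just makes that formalism explicit. Note that in the colimit presentation along $\Delta_{(\alpha),*}$ only functoriality of pushforward is needed; proper base change enters only to compute the adjoint termwise in the limit presentation.
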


\begin{definition} \label{def:join-mon-str}
    For $\cF \in \IndCoh(\fM_{0,2,\Ran})$, write \[\star_{i=1}^{\ell}\cF_i \coloneqq \lor_{\Ran,!}\boxtimes_{i=1}^{\ell}\cF_i\]
    Let $\IndCoh^{\star}(\fM_{0,2,\Ran})$ be the resulted monoidal category.
\end{definition}

\begin{definition}
    Let $\operatorname{JoinAlg}_{0,2}^{\sst}$ be the full subcategory of $\operatorname{Assoc}(\IndCoh^{\star}(\fM_{0,2,\Ran}))$ spanned by associative algebras $A$ such that the maps \[A^{\boxtimes \ell} \to \lor_{\Ran}^!A\] adjoint to the multiplication maps are isomorphism for each $\ell > 0$. We call such an object a \textit{join algebra}.
\end{definition}

The main result of this section is:
\begin{proposition}
    For a universal factorization algebra $\cA$, $\cA_{0,2}^{\epsilon,\sst}$ of Definition~\ref{def:univ-to-Mgn} admits a canonical structure of a join algebra.
\end{proposition}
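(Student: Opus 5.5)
The idea is to obtain the join structure from the factorization structure of $\cA$, via the classifying map $\pi^{X/S}_{\dR}$ of Definition~\ref{def:pi_Ran^XS} applied to the universal family $\fX_{0,2}^{\sst,\circ} \to \fM_{0,2}^{\sst}$ of punctured smooth loci. The basic observation is geometric. Let $(X^1,\dots,X^\ell)$ be a point of $\fM_{0,2}^{\sst,\ell}$ and put $X = \lor_i X^i$. The smooth punctured locus of $X$ is then canonically the disjoint union $\bigsqcup_i (X^{i,\sm}\backslash\{x^i_0,x^i_\infty\})$, because the gluing points $x^i_\infty \sim x^{i+1}_0$ become nodes and are removed. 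Consequently every $\ell$-tuple of finite subsets $(\{y_{1,j}\},\dots,\{y_{\ell,j}\})$ lying on the separate components is automatically disjoint in $X$.

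We first show that $\lor_{\Ran}$ factors through the disjoint locus. By Lemma~\ref{lem:join-pullback} we have $\fM_{0,2,\Ran}^{\sst,\ell} \simeq \fM_{0,2,\Ran}^{\sst}\times_{\fM_{0,2}^{\sst}}\fM_{0,2}^{\sst,\ell}$, and the observation above shows this is identified with the pullback to $\fM_{0,2}^{\sst,\ell}$ of the disjoint locus $(\fM_{0,2,\Ran}^{\sst})^{\times\ell}_{\disj}$ of the family $\lor^*\fX$ (including the empty-set components, which are harmless). Hence $\lor_{\Ran}$ factors as a composite $\fM_{0,2,\Ran}^{\sst,\ell} \to (\fM_{0,2,\Ran}^{\sst}\times_{\fM^{\sst}_{0,2}} \dotsb)_{\disj} \xrightarrow{\sqcup} \fM_{0,2,\Ran}^{\sst}$. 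The first map is the base change along $\lor$ of the product of the maps $\fM^{\sst,\ell}_{0,2}\to\fM^{\sst}_{0,2}$.

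Next we check compatibility with the multidisk classifying maps. Composing with $\pi_{\dR}$, the formal multidisk of $\bigsqcup_i\{y_{i,j}\}$ in $X$ is the disjoint union of the multidisks of $\{y_{i,j}\}$ in $X^i$, since formal completions at smooth points are local. This gives a commutative square
\[\pi_{\dR}\circ\lor_{\Ran} = \sqcup\circ\textstyle\prod_i\pi_{\dR}\]
into $\MDisk_{\Ran}$. Applying $!$-pullback to the universal factorization isomorphism~\eqref{eq:univ-fact-iso} produces isomorphisms $\lor_{\Ran}^!\cA^{\epsilon,\sst}_{0,2}\simeq \boxtimes_i \cA^{\epsilon,\sst}_{0,2}$. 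For the empty set we use the unit of $\cA$, namely $\omega$ on the empty component. By adjunction through the left adjoint $\lor_{\Ran,!}$, which exists by Lemma~\ref{lem:join-proper}, these isomorphisms give multiplication maps $\star_{i}\cA\to\cA$.

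Finally, the associativity data must be supplied coherently, meaning as a genuine $\mathrm{Assoc}$-algebra in $\IndCoh^{\star}$ and not merely through the binary maps. The plan is to encode the join structure as an associative algebra object in $\PreSt^{\corr}$, with multiplication $\fM^{\ell}\xleftarrow{\id}\fM^{\ell}\xrightarrow{\lor_{\Ran}}\fM$. The map $\pi_{\dR}$ should then upgrade to a lax map of $\infty$-operads from this associative algebra to the commutative chiral algebra $\MDisk_{\Ran}$ in correspondences, sending $\lor_{\Ran}$ to the correspondence through $\sqcup$. Since the factorization isomorphisms are invertible, pulling back the commutative coalgebra $\cA$ along such a map yields an object whose structure maps $\lor^!_{\Ran}\cA\to\cA^{\boxtimes\ell}$ are isomorphisms. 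Taking inverses and adjoints then gives an associative algebra satisfying the join condition. This is the same mechanism as in Lemma~\ref{lem:module-map-fact} and Theorem~\ref{thm:loc-fact}.

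The main obstacle is producing this coherent operadic map. The square for each $\ell$ is easy, but the higher compatibilities between iterated joins and the Cartesian squares require a functorial description of the disjoint loci. I would attempt this by presenting everything over $\Delta^{\op}$, using the colimit description of Theorem~\ref{thm:ss-02}, so that the joins correspond to concatenation $[\ell]\ast[\ell']$, and by checking Cartesianness levelwise on the charts $\fA[\ell]$.
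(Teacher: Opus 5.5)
Your proposal is correct and is essentially the paper's argument. The paper's proof consists of the commutative square $\pi_{\dR}\circ\lor_{\Ran}=\sqcup\circ\pi^{\ell}_{\dR}$ into $\MDisk_{\Ran}$, together with the remark that pulling back the associative universal factorization isomorphisms $\cA^{\boxtimes\ell}\iso\sqcup^!\cA$ gives associative isomorphisms $(\cA^{\epsilon,\sst}_{0,2})^{\boxtimes\ell}\iso\lor_{\Ran}^!\cA^{\epsilon,\sst}_{0,2}$. Your handling of the empty component via the unit, the passage to multiplication maps through $\lor_{\Ran,!}$, and the discussion of higher coherences only make explicit what the paper leaves implicit (the detour through the disjoint locus is unnecessary, and $\fM^{\sst,\ell}_{0,2,\Ran}$ is in fact only an open-and-closed piece of the base-changed disjoint locus, not all of it).
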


\begin{proof}
    Note that the join product maps to the disjoint union map on $\MDisk_{\Ran}$, namely, we have commutative diagrams 
    \[\begin{tikzcd}
	{\fM_{0,2,\Ran}^{\sst,\ell}} & {\fM_{0,2,\Ran}^{\sst}} \\
	{\MDisk_{\Ran}^{\ell}} & {\MDisk_{\Ran}}
	\arrow["{\lor_{\Ran}}", from=1-1, to=1-2]
	\arrow["{\pi^{\ell}_{\dR}}"', from=1-1, to=2-1]
	\arrow["{\pi_{\dR}}", from=1-2, to=2-2]
    \arrow["\sqcup", from=2-1, to=2-2]
    \end{tikzcd}\] with $\pi_{\dR} = \pi_{\dR}^{\fM_{0,2,\Ran}^{\sst}/\fM_{0,2}^{\sst}}$ in the notation of Definition~\ref{def:pi_Ran^XS}. For a universal factorization algebra $\cA$, we have associative identifications \[\cA^{\boxtimes \ell} \iso \sqcup^!\cA~,\] and thus by pullback, associative isomorphisms \[\cA_{0,2}^{\epsilon,\sst,\boxtimes \ell} \iso \lor_{\Ran}^!\cA_{0,2}^{\epsilon,\sst}~.\]
\end{proof}

Now note that the join product of two stable configurations is again stable, and in particular, the join product restricts to an associative multiplication on $\bcM_{0,2,\Ran}$:
\begin{corollary}
    The join product defines an associative multiplication on the category $\IndCoh(\bcM_{0,2,\Ran})$, which we denote by $\IndCoh^{\star}(\bcM_{0,2,\Ran})$. For a universal factorization algebra $\cA$, the vacuum module $\Vac(\cA)_{0,2}^{\epsilon}$ has a structure of an associative algebra in $\IndCoh^{\star}(\bcM_{0,2,\Ran})$.
\end{corollary}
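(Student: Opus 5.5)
The plan is to restrict the join structure on $\fM_{0,2,\Ran}^{\sst}$ along the open embedding $\Phi_{0,2} : \bcM_{0,2,\Ran} \to \fM_{0,2,\Ran}^{\sst}$ of~\eqref{eq:Phi_gn}, and then transport the join algebra structure on $\cA_{0,2}^{\epsilon,\sst}$ given by the preceding proposition.

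\textbf{Step 1: stability and a Cartesian square.} The stability condition for $\bcM_{0,2,\Ran}$ is that every irreducible component of the rational chain carries at least one weight-$\epsilon$ point. The components of a join $\lor_{\Ran}(X^1,\dots,X^\ell)$ are exactly the components of the $X^i$, and the weight-$\epsilon$ points are the disjoint union of those on each $X^i$. So the join is stable if and only if each $X^i$ is stable. This gives a Cartesian square
\[\begin{tikzcd}
	{\bcM_{0,2,\Ran}^{\ell}} & {\bcM_{0,2,\Ran}} \\
	{\fM_{0,2,\Ran}^{\sst,\ell}} & {\fM_{0,2,\Ran}^{\sst}}
	\arrow["{\lor_{\Ran}}", from=1-1, to=1-2]
	\arrow["{\Phi^{\ell}}"', from=1-1, to=2-1]
	\arrow["{\Phi}", from=1-2, to=2-2]
	\arrow["{\lor_{\Ran}}", from=2-1, to=2-2]
\end{tikzcd}\]
Hence the restricted join map $\lor_{\Ran} : \bcM_{0,2,\Ran}^{\ell} \to \bcM_{0,2,\Ran}$ is again proper, by Lemma~\ref{lem:join-proper} and base change. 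It therefore has a left adjoint $\lor_{\Ran,!}$ satisfying base change against $\Phi^!$.

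\textbf{Step 2: the monoidal category.} Associativity data for the join on $\bcM_{0,2,\Ran}$, including the higher coherences, is inherited from $\fM_{0,2,\Ran}^{\sst}$. The reason is that $\Phi$ is an open embedding, so $\bcM_{0,2,\Ran}$ is a sub-semigroup object, which makes sense by Step 1. Since $\IndCoh$ with $!$-pushforward along proper maps is lax monoidal on correspondences, this yields the monoidal category $\IndCoh^{\star}(\bcM_{0,2,\Ran})$ with $\star_{i}\cF_i = \lor_{\Ran,!}\boxtimes_i\cF_i$. The same Cartesian square shows that $\Phi^!$ is monoidal: base change gives
\[\Phi^!\lor_{\Ran,!} \simeq \lor_{\Ran,!}(\Phi^{\ell})^!.\]

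\textbf{Step 3: the algebra structure.} A monoidal functor sends associative algebras to associative algebras. Since $\Vac(\cA)_{0,2}^{\epsilon} = \Phi^!\cA_{0,2}^{\epsilon,\sst}$ by Example~\ref{ex:vac-mod-gn}, it is an associative algebra. Moreover, restricting the join-algebra isomorphisms $\cA^{\boxtimes\ell}\iso\lor_{\Ran}^!\cA$ along $\Phi^\ell$ shows that the adjoint maps are still isomorphisms, so $\Vac(\cA)_{0,2}^{\epsilon}$ is in fact a join algebra.

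\textbf{Main obstacle.} The main difficulty is Step 2, specifically the coherence. One must verify that the non-unital $\mathbb{E}_1$-structure in correspondences, with the higher associativity homotopies, restricts along an open sub-semigroup. The approach I would take is to realise the join structure as a functor from $\Delta^{\op}$ (a bar construction of Cartesian products). I would then note that the subfunctor cut out by the stable loci is levelwise open and closed under all face maps, which is exactly the content of Step 1.
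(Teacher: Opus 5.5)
Your proposal is correct and follows the paper's route. The paper only observes that a join of stable configurations is stable, so the join restricts to $\bcM_{0,2,\Ran}$, and it deduces the algebra structure on $\Vac(\cA)^{\epsilon}_{0,2}=\Phi^!\cA^{\epsilon,\sst}_{0,2}$ from the join-algebra structure on $\cA^{\epsilon,\sst}_{0,2}$; your Cartesian square (the ``only if'' direction) and the base-change identity $\Phi^!\lor_{\Ran,!}\simeq\lor_{\Ran,!}(\Phi^{\ell})^!$ spell out the details the paper leaves implicit, and the coherence issue you flag is not a real obstacle, since $\Phi$ is a monomorphism and being a sub-semigroup is then just the property of closure under $\lor_{\Ran}$.
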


For general $(g,n)$, the join product acts on each marked point:
\begin{definition} \label{def:join-action-Mgn}
    For $1 \leq i \leq n$, let \[\lor_{\Ran}^{r,i} : \bcM_{g,n,\Ran} \times \bcM_{0,2,\Ran} \to \bcM_{g,n,\Ran}\] be the map \[(X,p_1,\dots,p_n,\{x_j^{\red}\}), (Y,q_0,q_\infty,\{y_j^{\red}\}) \mapsto (X \cup_{p_i \sim q_0} Y,p_1,\dots,q_{\infty},\dots,p_n, \{x_j^{\red}\} \cup \{y_j^{\red}\})~.\]
    Similarly, let \[\lor_{\Ran}^{l,i} : \bcM_{0,2,\Ran} \times \bcM_{g,n,\Ran} \to \bcM_{g,n,\Ran}\] be the map \[(Y,q_0,q_\infty,\{y_j^{\red}\}), (X,p_1,\dots,p_n,\{x_j^{\red}\}) \mapsto (Y \cup_{q_{\infty} \sim p_i} Y,p_1,\dots,q_{0},\dots,p_n, \{y_j^{\red}\} \cup \{x_j^{\red}\})~.\]
\end{definition}
This defines $n$ commuting right actions, and $n$ commuting left actions. Note, however, that the right and left actions do not commute with each other. The left action can be obtained from the right action by pre-composition with the involution on $\fM_{0,2}^{\sst}$ obtained by switching $x_0$ and $x_\infty$. Similarly to the case of $(g,n) = (0,2)$, we have:
\begin{proposition}
    $\IndCoh(\bcM_{g,n,\Ran})$ admits $n$ commuting right (resp. left) actions of $\IndCoh^{\star}(\bcM_{0,2,\Ran})$. We denote the resulted $\IndCoh^{\star}(\bcM_{0,2,\Ran})$ $n$-module structure by $\IndCoh^{\star,r}(\bcM_{g,n,\Ran})$ (resp. $\IndCoh^{\star,l}(\bcM_{g,n,\Ran})$). 
\end{proposition}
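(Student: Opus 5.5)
The plan is to mimic the construction of $\IndCoh^{\star}(\bcM_{0,2,\Ran})$, with the action maps $\lor^{r,i}_{\Ran}$ and $\lor^{l,i}_{\Ran}$ of Definition~\ref{def:join-action-Mgn} in place of $\lor_{\Ran}$. I treat the right actions; the left actions follow by precomposing with the involution of $\fM_{0,2}^{\sst}$ exchanging $x_0$ and $x_\infty$, which intertwines $\lor^{l,i}_{\Ran}$ with $\lor^{r,i}_{\Ran}$ and reverses the order of the join product.

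First I would establish the space-level structure. For each $i$, the maps $\lor^{r,i}_{\Ran}$ should make $\bcM_{g,n,\Ran}$ a right module over the non-unital associative algebra $(\bcM_{0,2,\Ran},\lor_{\Ran})$ in prestacks. Concretely, one writes down the higher coherences via iterated maps
\[\bcM_{g,n,\Ran} \times \bcM_{0,2,\Ran}^{\times \ell} \to \bcM_{g,n,\Ran},\]
which glue a chain $Y^1 \lor \dotsb \lor Y^{\ell}$ at $p_i$. Associativity is the canonical isomorphism of iterated gluings of nodal curves along sections, exactly as for Construction~\ref{con-join}. The output is stable because the union of the weight-$\epsilon$ configurations is stable on every new component. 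For $i \neq i'$ the actions commute, since gluing at distinct marked points are independent operations, giving a canonical isomorphism
\[\lor^{r,i}_{\Ran}\circ(\lor^{r,i'}_{\Ran}\times\id) \simeq \lor^{r,i'}_{\Ran}\circ(\lor^{r,i}_{\Ran}\times\id).\]
Together these data form an $n$-fold module, that is, a module over the $n$-fold product algebra $\bcM_{0,2,\Ran}^{\times n}$.

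Next I would show that each $\lor^{r,i}_{\Ran}$ is proper, so that $\lor^{r,i}_{\Ran,!}$ exists and is left adjoint to $\lor^{r,i,!}_{\Ran}$. As in Lemma~\ref{lem:join-pullback}, the map is the base change of the underlying gluing map $\fM^{\sst}_{g,n}\times\fM^{\sst}_{0,2}\to\fM^{\sst}_{g,n}$. Properness of that gluing map follows from the valuative criterion argument of Lemma~\ref{lem:join-proper}, using that the node $p_i\sim q_0$ extends because the nodal locus is closed. Then the action functor is defined as
\[\cF \star^{i} \cG \coloneqq \lor^{r,i}_{\Ran,!}(\cF\boxtimes\cG).\]
Its associativity and compatibility with the monoidal structure of Definition~\ref{def:join-mon-str} come from proper base change for $!$-pushforward along the Cartesian squares expressing the module axioms. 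Those squares are Cartesian by the same argument as in Lemma~\ref{lem:join-pullback}: one splits a configuration on the glued curve into its parts on each piece. Functoriality of $\IndCoh$ with respect to correspondences, which is lax monoidal, then packages everything into a module category structure over $\IndCoh^{\star}(\bcM_{0,2,\Ran})^{\otimes n}$.

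The main obstacle is upgrading the space-level associativity and commutativity to a fully coherent $\infty$-categorical module structure, rather than just checking it at the level of isomorphisms. I would handle this by organizing the data as a functor from the operad for $n$-fold (right) modules over non-unital associative algebras to $\PreSt^{\corr}$, whose morphisms are the spans $(\id,\lor)$. I would then apply the lax-monoidal functor $\IndCoh$, using that $!$-pushforward along proper maps satisfies base change. The needed Cartesian property of the composite squares is the point to verify carefully, particularly the de Rham direction in the Ran parameters, which is handled fibrewise over $\fM^{\sst}$ as in Lemma~\ref{lem:join-pullback}.
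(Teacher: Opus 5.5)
Your proposal follows essentially the paper's argument, which just transports the $(0,2)$ construction: the space-level actions $\lor^{r,i}_{\Ran}$ (the left ones via the involution $x_0 \leftrightarrow x_\infty$), properness so that $\lor^{r,i}_{\Ran,!}$ exists, and the action $\lor^{r,i}_{\Ran,!}(-\boxtimes-)$. Two small corrections, neither of which breaks the proof: the module axioms need no base change, since spans $(\id,\lor)$ compose with only trivial fiber products, so associativity and commutativity follow from commutativity of the gluing maps plus functoriality of $!$-pushforward and its compatibility with $\boxtimes$ (the associativity squares are in fact not Cartesian); and $\lor^{r,i}_{\Ran}$ is not literally the base change of the gluing map on $\fM^{\sst}$, because $\bcM_{g,n,\Ran}$ excludes empty configurations on the $(g,n)$-side, but it is a clopen piece of that base change, so properness still holds.
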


\begin{proposition} \label{prop:fact-to-join}
    For a universal factorization algebra $\cA$, the corresponding vacuum module $\Vac(\cA)_{g,n}^{\epsilon}$ admits $n$ commuting right (resp. left) actions of $\Vac(\cA)_{0,2}^{\epsilon}$, considered as an object of $\IndCoh^{\star,r}(\bcM_{g,n,\Ran})$ (resp. $\IndCoh^{\star,l}(\bcM_{g,n,\Ran})$).
\end{proposition}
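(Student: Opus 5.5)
We mimic the proof of the preceding proposition for $\cA_{0,2}^{\epsilon,\sst}$, now working at the level of the semistable spaces $\fM_{g,n,\Ran}^{\sst}$ and then restricting along $\Phi_{g,n}$. First, we extend the maps $\lor^{r,i}_{\Ran}$ of Definition~\ref{def:join-action-Mgn} to maps
\[\lor^{r,i}_{\Ran} : \fM_{g,n,\Ran}^{\sst} \times \fM_{0,2,\Ran}^{\sst} \to \fM_{g,n,\Ran}^{\sst},\]
given by the same formula. Gluing a rational chain at $p_i$ produces a semistable curve. The union of the two finite subsets is disjoint, because they lie on different components away from the node. As in Lemma~\ref{lem:join-pullback}, these maps sit over a map $\fM_{g,n}^{\sst} \times \fM_{0,2}^{\sst} \to \fM_{g,n}^{\sst}$ with a Cartesian square, and they are proper by the valuative argument of Lemma~\ref{lem:join-proper}. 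So $\lor^{r,i}_{\Ran,!}$ exists. The actions at different $i$ commute, since they glue at different marked points; associativity with the join product on $\fM_{0,2,\Ran}^{\sst}$ is concatenation of chains. Restricting to stable configurations, note that the join of a stable configuration with a stable chain configuration is stable. This gives the $n$ commuting actions of $\IndCoh^{\star}(\bcM_{0,2,\Ran})$ on $\IndCoh(\bcM_{g,n,\Ran})$ asserted in the preceding proposition, compatibly with $\Phi_{g,n}$ and $\Phi_{0,2}$.

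The key step is a commutative square
\[\begin{tikzcd}
	{\fM_{g,n,\Ran}^{\sst} \times \fM_{0,2,\Ran}^{\sst}} & {\fM_{g,n,\Ran}^{\sst}} \\
	{\MDisk_{\Ran} \times \MDisk_{\Ran}} & {\MDisk_{\Ran}}
	\arrow["{\lor^{r,i}_{\Ran}}", from=1-1, to=1-2]
	\arrow["{\pi_{\dR} \times \pi_{\dR}}"', from=1-1, to=2-1]
	\arrow["{\pi_{\dR}}", from=1-2, to=2-2]
	\arrow["\sqcup", from=2-1, to=2-2]
\end{tikzcd}\]
together with its higher analogues for several chains glued at several points. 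This commutativity holds because a point of $\fM_{g,n,\Ran}^{\sst}$ is a finite subset of the smooth punctured locus. The formal neighborhood of such a subset in the glued curve is canonically the disjoint union of its formal neighborhoods in the two pieces: gluing happens only at $p_i \sim q_0$, away from all marked Ran points. To make this precise, we check it on each stratum $I$, where it is the statement that fiberwise formal completion (Example~\ref{ex:X-to-disk}) of sections commutes with the open embeddings of the smooth loci of each piece into the glued curve. We then pass to the colimit over $\fSet^{\surj,\op}$.

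Pulling back the factorization isomorphisms \eqref{eq:univ-fact-iso}, $\cA^{\boxtimes 2} \iso \sqcup^!\cA$, along this square yields isomorphisms
\[\cA_{g,n}^{\epsilon,\sst} \boxtimes \cA_{0,2}^{\epsilon,\sst} \iso \lor^{r,i,!}_{\Ran}\cA_{g,n}^{\epsilon,\sst}.\]
The coherent associativity and commutativity data of $\cA$ as a commutative coalgebra in $\IndCoh^{!,\ch}_{\ren}(\MDisk_{\Ran})$ make these compatible with the join algebra structure and with each other for distinct $i$. Applying $\Phi_{g,n}^!$ and base change along the stable loci gives isomorphisms $\Vac(\cA)^{\epsilon}_{g,n} \boxtimes \Vac(\cA)^{\epsilon}_{0,2} \iso \lor^{r,i,!}_{\Ran}\Vac(\cA)^{\epsilon}_{g,n}$. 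Their adjoints under $\lor^{r,i}_{\Ran,!} \dashv \lor^{r,i,!}_{\Ran}$ are the action maps $\Vac(\cA)^{\epsilon}_{g,n} \star_i \Vac(\cA)^{\epsilon}_{0,2} \to \Vac(\cA)^{\epsilon}_{g,n}$. The left actions follow by precomposing with the involution exchanging $x_0, x_\infty$, using that $\pi_{\dR}$ is invariant under it.

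The main obstacle is the coherence: we must produce an $\infty$-categorical module structure rather than single maps. We would handle this by organizing the gluing maps as a module in $\PreSt^{\corr}$ over the associative algebra $\fM_{0,2,\Ran}^{\sst}$, using spans of the form $\mathrm{id}$ followed by proper $\lor$. We would then show that $\pi_{\dR}$ intertwines this with the commutative algebra $\MDisk_{\Ran}$ (restricting to an associative one), so that $!$-pullback of the cocommutative coalgebra $\cA$ becomes a module object with its structure maps being isomorphisms. A secondary point is justifying base change through the renormalized categories; this would use Lemma~\ref{lem:ren-?push} as in the proof of Theorem~\ref{thm:loc-fact}.
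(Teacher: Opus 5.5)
Your proposal is correct and follows the paper's route. The paper gives no separate argument beyond ``similarly to the case $(g,n)=(0,2)$'', and that earlier proof is exactly your key step: the join maps lie over $\sqcup$ on $\MDisk_{\Ran}$, so pulling back the factorization isomorphisms $\sqcup^!\cA \simeq \cA^{\boxtimes \ell}$ yields the action isomorphisms, which are then restricted along $\Phi_{g,n}$. Your additional details (extending to $\fM^{\sst}_{g,n,\Ran}$, properness and the Cartesian squares, using the involution for the left actions, and the remarks on coherence) elaborate that same argument rather than changing it.
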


\section{Chiral homology for nodal curves}\label{sec:ch-hom}

\subsection{Associative algebra structure}

In this section, we define chiral homology for nodal and punctured curves, and prove a gluing result relating the chiral homology of a nodal curve with that of its normalization.

\begin{definition}
    For $2g + n - 2 > 0$ and a universal factorization algebra $\cA$, define \[\int_{g,n} \cA \coloneqq p_!^0\Vac(\cA)_{g,n}^{0} \in \IndCoh(\bcM_{g,n})~,\] where $\Vac(\cA)_{g,n}^{0}$ is as in Example~\ref{cor:vac-0}, and $p^0 : \Ran_{g,n} \to \bcM_{g,n}$ the projection. For a $k$-algebra $R$ and $f : \Spec R \to \bcM_{g,n}$ classifying an $R$-family $(X,x_1,\dots,x_n)$, define \[\int_{X \backslash \{x_1,\dots,x_n\}} \cA \coloneqq f^!\int_{g,n}\cA \in \IndCoh(\Spec R)~.\]
\end{definition}

Note that $p_!^0 \simeq p_*^0$ is well-defined since $p^0$ is pseudo-proper. Explicitly, $\int_{g,n}\cA$ is given by integrating the vacuum module over all stable configurations in all semistable modifications: \[\int_{g,n}\cA \simeq p^{\epsilon}_!\Vac(\cA)_{g,n}^{\epsilon}\] and \[\int_{X \backslash \{x_1,\dots,x_n\}} \cA \simeq p_{(X,x_1,\dots,x_n),!}^{\epsilon}\Vac(\cA)_{(X,x_1,\dots,x_n)}^{\epsilon}~,\] where $\Vac(\cA)_{(X,x_1,\dots,x_n)}^{\epsilon}$ is as in Definition~\ref{def:VacXx}, $p^{\epsilon} : \bcM_{g,n,\Ran} \to \bcM_{g,n}$ the projection, and $p^{\epsilon}_{X,x_1,\dots,x_n} : \bcM_{(X,x_1,\dots,x_n),\Ran} \to \Spec R$ its base-change along $f$. 

The restriction of $\Vac(\cA)_{(X,x_1,\dots,x_n)}^{0}$ to $\mathring{X} \coloneqq X^{\sm} \backslash \{x_1,\dots,x_n\}$ is just the usual vacuum module, while its fiber over $x_i$ has a structure of a chiral module $\fZ_{\cA}^+$ supported at a point, which we describe below. In particular, if $X$ is smooth, we can realize $\int_{X \backslash {x_1,\dots,x_n}} \cA$ as factorization homology of $\cA$ over the proper curve $X$ with coefficients in chiral module $\fZ_{\cA}^+$ at each of the point $x_1,\dots,x_n$. Its value over a node has a structure of a chiral bimodule supported at a point, isomorphic to the bimodule $\fZ_{\cA}^{\nd}$ which we describe below.

\begin{definition}
    For a universal factorization algebra $\cA$, define \begin{align*}
        \fZ_{\cA}^{+} & \coloneqq \Gamma_c(\bcM_{0,2+1,\Ran}^{\land},\Vac(\cA)_{0,3}^{\epsilon}) \\
        \fZ_{\cA}^{-} & \coloneqq \Gamma_c(\bcM_{0,1+2,\Ran}^{\land},\Vac(\cA)_{0,3}^{\epsilon}) \\
        \fZ_{\cA} & \coloneqq \Gamma_c(\bcM_{0,2+2,\Ran}^{\land,\nd} ,\Vac(\cA)_{0,4}^{\epsilon}) \\
        \tilde{\fZ}_{\cA} & \coloneqq \Gamma_c(\bcM_{0,2+2,\Ran}^{\land},\Vac(\cA)_{0,4}^{\epsilon}) \\
        \fZ_{\cA}^{0} & \coloneqq \Gamma_c(\bcM_{0,2,\Ran},\Vac(\cA)_{0,2}^{\epsilon}) ~,\\
    \end{align*} where the spaces $\bcM_{0,2+1,\Ran}^{\land}$ etc. are as in Definition~\ref{def:M2plus2completed}. For a smooth point $x \in X$, define \[\fZ_{\cA,x} = \imath_x^!\Vac(\cA)_{(X,x)}^0\] where $\imath_x : \{x\} \into \Ran X$ is the inclusion.
\end{definition}

Recall from Definition~\ref{def:join-mon-str} the monoidal category $\IndCoh^{\star}(\bcM_{0,2,\Ran})$.
\begin{lemma} \label{lem:glob-join}
    The functor $\Gamma_c : \IndCoh^{\star}(\bcM_{0,2,\Ran}) \to \Vect$ is monoidal.
\end{lemma}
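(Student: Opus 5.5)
The plan is to write $\Gamma_c$ as the $!$-pushforward along the structure map $p : \bcM_{0,2,\Ran} \to \Spec k$. Then I will check that the lax structure maps it receives from the join product are isomorphisms, and that they are compatible with associativity.

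By Definition~\ref{def:join-mon-str}, restricted to stable configurations, the $\ell$-fold product is $\star_{i=1}^{\ell}\cF_i = \lor_{\Ran,!}\boxtimes_{i=1}^{\ell}\cF_i$. The join map $\lor_{\Ran} : \bcM_{0,2,\Ran}^{\ell} \to \bcM_{0,2,\Ran}$ lies over $\Spec k$, so $p \circ \lor_{\Ran} = p^{\ell}$, where $p^{\ell}$ is the structure map of the $\ell$-fold product. Functoriality of $!$-pushforward then gives
\[\Gamma_c(\star_{i=1}^{\ell}\cF_i) = p_!\lor_{\Ran,!}\boxtimes_i \cF_i \simeq p^{\ell}_!\boxtimes_{i=1}^{\ell}\cF_i~.\]
These identifications are canonical, and they are compatible with the associativity data of the join product. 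The reason is that this data itself comes from the commutative diagrams of iterated join maps, $\lor \circ (\lor^{\ell_1} \times \dotsb \times \lor^{\ell_m}) = \lor$, and pushforward is functorial with respect to such diagrams. This step is formal once one knows that the pushforwards exist. The existence of $\lor_{\Ran,!}$ follows from Lemma~\ref{lem:join-proper}. The existence of $p_!$ follows from $\bcM_{0,2,\Ran}$ being pseudo-proper: it is a colimit of the proper Losev--Manin spaces $(\bcM_{0,2,I})_{\dR/}$ over $\fSet^{\surj,\op}$. The same colimit presentation shows that $p_!$ commutes with colimits, so $\Gamma_c$ is continuous.

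The remaining step is a Künneth formula
\[p^{\ell}_!(\boxtimes_{i=1}^{\ell}\cF_i) \simeq \bigotimes_{i=1}^{\ell} p_!\cF_i~.\]
To prove it, write $\cF_i \simeq \colim_{I_i}\Delta_{I_i,!}(\cF_i)_{I_i}$ as in the Corollary giving the colimit presentation of $\IndCoh(\bcM_{g,n,\Ran})$. Both sides commute with colimits in each variable separately, so it suffices to treat the terms indexed by $\prod_i \fSet^{\surj,\op}$. This reduces the claim to Künneth for $*$-pushforward (equal to $!$-pushforward) along the proper maps $(\bcM_{0,2,I_i})_{\dR/} \to \Spec k$, together with the compatibility of $\boxtimes$ with proper pushforward.

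The main obstacle is that these terms are relative de Rham stacks over $\fM_{0,2}^{\sst}$, which is an Artin stack that is not of finite type, and their product is taken over $\Spec k$ rather than over $\fM_{0,2}^{\sst}$. I would handle this by working with $\bcM_{0,2,I}$ directly, which is a proper DM stack (in fact a smooth projective variety). The relative de Rham structure is then given by the formal completion $(\bcM_{0,2,I})_{\dR} \times_{(\fM^{\sst}_{0,2})_{\dR}} \fM^{\sst}_{0,2}$. Using the étale atlas $\fA[\ell]$ of Theorem~\ref{thm:ss-02}, I would check locally that this is ind-proper over $\Spec k$ and that products of such completions are again completions of the same type. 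With that in place, Künneth for ind-proper maps of laft prestacks (from the IndCoh formalism of~\cite{Gai-ICS}) applies. Finally, monoidality of the colimit over $\fSet^{\surj}$ follows because the colimit over a product of sifted index categories can be computed diagonally.
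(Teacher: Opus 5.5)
Your proposal follows the same route as the paper. The paper's entire proof is the remark that $\star=\lor_{\Ran,!}\boxtimes$, so $\Gamma_c=p_!$ intertwines $\star$ with $\otimes$ by functoriality of $!$-pushforward and its compatibility with $\boxtimes$; you simply make the Künneth step explicit.

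One small correction, which does not affect the argument, concerns your plan to check that the terms are ind-proper over $\Spec k$. The prestack $(\bcM_{0,2,I})_{\dR/}$ is not an ind-proper ind-scheme, because it keeps the formal stabilizers of $\fM_{0,2}^{\sst}$; for example, $(\bcM_{0,2,\{1\}})_{\dR/}\simeq B\hat{\GG}_m$. It is, however, the quotient of the proper variety $\bcM_{0,2,I}$ by its relative infinitesimal groupoid over $\fM_{0,2}^{\sst}$. It is therefore pseudo-proper (ind-proper in the inf-scheme sense), and that is all your colimit reduction of Künneth needs.
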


\begin{proof}
    The monoidal structure $\star$ is given by $\lor_{\Ran,!}\boxtimes$, and in particular commutes with ($!$-)pushforward.
\end{proof}

\begin{corollary}
    $\fZ_{\cA}^0$ has a structure of a non-unital associative algebra.
\end{corollary}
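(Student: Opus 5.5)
The plan is to deduce the corollary formally from Lemma~\ref{lem:glob-join} and the algebra structure on the vacuum module. By the Corollary following the join-algebra Proposition, $\Vac(\cA)_{0,2}^{\epsilon}$ is an associative algebra in $\IndCoh^{\star}(\bcM_{0,2,\Ran})$. This structure is produced by pulling back the factorization isomorphisms $\cA^{\boxtimes \ell} \iso \sqcup^!\cA$ along the square relating $\lor_{\Ran}$ and $\sqcup$, restricted to the stable locus. A monoidal functor carries associative algebras to associative algebras. So once $\Gamma_c$ is known to be monoidal, $\fZ_{\cA}^0 = \Gamma_c(\bcM_{0,2,\Ran},\Vac(\cA)_{0,2}^{\epsilon})$ inherits an associative algebra structure in $\Vect$. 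There is no unit, because the join product is only a non-unital associative operation: there is no two-pointed chain of length zero in $\bcM_{0,2,\Ran}$.

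The key steps, in order, are as follows.

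\textbf{Step 1: the join product restricts to the stable locus.} I would check that $\lor_{\Ran}$ restricts to $\bcM_{0,2,\Ran}^{\ell} \to \bcM_{0,2,\Ran}$. This holds because the join of stable weighted configurations is stable: each component retains its $\epsilon$-points, and the new nodes have weight $1$. The restricted map is still proper; it is a base change of the proper map of Lemma~\ref{lem:join-proper} along the open $\Phi_{0,2}$, and the stability condition is compatible with this. Hence $\lor_{\Ran,!}$ exists and is left adjoint to $\lor_{\Ran}^!$.

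\textbf{Step 2: $\Gamma_c$ is monoidal.} Write $p : \bcM_{0,2,\Ran} \to \pt$, so that $\Gamma_c = p_!$, defined since $\bcM_{0,2,\Ran}$ is pseudo-proper. Then $p \circ \lor_{\Ran} = p^{\ell}$, where $p^{\ell}$ is the projection from $\bcM_{0,2,\Ran}^{\ell}$. Functoriality of $!$-pushforward gives
\[p_!(\star_i \cF_i) = p_!\lor_{\Ran,!}\boxtimes_i\cF_i \simeq p^{\ell}_!\boxtimes_i \cF_i \simeq \otimes_i p_!\cF_i,\]
where the last step is the Künneth formula for $!$-pushforward of exterior products. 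These identifications are coherently associative because they come from the functoriality of $(-)_!$ on correspondences and from the lax-monoidal structure of $\IndCoh$ recalled in the conventions. Together these give Lemma~\ref{lem:glob-join}.

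\textbf{Step 3: conclude.} Apply $\Gamma_c$ to the algebra object $\Vac(\cA)_{0,2}^{\epsilon}$. The multiplication on $\fZ_{\cA}^0$ is then the composite
\[\fZ_{\cA}^0 \otimes \fZ_{\cA}^0 \simeq \Gamma_c(\Vac \star \Vac) \to \Gamma_c(\Vac).\]
Here the last map is $\Gamma_c$ applied to the counit $\lor_{\Ran,!}\lor_{\Ran}^!\Vac \to \Vac$, precomposed with the factorization isomorphism $\Vac^{\boxtimes 2} \iso \lor_{\Ran}^!\Vac$.

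I expect the main obstacle to be the Künneth step in Step 2, since $\bcM_{0,2,\Ran}$ is a colimit of relative de Rham prestacks over the Artin stack $\fM_{0,2}^{\sst}$, not of finite type. I would first reduce to the presentation $\colim_{I}(\bcM_{0,2,I})_{\dR/}$ and use that $\boxtimes$ commutes with these colimits. Each term is a relative de Rham stack of a smooth proper Losev--Manin space over $\fA[\ell]$ via Theorem~\ref{thm:ss-02}. There Künneth holds for $!$-pushforward of ind-coherent sheaves on finite-type stacks, and it passes to the colimit because $!$-pushforward commutes with colimits.
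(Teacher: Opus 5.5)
Your proposal is correct and follows the paper's argument. $\Vac(\cA)_{0,2}^{\epsilon}$ is an associative algebra for the join product $\star = \lor_{\Ran,!}\boxtimes$, and $\Gamma_c$ is monoidal because $\star$ commutes with $!$-pushforward (Lemma~\ref{lem:glob-join}), so $\fZ_{\cA}^0$ inherits a non-unital algebra structure; your Steps 1 and 2 spell out details the paper leaves implicit: the restriction of $\lor_{\Ran}$ to the stable locus, its properness, and the K\"unneth identification.
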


\begin{proof}
    By Proposition~\ref{prop:fact-to-join}, $\Vac(\cA)_{0,2}^{\epsilon}$ is an associative algebra in $\IndCoh^{\star}(\bcM_{0,2,\Ran})$. Therefore by Lemma~\ref{lem:glob-join}, its space of global sections $\fZ_{\cA}^0$ inherits an associative algebra structure as well. 
\end{proof}

\begin{proposition}
    $\fZ_{\cA}^{+}$ (resp. $\fZ_{\cA}^{-}$) has a structure of a chiral modules supported at $\infty \in \PP^1$ (resp. $0 \in \PP^1$). More generally, for any smooth pointed curve $(X,x)$, the fiber $\fZ_{\cA,x}$ has a structure of a chiral $\cA$-module supported at $x$. Those modules are all isomorphic up to a choice of a local coordinate.
\end{proposition}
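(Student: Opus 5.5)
Our plan is to obtain the chiral module structure on $\fZ_{\cA,x}$ directly from the factorization module $\Vac(\cA)^0_{(X,x)}$ of Definition~\ref{def:VacXx}, and then to identify $\fZ_{\cA}^{\pm}$ with particular instances of $\fZ_{\cA,x}$ via Proposition~\ref{prop:structure_MXxRan_smooth}.

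\textbf{Step 1: a factorization module supported at $x$.} Let $(X,x)$ be a smooth pointed curve. By Corollary~\ref{cor:vac-0} and base change, $\Vac(\cA)^0_{(X,x)} = \Pi_{(X,x),!}\Vac(\cA)^\epsilon_{(X,x)}$ is a factorization module over $\Ran X$ for $\cA_{X\backslash\{x\}}$. Here we use Lemma~\ref{lem:Pi-bimod-map} and Lemma~\ref{lem:module-map-fact}, together with the compatibility of Lemma~\ref{lem:base-change-fact-mod} for the base-change $\Spec k \to \bcM_{g,1}$. The open locus $\Ran(X\backslash\{x\})\subset\Ran X$ has closed complement $\Ran_{\{x\}}X$, the space of finite subsets containing $x$. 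We then pass to the restriction $\imath^!_{\Ran_{\{x\}}X}\Vac(\cA)^0_{(X,x)}$. Because the module structure on $\Ran X$ only involves points disjoint from the module points, this restriction is an $\cA$-factorization module supported at $x$, as in Example~\ref{ex:fact-mod}. The factorization isomorphism~\eqref{eq:fact-iso-mod} restricted to the locus $\{x\}\sqcup(\text{points away from }x)$ identifies this module with the module $\Ran_{\{x\}}X$ generated by the fiber $\fZ_{\cA,x}=\imath_x^!\Vac(\cA)^0_{(X,x)}$. We also need $\cA$ itself to extend over $x$. Since $X$ is smooth, $\cA_X$ is defined on all of $\Ran X$ by Theorem~\ref{thm:loc-fact}, and its restriction to $X\backslash\{x\}$ is $\cA^0$.

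\textbf{Step 2: chiral modules.} Applying Theorem~\ref{thm:CKD} with $Z=\{x\}$ turns the factorization module of Step~1 into a chiral $\cA^{\ch}$-module $\cN$ with $\cN^{\fact}$ equal to that module. Its underlying object, the $!$-fiber at $\{x\}\subset\Ran_{\{x\}}X$, is $\fZ_{\cA,x}$.

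\textbf{Step 3: identification with $\fZ_{\cA}^{\pm}$ and independence of $(X,x)$.} Take the Cartesian square of Proposition~\ref{prop:structure_MXxRan_smooth}, after a choice of local coordinate at $x$. Base change for the pseudo-proper map $\Pi_{(X,x)}$ gives
\[\fZ_{\cA,x}\simeq\Gamma_c\bigl(\bcM^{\land}_{0,2+1,\Ran},\,\Vac(\cA)^\epsilon_{(X,x)}|\bigr).\]
We must still identify the restricted sheaf with $\Vac(\cA)^\epsilon_{0,3}$. Both are pullbacks of $\cA$ along classifying maps to $\MDisk_{\Ran}$. Since $\bcM^{\land}_{0,2+1,\Ran}$ is the formal neighborhood of configurations on the exceptional chain, the relevant multidisks live in the chain and in the formal neighborhood of its attaching point. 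The chosen coordinate identifies these with the corresponding data for $(\PP^1,\infty)$, so the two maps to $\MDisk_{\Ran}$ agree. Hence $\fZ_{\cA,x}\simeq\fZ^+_{\cA}$, with $\fZ^+_{\cA}=\fZ_{\cA,\infty}$ for $(\PP^1,\infty)$. Applying the involution $x_0\leftrightarrow x_\infty$ handles $\fZ^-_{\cA}$ at $0\in\PP^1$.

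Changing the coordinate acts through $\Aut_*\hat\cO$ on both sides. So the isomorphisms are canonical only up to this choice, which is exactly the claim, and they are compatible with the chiral module structures because they come from isomorphisms of module spaces over the formal disk. \textbf{Main obstacle:} Step 3's comparison of the classifying maps to $\MDisk_{\Ran}$ over the formal neighborhood. This requires checking that the ACFW isomorphism $\fM^{\sst}_{(X,x)}\simeq\fM^{\sst}_{0,2+1}$ lifts to universal curves compatibly with formal completions along Ran points. I would verify this first on reduced points, using the explicit construction in the proof of Proposition~\ref{prop:structure_MXxRan_smooth}, and then argue via deformations, which are local at $x$.
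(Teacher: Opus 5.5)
Your proposal is correct and follows essentially the paper's route. The paper observes that $\fZ_{\cA}^{+}$ and $\fZ_{\cA}^{-}$ are the fibers of the factorization module $\Vac(\cA)^0_{0,3}$ at $\infty$ and $0$ in $\Ran\PP^1\simeq\Ran_{0,3}$, and then cites Proposition~\ref{prop:structure_MXxRan_smooth} for the comparison up to a local coordinate; your Steps 1--3 (restriction to $\Ran_{\{x\}}X$, Theorem~\ref{thm:CKD}, and matching the classifying maps to $\MDisk_{\Ran}$) make explicit what the paper leaves implicit.

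One small correction: $(\PP^1,\infty)$ is unstable, so $\fZ_{\cA,\infty}$ is not defined for it. Work instead on $(\PP^1,0,1,\infty)\in\bcM_{0,3}$, where $\bcM_{0,2+1,\Ran}^{\land}$ sits literally inside $\bcM_{0,3,\Ran}$. There the identification of $\fZ_{\cA}^{+}$ with the fiber at $\infty$ is pure base change, with no comparison of classifying maps needed, and $\fZ_{\cA}^{-}$ is likewise the fiber at $0$ without appealing to the involution.
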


\begin{proof}
    Note that $\fZ_{\cA}^+$ is the fiber of $\Vac(\cA)_{0,3}^{0}$ at $\{\infty\} \into \Ran \PP^1 \simeq \Ran_{0,3}$, while $\fZ_{\cA}^{-}$ is its fiber at $\{0\} \into \Ran \PP^1$. By Proposition~\ref{prop:structure_MXxRan_smooth}, all these modules are isomorphic up to a choice of a local coordinate. 
\end{proof}

\begin{remark}
    Since $\bcM_{0,2+1,\Ran}^{\land}$ is a completion of $\bcM_{0,2,\Ran}$, the underlying vector space of $\fZ_{\cA}^{+}$ admits a filtration, coming from the adic filtration, such that $\fZ_{\cA}^0$ is the zeroth flitered space.
\end{remark}

\begin{lemma}
    $\fZ_{\cA}^+$ (resp. $\fZ_{\cA}^-$) admits a right (resp. left) $\fZ_{\cA}^0$-module structure, and we have maps of $\fZ_{\cA}^0$-modules \[\fZ_{\cA}^+ \from \fZ_{\cA}^0 \to \fZ_{\cA}^-~.\] There exists an involution \[\theta : \fZ_{\cA}^{0} \iso \fZ_{\cA}^{0,\op}\] such that \[\fZ_{\cA}^- \simeq \theta^*\fZ_{\cA}^+~.\]
\end{lemma}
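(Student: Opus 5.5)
The plan is to obtain all three structures geometrically: the module structures from join maps on the completed spaces of Definition~\ref{def:M2plus2completed}, the module maps from a closed embedding, and the involution from swapping the two marked points. In each case we check that the vacuum module $\Vac(\cA)^{\epsilon}$ is equivariant, which follows from the universal factorization isomorphism, and then apply $\Gamma_c$.

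\textbf{Module structures.} Restrict the right action $\lor^{r,\infty}_{\Ran} : \bcM_{0,3,\Ran}\times\bcM_{0,2,\Ran}\to\bcM_{0,3,\Ran}$ of Definition~\ref{def:join-action-Mgn}, acting at the marked point $x_\infty$, to the open substack $\fM^{\sst}_{0,2+1}$. Attaching a chain at $x_\infty$ does not move $x_0,x_1$ off their common component, and it preserves the formal neighborhood of the image of $(\PP^1,0,1,\infty)\lor(-)$. Concretely, $((\PP^1,0,1,\infty)\lor Y)\lor Y' = (\PP^1,0,1,\infty)\lor(Y\lor Y')$, and the join is compatible with the relevant infinitesimal thickenings via the $\GG_m$-torsor description of Lemma~\ref{lem:Gm-torsor}. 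Hence we obtain an action map
\[\lor^r : \bcM^{\land}_{0,2+1,\Ran}\times\bcM_{0,2,\Ran}\to\bcM^{\land}_{0,2+1,\Ran}.\]
This map is proper, by the same valuative argument as in Lemma~\ref{lem:join-proper}, since it is a base change of $\lor$. By Proposition~\ref{prop:fact-to-join}, $\Vac(\cA)^\epsilon_{0,3}$ restricted to this completion is a right module over the algebra $\Vac(\cA)^\epsilon_{0,2}$ in the sense of $\lor^r_!\boxtimes$. Applying $\Gamma_c$ and arguing as in Lemma~\ref{lem:glob-join}, since $\Gamma_c$ commutes with $!$-pushforward and $\boxtimes$, we get a right $\fZ^0_\cA$-module structure on $\fZ^+_\cA$. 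The left structure on $\fZ^-_\cA$ is obtained symmetrically, using $\lor^{l,0}$.

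\textbf{Module maps.} The map $\fZ^0_\cA\to\fZ^+_\cA$ comes from the closed embedding
\[e=(\PP^1,0,1,\infty)\lor(-) : \bcM_{0,2,\Ran}\to\bcM^{\land}_{0,2+1,\Ran},\]
together with the identification $e^!\Vac(\cA)^\epsilon_{0,3}\simeq\Vac(\cA)^\epsilon_{0,2}$. This identification holds because the attached $\PP^1$ carries no weight-$\epsilon$ points, and it follows from the factorization isomorphism pulled back along the analogue of Lemma~\ref{lem:join-pullback}. We then take $\Gamma_c$ of the counit $e_!e^!\to\id$ in reverse, that is, of the map $\Gamma_c(e^!\cF)\to\Gamma_c(\cF)$ furnished by properness of $e$ via $e_!e^!\cF\to\cF$. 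Right-linearity holds because $e$ intertwines right multiplication by $\lor$ on $\bcM_{0,2,\Ran}$ with $\lor^r$, by associativity of the join. The same argument gives the map $\fZ^0_\cA\to\fZ^-_\cA$.

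\textbf{The involution.} Let $\sigma$ be the involution of $\fM^{\sst}_{0,2}$ and $\bcM_{0,2,\Ran}$ that swaps $x_0$ and $x_\infty$. It satisfies $\sigma\circ\lor = \lor\circ(\sigma\times\sigma)\circ\mathrm{swap}$, so it reverses the order of the join. It lifts to the universal curve, so $\pi_{\dR}\circ\sigma=\pi_{\dR}$ and therefore $\sigma^!\Vac(\cA)^\epsilon_{0,2}\simeq\Vac(\cA)^\epsilon_{0,2}$. The main obstacle is checking that this isomorphism intertwines the multiplication with the opposite multiplication. This reduces to the commutativity constraint of the factorization isomorphism $\sqcup^!\cA\simeq\cA\boxtimes\cA$ on $\MDisk_{\Ran}$, which is symmetric since $\cA$ is a cocommutative coalgebra; this yields $\theta$. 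Finally, $\sigma$ extended to $\fM^{\sst}_{0,3}$ by swapping $x_0\leftrightarrow x_\infty$ and fixing $x_1$ identifies $\bcM^{\land}_{0,2+1,\Ran}$ with $\bcM^{\land}_{0,1+2,\Ran}$. It converts $\lor^r$ into $\lor^l$ precomposed with $\sigma$, which gives $\fZ^-_\cA\simeq\theta^*\fZ^+_\cA$.
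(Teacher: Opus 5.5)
Your proposal is correct and follows the same route as the paper. You restrict the join action to the completions $\bcM^{\land}_{0,2+1,\Ran}$ and $\bcM^{\land}_{0,1+2,\Ran}$, use Proposition~\ref{prop:fact-to-join} for the equivariance of the vacuum module and Lemma~\ref{lem:glob-join} to pass to $\Gamma_c$, and get $\theta$ from the swap $x_0 \leftrightarrow x_\infty$. You also supply details the paper leaves implicit: properness of the restricted action, the maps $\fZ_{\cA}^0 \to \fZ_{\cA}^{\pm}$ via the counit of $e_! \dashv e^!$ for the closed embedding $e$, and the check, via cocommutativity of $\cA$, that $\theta$ reverses the multiplication.
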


\begin{proof}
    We have a right (resp. left) $\IndCoh^{\star}(\bcM_{0,2,\Ran})$-module structure on the category $\IndCoh(\bcM_{0,2+1,\Ran}^{\land})$ (resp. $\IndCoh(\bcM_{0,1+2,\Ran}^{\land})$) given by restriction of the join action on $\IndCoh(\bcM_{0,2+1,\Ran})$ (resp. $\IndCoh(\bcM_{0,1+2,\Ran})$). Furthermore, by Proposition~\ref{prop:fact-to-join}, the $(g,n)$-vacuum module admits a module structure with respect to this action. Finally, the involution is obtained from the involution on $\bcM_{0,2,\Ran}$ given by \[(X,x_0,x_\infty) \mapsto (X,x_\infty,x_0)~.\]  
\end{proof}

\begin{proposition}
    For any smooth pointed curve $(X,x_1,\dots,x_n) \in \cM_{g,n}(k)$, $\int_{X \backslash \{x_1,\dots,x_n\}}\cA$ admits $n$ commuting left $\fZ_{\cA}^0$-module structures, and $n$ commuting right $\fZ_{\cA}^0$-module structures.
\end{proposition}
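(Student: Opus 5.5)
The plan is to transport the join actions of Proposition~\ref{prop:fact-to-join} to compactly supported global sections, exactly as Lemma~\ref{lem:glob-join} did in the case $(g,n)=(0,2)$. By the definition of $\int_{g,n}\cA$ and base change along $f : \Spec k \to \bcM_{g,n}$ classifying $(X,x_1,\dots,x_n)$, we have
\[\int_{X \backslash \{x_1,\dots,x_n\}}\cA \simeq \Gamma_c\bigl(\bcM_{(X,x_1,\dots,x_n),\Ran},\Vac(\cA)^{\epsilon}_{(X,x_1,\dots,x_n)}\bigr).\]
The first step is to check that the join maps $\lor^{r,i}_{\Ran}$ and $\lor^{l,i}_{\Ran}$ of Definition~\ref{def:join-action-Mgn} restrict to the fiber over $(X,x_1,\dots,x_n)$. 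Gluing a rational chain at $x_i$ does not change the stable reduction, so the composite of $\lor^{r,i}_{\Ran}$ with $p^\epsilon$ factors through the projection of the first factor. We therefore obtain maps
\[\lor^{r,i}_{\Ran} : \bcM_{(X,x_1,\dots,x_n),\Ran} \times \bcM_{0,2,\Ran} \to \bcM_{(X,x_1,\dots,x_n),\Ran},\]
and similarly on the left. These maps define $n$ commuting right and left actions of the monoidal category $\IndCoh^{\star}(\bcM_{0,2,\Ran})$ on $\IndCoh(\bcM_{(X,x_1,\dots,x_n),\Ran})$. Pulling back the module structure of Proposition~\ref{prop:fact-to-join} along $f^{\epsilon}_{\Ran}$ makes $\Vac(\cA)^{\epsilon}_{(X,x_1,\dots,x_n)}$ an $n$-fold right (respectively left) module over the algebra $\Vac(\cA)^{\epsilon}_{0,2}$. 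This uses the Cartesian square relating $\lor^{r,i}_{\Ran}$ on $\bcM_{g,n,\Ran}$ with its base change, together with the fact that $f^{\epsilon,!}_{\Ran}$ intertwines the actions, since $\lor^{r,i}_{\Ran}$ commutes with $f^\epsilon_{\Ran}\times\id$.

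The second step is to show that $\Gamma_c$ intertwines these actions. Here I would argue that $\lor^{r,i}_{\Ran}$ is proper, using the valuative-criterion argument of Lemma~\ref{lem:join-proper} verbatim: the node at the glued point is closed and extends uniquely. It follows that $\lor^{r,i}_{\Ran,!}$ exists as a left adjoint to $\lor^{r,i,!}_{\Ran}$, and the action functor is $\cF \star \cG = \lor^{r,i}_{\Ran,!}(\cF\boxtimes\cG)$. Since $p^{\epsilon}\circ\lor^{r,i}_{\Ran} = p^\epsilon\times p_{0,2}$, where $p_{0,2}:\bcM_{0,2,\Ran}\to\Spec k$, we get
\[\Gamma_c(\cF\star\cG)\simeq \Gamma_c(\cF)\otimes\Gamma_c(\cG),\]
compatibly with the associativity data. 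Hence $\Gamma_c$ is a map of module categories over the monoidal functor of Lemma~\ref{lem:glob-join}, and it sends the module $\Vac(\cA)^{\epsilon}_{(X,x_1,\dots,x_n)}$ to a $\fZ^0_{\cA}$-module, one for each $i$, on each side.

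For commutativity, the actions at distinct points $x_i \neq x_j$ involve gluing at disjoint points, so the two composites
\[\bcM_{(X,\bar x),\Ran}\times\bcM_{0,2,\Ran}\times\bcM_{0,2,\Ran}\to\bcM_{(X,\bar x),\Ran}\]
agree canonically, and this coherence passes through $\Gamma_c$. The main obstacle is to make these coherences precise at the $\infty$-categorical level, namely to produce an honest action of $\IndCoh^{\star}(\bcM_{0,2,\Ran})^{\otimes n}$ rather than $n$ separate actions. I would do this by organizing the gluing maps as a map of operads in correspondences, namely an $(\mathrm{Assoc})^{\otimes n}$-module object in $\PreSt^{\corr}$, and then applying the lax-monoidal functor $\IndCoh$ as in Section~\ref{sec:ch}. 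Note that the statement only claims that left actions commute among themselves and right actions commute among themselves. At a single $x_i$ the left and right actions do not commute, as the paper remarks, so nothing is needed there.
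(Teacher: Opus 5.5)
Your proposal is correct and follows the same route as the paper. The paper's proof simply cites Proposition~\ref{prop:fact-to-join} (the $n$ commuting join actions on $\Vac(\cA)^{\epsilon}_{g,n}$) together with Lemma~\ref{lem:glob-join} ($\Gamma_c$ commutes with $\lor_{\Ran,!}\boxtimes$). Your version also makes explicit what that one-line proof leaves implicit: the base change to the fiber $\bcM_{(X,x_1,\dots,x_n),\Ran}$, the properness of $\lor^{r,i}_{\Ran}$ (which the paper only asserts later, in the proof of Theorem~\ref{thm:disj-gluing}), and the K\"unneth identity that makes $\Gamma_c$ a module functor.
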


\begin{proof}
    Follows from Proposition~\ref{prop:fact-to-join} together with Lemma~\ref{lem:glob-join}. 
\end{proof}

\subsection{Gluing formula}

In this section we prove the main results of this paper, namely the expression of chiral homology for nodal curves in terms of chiral homology with coefficients for their normalizations. We start wth the case of a nodal curve given by gluing two pointed curves along marked points.

\begin{theorem} \label{thm:disj-gluing}
    Given pointed stable curves $(X,x),(Y,y) \in \bcM_{g,1}(k)$ and a universal factorization algebra $\cA$, there is a canonical isomorphism \[\int_{X \underset{x \sim y}{\cup}Y} \cA \simeq \int_{X \backslash \{x\}} \cA \underset{\fZ_{\cA}^0}{\otimes} \int_{Y \backslash \{y\}} \cA~.\]
\end{theorem}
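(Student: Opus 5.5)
The plan is to compute both sides as compactly supported global sections of the vacuum module $\Vac(\cA)^{\epsilon}$ on the relevant moduli of stable configurations, and to compare them by stratifying $\bcM_{(Z),\Ran}$, where $Z = X \underset{x\sim y}{\cup} Y$, according to the length of the rational bridge inserted at the node. By definition and base change,
\[\int_{Z}\cA \simeq p^{\epsilon}_{Z,!}\Vac(\cA)^{\epsilon}_{Z} = \Gamma_c(\bcM_{Z,\Ran},\Vac(\cA)^{\epsilon}_Z),\]
and similarly for $\int_{X\backslash\{x\}}\cA$ and $\int_{Y\backslash\{y\}}\cA$.

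The first step is geometric. A $k$-point of $\bcM_{Z,\Ran}$ is a semistable modification $Z'$ of $Z$ at the node with a stable configuration. Such a $Z'$ is obtained uniquely by choosing a semistable modification $X'$ of $(X,x)$, a chain $C\in\fM^{\sst}_{0,2}$, and a modification $Y'$ of $(Y,y)$, then joining $X'\cup C\cup Y'$ along the marked points. This gives gluing maps
\[G_\ell : \bcM_{(X,x),\Ran}\times \bcM_{0,2,\Ran}^{\times \ell}\times \bcM_{(Y,y),\Ran} \to \bcM_{Z,\Ran},\]
built from $\lor^{r}_{\Ran}$ and $\lor^{l}_{\Ran}$ of Definition~\ref{def:join-action-Mgn}. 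Each $G_\ell$ is proper by the valuative argument of Lemma~\ref{lem:join-proper}. Their images exhaust $\bcM_{Z,\Ran}$, and any two compositions differ by joining adjacent factors.

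I would therefore show that the simplicial diagram $[\ell]\mapsto \bcM_{(X,x),\Ran}\times\bcM_{0,2,\Ran}^{\ell}\times\bcM_{(Y,y),\Ran}$, with face maps given by the join and the two actions (the two-sided bar construction in $\PreSt$ of proper correspondences), has geometric realization $\bcM_{Z,\Ran}$. Concretely, I would prove that $\IndCoh(\bcM_{Z,\Ran})$ satisfies descent along the proper surjection $G=\sqcup_\ell G_\ell$, where the \v{C}ech nerve is identified with the bar diagram through non-unital simplicial combinatorics. This identification uses Lemma~\ref{lem:join-pullback} and the fact that, for a point $Z'$, the ways to cut it into $\ell+2$ pieces correspond to choosing $\ell+1$ nodes of the bridge in order.

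The second step is sheaf-theoretic. By Proposition~\ref{prop:fact-to-join}, $\Vac(\cA)^{\epsilon}$ is compatible with these maps: we have $G_\ell^!\Vac(\cA)^{\epsilon}_Z\simeq \Vac_{(X,x)}\boxtimes\Vac_{0,2}^{\boxtimes\ell}\boxtimes\Vac_{(Y,y)}$, compatibly with the face maps. Both facts come from pulling back the factorization isomorphism $\sqcup^!\cA\simeq\cA\boxtimes\cA$ along $\pi_{\dR}$, since joining maps to disjoint union of multidisks. Proper descent then expresses
\[\Vac_Z\simeq\colim_{\Delta^{\op}} G_{\ell,!}G_\ell^!\Vac_Z.\]
Applying $\Gamma_c$, which commutes with colimits and with $!$-pushforward and is monoidal by Lemma~\ref{lem:glob-join}, and using the K\"unneth formula for $\Gamma_c$ on products, gives
\[\int_Z\cA\simeq\colim\Bigl(\dotsb \rrarrow \int_{X\backslash x}\cA\otimes\fZ^0_{\cA}\otimes\int_{Y\backslash y}\cA \rrarrow \int_{X\backslash x}\cA\otimes\int_{Y\backslash y}\cA\Bigr).\]
The face maps are the module actions and the multiplication, so this colimit is exactly the non-unital bar complex $\int_{X\backslash x}\cA\otimes_{\fZ^0_{\cA}}\int_{Y\backslash y}\cA$.

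The main obstacle is the descent step. The maps $G_\ell$ are not monomorphisms, and the strata overlap non-transversally. Because there is no unit, the colimit runs only over face maps (semi-simplicial, $\Delta^{\inj,\op}$), and it must be checked that $\Vac_Z$ is really recovered. My first approach is to stratify $\bcM_{Z,\Ran}$ by the number $m$ of nodes in the bridge. On the locally closed stratum with $m$ nodes, $\coprod_\ell G_\ell$ restricts to a disjoint union of copies of the stratum, indexed by the nonempty subsets of the $m+1$ nodes. The semi-simplicial colimit over these subsets is contractible, since it is the augmented nerve of a simplex. I would then use the recollement triangles for the open/closed decomposition, together with commutation of $!$-pushforward with $!$-restriction, to glue these stratumwise statements; they hold because base change holds for proper maps. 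Nilpotence in the number of bridge components at each finite Ran level (as in Lemma~\ref{lem:pro-nil}) should ensure that the filtered limit over strata converges.
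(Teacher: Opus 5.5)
Your architecture is the paper's. You realize $\bcM_{Z,\Ran}$, with $Z=X\underset{x\sim y}{\cup}Y$, through the two-sided bar diagram of join maps $G_\ell$, and write $\Vac(\cA)^{\epsilon}_Z\simeq\colim_{\Delta^{\inj,\op}}G_{\ell,!}G_\ell^{!}\Vac(\cA)^{\epsilon}_Z$. You then identify $G_\ell^{!}\Vac$ with external products via Proposition~\ref{prop:fact-to-join} and apply $\Gamma_c$. The paper settles the middle step in one sentence: it asserts that the semisimplicial \v{C}ech nerve of the single join map $G_0$ is the bar complex and invokes proper descent for $\IndCoh$.

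Your version of that identification, for $\sqcup_\ell G_\ell$, is false. Already $G_0\times_Z G_0$ contains, classically, a diagonal copy of the source of $G_0$, together with one copy of the bar term for each ordering of a pair of cut nodes. So \v{C}ech descent does not produce the bar complex, and the same caveat applies to the paper's sentence read literally. What is actually needed is a direct proof that the augmented bar diagram is a colimit diagram. That is what your stratification is meant to supply, and it is where the gap lies.

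By Theorem~\ref{thm:ss-02} and Proposition~\ref{prop:structure_MXRan_nodal}, near a point whose bridge has $m+1$ nodes, the stack $\fM^{\sst}_Z$ of semistable modifications of $Z$ is locally the normal crossings divisor $\{t_0\cdots t_m=0\}$ modulo a torus, where $t_i$ is the smoothing parameter of the $i$-th node. In this model $G_0$ is the normalization, and $G_\ell$ is the disjoint union of the classical intersections $L_\sigma=\{t_i=0,\ i\in\sigma\}$ with $|\sigma|=\ell+1$.

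Your recollement argument needs the base change $i_S^{!}G_{\ell,!}\simeq G'_{\ell,!}i'^{!}_{S}$. This holds only along the \emph{derived} fibre product, which is not a disjoint union of copies of the stratum. For $m=1$ and $S$ the origin, $\operatorname{Tor}^{k[t_0,t_1]/(t_0t_1)}_n(k[t_1],k)\simeq k$ for every $n\geq 0$. Since $\IndCoh$ sees this derived structure, $i_S^{!}G_{\ell,!}G_\ell^{!}\cF$ is not $\bigoplus_\sigma i_S^{!}\cF$, and the contractible-simplex count does not compute the $!$-fibres.

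Your combinatorics does work one level down, on functions:
\begin{itemize}
\item For coherent $\cF$ one has locally $G_{\ell,!}G_\ell^{!}\cF\simeq\bigoplus_{|\sigma|=\ell+1}\underline{\operatorname{Hom}}(\cO_{L_\sigma},\cF)$.
\item The normal crossings complex $0\to\cO\to\bigoplus_i\cO_{L_i}\to\bigoplus_{i<j}\cO_{L_{ij}}\to\dotsb$ is exact. A monomial $t^a$ contributes exactly the augmented cochain complex of the simplex on $\{0,\dots,m\}\setminus\operatorname{supp}(a)$, and nothing if the support is everything.
\item Applying $\underline{\operatorname{Hom}}(-,\cF)$ and then continuity gives the colimit formula on $\fM^{\sst}_Z$.
\end{itemize}
This should then transfer to $\bcM_{Z,\Ran}$, using that the $G_\ell$ are pulled back from $\fM^{\sst}_Z$ (compare Lemma~\ref{lem:join-pullback}) and that the Ran directions are relative de Rham along smooth fibres. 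With that in place, the rest of your argument goes through as in the paper.
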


\begin{proof}
    Note that the join map \[\bcM_{(X,x),\Ran} \times \bcM_{(Y,y),\Ran} \to \bcM_{X\underset{x \sim y}{\cup} Y,\Ran}\] is schematic, proper (similarly to Lemma~\ref{lem:join-proper}), and surjective, and its (semisimplicial) \v{C}ech nerve is precisely the (non-unital) bar complex $\bcM_{(X,x),\Ran} \times \bcM_{0,2,\Ran}^{\bullet} \times \bcM_{(Y,y),\Ran}$ computing the tensor product $\bcM_{(X,x),\Ran} \underset{\bcM_{0,2,\Ran}}{\otimes} \bcM_{(Y,y),\Ran}$. By proper descent (see~\cite[Proposition~3.3.3]{GR2}), we get \begin{align*}
        \IndCoh(\bcM_{X\underset{x \sim y}{\cup} Y,\Ran}) & \simeq \lim_{\Delta^{\inj}, (-)^!}\IndCoh(\bcM_{(X,x),\Ran} \times \bcM_{0,2,\Ran}^{\bullet} \times \bcM_{(Y,y),\Ran}) \\
        & \simeq \underset{\Delta^{\inj,\op},(-)_!}{\colim}\IndCoh(\bcM_{(X,x),\Ran} \times \bcM_{0,2,\Ran}^{\bullet} \times \bcM_{(Y,y),\Ran})~.
    \end{align*} Therefore, an object $\cF \in \IndCoh(\bcM_{X\underset{x \sim y}{\cup} Y,\Ran})$ may be written as \[\cF \simeq \colim_{\Delta^{\inj,\op}} \lor_{\Ran,!}^{\bullet}\lor_{\Ran}^{\bullet,!}\cF\] Since $\Gamma_c$ commutes with colimit, we get \[\Gamma_c(\bcM_{X\underset{x \sim y}{\cup} Y,\Ran}, \cF) \simeq \underset{\Delta^{\inj,\op}}{\colim} \Gamma_c(\bcM_{(X,x),\Ran} \times \bcM_{0,2,\Ran}^{\bullet} \times \bcM_{(Y,y),\Ran}, \lor_{\Ran,!}^{\bullet}\lor_{\Ran}^{\bullet,!}\cF)~.\] Taking $\cF = \Vac(\cA)_{X \underset{x \sim y}{\cup}Y}^0$, we get precisely the bar complex computing the tensor product $\int_{X \backslash \{x\}} \cA \underset{\fZ_{\cA}^0}{\otimes} \int_{Y \backslash \{y\}} \cA$.
\end{proof}

\begin{remark}
    As mentioned above, if we fix formal coordinates at $x,y$ and assume $X,Y$ are smooth, the RHS may be interpreted in terms of usual chiral homology for proper curves with coefficients, namely \[\int_{X \backslash \{x\}} \cA \underset{\fZ_{\cA}^0}{\otimes} \int_{Y \backslash \{y\}} \cA \simeq \int_{(X,x)} (\cA, \fZ_{\cA}^+) \underset{\fZ_{\cA}^0}{\otimes} \int_{(Y,y)} (\cA,\fZ_{\cA}^-)~.\]
\end{remark}

We have a similar result for self-gluing:
\begin{theorem} \label{thm:self-gluing}
    For a two-pointed stable curve $(X,x_1,x_2) \in \bcM_{g,2}(k)$ and a universal factorization algebra $\cA$, there is a canonical isomorphism \[\int_{X \underset{\{x_1,x_2\}}{\cup} \{\pt\}} \cA \simeq \int_{(S^1,*)}(\fZ_{\cA}^0, \int_{X \backslash \{x_1,x_2\}}\cA)~.\] Here $\int_{(S^1,*)}(\fZ_{\cA}^0,-)$ is the functor of Hochschild homology of the associative algebra $\fZ_{\cA}^0$ with coefficients in a bimodule. 
\end{theorem}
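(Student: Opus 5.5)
The plan mirrors the proof of Theorem~\ref{thm:disj-gluing}, with the bar complex replaced by the cyclic bar complex. Let $X^{\nd} = X \underset{\{x_1,x_2\}}{\cup}\{\pt\}$. Every semistable modification of $X^{\nd}$ at the node is obtained by inserting a rational chain between $x_1$ and $x_2$. This yields a self-gluing map
\[\lor^{\circ}_{\Ran} : \bcM_{(X,x_1,x_2),\Ran} \to \bcM_{X^{\nd},\Ran},\]
sending $(X',\{x_j^{\red}\})$ to the curve obtained by identifying the images of $x_1$ and $x_2$ in $X'$, with the same configuration. Its iterates are the maps
\[\bcM_{(X,x_1,x_2),\Ran} \times \bcM_{0,2,\Ran}^{\times m} \to \bcM_{X^{\nd},\Ran},\]
which first join $m$ chains at $x_2$ (using the right action $\lor^{r,2}_{\Ran}$ of Definition~\ref{def:join-action-Mgn}) and then self-glue.

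The first step is to show that $\lor^{\circ}_{\Ran}$ is schematic, proper and surjective. Properness follows from the valuative argument of Lemma~\ref{lem:join-proper}: nodes extend over a valuation ring, so a decomposition at a chosen node extends uniquely. Surjectivity holds because every stable configuration on a semistable modification of $X^{\nd}$ arises by cutting at some node of the inserted bridge, or at the original node.

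The second step identifies the semisimplicial \v{C}ech nerve of $\lor^{\circ}_{\Ran}$ with the cyclic bar construction
\[[m] \mapsto \bcM_{(X,x_1,x_2),\Ran} \times \bcM_{0,2,\Ran}^{\times m}.\]
An $(m+1)$-fold fiber product over $\bcM_{X^{\nd},\Ran}$ amounts to choosing $m+1$ nodes along the inserted bridge, cyclically ordered around the loop. Cutting at all of them gives one piece that is a modification of $(X,x_1,x_2)$ and $m$ pieces that are two-pointed rational chains. The face maps either rejoin adjacent chains, via the join product $\lor_{\Ran}$, or absorb the first or last chain into $X$, via $\lor^{l,1}_{\Ran}$ or $\lor^{r,2}_{\Ran}$. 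These are exactly the face maps of the Hochschild complex.

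\textbf{This identification is the main obstacle.} One has to check that the fiber products are not larger: two cut points may coincide, and a cut at the original node must be allowed. The approach is to work first on reduced points, where it is combinatorial, and then deform using Proposition~\ref{prop:structure_MXRan_nodal}. That proposition identifies the deformation theory near the node with $\bcM^{\land}_{0,2+2,\Ran}$, so deformations of the glued curve correspond to deformations of the pieces compatible at the cut nodes.

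Given this, the remaining steps are formal. Proper descent \cite[Proposition~3.3.3]{GR2} gives
\[\IndCoh(\bcM_{X^{\nd},\Ran}) \simeq \underset{\Delta^{\inj,\op},(-)_!}{\colim}\IndCoh\bigl(\bcM_{(X,x_1,x_2),\Ran}\times\bcM_{0,2,\Ran}^{\bullet}\bigr).\]
Applying this to $\cF=\Vac(\cA)^{\epsilon}_{X^{\nd}}$ writes $\cF$ as the colimit of the $!$-pushforwards of its $!$-pullbacks to the terms of the nerve. By Proposition~\ref{prop:fact-to-join}, the $!$-pullback to the $m$-th term is
\[\Vac(\cA)^{\epsilon}_{(X,x_1,x_2)} \boxtimes (\Vac(\cA)^{\epsilon}_{0,2})^{\boxtimes m},\]
with face maps given by the join algebra structure and the two module structures. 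Taking $\Gamma_c$, which commutes with colimits and is monoidal by Lemma~\ref{lem:glob-join}, together with Künneth, yields the cyclic bar complex
\[\colim_{\Delta^{\inj,\op}}\, \int_{X\backslash\{x_1,x_2\}}\cA \otimes (\fZ^0_{\cA})^{\otimes \bullet}.\]
This is the non-unital Hochschild complex $\int_{(S^1,*)}(\fZ^0_{\cA},\int_{X\backslash\{x_1,x_2\}}\cA)$.

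Finally, the left-hand side is identified with $\int_{X^{\nd}}\cA$ via the formula $\int \simeq p^{\epsilon}_!\Vac(\cA)^{\epsilon}$ from the definition of $\int_{g,n}\cA$.
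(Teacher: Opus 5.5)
Your route is the paper's: a proper surjective self-gluing map $Y=\bcM_{(X,x_1,x_2),\Ran}\to Z=\bcM_{X^{\nd},\Ran}$, descent for $\IndCoh$ along it, and $\Gamma_c$ turning the resulting resolution of $\Vac(\cA)^{\epsilon}_{X^{\nd}}$ into a Hochschild complex. Your indexing $[m]\mapsto Y\times\bcM_{0,2,\Ran}^{\times m}$ is the correct form of what the paper calls the bar complex for $Y\otimes_{\bcM_{0,2,\Ran}\times\bcM_{0,2,\Ran}^{\op}}\bcM_{0,2,\Ran}$. Read literally with the non-unital convention, that complex starts with $Y\times\bcM_{0,2,\Ran}$ and is supported on the closed locus where a bridge has been inserted.

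The gap is the step you single out as the main obstacle. It is false as stated, so the check you propose cannot succeed: the \v{C}ech nerve of the gluing map is strictly larger than the cyclic bar construction. Take a $k$-point of $Z$ whose bridge has $\ell$ components. The fibre of $Y\to Z$ is the set of its $\ell+1$ bridge nodes, so the fibre of the $m$-th \v{C}ech term is the set of \emph{all} $(m+1)$-tuples of bridge nodes, with repetitions and in any order. The fibre of your $m$-th term (join at $x_2$, then self-glue) consists only of the strictly increasing tuples $a_0<\dots<a_m$ along the bridge from $x_2$ to $x_1$. The $X$-piece is always the arc from $a_m$ back to $a_0$, so the order is linear, not merely cyclic. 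Already $Y\times_ZY$ contains the diagonal copy of $Y$ (the same cut taken twice) and two copies of $Y\times\bcM_{0,2,\Ran}$, one for each order of two distinct cuts. Its fibre has $(\ell+1)^2$ points, versus $\binom{\ell+1}{2}$ for your degree-one term. Hence \cite[Proposition~3.3.3]{GR2} does not by itself give the colimit over $\Delta^{\inj,\op}$ of your terms. The paper's proof asserts the same identification without argument, so you are not departing from it, but the step needs a different justification.

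What you need is descent directly along the augmented semisimplicial object of strictly increasing cuts, i.e.\ $\cF\simeq\colim_{\Delta^{\inj,\op}}f_{\bullet,!}f_{\bullet}^{!}\cF$ for $f_m:Y\times\bcM_{0,2,\Ran}^{\times m}\to Z$. This is a closed-cover (Mayer--Vietoris) statement rather than \v{C}ech descent. By Proposition~\ref{prop:structure_MXRan_nodal}(1) and Theorem~\ref{thm:ss-02}, near a point whose bridge has $\ell$ components, the modifications of the node are locally parametrized by the normal crossings divisor $D=\{t_0\cdots t_\ell=0\}$, where $t_i$ is the smoothing parameter of the $i$-th bridge node. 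Locally $Y\to Z$ is the disjoint union of the components $D_i=\{t_i=0\}$, and your $m$-th term is the disjoint union of the intersections $D_{i_0}\cap\dots\cap D_{i_m}$ over $i_0<\dots<i_m$. The sequence $0\to\cO_D\to\bigoplus_i\cO_{D_i}\to\bigoplus_{i<j}\cO_{D_i\cap D_j}\to\cdots$ is exact. Applying $\underline{\mathrm{Hom}}(-,\cF)$ on bounded-below objects and extending by continuity gives the colimit formula; fibrewise this is the contractibility of the semisimplicial set of nondegenerate simplices of $\Delta^{\ell}$. Once this replaces the appeal to proper descent, the rest of your argument (pulling back $\Vac(\cA)^{\epsilon}_{X^{\nd}}$, applying $\Gamma_c$ and K\"unneth) goes through as written.
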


\begin{proof}
    Similar to the proof of Theorem~\ref{thm:disj-gluing}, we have a proper surjective map \[\bcM_{(X,x_1,x_2),\Ran} \to \bcM_{X \underset{\{x_1,x_2\}}{\cup} \{\pt\},\Ran}\] given by gluing $x_1$ to $x_2$, and its \v{C}ech nerve is precisely the bar complex computing $\bcM_{(X,x_1,x_2),\Ran} \otimes_{\bcM_{0,2,\Ran} \times \bcM_{0,2,\Ran}^{\op}} \bcM_{0,2,\Ran}$. The induced complex on global sections is the bar complex computing \[\int_{X \backslash \{x_1,x_2\}}\cA \underset{\fZ_{\cA}^0 \otimes \fZ_{\cA}^{0,\op}}{\otimes} \fZ_{\cA}^0 = \int_{(S^1,*)}(\fZ_{\cA}^0, \int_{X \backslash \{x_1,x_2\}}\cA)~.\]
\end{proof}

Alternatively, one can deduce the gluing formulas from a gluing formula for the chiral modules $\fZ_{\cA}^{\pm}$, namely we have the following result, which is proven similarly to Theorem~\ref{thm:disj-gluing}.

\begin{lemma}
    The join map \[\bcM_{0,2+1,\Ran}^{\land} \times \bcM_{0,1+2,\Ran}^{\land} \to \bcM_{0,2+2,\Ran}^{\land,\nd}\] induces an isomorphism \[\bcM_{0,2+1,\Ran}^{\land} \underset{\bcM_{0,2,\Ran}}{\otimes} \bcM_{0,1+2,\Ran}^{\land} \simeq \bcM_{0,2+2,\Ran}^{\land,\nd}~,\] and thus an isomorphism \[\fZ_{\cA}^{+} \underset{\fZ_{\cA}^0}{\otimes} \fZ_{\cA}^{-} \simeq \fZ_{\cA}~.\]
\end{lemma}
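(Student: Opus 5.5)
The plan is to imitate the proof of Theorem~\ref{thm:disj-gluing}, but with $\bcM_{(X,x),\Ran}$ and $\bcM_{(Y,y),\Ran}$ replaced by the completed local spaces $\bcM_{0,2+1,\Ran}^{\land}$ and $\bcM_{0,1+2,\Ran}^{\land}$.

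First I would show that the join map
\[\lor_{\Ran} : \bcM_{0,2+1,\Ran}^{\land} \times \bcM_{0,1+2,\Ran}^{\land} \to \bcM_{0,2+2,\Ran}^{\land,\nd}\]
is well defined, schematic, proper and surjective.
\begin{itemize}
\item \emph{Well-definedness.} A curve in the source is of the form $(\PP^1,0,1,\infty)\lor Z$ (up to formal deformation) joined with $Z'\lor(\PP^1,0,1,\infty)$. Their join is $(\PP^1,0,1,\infty)\lor (Z\lor Z')\lor (\PP^1,0,1,\infty)$, which is nodal at the junction, so the map lands in the closed substack $\bcM_{0,2+2,\Ran}^{\land,\nd}$.
\item \emph{Properness.} This follows from the valuative argument of Lemma~\ref{lem:join-proper}: the distinguished node extends over a valuation ring because the nodal locus is closed.
\item \emph{Surjectivity.} I would use the explicit description in the definition of $\bcM_{0,2+2,\Ran}^{\land,\nd}$ as $\colim_I \bcM_{0,2+1,I}^{\land}\cup_{\bcM_{0,2,I}}\bcM_{0,1+2,I}^{\land}$. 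Every point is then a chain with at least one node, and cutting at any node exhibits it as such a join.
\end{itemize}

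Next I would identify the semisimplicial \v{C}ech nerve of $\lor_{\Ran}$ with the non-unital bar construction $\bcM_{0,2+1,\Ran}^{\land}\times\bcM_{0,2,\Ran}^{\times\bullet}\times\bcM_{0,1+2,\Ran}^{\land}$.
\begin{itemize}
\item The $m$-fold fiber product over the target parametrizes a chain together with $m+1$ choices of nodes at which it decomposes, modulo ordering. Because nodes in a chain are linearly ordered, ordered tuples of $m+1$ cut nodes correspond to decompositions into a left piece, $m$ middle pieces in $\bcM_{0,2,\Ran}$, and a right piece.
\item Here the stability condition is essential: each middle component must carry a weight-$\epsilon$ point, so each middle piece is a genuine point of $\bcM_{0,2,\Ran}$.
\item Repeated nodes are what force the semisimplicial (non-unital) indexing rather than a simplicial one, exactly as in Theorem~\ref{thm:disj-gluing}.
\item The face maps are given by the join multiplication and the join actions of Definition~\ref{def:join-action-Mgn}, restricted to the completions. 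Since $\lor_{\Ran}$ of Lemma~\ref{lem:join-pullback} is compatible with the formal completions, the fiber products of completed spaces agree with the completions of the fiber products.
\end{itemize}
This gives the claimed geometric isomorphism $\bcM_{0,2+1,\Ran}^{\land} \otimes_{\bcM_{0,2,\Ran}} \bcM_{0,1+2,\Ran}^{\land} \simeq \bcM_{0,2+2,\Ran}^{\land,\nd}$.

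Then I would apply proper descent~\cite[Proposition~3.3.3]{GR2} to write
\[\IndCoh(\bcM_{0,2+2,\Ran}^{\land,\nd}) \simeq \colim_{\Delta^{\inj,\op},(-)_!}\IndCoh(\text{bar terms}),\]
so that any $\cF$ satisfies $\cF\simeq\colim \lor^{\bullet}_{\Ran,!}\lor^{\bullet,!}_{\Ran}\cF$. I would take $\cF$ to be the restriction of $\Vac(\cA)^{\epsilon}_{0,4}$.
\begin{itemize}
\item By Proposition~\ref{prop:fact-to-join}, the pullbacks $\lor^{\bullet,!}_{\Ran}\cF$ are identified with the external products $\Vac(\cA)^{\epsilon}_{0,3}|\boxtimes(\Vac(\cA)^{\epsilon}_{0,2})^{\boxtimes m}\boxtimes\Vac(\cA)^{\epsilon}_{0,3}|$. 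These identifications are compatible with the face maps, since they come from the universal factorization isomorphism $\sqcup^!\cA\simeq\cA\boxtimes\cA$.
\item Applying $\Gamma_c$, which commutes with colimits and is monoidal for $\star$ by Lemma~\ref{lem:glob-join}, together with Künneth, yields the bar complex computing $\fZ_{\cA}^+\otimes_{\fZ_{\cA}^0}\fZ_{\cA}^-$.
\end{itemize}

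The step I expect to be the main obstacle is applying descent and $\Gamma_c$ on formal completions and on the pushout $\bcM_{0,2+1,I}^{\land}\cup_{\bcM_{0,2,I}}\bcM_{0,1+2,I}^{\land}$, which are not of finite type. My first approach is to work levelwise in $I$ and in the adic filtration: write each completion as a colimit of finite-order nilpotent thickenings, on which the join is a genuine proper surjection of finite type stacks. I would apply descent and base change there, and then pass to the colimit, using that $\Gamma_c$ and $!$-pushforward commute with filtered colimits.
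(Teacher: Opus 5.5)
Your plan is the one the paper intends: it only says the lemma is ``proven similarly to Theorem~\ref{thm:disj-gluing}''. However, the two geometric facts your descent argument rests on are false for the spaces as defined, and the problem is not a technicality about formal completions.

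\emph{Surjectivity fails.} A $k$-point of $\bcM_{0,2+1,\Ran}^{\land}$ is $(\PP^1,0,1,\infty)\lor Y_1$ with $Y_1\in\bcM_{0,2,\Ran}(k)$. So $Y_1$ has at least one component and a nonempty configuration, and the bare bubble $(\PP^1,0,1,\infty)$ with empty configuration is not a point. Hence every point in the image of the join is $(\PP^1,0,1,\infty)\lor Y_1\lor Y_2\lor(\PP^1,0,1,\infty)$, whose middle chain has at least two components. Now take the points $(\PP^1,0,1,\infty)\lor Y\lor(\PP^1,0,1,\infty)$ with $Y$ irreducible; they form the open dense part of the copy of $\bcM_{0,2,\Ran}$ inside $\bcM_{0,2+2,\Ran}^{\land,\nd}$. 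These have only the two end nodes, and cutting at either produces exactly the excluded bare bubble. So ``cutting at any node exhibits it as such a join'' is wrong, and these points are not in the image.

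\emph{The map does not land in the target.} $\bcM_{0,2+2,\Ran}^{\land,\nd}$ is the pushout of the loci where the first, resp.\ last, node survives. Locally, in the smoothing parameters $t_1,t_e$ of the two end nodes, it is $\{t_1t_e=0\}$. The join forces only the interior junction node to survive, with $t_1,t_e$ independent nilpotents. Over a test ring with $t_1t_e\neq 0$ the join therefore lies outside $\bcM_{0,2+2,\Ran}^{\land,\nd}$; ``nodal at the junction'' is not the defining condition.

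Consequently, even after the completions are handled, proper descent along the join can at best compute $\Gamma_c$ of $\Vac(\cA)^{\epsilon}_{0,4}$ over the locus where an interior node of the middle chain survives. That differs from $\fZ_{\cA}$ by the contribution of irreducible middle chains, which has no reason to vanish. What is missing is a genuinely new ingredient: a unit or augmentation accounting for the bare bubble, for instance versions of $\fZ_{\cA}^{\pm}$ that include the trivial end, or a corrected target. The same boundary effect already affects the argument of Theorem~\ref{thm:disj-gluing}, for configurations lying entirely on one side of the node with no inserted chain, so appealing to ``similarly'' does not close the gap.

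Separately, your identification of the \v{C}ech nerve with the bar construction is not literal. The $(m+1)$-fold fiber product over the target parametrizes arbitrary tuples of cut nodes, possibly repeated and in any order, not strictly increasing ones. Passing to strictly increasing tuples (``modulo ordering'') needs its own argument, for instance that this semisimplicial object still recovers $\cF$ because it is fiberwise a simplex. It is not a direct instance of proper descent.
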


\begin{remark}
    In the topological setting, given an $\EE_2$-algebra $\cA$, the factorization homology $\int_{S^1 \times \RR^1} \cA$ admits an associative algebra structure, and we have an isomorphisms $\cA\mh\Mod^{\EE_2} \simeq (\int_{S^1 \times \RR^1})\mh\Mod$ (\cite[Proposition~3.16]{Fr}). In addition, for any Riemann surface $\Sigma$ with $n$ boundary components $\partial \Sigma \simeq (S^1)^{\sqcup n}$, the factorization homology $\int_{\Sigma} \cA$ admits $n$ commuting left $\int_{S^1 \times \RR^1} \cA$-module structures, and given $\Sigma_1,\Sigma_2$ with $\partial \Sigma_i \simeq S^1$, one has a gluing formula (\cite[Lemma~3.18]{AF}) \[\int_{\Sigma_1 \underset{S^1 \times \RR^1}{\otimes} \Sigma_2} \cA \simeq \int_{\Sigma_1} \cA \underset{\int_{S^1 \times \RR^1} \cA}{\otimes} \int_{\Sigma_2} \cA~.\] In the algebraic setting, we have two different associative algebras associated to a factorization algebra $\cA$: The first is $\fZ_{\cA}^0$, which, by the above, satisfies an analogous gluing formula. The second is the topological associative algebra $\cA_x^{\operatorname{as}}$ of~\cite[Section~3.6]{BD} (or its vertex algebra counterpart, given by the universal enveloping algebra $U(V)$, see~\cite[Section~4.3]{FBZ}), which satisfies $\cA\mh\ChMod_x \simeq \cA_x^{\operatorname{as}}\mh\Mod$. 
\end{remark}

\printbibliography

\end{document}